
\documentclass[12pt,thmsa]{article}
\usepackage{amssymb}
\usepackage{amsfonts}
\usepackage{amsmath}
\usepackage[top=3.8cm,bottom=3.8cm,textwidth=16cm,centering,a4paper]{geometry}
\usepackage{diagbox}
\usepackage{slashbox}

\setcounter{MaxMatrixCols}{10}

\setlength{\textheight}{235mm} \setlength{\textwidth}{175mm}
\setlength{\voffset}{-7mm} \setlength{\hoffset}{-9mm}

\newtheorem{theorem}{Theorem}[section]

\newtheorem{proposition}[theorem]{Proposition}

\newtheorem{remark}[theorem]{Remark}

\newtheorem{lemma}[theorem]{Lemma}

\newtheorem{definition}[theorem]{Definition}
\newenvironment{proof}[1][Proof]{\noindent\textbf{#1.} }{\ \rule{0.5em}{0.5em}}

\begin{document}

\title{Existence and dynamical behaviour of vectorial standing waves with
prescribed mass for Hartree-Fock type systems}
\date{}
\author{Shuai Yao$^{a}$\thanks{%
E-mail address: shyao2019@163.com (S. Yao)}, Juntao Sun$^{a}$\thanks{%
E-mail address: jtsun@sdut.edu.cn (J. Sun)}, Tsung-fang Wu$^{b}$\thanks{%
E-mail address: tfwu@nuk.edu.tw (T.-F. Wu)} \\
$^{a}${\footnotesize \emph{School of Mathematics and Statistics, Shandong
University of Technology, Zibo 255049, PR China}}\\
$^{b}${\footnotesize \emph{Department of Applied Mathematics, National
University of Kaohsiung, Kaohsiung 811, Taiwan}}}
\maketitle

\begin{abstract}
In this paper, we investigate vectorial standing waves with prescribed mass
for the Hartree-Fock type system (HF system) with the double coupled
feature. Such system is viewed as an approximation of
the Coulomb system with two particles appeared in quantum mechanics. By exploring the interaction of
the double coupled terms, we prove the existence/nonexistence and symmetry
of vectorial energy ground states for the corresponding stationary problem. Furthermore,
we obtain the relation between vectorial energy ground states and
vectorial action ground states in some cases. Finally, we establish
conditions for global well-posedness and finite time blow-up to HF system
with the initial data, and prove orbital stability/strong instability of
standing waves.
\end{abstract}

\tableofcontents

\textbf{Keywords:} Hartree-Fock system; vectorial ground state;
well-posedness; dynamical behaviour

\textbf{2010 Mathematics Subject Classification:} 35J50, 35Q40, 35Q55.

\section{Introduction}

Our starting point is the system of Hartree-Fock equations:%
\begin{equation}
\begin{array}{ll}
i\partial _{t}\psi _{k}+\Delta \psi _{k}-\left( |x|^{-1}\ast
\sum\limits_{j=1}^{N}|\psi _{j}|^{2}\right) \psi _{k}+(V_{\text{ex}}\psi
)_{k}=0, & \text{ }\forall k=1,...,N,%
\end{array}
\label{e1-0}
\end{equation}%
where $\psi _{k}:\mathbb{R\times R}^{3}\rightarrow \mathbb{C}$ and $(V_{%
\text{ex}}\psi )_{k}$ is the $k$'th component of \textit{the exchange
potential} defined by%
\begin{equation}
\begin{array}{ll}
(V_{\text{ex}}\psi )_{k}=\sum\limits_{j=1}^{N}\psi _{j}(y)\int_{\mathbb{R}%
^{3}}\frac{\psi _{k}(y)\bar{\psi}_{j}(y)}{|x-y|}dy, & \text{ }\forall
k=1,...,N.%
\end{array}
\label{e1-1}
\end{equation}%
This system appeared in quantum mechanics in the study of a system of $N$
particles, which is an approximation of the Coulomb system. With respect to system (\ref{e1-0}), it has the advantage of
being consistent with the Pauli principle, see e.g. \cite{F1,Sl1}.

We note that the exchange potential (\ref{e1-1}) is the most difficult term
to be treated in system (\ref{e1-0}). A very simple approximation of this
potential was given by Slater \cite{Sl2} in the form:%
\begin{equation}
\begin{array}{ll}
(V_{\text{ex}}\psi )_{k}\approx C\left( \sum\limits_{j=1}^{N}|\psi
_{j}|^{2}\right) ^{1/3}\psi _{k}, & \text{ }\forall k=1,2,...,N,%
\end{array}
\label{1-1}
\end{equation}%
where $C>0$. For more different local approximations, we refer to \cite{KS,PY} and references therein.

For the single-partical states, i.e. $N=1,$ the exchange potential (\ref{1-1}%
) becomes $(V_{\text{ex}}\psi )_{1}=C|\psi _{1}|^{2/3}\psi _{1}$, and then
system (\ref{e1-0}) is the following Schr\"{o}dinger--Poisson--Slater
equation:

\begin{equation}
\begin{array}{ll}
i\partial _{t}\psi +\Delta \psi -\left( |x|^{-1}\ast |\psi |^{2}\right) \psi
+|\psi |^{2/3}\psi =0 & \text{ in }\mathbb{R}^{3},%
\end{array}
\label{1-3}
\end{equation}%
where we rename $\psi _{1}$ as $\psi ,$ and take $C=1$ for simplicity. S\'{a}%
nchez and Soler \cite{SS1} analyzed the asymptotic behaviour of solutions to
equation (\ref{1-3}). If the term $|\psi |^{2/3}\psi $ is replaced with $%
|\psi |^{p-2}\psi $ $(2<p<6)$ (or, more generally, $f(\psi ))$, then
equation (\ref{1-3}) becomes Schr\"{o}dinger--Poisson equation (also
called Schr\"{o}dinger--Maxwell equation), where the existence,
stability/instability and dynamics of standing waves $\psi (t,x)=e^{i\lambda
t}u(x)$ has been extensively studied in recent years, see e.g. \cite%
{BJL,CDSS,CW,FCW,R,R1,SS,SWF1,SWF2}.

For the two-particle states, i.e. $N=2,$ system (\ref{e1-0}) becomes much
more complicated. In order to further simplify it, we assume the exchange
potential%
\begin{equation}
V_{\text{ex}}\psi =C\binom{(\left\vert \psi _{1}\right\vert ^{p-2}+\beta
\left\vert \psi _{1}\right\vert ^{\frac{p}{2}-2}\left\vert \psi
_{2}\right\vert ^{\frac{p}{2}})\psi _{1}}{(\left\vert \psi _{2}\right\vert
^{p-2}+\beta \left\vert \psi _{1}\right\vert ^{\frac{p}{2}}\left\vert \psi
_{2}\right\vert ^{\frac{p}{2}-2})\psi _{2}},  \label{1-4}
\end{equation}%
where $C>0,\beta \in \mathbb{R}$ and $2<p<6$. Clearly, if $p=\frac{8}{3},$
then (\ref{1-4}) is written as%
\begin{equation*}
V_{\text{ex}}\psi =C\binom{(\left\vert \psi _{1}\right\vert ^{\frac{2}{3}%
}+\beta \left\vert \psi _{1}\right\vert ^{-\frac{2}{3}}\left\vert \psi
_{2}\right\vert ^{\frac{4}{3}})\psi _{1}}{(\left\vert \psi _{2}\right\vert ^{%
\frac{2}{3}}+\beta \left\vert \psi _{1}\right\vert ^{\frac{4}{3}}\left\vert
\psi _{2}\right\vert ^{-\frac{2}{3}})\psi _{2}},
\end{equation*}%
which can be viewed as an approximation of the exchange potential (\ref{1-1}%
). Taking, for simplicity, $C=1$, system (\ref{e1-0}) becomes the following
problem, depending on the parameter $\gamma >0:$%
\begin{equation}
\left\{
\begin{array}{ll}
i\partial _{t}\psi _{1}+\Delta \psi _{1}-\gamma \left( |x|^{-1}\ast
\sum\limits_{j=1}^{2}|\psi _{j}|^{2}\right) \psi _{1}+|\psi _{1}|^{p-2}\psi
_{1}+\beta |\psi _{2}|^{\frac{p}{2}}|\psi _{1}|^{\frac{p}{2}-2}\psi _{1}=0 &
\ \text {in}\ \mathbb{R}^{3}, \\
i\partial _{t}\psi _{2}+\Delta \psi _{2}-\gamma \left( |x|^{-1}\ast
\sum\limits_{j=1}^{2}|\psi _{j}|^{2}\right) \psi _{2}+|\psi _{2}|^{p-2}\psi
_{2}+\beta |\psi _{1}|^{\frac{p}{2}}|\psi _{2}|^{\frac{p}{2}-2}\psi _{2}=0 &
\ \text{in}\ \mathbb{R}^{3}.%
\end{array}%
\right.  \label{e1-2}
\end{equation}%
The typical characteristic of such system lies on the presence of the double
coupled terms, including a Coulomb interacting one and a cooperative pure
power one.

An important topic is to establish conditions for the well-posedness and
blow-up of system (\ref{e1-2}) with the initial data. When $\gamma =0,$
there has been a number of works on local existence, global existence and
scattering theory, see e.g. \cite{FP,S0,X}. However, for $\gamma >0$,
there have no any results on these aspects in the existing literature so far. We
observe that the presence of the double coupled terms yields substantial
difficulties and requires more detailed dispersive estimates. One objective
in this paper is to investigate the global well-posedness and finite time
blow-up of system (\ref{e1-2}) with the initial data.

Another interesting topic on system (\ref{e1-2}) is to study the standing
waves of the form%
\begin{equation*}
\psi _{1}(t,x)=e^{i\lambda t}u(x)\text{ and }\psi _{2}(t,x)=e^{i\lambda
t}v(x),
\end{equation*}%
where $\lambda \in \mathbb{R}$ and $(u,v)$ is a vector function to be found.
Thus $(u,v)$ must verify%
\begin{equation}
\left\{
\begin{array}{ll}
-\Delta u+\lambda u+\gamma \phi _{u,v}u=|u|^{p-2}u+\beta |v|^{\frac{p}{2}%
}|u|^{\frac{p}{2}-2}u & \quad \text{in}\quad \mathbb{R}^{3}, \\
-\Delta v+\lambda v+\gamma \phi _{u,v}v=|v|^{p-2}v+\beta |u|^{\frac{p}{2}%
}|v|^{\frac{p}{2}-2}v & \quad \text{in}\quad \mathbb{R}^{3},%
\end{array}%
\right.   \label{e1-3}
\end{equation}%
where $\phi _{u,v}:=\int_{\mathbb{R}^{3}}\frac{u^{2}(y)+v^{2}(y)}{|x-y|}dy.$
For a solution $\left( u,v\right) $ of system (\ref{e1-3}), we introduce
some concepts of its triviality.

\begin{definition}
\label{D1}A vector function $\left( u,v\right) $ is said to be\newline
$\left( i\right) $ nontrivial if either $u\neq 0$ or $v\neq 0;$\newline
$\left( ii\right) $ semitrivial if it is nontrivial but either $u=0$ or $%
v=0; $\newline
$\left( iii\right) $ vectorial if both of $u$ and $v$ are not zero;
\end{definition}

According to the role of the frequency $\lambda ,$ there are two different
ways to study solutions of system (\ref{e1-3}):\newline
$(i)$ the frequency $\lambda $ is a fixed and assigned parameter;\newline
$(ii)$ the frequency $\lambda $ is an unknown of the problem.

For case $(i)$, one can see that solutions of system (\ref{e1-3}) can be
obtained as critical points of the functional defined in $\mathbf{H}:=H^{1}(%
\mathbb{R}^{3})\times H^{1}(\mathbb{R}^{3})$ by%
\begin{eqnarray*}
E_{\lambda ,\gamma ,\beta }(u,v):&=&\frac{1}{2}\int_{\mathbb{R}%
^{N}}(|\nabla u|^{2}+|\nabla v|^{2})dx+\frac{\lambda }{2}\int_{\mathbb{R}%
^{N}}(|u|^{2}+|v|^{2})dx+\frac{\gamma }{4}\int_{\mathbb{R}^{3}}\phi
_{u,v}(u^{2}+v^{2})dx \\
&&-\frac{1}{p}\int_{\mathbb{R}^{3}}(|u|^{p}+|v|^{p}+2\beta |u|^{\frac{p}{2}%
}|v|^{\frac{p}{2}})dx.
\end{eqnarray*}%
When either $3<p<4$ and $\beta >0,$ or $4\leq p<6$ and $\beta \geq
2^{2q-1}-1,$ d'Avenia, Maia and Siciliano \cite{DMS} studied the existence
of radial vectorial solutions to system (\ref{e1-3}) with $\lambda =1\ $via
the method of Nehari-Pohozaev manifold defined in $\mathbf{H}%
_{r}:=H_{rad}^{1}(\mathbb{R}^{3})\times H_{rad}^{1}(\mathbb{R}^{3})$
developed by Ruiz \cite{R1}. Very recently, we \cite{SW} considered another
interesting case, i.e. $2<p<3,$ and proved the existence of radial vectorial
solutions for system (\ref{e1-3}) with $\lambda =1$ when either $\gamma >0$
small enough or $\beta >\beta (\gamma ).$ In addition, by developing a novel
constraint method, for $\beta >\beta (\gamma ),$ we obtained a vectorial
\textit{action ground state} in $\mathbf{H}$ when either $3\leq p<4$ and $%
0<\gamma <\gamma _{0},$ or $\frac{1+\sqrt{73}}{3}\leq p<4$ and $\gamma >0.$
Here the action ground state is defined in the following sense:

\begin{definition}
\label{d2} We say that $(u,v)$ is an action ground state of system (\ref%
{e1-3}) if it is a solution of system (\ref{e1-3}) having least energy among
all the solutions:%
\begin{equation*}
E_{\lambda ,\gamma ,\beta }(u,v)=\inf \{E_{\lambda ,\gamma ,\beta }(\phi
_{1},\phi _{2}):E_{\lambda ,\gamma ,\beta }^{\prime }(\phi _{1},\phi _{2})=0%
\text{ and }(\phi _{1},\phi _{2})\in \mathbf{H}\backslash \{(0,0)\}\}.
\end{equation*}
\end{definition}

Alternatively one can look for solutions of system (\ref{e1-3}) with $%
\lambda $ unknown. In this case $\lambda \in \mathbb{R}$ appears as a
Lagrange multiplier and $L^{2}$-norms of solutions are prescribed. This
study seems to be particularly meaningful from the physical point of view,
since solutions of system (\ref{e1-2}) with the initial data conserve their
mass along time. Moreover, this study often offers a good insight of the
dynamical properties of solutions for system (\ref{e1-2}), such as stability
or instability. For these reasons, another objective in this paper is to
focus on existence and dynamics of solutions for system (\ref{e1-3}) having
prescribed mass%
\begin{equation}
\int_{\mathbb{R}^{3}}(u^{2}+v^{2})dx=c>0,  \label{e1-4}
\end{equation}%
which has not been concerned before. From
the variational point of view, solutions of problem (\ref{e1-3})-(\ref{e1-4}%
) can be obtained as critical points of the energy functional $I_{\gamma
,\beta }:\mathbf{H\rightarrow }\mathbb{R}$ given by
\begin{eqnarray*}
I_{\gamma ,\beta }(u,v):&=&\frac{1}{2}\int_{\mathbb{R}^{N}}(|\nabla
u|^{2}+|\nabla v|^{2})dx+\frac{\gamma }{4}\int_{\mathbb{R}^{3}}\phi
_{u,v}(u^{2}+v^{2})dx \\
&&-\frac{1}{p}\int_{\mathbb{R}^{3}}(|u|^{p}+|v|^{p}+2\beta |u|^{\frac{p}{2}%
}|v|^{\frac{p}{2}})dx
\end{eqnarray*}%
on the constraint
\begin{equation*}
S_{c}:=\left\{ (u,v)\in \mathbf{H}:\int_{\mathbb{R}^{3}}(u^{2}+v^{2})dx=c%
\right\} .
\end{equation*}%
As $2<p<6,$ it is standard that $I_{\gamma ,\beta }$ is of class $C^{1}$ on $%
S_{c}.$ We will be particularly interested in \textit{energy ground states},
defined as follows:

\begin{definition}
\label{D2} We say that $(u,v)$ is an energy ground state of system (\ref%
{e1-3}) if it is a solution of system (\ref{e1-3}) having minimal energy
among all the solutions:%
\begin{equation*}
I_{\gamma ,\beta }^{\prime }|_{S_{c}}(u,v)=0\text{ and }I_{\gamma ,\beta
}(u,v)=\inf \{I_{\gamma ,\beta }(\phi _{1},\phi _{2}):I_{\gamma ,\beta
}^{\prime }|_{S(c)}(\phi _{1},\phi _{2})=0\text{ and }(\phi _{1},\phi
_{2})\in S_{c}\}.
\end{equation*}
\end{definition}

If $I_{\gamma ,\beta }$ has a global minimizer, then this definition
naturally extends the notion of ground states from linear quantum mechanics.
Moreover, it allows to deal with cases when $I_{\gamma ,\beta }$ is
unbounded from below on $S_{c}$. We also recall the notion of stability and
instability as follows:

\begin{definition}
\label{D3} $(i)$ The set of the ground states $\mathcal{Z}(c)$ is orbitally
stable if for any $\varepsilon >0$, there exists $\delta >0$ such that for
the initial data $(\psi _{1}(0),\psi _{2}(0))\in \mathbf{H}$ satisfying $%
\inf_{(u,v)\in \mathcal{Z}(c)}\Vert (\psi _{1}(0),\psi _{2}(0))-(u,v)\Vert _{%
\mathbf{H}}\leq \delta ,$ we have
\begin{equation*}
\sup_{t\in (0,T_{\max })}\inf_{(u,v)\in \mathcal{Z}(c)}\Vert (\psi
_{1}(t),\psi _{2}(t))-(u,v)\Vert _{\mathbf{H}}\leq \varepsilon ,
\end{equation*}%
where $(\psi _{1}(t),\psi _{2}(t))$ is the solution to system (\ref{e1-2})
with the initial data $(\psi _{1}(0),\psi _{2}(0))$ for $t\in \left[
0,T_{\max }\right) $, and $T_{\max }$ denotes the maximum existence time of
solution.\newline
$(ii)$ A standing wave $(e^{i\lambda t}u,e^{i\lambda t}v)$ is strongly
unstable if for any $\varepsilon >0$, there exists $(\psi _{1}(0),\psi
_{2}(0))\in \mathbf{H}$ such that $\Vert (\psi _{1}(0),\psi
_{2}(0))-(u,v)\Vert _{\mathbf{H}}<\varepsilon $ and the solution $(\psi
_{1}(t),\psi _{2}(t))$ blows up in finite time.
\end{definition}

We note that the definition of stability implicitly requires that system (%
\ref{e1-2}) has a unique global solution, at least for initial data $(\psi
_{1}(0),\psi _{2}(0))$ sufficiently close to $\mathcal{Z}(c).$

When $\gamma =0$, system (\ref{e1-3}) is reduced to the local weakly coupled
nonlinear Schr\"{o}dinger system
\begin{equation}
\left\{
\begin{array}{ll}
-\Delta u+\lambda u=|u|^{p-2}u+\beta |v|^{\frac{p}{2}}|u|^{\frac{p}{2}-2}u &
\quad \text{in}\quad \mathbb{R}^{3}, \\
-\Delta v+\lambda v=|v|^{p-2}v+\beta |u|^{\frac{p}{2}}|v|^{\frac{p}{2}-2}v &
\quad \text{in}\quad \mathbb{R}^{3}.%
\end{array}%
\right.  \label{e1-5}
\end{equation}%
In recent years, the existence and multiplicity of solutions to system (\ref%
{e1-5}) with prescribed mass have attracted much attention, see e.g. \cite%
{BS,CW1,GJ,GJ1,LZ,MS}. However, when $\gamma >0$, due to the interaction of
the double coupled terms, the situation becomes more different and
complicated. Some key challenges need to be solved, such as the
characterization of the geometric structure of the functional $I_{\gamma
,\beta }$, and the relation between two coupled constants $\gamma ,\beta $. In order to overcome these considerable
difficulties, new ideas and techniques have been explored. More details will be discussed in the
next subsection.

\subsection{Main results}

For sake of convenience, we set
\begin{equation*}
A(u,v):=\int_{\mathbb{R}^{3}}(|\nabla u|^{2}+|\nabla v|^{2})dx,\quad
B(u,v):=\int_{\mathbb{R}^{3}}\phi _{u,v}(u^{2}+v^{2})dx,
\end{equation*}%
and
\begin{equation*}
C(u,v):=\int_{\mathbb{R}^{3}}(|u|^{p}+|v|^{p}+2\beta |u|^{\frac{p}{2}}|v|^{%
\frac{p}{2}})dx.
\end{equation*}%
First of all, we consider the minimization problem:
\begin{equation*}
\sigma _{\gamma ,\beta }(c):=\inf_{(u,v)\in S_{c}}I_{\gamma ,\beta }(u,v).
\label{e1-6}
\end{equation*}

\begin{theorem}
\label{t1} Let $2<p<3$ and $c,\beta >0$. Then there exists $\gamma _{\ast
}:=\gamma _{\ast }(c)>0$ such that for $0<\gamma <\gamma _{\ast }$, $\sigma
_{\gamma ,\beta }(c)$ admits a minimizer $(\hat{u},\hat{v})\in S_{c}$,
namely, system (\ref{e1-3}) admits a vectorial energy ground state $(\hat{u},%
\hat{v})\in \mathbf{H}$. In particular, for $\frac{18}{7}<p<3,$ there exists
$c_{\ast }>0$ such that for every $c>c_{\ast }$ and $0<\gamma <\gamma _{\ast
},$ the vectorial energy ground state $(\hat{u},\hat{v})$ is non-radial.
\end{theorem}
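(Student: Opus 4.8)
The plan is to realize the ground state as a constrained minimizer of $I_{\gamma,\beta}$ on $S_{c}$ by the direct method, treating the Coulomb coefficient $\gamma$ as a small perturbation of the purely local system ($\gamma=0$) and exploiting that $2<p<3$ lies well below the $L^{2}$-critical exponent $10/3$. First I would check that $I_{\gamma,\beta}$ is coercive and bounded from below on $S_{c}$. Since $\gamma,\beta>0$ the term $\frac{\gamma}{4}B(u,v)\ge 0$, so it suffices to dominate $C(u,v)$; by the Gagliardo--Nirenberg inequality and H\"older's inequality for the cross term one gets $C(u,v)\le K(c)\,A(u,v)^{3(p-2)/4}$ on $S_{c}$, and since $3(p-2)/4<1$ for $p<10/3$ the estimate $I_{\gamma,\beta}(u,v)\ge \frac{1}{2}A(u,v)-K(c)A(u,v)^{3(p-2)/4}$ yields both coercivity and a finite lower bound. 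In particular every minimizing sequence is bounded in $\mathbf{H}$.

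Next I would show $\sigma_{\gamma,\beta}(c)<0$, which fixes the threshold $\gamma_{\ast}=\gamma_{\ast}(c)$. Using the $L^{2}$-preserving dilation $u_{t}(x)=t^{3/2}u(tx)$ one has $A(u_{t},v_{t})=t^{2}A$, $B(u_{t},v_{t})=tB$ and $C(u_{t},v_{t})=t^{3(p-2)/2}C$, so for fixed $(u,v)\in S_{c}$ with $C(u,v)>0$ the energy along the dilation is
\[
g(t)=\frac{t^{2}}{2}A(u,v)+\frac{\gamma t}{4}B(u,v)-\frac{t^{3(p-2)/2}}{p}C(u,v).
\]
Because $3(p-2)/2<2$, the $\gamma=0$ part $\frac{t^{2}}{2}A-\frac{t^{3(p-2)/2}}{p}C$ attains a strictly negative minimum; for $\gamma$ small the extra nonnegative term $\frac{\gamma t}{4}B$ does not destroy this, so $\min_{t}g(t)<0$ and hence $\sigma_{\gamma,\beta}(c)<0$.

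The heart of the argument, and the step I expect to be the main obstacle, is the compactness of minimizing sequences. I would invoke the concentration--compactness principle on a bounded minimizing sequence. Vanishing is immediately excluded: it would force $C\to 0$, whence $\liminf I_{\gamma,\beta}\ge 0>\sigma_{\gamma,\beta}(c)$. The delicate point is to rule out dichotomy, i.e.\ to prove the strict subadditivity $\sigma_{\gamma,\beta}(c)<\sigma_{\gamma,\beta}(c_{1})+\sigma_{\gamma,\beta}(c-c_{1})$ for all $0<c_{1}<c$. Here the nonlocal structure fights against us: since the Coulomb kernel is nonnegative, pulling two lumps of masses $c_{1}$ and $c-c_{1}$ a distance $R$ apart raises the energy only by a positive cross-interaction of size $O(\gamma/R)$, so the naive two-bump test configuration gives merely $\sigma_{\gamma,\beta}(c)\le\sigma_{\gamma,\beta}(c_{1})+\sigma_{\gamma,\beta}(c-c_{1})$, with the Coulomb term pushing in the wrong direction for a strict inequality. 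My remedy is perturbative: for $\gamma=0$ the local subcritical system satisfies the strong subadditivity $\sigma_{0,\beta}(\theta c)<\theta\,\sigma_{0,\beta}(c)$ for $\theta>1$ (constant rescaling $(u,v)\mapsto(\sqrt{\theta}u,\sqrt{\theta}v)$ gains the strictly positive amount $\frac{1}{p}(\theta^{p/2}-\theta)C$), which produces a quantitative gap in the strict subadditivity; since $\sigma_{0,\beta}\le\sigma_{\gamma,\beta}$ and $\sigma_{\gamma,\beta}(c)\to\sigma_{0,\beta}(c)$ as $\gamma\to 0$ with estimates uniform on the relevant range of masses, this gap survives for $0<\gamma<\gamma_{\ast}(c)$, the endpoint regimes $c_{1}\to 0^{+}$ and $c_{1}\to c^{-}$ being handled by the continuity and strict monotonicity of $c\mapsto\sigma_{\gamma,\beta}(c)$ together with the superlinear decay $\sigma_{\gamma,\beta}(c_{1})=o(c_{1})$. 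With vanishing and dichotomy excluded, the sequence is, up to translations, relatively compact, and weak lower semicontinuity together with conservation of mass gives strong convergence to a minimizer $(\hat{u},\hat{v})\in S_{c}$; the associated Lagrange multiplier $\lambda$ makes $(\hat{u},\hat{v})$ a solution of (\ref{e1-3}).

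Finally I would upgrade the minimizer to a vectorial one and, in the stated regime, to a non-radial one. If some minimizer were semitrivial, say $(\hat{u},0)$, then the competitor $(\cos\alpha\,\hat{u},\sin\alpha\,\hat{u})\in S_{c}$ leaves $A$ and $B$ unchanged while multiplying $\int_{\mathbb{R}^{3}}|\hat{u}|^{p}$ by $\cos^{p}\alpha+\sin^{p}\alpha+2\beta(\cos\alpha\sin\alpha)^{p/2}$, which exceeds $1$ for small $\alpha>0$ because $\beta>0$ and $p/2<2<p$ make the term $2\beta\alpha^{p/2}$ dominant; this strictly lowers $I_{\gamma,\beta}$, so no minimizer is semitrivial and $(\hat{u},\hat{v})$ is vectorial. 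For the non-radiality when $\frac{18}{7}<p<3$ and $c>c_{\ast}$, I would compare $\sigma_{\gamma,\beta}(c)$ with the radial infimum: by Newton's theorem a radial mass distribution carries a large repulsive Coulomb self-energy, whereas a non-radial test configuration of two well-separated lumps reduces it, and optimizing the dilation of the lumps shows that for $c$ large and $p>\frac{18}{7}$ the split configuration has strictly smaller energy than any radial function. Hence $\sigma_{\gamma,\beta}(c)$ lies strictly below the radial infimum and the ground state cannot be radial. Throughout, the principal difficulty remains the strict subadditivity, where the sign of the nonlocal Coulomb interaction is adverse and the smallness of $\gamma$ is essential.
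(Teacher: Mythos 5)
Your overall route (direct minimization on $S_{c}$, concentration--compactness, treating $\gamma$ as a small perturbation of the local system) is the same as the paper's, and several steps are correct and essentially identical to the paper's: coercivity and lower boundedness (Lemma \ref{L3-1}), negativity of $\sigma_{\gamma,\beta}(c)$, exclusion of vanishing, and the mixing competitor that rules out semitrivial minimizers (Lemma \ref{L2-4}). The genuine gap sits exactly where you locate the main difficulty: the endpoint regime of the strict subadditivity. Your interior argument is sound: for $c_{1}\in[\delta,c-\delta]$ the $\gamma=0$ gap $\delta_{0}(c_{1}):=\sigma_{0,\beta}(c_{1})+\sigma_{0,\beta}(c-c_{1})-\sigma_{0,\beta}(c)$ is bounded below by a positive constant and survives an $O(\gamma)$ error. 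But since $\sigma_{0,\beta}(c)=\sigma_{0,\beta}(1)\,c^{\alpha}$ with $\alpha=\frac{6-p}{10-3p}>1$, the gap $\delta_{0}(c_{1})$ vanishes \emph{linearly} as $c_{1}\to 0^{+}$, while your perturbation error $\epsilon(\gamma)$ is a constant; so for every fixed $\gamma>0$ there is a nonempty range $0<c_{1}\lesssim \epsilon(\gamma)$ where this comparison proves nothing. There you need $\sigma_{\gamma,\beta}(c)-\sigma_{\gamma,\beta}(c-c_{1})<\sigma_{\gamma,\beta}(c_{1})$, with both sides negative and small, and the tools you invoke --- continuity, \emph{qualitative} strict monotonicity, and the decay $\sigma_{\gamma,\beta}(c_{1})=o(c_{1})$ --- cannot decide it: if the decrement $\sigma_{\gamma,\beta}(c-c_{1})-\sigma_{\gamma,\beta}(c)$ were of order $c_{1}^{3/2}$ while $\sigma_{\gamma,\beta}(c_{1})\sim-c_{1}^{5/4}$, all three hypotheses would hold and subadditivity would fail. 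What is needed is a quantitative slope bound, uniform for small $\gamma$: the difference quotient $\left(\sigma_{\gamma,\beta}(c)-\sigma_{\gamma,\beta}(c-c_{1})\right)/c_{1}$ must stay below a strictly negative constant. Producing this is precisely the hard content of the paper's Proposition \ref{P3-3}: after the scaling reduction to the functionals $J_{\rho}^{\gamma,c}$, failure is assumed along $\gamma_{n}\to0$, the failure points are shown to satisfy $\kappa_{n}\to1$, minimizers of the perturbed problems are extracted and shown to converge to a minimizer of the limiting local problem, and the Euler--Lagrange equations together with the \emph{positivity of the limiting Lagrange multiplier} $\lambda_{\infty}>0$ contradict $J_{1-\kappa_{n}}^{\gamma_{n},c}/(1-\kappa_{n})\to0$. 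Nothing in your sketch produces this multiplier bound. Note also that for $2<p<3$ you cannot shortcut it via the homogeneity inequality $\sigma_{\gamma,\beta}(\theta c)<\theta\sigma_{\gamma,\beta}(c)$: the scaling $(s^{2}u(sx),s^{2}v(sx))$ that proves it for $3\le p<\frac{10}{3}$ (Lemma \ref{L3-8}) produces the term $\frac{1}{p}(s^{3}-s^{2p-3})C$, which has the wrong sign when $p<3$.

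The non-radiality part also has a gap. Your framework (global infimum strictly below the radial infimum for $c$ large) is the paper's, and your splitting idea does give half of it: iterating the weak subadditivity of Lemma \ref{L3-1} with $\sigma_{\gamma,\beta}(c_{0})<0$ yields $\sigma_{\gamma,\beta}(kc_{0})\le k\,\sigma_{\gamma,\beta}(c_{0})\to-\infty$. But the other half --- a lower bound on $I_{\gamma,\beta}$ valid for \emph{all} radial functions, uniform in $c$ --- does not follow from Newton's theorem or from comparison with two-bump competitors. The paper obtains it (Lemma \ref{L3-3}) from Ruiz's radial interpolation inequality (Lemma \ref{L3-10}, inequality (\ref{e3-23})), which for radial functions controls $C(u,v)$ by $A(u,v)^{\frac{5p-12}{6}}B(u,v)^{\frac{6-p}{6}}$ and is valid exactly when $p>\frac{18}{7}$; this is where the threshold $\frac{18}{7}$ in the statement comes from, a point your sketch never explains. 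Without this radial-specific inequality, the claim that the energy ground state is non-radial for large $c$ remains unsupported.
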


For $3\leq p<\frac{10}{3}$ and $c>0,$ we define
\begin{eqnarray*}
\beta _{\star }:&=&\beta _{\star }(c,\gamma ,p) \\
&=&\inf_{(u,v)\in \mathbf{H}\backslash \{(0,0)\}}\left[\frac{\frac{p}{3p-8}\left(
\frac{\gamma (3p-8)}{2(10-3p)}\right) ^{\frac{10-3p}{2}}c^{-2(p-3)}A\left(
u,v\right) ^{\frac{3p-8}{2}}B\left( u,v\right) ^{\frac{10-3p}{2}}(\Vert
u\Vert _{2}^{2}+\Vert v\Vert _{2}^{2})^{2(p-3)}}{2\int_{\mathbb{R}^{3}}|u|^{\frac{p}{2}}|v|^{\frac{p%
}{2}}dx}\right.\\
&&-\left.\frac{\int_{\mathbb{R}%
^{3}}(|u|^{p}+|v|^{p})dx}{2\int_{\mathbb{R}^{3}}|u|^{\frac{p}{2}}|v|^{\frac{p%
}{2}}dx}\right].
\end{eqnarray*}
By (\ref{e2-21}) below, $\beta _{\star }$ is well-defined. In particular,
when $p=3$, $\beta _{\star }\geq \frac{3\sqrt{\gamma }}{2}-1,$
not depending on $c.$

\begin{theorem}
\label{t2} Let $3\leq p<\frac{10}{3}$ and $c,\gamma >0$. Then we have\newline
$(i)$ \textbf{(existence)} $\sigma _{\gamma ,\beta }(c)$ admits a minimizer $%
(\tilde{u},\tilde{v})\in S_{c}$ for $\beta >\max \{\beta _{\star },0\}$. In
addition, the corresponding Lagrange multiplier $\tilde{\lambda}$ is
positive.\newline
$(ii)$ \textbf{(non-existence)} If $\beta _{\star }>0$, then for $p=3$ and $%
\beta <\frac{3\sqrt{\gamma }}{2}-1,$ or for $3<p<\frac{10}{3}$ and $\beta
<\beta _{\star }$, $\sigma _{\gamma ,\beta }(c)$ has no minimizer.
\end{theorem}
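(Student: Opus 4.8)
The plan is to obtain the minimizer of $\sigma_{\gamma,\beta}(c)$ by the direct method combined with a concentration--compactness analysis, and to read off both the sign of the Lagrange multiplier and the non-existence statement from the mass-preserving dilation. Throughout write $P(u,v)=\int_{\mathbb{R}^3}(|u|^p+|v|^p)\,dx$ and $Q(u,v)=\int_{\mathbb{R}^3}|u|^{p/2}|v|^{p/2}\,dx$, so that $C(u,v)=P(u,v)+2\beta Q(u,v)$, and set $K=\frac{p}{3p-8}\big(\frac{\gamma(3p-8)}{2(10-3p)}\big)^{(10-3p)/2}$. For $(u,v)\in S_c$ the dilation $(u^t,v^t)=(t^{3/2}u(t\cdot),t^{3/2}v(t\cdot))$ stays in $S_c$ and $f(t):=I_{\gamma,\beta}(u^t,v^t)=\tfrac{t^2}{2}A(u,v)+\tfrac{\gamma t}{4}B(u,v)-\tfrac{t^{(3p-6)/2}}{p}C(u,v)$. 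Since $f(t)\to0$ as $t\to0^+$ we always have $\sigma_{\gamma,\beta}(c)\le0$; and because $1<(3p-6)/2<2$ the function $f$ dips below zero for some $t>0$ exactly when $C(u,v)>K\,A(u,v)^{(3p-8)/2}B(u,v)^{(10-3p)/2}$, the algebraic identity behind $K$ being precisely the one encoded in the definition of $\beta_\star$. As the whole bracket defining $\beta_\star$ is invariant under the two-parameter amplitude--dilation scaling, its infimum over $\mathbf{H}\setminus\{(0,0)\}$ is realised on $S_c$, and $C=P+2\beta Q$ turns the last inequality into $\beta>\beta_\star$. Thus $\sigma_{\gamma,\beta}(c)<0\iff\beta>\beta_\star$.

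For part $(i)$ I would first show $\sigma_{\gamma,\beta}(c)>-\infty$ and that minimizing sequences are bounded: since $3\le p<\tfrac{10}{3}$ the power $\tfrac{3(p-2)}{2}<2$, so Gagliardo--Nirenberg gives $C(u,v)\lesssim A(u,v)^{(3p-6)/4}$ on $S_c$ with $(3p-6)/4<1$, and Young's inequality yields $I_{\gamma,\beta}\ge(\tfrac12-\varepsilon)A+\tfrac{\gamma}{4}B-C_\varepsilon$. Because $\beta>\max\{\beta_\star,0\}$ the above gives $\sigma_{\gamma,\beta}(c)<0$. Taking a bounded minimizing sequence $(u_n,v_n)$, I would run Lions' concentration--compactness on the densities $u_n^2+v_n^2$. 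Vanishing is ruled out since it forces $C(u_n,v_n)\to0$ and then $\liminf I_{\gamma,\beta}(u_n,v_n)\ge0>\sigma_{\gamma,\beta}(c)$. Dichotomy is excluded by the strict subadditivity $\sigma_{\gamma,\beta}(c)<\sigma_{\gamma,\beta}(c_1)+\sigma_{\gamma,\beta}(c-c_1)$ for $0<c_1<c$; combined with a Br\'ezis--Lieb splitting (valid for the Coulomb bilinear form, whose cross interaction is non-negative and vanishes under separation) this retains all the mass, so $(u_n,v_n)$ is precompact up to translations and its limit $(\tilde u,\tilde v)\in S_c$ minimizes.

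The main obstacle is precisely this strict subadditivity, and the difficulty is structural: under amplitude scaling $(u,v)\mapsto(\theta^{1/2}u,\theta^{1/2}v)$ the three terms scale as $\theta,\theta^2,\theta^{p/2}$, and since $p/2<2$ the \emph{defocusing} Coulomb term grows faster than the focusing nonlinearity, so amplitude scaling alone does not give $\sigma_{\gamma,\beta}(\theta c)<\theta\sigma_{\gamma,\beta}(c)$. The remedy is the extra dilation freedom: testing with $(w,z)(x)=\theta^{1/2}s^{3/2}(u,v)(sx)\in S_{\theta c}$ gives $I_{\gamma,\beta}(w,z)=\theta f(s)+\tfrac{\gamma}{4}(\theta^2-\theta)sB-\tfrac1p(\theta^{p/2}-\theta)s^{(3p-6)/2}C$, and since $(3p-6)/2>1$ one can choose $s$ (balancing against the growth of $\theta f(s)$) so that $\min_{s>0}I_{\gamma,\beta}(w,z)<\theta\,\sigma_{\gamma,\beta}(c)$. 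This optimisation is the delicate computation; it yields $\sigma_{\gamma,\beta}(\theta c)<\theta\sigma_{\gamma,\beta}(c)$ for $\theta>1$, hence the strict monotonicity of $c\mapsto\sigma_{\gamma,\beta}(c)/c$, which in turn gives the required strict subadditivity. That $(\tilde u,\tilde v)$ is a weak solution with a Lagrange multiplier $\tilde\lambda$ is standard once compactness is in hand.

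For the sign of $\tilde\lambda$, the minimizer satisfies the energy identity $I_{\gamma,\beta}(\tilde u,\tilde v)=\sigma_{\gamma,\beta}(c)$, the Pohozaev/virial identity $f'(1)=0$, i.e. $A+\tfrac\gamma4B=\tfrac{3p-6}{2p}C$ (valid because the minimizer is optimal among its own dilations), and the Nehari identity $\tilde\lambda c=C-A-\gamma B$ from pairing the system with $(\tilde u,\tilde v)$. Eliminating $A$ and $B$ gives $\tilde\lambda c=\tfrac{4(p-3)}{p}C-6\,\sigma_{\gamma,\beta}(c)$; since $p\ge3$, $C>0$ (as $\beta>0$) and $\sigma_{\gamma,\beta}(c)<0$, this is strictly positive, so $\tilde\lambda>0$.

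For part $(ii)$, when $\beta<\beta_\star$ (which for $p=3$ is implied by $\beta<\tfrac{3\sqrt\gamma}{2}-1\le\beta_\star$) the equivalence of the first paragraph shows $C(u,v)\le K\,A^{(3p-8)/2}B^{(10-3p)/2}$ for every $(u,v)$, whence $\min_{t>0}f(t)=0$ and $I_{\gamma,\beta}\ge0$ on $S_c$; with $\sigma_{\gamma,\beta}(c)\le0$ this forces $\sigma_{\gamma,\beta}(c)=0$. If it were attained at some $(u_0,v_0)\in S_c$, then $f(1)=0=\min_{t>0}f$ would give simultaneously $\tfrac12A+\tfrac\gamma4B=\tfrac1pC$ and $A+\tfrac\gamma4B=\tfrac{3p-6}{2p}C$, hence $A=\tfrac{3p-8}{p}C$ and $\gamma B=\tfrac{2(10-3p)}{p}C$, which a direct computation shows are equivalent to the \emph{equality} $C=K\,A^{(3p-8)/2}B^{(10-3p)/2}$. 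Using $C=P+2\beta Q$ this reads $\beta=\big(K\,A^{(3p-8)/2}B^{(10-3p)/2}-P\big)/(2Q)\ge\beta_\star$, contradicting $\beta<\beta_\star$; so no minimizer exists. The only remaining case is the semitrivial one $Q(u_0,v_0)=0$, which reduces to the scalar Schr\"odinger--Poisson--Slater problem and is excluded by the same dilation argument applied to a single component.
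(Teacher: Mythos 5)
Your part (i) follows essentially the same route as the paper and is sound in outline: the equivalence $\sigma_{\gamma,\beta}(c)<0\Leftrightarrow\beta>\beta_{\star}$ (your ``dip'' computation is exactly the one behind Lemma \ref{L3-6} and Remark \ref{r4}), coercivity via Gagliardo--Nirenberg, concentration--compactness with strict subadditivity, and the multiplier sign from the Nehari, Pohozaev and energy identities; your identity $\tilde\lambda c=\frac{4(p-3)}{p}C-6\sigma_{\gamma,\beta}(c)$ is an equivalent rearrangement of the paper's $(5p-12)\sigma_{\gamma,\beta}(c)=2(p-3)A-\frac{(3p-8)\tilde\lambda c}{2}$. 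One useful remark: the ``delicate optimisation'' you defer is not delicate. Your two-parameter family contains, at $s=\theta$, precisely the scaling $(u,v)\mapsto(\theta^{2}u(\theta x),\theta^{2}v(\theta x))$ used in Lemma \ref{L3-8}, for which $I_{\gamma,\beta}=\theta^{3}I_{\gamma,\beta}(u,v)+\frac{\theta^{3}-\theta^{2p-3}}{p}C(u,v)\le\theta^{3}I_{\gamma,\beta}(u,v)$ because $2p-3\ge 3$; since $\sigma_{\gamma,\beta}(c)<0$, this gives $\sigma_{\gamma,\beta}(\theta c)\le\theta^{3}\sigma_{\gamma,\beta}(c)<\theta\sigma_{\gamma,\beta}(c)$ in one line.

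Part (ii) is where you genuinely diverge from the paper, and there is a real gap. Your equality-case argument ($f(1)=0=\min f$ and $f'(1)=0$ force $A=\frac{3p-8}{p}C$, $\gamma B=\frac{2(10-3p)}{p}C$, hence $C=K A^{(3p-8)/2}B^{(10-3p)/2}$, hence $\beta\ge\beta_{\star}$) is correct, but only when $Q(u_0,v_0)=\int_{\mathbb{R}^3}|u_0|^{p/2}|v_0|^{p/2}dx>0$: otherwise you cannot divide by $2Q$, and, more to the point, pairs with $u_0v_0\equiv 0$ do not enter the infimum defining $\beta_{\star}$ at all, so equality $C=P=KA^{(3p-8)/2}B^{(10-3p)/2}$ for such a pair contradicts nothing. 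Your dismissal of this case (``the same dilation argument applied to a single component'') does not work: applied to a semitrivial or disjointly supported minimizer, the dilation argument again only reproduces the same equality conditions, which are perfectly consistent; what you would actually need is non-attainment at the zero level for the \emph{scalar} Schr\"{o}dinger--Poisson--Slater constrained problem, a nontrivial statement in its own right. For $\beta>0$ the gap can be patched via Lemma \ref{L2-3}/Lemma \ref{L2-4}: a disjointly supported minimizer $(u_0,v_0)$ can be replaced by the semitrivial minimizer $(|u_0|+|v_0|,0)$, which has the same $A$, $B$, $C$ and mass, and then $(\sqrt{s_{\beta}}\,\omega,\sqrt{1-s_{\beta}}\,\omega)$ strictly lowers the energy, a contradiction. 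But Theorem \ref{t2}(ii) allows $\beta\le 0$ (any $\beta<\beta_{\star}$), where this patch fails because $I_{\gamma,\beta}$ restricted to semitrivial pairs does not see $\beta$ at all. (A smaller instance of the same issue: your preliminary claim that $\beta<\beta_{\star}$ gives $C\le KA^{(3p-8)/2}B^{(10-3p)/2}$ ``for every $(u,v)$'' also needs a perturbation argument for pairs with $Q=0$, though that one is easily fixed.)

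For contrast, the paper's proof of (ii) avoids any case analysis on $Q$ by arguing at the level of the value function: if $\sigma_{\gamma,\tilde\beta}(c)=0$ were attained at $(u_0,v_0)$, then $C(u_0,v_0)=p\left(\frac{1}{2}A+\frac{\gamma}{4}B\right)>0$ regardless of the structure of the minimizer, so the mass-changing scaling $s^{2}u_0(sx)$ gives $\sigma_{\gamma,\tilde\beta}(sc)<0$ for every $s>1$ when $p>3$, i.e.\ $\tilde\beta\ge\beta_{\star}(sc,\gamma,p)$; choosing $\tilde c>c$ with $\tilde\beta<\beta_{\star}(\tilde c,\gamma,p)$ (using the monotonicity and continuity of $\beta_{\star}$ in $c$) yields the contradiction, while $p=3$ is handled by a separate direct positivity estimate. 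If you wish to keep your equality-case argument, you must either append this value-function step or restrict to $\beta>0$ and add the Lemma \ref{L2-4} patch; as written, the $Q(u_0,v_0)=0$ case is an unproved step on which the non-existence claim depends.
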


\begin{remark}
\label{r1} $(a)$ We give a characterisation of the quantity $\beta
_{\star }$ in Theorem \ref{t2}, which a threshold for the existence and
non-existence of minimizer for $\sigma _{\gamma ,\beta }(c).$\newline
$(b)$ From Theorem \ref{t2}, we can see that the parameter $\beta $ can be
any positive value if $\gamma >0$ small. In particular, when $p=3$, if $%
0<\gamma \leq \frac{4}{9}$, then for any $\beta >0$, the existence result
still holds.
\end{remark}

For $2<p<10/3$, the main difficulty is due to the lack of compactness of
the bounded minimizing sequences. Indeed, the minimizing sequence could run off to spatial infinity and/or spread uniformly in space.
The usual strategy is to control the mass $c$ to exclude both the vanishing and the dichotomy. However, in Theorems \ref{t1} and \ref{t2}, we  control the coupled constant either $\gamma$ or $\beta$, not $c$, although there might be a connection between them. This leads us to have a more considered analysis in the proofs.

Due to Theorems \ref{t1} and \ref{t2}, a natural question is to investigate
the connection between the energy ground states and the action ground
states. This correspondence has been established in \cite{DST,JL0} for
scalar equations and in \cite{CW1} for systems. The next theorem establishes
the relation between vectorial energy ground states and vectorial action
ground states for system (\ref{e1-3}) when $p=3.$

\begin{theorem}
\label{t4} Let $p=3$ and $\gamma ,\lambda >0$. Then for $\beta >\max \{\beta
_{\star },0\}$, system (\ref{e1-3}) has a vectorial action ground state $%
(u_{\lambda },v_{\lambda })\in \mathbf{H}$. Moreover, we have\newline
$(i)$ There admits a unique $c=c(\lambda )>0$ such that $\Vert u_{\lambda
}\Vert _{2}^{2}+\Vert v_{\lambda }\Vert _{2}^{2}=c(\lambda ).$\newline
$(ii)$ $(u_{\lambda },v_{\lambda })$ is a vectorial action ground state of
system (\ref{e1-3}) if and only if $(u_{\lambda },v_{\lambda })$ is a
minimizer of $\sigma _{\gamma ,\beta }(c(\lambda ))$. In particular, there
holds
\begin{equation*}
I_{\gamma ,\beta }(u_{\lambda },v_{\lambda })=\sigma _{\gamma ,\beta
}(c(\lambda ))=\inf_{(u,v)\in S_{c(\lambda )}}I_{\gamma ,\beta }(u,v).
\end{equation*}
\end{theorem}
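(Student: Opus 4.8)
The plan is to establish Theorem \ref{t4} in two stages: first constructing a vectorial action ground state for the fixed-frequency problem (\ref{e1-3}) with $p=3$, and then identifying it with an energy ground state via the scaling that links the two variational problems. Since $p=3$, the exponent satisfies $p < 10/3$, so I can invoke Theorem \ref{t2}$(i)$: for $\beta > \max\{\beta_\star,0\}$ the minimization problem $\sigma_{\gamma,\beta}(c)$ admits a minimizer $(\tilde u,\tilde v)\in S_c$ with a positive Lagrange multiplier $\tilde\lambda$. The first step is therefore to show that, given $\lambda>0$, one can prescribe the mass $c(\lambda)$ so that the associated energy ground state has Lagrange multiplier exactly $\lambda$. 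The natural tool is the dilation $u^t(x)=t^{3/2}u(tx)$ (or the two-parameter scaling $u\mapsto s\,u(tx)$) that preserves the $L^2$-norm while scaling the kinetic, Coulomb and nonlinear terms by definite powers of $t$; tracking how the Lagrange multiplier depends on $c$ under this scaling yields the monotonicity needed to invert the map $c\mapsto\lambda$.

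For part $(i)$, I would exploit the special structure at $p=3$. Writing out the Pohozaev-type and Nehari-type constraints that any solution of (\ref{e1-3}) must satisfy, I expect the homogeneities to combine so that the map $\lambda\mapsto c(\lambda)$, defined implicitly through the energy ground state, is strictly monotone and continuous, hence a bijection onto its range. Uniqueness of $c(\lambda)$ then follows from strict monotonicity. Concretely, if $(u,v)$ solves (\ref{e1-3}) with frequency $\lambda$, testing the equations against $(u,v)$ and against the dilation generator $x\cdot\nabla(u,v)$ gives two identities relating $A(u,v)$, $B(u,v)$, $C(u,v)$, and $\lambda(\|u\|_2^2+\|v\|_2^2)$; eliminating variables pins down $c=\|u\|_2^2+\|v\|_2^2$ as a function of $\lambda$ alone.

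For part $(ii)$, the equivalence between action ground states and energy ground states is the heart of the matter. The plan is to show both inclusions. If $(u_\lambda,v_\lambda)$ is an action ground state (a minimizer of $E_{\lambda,\gamma,\beta}$ among all nontrivial solutions of (\ref{e1-3})), I would verify it lies on $S_{c(\lambda)}$ by part $(i)$ and then show it minimizes $I_{\gamma,\beta}$ on that sphere; the reverse direction uses that the energy ground state from Theorem \ref{t2} solves (\ref{e1-3}) with multiplier $\tilde\lambda$, which by the monotonicity of $c\mapsto\lambda$ must coincide with the prescribed $\lambda$. The identity
\begin{equation*}
I_{\gamma,\beta}(u_\lambda,v_\lambda)=\sigma_{\gamma,\beta}(c(\lambda))
\end{equation*}
then follows by comparing the two infima through the relation $E_{\lambda,\gamma,\beta}=I_{\gamma,\beta}+\tfrac{\lambda}{2}(\|u\|_2^2+\|v\|_2^2)$, which on $S_{c(\lambda)}$ differs from $I_{\gamma,\beta}$ only by the constant $\tfrac{\lambda}{2}c(\lambda)$.

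The main obstacle I anticipate is establishing the strict monotonicity of the correspondence $c\leftrightarrow\lambda$ and ruling out the possibility that an action ground state at frequency $\lambda$ could have mass different from $c(\lambda)$, or that the energy ground state at mass $c(\lambda)$ could carry a multiplier other than $\lambda$. The presence of the \emph{two} coupled terms—the nonlocal Coulomb term $B(u,v)$ scaling as $t$ and the cooperative power term scaling with $p=3$—means the competing homogeneities must align just right; verifying that the net scaling still yields a strictly monotone profile is delicate, and the constraint $\beta>\max\{\beta_\star,0\}$ is presumably exactly what guarantees the minimizer is vectorial (rather than semitrivial) so that the correspondence is well-defined on the vectorial class. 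I would handle this by carefully computing the scaling exponents at $p=3$ and showing the derivative of the fiber map has a constant sign, which is the technical crux of the argument.
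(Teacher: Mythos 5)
Your high-level route is the same as the paper's: start from the energy ground state of Theorem \ref{t2}, use the $p=3$ scaling $u\mapsto\varpi^{2}u(\varpi x)$ (which maps solutions at frequency $\tilde\lambda$ to solutions at frequency $\varpi^{2}\tilde\lambda$ and multiplies the mass by $\varpi$) to reach the prescribed frequency, and then prove the equivalence of the two notions of ground state. However, two of your key mechanisms have genuine gaps. For part $(i)$, the claim that testing the equations against $(u,v)$ and against $x\cdot\nabla(u,v)$ ``pins down $c$ as a function of $\lambda$ alone'' is false. At $p=3$ these two identities read $A+\lambda c+\gamma B=C$ and $A+\frac{\gamma}{4}B=\frac{1}{2}C$; eliminating $C$ gives $\lambda c=A-\frac{\gamma}{2}B$, equivalently
\begin{equation*}
I_{\gamma ,\beta }(u,v)=-\frac{\lambda }{6}\left( \Vert u\Vert
_{2}^{2}+\Vert v\Vert _{2}^{2}\right) ,\qquad E_{\lambda ,\gamma ,\beta
}(u,v)=\frac{\lambda }{3}\left( \Vert u\Vert _{2}^{2}+\Vert v\Vert
_{2}^{2}\right) ,
\end{equation*}
valid for \emph{every} solution; but $A$ and $B$ vary from solution to solution, so nothing in these identities prevents two solutions at the same frequency from having different masses. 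What actually forces a unique $c(\lambda )$ --- and is the paper's argument --- is that all action ground states share the same (minimal) action value, which combined with $E_{\lambda ,\gamma ,\beta }=\frac{\lambda }{3}\times \text{mass}$ yields equal masses. You have the right identities in hand but never invoke the shared minimal value, which is the decisive step.

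Second, both your existence construction and your treatment of part $(ii)$ omit the comparison with arbitrary solutions. Knowing that the energy ground state at mass $c(\lambda )$ solves (\ref{e1-3}) with multiplier exactly $\lambda $ (which your monotonicity argument would give; indeed $\lambda (c)\propto c^{2}$ here by the homogeneity $\sigma _{\gamma ,\beta }(sc)=s^{3}\sigma _{\gamma ,\beta }(c)$) does \emph{not} make it an action ground state: one must still show its action does not exceed that of \emph{any} nontrivial solution $(\phi _{1},\phi _{2})$ of (\ref{e1-3}). The paper does this by rescaling $(\phi _{1},\phi _{2})$ to mass $c$ via $\tilde{\phi}_{i}:=\tilde{c}^{2}\phi _{i}(\tilde{c}x)$ with $\tilde{c}:=c/(\Vert \phi _{1}\Vert _{2}^{2}+\Vert \phi _{2}\Vert _{2}^{2})$, and using the $p=3$ homogeneity $I_{\gamma ,\beta }(\tilde{\phi}_{1},\tilde{\phi}_{2})=\tilde{c}^{3}I_{\gamma ,\beta }(\phi _{1},\phi _{2})$ together with $\sigma _{\gamma ,\beta }(c)\leq I_{\gamma ,\beta }(\tilde{\phi}_{1},\tilde{\phi}_{2})$ and the displayed identity to conclude $\Vert \phi _{1}\Vert _{2}^{2}+\Vert \phi _{2}\Vert _{2}^{2}\geq c$, hence $E_{\lambda ,\gamma ,\beta }(\phi _{1},\phi _{2})\geq \frac{\lambda }{3}c$. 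The same rescale-and-compare-masses contradiction argument is also what proves the forward implication of $(ii)$ (if a sphere competitor had strictly smaller $I_{\gamma ,\beta }$, its multiplier $\lambda _{1}$ would satisfy $\lambda _{1}>\lambda $, and scaling it back down to frequency $\lambda $ would produce a solution with smaller action), which you assert but do not prove. Finally, a small but real point: the mass-preserving dilation $t^{3/2}u(tx)$ you propose cannot detect how the multiplier varies with $c$, precisely because it fixes the mass; the scaling that drives the whole argument is $t^{2}u(tx)$.
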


\begin{remark}
\label{r2} Regarding on the study of vectorial action ground states to
system (\ref{e1-3}), we note that the case of $p=3$ has been considered in \cite%
{SW} but the coupled constant $\gamma >0$ is requied to be small. In our
paper, according to Theorems \ref{t2} and \ref{t4}, we can obtain a
vectorial action ground state without restriction on $\gamma >0,$ which
improves the result in \cite{SW}.
\end{remark}

For $\frac{10}{3}\leq p<6$, $I_{\gamma ,\beta }$ is no longer bounded from below
on $S_{c}$. Then it will not be possible to find a global minimizer. In
order to seek for critical points of $I_{\gamma ,\beta }$ restricted to $%
S_{c},$ we will use the Pohozaev manifold $\mathcal{M}_{\gamma ,\beta }(c)$
that contains all the critical points of $I_{\gamma ,\beta }$ restricted to $%
S_{c}$. It is given by%
\begin{equation*}
\mathcal{M}_{\gamma ,\beta }(c):=\left\{ (u,v)\in S_{c}:\mathcal{P}_{\gamma
,\beta }(u,v)=0\right\} ,
\end{equation*}%
where $\mathcal{P}_{\gamma ,\beta }(u,v)=0$ is the Pohozaev type identity
corresponding to system (\ref{e1-3}). Consider the minimization problem:%
\begin{equation*}
m_{\gamma ,\beta }(c):=\inf_{(u,v)\in \mathcal{M}_{\gamma ,\beta
}(c)}I_{\gamma ,\beta }(u,v),
\end{equation*}%
and set
\begin{equation}
c_{\ast }:=\left( \frac{5}{3}\right) ^{\frac{3}{2}}\inf_{(u,v)\in S_{1}}%
\mathcal{R}_{10/3}(u,v)\text{ and }c^{\ast }:=\left( \frac{2(6-p)}{5p-12}%
\right) ^{\frac{3p-10}{4(p-3)}}\left( \frac{3p}{5p-12}\right) ^{\frac{1}{%
2(p-3)}}\inf_{(u,v)\in S_{1}}\mathcal{R}_{p}(u,v),\label{e1-7}
\end{equation}%
where
\begin{equation*}
\mathcal{R}_{p}(u,v):=\left[ \frac{A(u,v)^{3p-8}(\gamma B(u,v))^{10-3p}}{%
C(u,v)^{2}}\right] ^{\frac{1}{4(p-3)}}.
\end{equation*}

\begin{theorem}
\label{t5} Let $p=\frac{10}{3}$ and $\gamma ,\beta >0$. Then for each $%
c_{\ast }<c<c^{\ast }$, system (\ref{e1-3}) has a vectorial energy ground
state $(\bar{u},\bar{v})\in \mathbf{H}$. Moreover, the corresponding
Lagrange multiplier $\bar{\lambda}$ is positive. While for each $0<c<c_{\ast
}$, there are no critical points of $I_{\gamma ,\beta }$ restricted to $%
S_{c} $.
\end{theorem}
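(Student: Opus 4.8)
The plan is to use that $p=\tfrac{10}{3}$ is exactly the $L^{2}$-critical exponent, so that the mass-preserving dilation $(u_{s},v_{s}):=(s^{3/2}u(s\,\cdot),s^{3/2}v(s\,\cdot))\in S_{c}$ acts by $A(u_{s},v_{s})=s^{2}A$, $C(u_{s},v_{s})=s^{2}C$ and $B(u_{s},v_{s})=sB$. Hence the fiber map $g(s):=I_{\gamma,\beta}(u_{s},v_{s})=s^{2}\big(\tfrac12A-\tfrac1pC\big)+\tfrac{\gamma s}{4}B$ is a downward parabola whenever $\tfrac12A<\tfrac1pC$, and $g'(1)=0$ gives the Pohozaev functional $\mathcal{P}_{\gamma,\beta}(u,v)=A+\tfrac{\gamma}{4}B-\tfrac35C$, so that $\mathcal{M}_{\gamma,\beta}(c)$ is its zero set. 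Substituting $\mathcal{P}_{\gamma,\beta}=0$ into $I_{\gamma,\beta}$ gives the decisive identity $I_{\gamma,\beta}=\tfrac{\gamma}{8}B$ on $\mathcal{M}_{\gamma,\beta}(c)$, so the ground-state level is governed by the Coulomb energy alone. The second ingredient is the vectorial critical Gagliardo--Nirenberg inequality $C(u,v)\le K\,(\Vert u\Vert_{2}^{2}+\Vert v\Vert_{2}^{2})^{2/3}A(u,v)$, where $K:=\big(\inf_{(u,v)\in S_{1}}A/C\big)^{-1}$; this yields $\sup_{S_{c}}C/A=Kc^{2/3}$, and unravelling $\mathcal{R}_{10/3}=(A/C)^{3/2}$ in (\ref{e1-7}) shows precisely $Kc_{\ast}^{2/3}=\tfrac p2=\tfrac53$ and $Kc^{\ast 2/3}=\tfrac{15}{7}$.

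For the non-existence statement I would argue that for $0<c<c_{\ast}$ one has $C/A\le Kc^{2/3}<\tfrac53$ on all of $S_{c}$, whence $\mathcal{P}_{\gamma,\beta}(u,v)=\big(A-\tfrac35C\big)+\tfrac{\gamma}{4}B>0$ for every nontrivial $(u,v)\in S_{c}$. Thus $\mathcal{M}_{\gamma,\beta}(c)=\emptyset$, and since every critical point of $I_{\gamma,\beta}|_{S_{c}}$ satisfies the Pohozaev identity, no critical points exist.

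For the existence statement ($c_{\ast}<c<c^{\ast}$) the condition $Kc^{2/3}>\tfrac53$ produces functions with $\tfrac35C>A$; projecting these onto the unique maximum of $g$ shows $\mathcal{M}_{\gamma,\beta}(c)\neq\emptyset$, and along each such fiber the level equals $\tfrac{\gamma^{2}B^{2}}{32(\frac35C-A)}$, a quantity invariant under the dilation above, so that $m_{\gamma,\beta}(c)=\tfrac{\gamma}{8}\inf_{\mathcal{M}_{\gamma,\beta}(c)}B$. The heart of the proof, and the step I expect to be hardest, is the compactness of a minimizing sequence: because the problem is invariant under translations and dilations and the nonlinearity is $L^{2}$-critical, I would first establish $H^{1}$-boundedness and $m_{\gamma,\beta}(c)>0$, then run a concentration--compactness/profile decomposition. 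Here being on $\mathcal{M}_{\gamma,\beta}(c)$ forces $C/A>\tfrac53$, which together with $m_{\gamma,\beta}(c)>0$ rules out vanishing, while a strict subadditivity-type inequality for $c\mapsto m_{\gamma,\beta}(c)$ on $(c_{\ast},c^{\ast})$ excludes dichotomy; the weak lower semicontinuity of the Coulomb term, fed through the profile decomposition, is what I expect to upgrade weak to strong convergence and deliver a minimizer $(\bar u,\bar v)\in\mathcal{M}_{\gamma,\beta}(c)$.

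It remains to check that $(\bar u,\bar v)$ is vectorial and that $\bar\lambda>0$. For the former I would compare $m_{\gamma,\beta}(c)$ with the two semitrivial levels and use that, since $\beta>0$, the cooperative term $2\beta\int|u|^{p/2}|v|^{p/2}$ strictly increases $C$ and hence, by the fiber formula above, strictly lowers the level on $\mathcal{M}_{\gamma,\beta}(c)$, so a semitrivial minimizer is impossible. For the multiplier, testing (\ref{e1-3}) with $(\bar u,\bar v)$ gives $\bar\lambda c=C-A-\gamma B$; eliminating $B$ through $\mathcal{P}_{\gamma,\beta}=0$ reduces this to $\bar\lambda c=3A-\tfrac75C$, so $\bar\lambda>0$ is equivalent to $C/A<\tfrac{15}{7}$, which holds on the whole of $S_{c}$ exactly because $c<c^{\ast}$. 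The thresholds $c_{\ast}$ and $c^{\ast}$ are thus calibrated so that nonemptiness of $\mathcal{M}_{\gamma,\beta}(c)$, the strict subadditivity, and the positivity of $\bar\lambda$ hold simultaneously.
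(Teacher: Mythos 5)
Your explicit computations are all correct and follow the paper's own route: the mass-critical dilation identities, the identity $I_{\gamma ,\beta }=\frac{\gamma }{8}B$ on $\mathcal{M}_{\gamma ,\beta }(c)$, the reading of $c_{\ast }$ and $c^{\ast }$ through the sharp Gagliardo--Nirenberg constant (so that $Kc_{\ast }^{2/3}=\frac{5}{3}$ and $K(c^{\ast })^{2/3}=\frac{15}{7}$), the emptiness of $\mathcal{M}_{\gamma ,\beta }(c)$ for $0<c<c_{\ast }$ and hence non-existence of critical points, the splitting argument for vectoriality, and the multiplier identity $\bar{\lambda}c=3A-\frac{7}{5}C>0$ from $c<c^{\ast }$ all coincide with Lemmas \ref{L4-2}, \ref{L4-3}, \ref{L2-4} and the closing computations of the paper's proof.

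The genuine gap is the compactness step, which you explicitly defer. You propose to exclude dichotomy by a ``strict subadditivity-type inequality'' for $c\mapsto m_{\gamma ,\beta }(c)$. Subadditivity is the correct mechanism for the global minimization problems of Theorems \ref{t1}--\ref{t2}, where the infimum is negative and taken over all of $S_{c}$; it does not transfer to the present setting. Here $m_{\gamma ,\beta }(c)>0$ and the infimum is taken over the Pohozaev manifold, so after a dichotomy the two bubbles need not lie on their own manifolds $\mathcal{M}_{\gamma ,\beta }(\rho )$ and $\mathcal{M}_{\gamma ,\beta }(c-\rho )$, and no inequality of the form $m_{\gamma ,\beta }(c)\geq m_{\gamma ,\beta }(\rho )+m_{\gamma ,\beta }(c-\rho )$ arises to be contradicted. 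What the paper actually proves and uses is strict \emph{monotonicity}: $m_{\gamma ,\beta }$ is strictly decreasing on $(c_{\ast },c^{\ast })$ (Lemma \ref{L4-6}), and this is the technical core of the whole proof — it rests on the auxiliary map $\hat{f}(c)=\Phi _{c,(u,v)}(t_{c}^{-})$, the algebraic uniqueness result of Lemma \ref{L4-4}, and a uniform bound $\hat{f}^{\prime }(c)\leq -D<0$ (Lemma \ref{L4-5} and the proof of Lemma \ref{L4-6}). Given this, the argument runs: by the profile decomposition (Proposition \ref{P3-18}) some profile $(\bar{u},\bar{v})$ satisfies $\mathcal{P}_{\gamma ,\beta }(\bar{u},\bar{v})\leq 0$ (otherwise the expansions (\ref{e7-2})--(\ref{e7-5}) yield a contradiction); it can be scaled by some $t^{-}\in (0,1]$ onto $\mathcal{M}_{\gamma ,\beta }^{-}(\rho )$ with $\rho =\Vert \bar{u}\Vert _{2}^{2}+\Vert \bar{v}\Vert _{2}^{2}$, giving $m_{\gamma ,\beta }(\rho )\leq m_{\gamma ,\beta }(c)$, which contradicts strict monotonicity unless $\rho =c$. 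Note also a point specific to $p=\frac{10}{3}$ that your sketch omits: for $m_{\gamma ,\beta }(\rho )$ to be defined at all one needs $\rho >c_{\ast }$ (by your own non-existence argument the manifold is empty below $c_{\ast }$); the paper derives this precisely from $\mathcal{P}_{\gamma ,\beta }(\bar{u},\bar{v})\leq 0$, which forces $A(\bar{u},\bar{v})/C(\bar{u},\bar{v})<\frac{3}{5}$ and hence $\rho >\left( \frac{5}{3}\right) ^{3/2}\inf_{(u,v)\in S_{1}}\left[ A(u,v)/C(u,v)\right] ^{3/2}=c_{\ast }$ after normalizing to $S_{1}$. Without the monotonicity lemma and this mass lower bound, your outline does not close.
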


\begin{theorem}
\label{t6} Let $\frac{10}{3}<p<4$ and $\gamma ,\beta >0$, or $4\leq p<6$ and
$\gamma >0,\beta >2^{\frac{p-2}{2}}$. Then for each $0<c<c^{\ast }$, system (%
\ref{e1-3}) has a vectorial energy ground state $(\bar{u},\bar{v})\in
\mathbf{H}$. Moreover, there exists $c^{\star }<c^{\ast }$ such that for
each $0<c<c^{\star }$, the corresponding Lagrange multiplier $\bar{\lambda}$
is positive.
\end{theorem}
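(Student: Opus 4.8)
The plan is to treat (\ref{e1-3})--(\ref{e1-4}) in this $L^2$-supercritical regime, where $I_{\gamma,\beta}$ is unbounded from below on $S_c$, by minimising over the Pohozaev manifold $\mathcal{M}_{\gamma,\beta}(c)$, which contains every critical point of $I_{\gamma,\beta}|_{S_c}$. The basic device is the mass-preserving dilation
\[
(u_t,v_t):=(t^{3/2}u(t\,\cdot),t^{3/2}v(t\,\cdot)),\qquad t>0,
\]
which keeps $(u_t,v_t)\in S_c$ and scales the three functionals by $A(u_t,v_t)=t^2A(u,v)$, $B(u_t,v_t)=tB(u,v)$ and $C(u_t,v_t)=t^{\theta}C(u,v)$ with $\theta:=\frac{3(p-2)}{2}>2$. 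Hence the fibre map
\[
\Psi_{u,v}(t):=I_{\gamma,\beta}(u_t,v_t)=\frac{1}{2}A(u,v)\,t^2+\frac{\gamma}{4}B(u,v)\,t-\frac{1}{p}C(u,v)\,t^{\theta}
\]
governs the geometry, and $\mathcal{P}_{\gamma,\beta}(u,v)=\Psi_{u,v}'(1)=A+\frac{\gamma}{4}B-\frac{\theta}{p}C$. Since $C(u,v)>0$ for every nontrivial $(u,v)$ in both parameter ranges (as $\beta>0$ there), and $\theta>2$, I would first show that $\Psi_{u,v}$ is positive and increasing near $t=0$, tends to $-\infty$ as $t\to\infty$, and, via $\Psi_{u,v}''$ (which has exactly one sign change), possesses a unique critical point $t(u,v)>0$ that is its strict global maximum. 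Consequently every direction has a unique projection onto $\mathcal{M}_{\gamma,\beta}(c)$, the manifold is a $C^1$ hypersurface of $S_c$, and $m_{\gamma,\beta}(c)=\inf_{(u,v)\in S_c}\max_{t>0}\Psi_{u,v}(t)$ is a genuine minimax (mountain-pass) level.

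Next I would extract the variational consequences of the constraint. Eliminating $C$ through $\mathcal{P}_{\gamma,\beta}=0$ gives, on $\mathcal{M}_{\gamma,\beta}(c)$,
\[
I_{\gamma,\beta}(u,v)=\frac{3p-10}{6(p-2)}A(u,v)+\frac{(3p-8)\gamma}{12(p-2)}B(u,v),
\]
whose two coefficients are strictly positive for $p>\frac{10}{3}$; this yields $m_{\gamma,\beta}(c)>0$ and shows that any minimising sequence has $A$ and $B$ uniformly bounded, hence is bounded in $\mathbf H$ (as $\Vert u\Vert_2^2+\Vert v\Vert_2^2=c$). I would then verify that $\mathcal{M}_{\gamma,\beta}(c)$ is a natural constraint: because the maximum of $\Psi_{u,v}$ is nondegenerate ($\Psi_{u,v}''(t(u,v))<0$), the scaling direction is transversal to $\mathcal{M}_{\gamma,\beta}(c)$, so a constrained critical point is a free critical point of $I_{\gamma,\beta}|_{S_c}$ and solves (\ref{e1-3}) with a Lagrange multiplier $\bar\lambda$. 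Since $\mathcal{M}_{\gamma,\beta}(c)$ carries every critical point, a minimiser will automatically be an energy ground state in the sense of Definition \ref{D2}.

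The main obstacle is compactness, which is where the bound $c<c^*$ is spent. For a bounded minimising sequence $(u_n,v_n)\in\mathcal{M}_{\gamma,\beta}(c)$ with $(u_n,v_n)\rightharpoonup(\bar u,\bar v)$ I would use the Brezis--Lieb splittings for $A$, for $C$, and for the nonlocal Coulomb energy $B$, together with the splitting of the mass, to decompose $m_{\gamma,\beta}(c)$ along the weak limit and the residual. Since two pieces drifting apart make the Coulomb cross-term vanish, a genuine dichotomy would force $m_{\gamma,\beta}(c)\ge m_{\gamma,\beta}(c_1)+m_{\gamma,\beta}(c_2)$ for some splitting $c=c_1+c_2$; hence the decisive point is the \emph{strict subadditivity} $m_{\gamma,\beta}(c)<m_{\gamma,\beta}(c_1)+m_{\gamma,\beta}(c_2)$, which I would establish from the scaling behaviour of $m_{\gamma,\beta}(\cdot)$ encoded in $\mathcal{R}_p$ and which holds precisely on $(0,c^*)$, the threshold $c^*$ being the sharp Gagliardo--Nirenberg-type constant in (\ref{e1-7}). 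Vanishing is excluded since it would drive $C$---and, through the constraint, $A$ and $B$---to zero, contradicting $m_{\gamma,\beta}(c)>0$. Strict subadditivity then forces the weak limit to carry the full mass $c$, giving strong convergence in $\mathbf H$ and a minimiser $(\bar u,\bar v)$. I regard the verification of strict subadditivity under the \emph{repulsive} Coulomb interaction as the principal difficulty, since that interaction favours spreading and works against concentration.

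Finally I would identify the minimiser as a \emph{vectorial} ground state and settle the sign of $\bar\lambda$. For the first, comparing $m_{\gamma,\beta}(c)$ with the semitrivial level: starting from a scalar ($v\equiv0$) minimiser $u_0$, the competitor $(su_0,\sqrt{1-s^2}\,u_0)$ leaves $A$ and $B$ unchanged while multiplying $C$ by $g(s)=s^p+(1-s^2)^{p/2}+2\beta s^{p/2}(1-s^2)^{p/4}$; since a larger $C$ strictly lowers $\max_t\Psi_{u,v}$, it suffices that $g(s)>1$ for some $s\in(0,1)$. A short expansion near $s=1$ shows this holds for every $\beta>0$ when $\frac{10}{3}<p<4$ (because $p/4<1$), whereas for $4\le p<6$ it requires a stronger coupling, which is the origin of the threshold $\beta>2^{(p-2)/2}$; in either case this forces $\bar u\neq0$ and $\bar v\neq0$. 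For the multiplier, testing (\ref{e1-3}) with $(\bar u,\bar v)$ gives $A+\bar\lambda c+\gamma B=C$, and on $\mathcal{M}_{\gamma,\beta}(c)$ this reduces to
\[
\bar\lambda\,c=\frac{6-p}{3(p-2)}A(\bar u,\bar v)-\frac{5p-12}{6(p-2)}\gamma B(\bar u,\bar v).
\]
Since $6-p>0$ and $5p-12>0$ in this range, positivity of $\bar\lambda$ amounts to $A/(\gamma B)$ being large; I would show that as $c\to0^+$ the ground state concentrates so that the kinetic term dominates the Coulomb term, whence the inequality holds for all $c$ below some $c^\star<c^*$, completing the proof.
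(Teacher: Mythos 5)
Your overall framework coincides with the paper's proof in most respects: minimization over the Pohozaev manifold via the fibering maps, the reduced identity $I_{\gamma,\beta}=\frac{3p-10}{6(p-2)}A+\frac{3p-8}{12(p-2)}\gamma B$ on $\mathcal{M}_{\gamma,\beta}(c)$ giving $m_{\gamma,\beta}(c)>0$ and boundedness, the vectorial comparison through $g_\beta$ (with the threshold $\beta>2^{(p-2)/2}$ for $p\ge 4$), and the sign of $\bar\lambda$ via $\bar\lambda c=\frac{6-p}{3(p-2)}A-\frac{5p-12}{6(p-2)}\gamma B$ together with small-$c$ domination of $A$ over $B$ are all exactly what the paper does. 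The genuine gap is in the compactness step. First, your dichotomy inequality $m_{\gamma,\beta}(c)\ge m_{\gamma,\beta}(c_1)+m_{\gamma,\beta}(c_2)$ is not justified in this Pohozaev-manifold setting: when a minimizing sequence splits, the constraint splits as $\mathcal{P}_{\gamma,\beta}(w^1_n)+\mathcal{P}_{\gamma,\beta}(w^2_n)+o_n(1)=0$, and only a piece with $\mathcal{P}_{\gamma,\beta}\le 0$ can be projected onto its manifold by a dilation $t\le 1$, which decreases the reduced functional; a piece with $\mathcal{P}_{\gamma,\beta}>0$ requires $t>1$, which increases it, so its energy cannot be bounded below by $m_{\gamma,\beta}(c_i)$. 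Second, the "decisive" strict subadditivity on $(0,c^{\ast})$ --- precisely where $c^{\ast}$ enters and where all the real work lies --- is asserted rather than proved; your sketch does not explain how the scaling behaviour of $\mathcal{R}_p$ yields it, and the anomalous scaling of the Coulomb term ($B$ scales like $t$, versus $t^2$ for $A$) blocks the clean scaling identities available in the subcritical case.

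The paper closes both issues simultaneously: Lemmas \ref{L4-4}--\ref{L4-6} prove that $c\mapsto m_{\gamma,\beta}(c)$ is strictly decreasing on $(0,c^{\ast})$, via the fibering-map derivative $\hat f^{\prime}(c)=\frac{1}{2}t_c^2h(c)$ and a uniform bound $\hat f^{\prime}(c)<-D$, with $c^{\ast}$ arising as the unique positive solution of the $2\times2$ system (\ref{e4-1}); then, instead of dichotomy, it applies the profile decomposition of Proposition \ref{P3-18}, shows some profile must satisfy $\mathcal{P}_{\gamma,\beta}(\bar u_i,\bar v_i)\le 0$, and scales only that profile down onto $\mathcal{M}_{\gamma,\beta}(\rho)$ to get $m_{\gamma,\beta}(\rho)\le m_{\gamma,\beta}(c)$ with $\rho\le c$, contradicting strict monotonicity unless $\rho=c$, which yields strong convergence. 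Note that once monotonicity and positivity of $m_{\gamma,\beta}$ are in hand, your strict subadditivity follows trivially ($m_{\gamma,\beta}(c_1)+m_{\gamma,\beta}(c_2)>m_{\gamma,\beta}(c_1)\ge m_{\gamma,\beta}(c)$), so the monotonicity lemma is the missing ingredient; with it, your argument should be restructured around the single piece with nonpositive Pohozaev value rather than the two-sided subadditivity inequality.
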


\begin{proposition}
\label{P11} Let $(\bar{u},\bar{v})\in
\mathbf{H}$ be the energy ground state obtained in
Theorem \ref{t5} or \ref{t6} with the positive Lagrange multiplier $\bar{%
\lambda}$, then $(\bar{u},\bar{v})$ has exponential decay:
\begin{equation*}
|\bar{u}(x)|\leq Ce^{-\zeta |x|}\text{ and }|\bar{v}(x)|\leq Ce^{-\zeta |x|}%
\text{ for every }x\in \mathbb{R}^{3},
\end{equation*}%
for some $C>0,$ $\zeta >0.$
\end{proposition}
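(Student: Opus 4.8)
The plan is to establish the exponential decay by exploiting the fact that $(\bar u,\bar v)$ solves the stationary system (\ref{e1-3}) with a \emph{positive} frequency $\bar\lambda>0$, which is precisely the hypothesis carried over from Theorems \ref{t5} and \ref{t6}. First I would observe that since $(\bar u,\bar v)\in\mathbf{H}=H^1(\mathbb{R}^3)\times H^1(\mathbb{R}^3)$ and the nonlinearities are subcritical ($2<p<6$), a standard bootstrap/elliptic regularity argument (Brezis--Kato together with Sobolev embeddings) upgrades $(\bar u,\bar v)$ to $L^\infty(\mathbb{R}^3)\times L^\infty(\mathbb{R}^3)$, and hence, by classical Schauder estimates, to $C^2$ solutions. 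In particular $\bar u(x),\bar v(x)\to 0$ as $|x|\to\infty$. This vanishing at infinity is the crucial qualitative input that lets the linear part dominate.

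Next I would rewrite each equation in the form of a linear Schr\"odinger inequality. Setting $w:=|\bar u|+|\bar v|$, I would use that the Coulomb potential term is nonnegative, $\gamma\phi_{\bar u,\bar v}\geq 0$, so it only helps the decay and can be discarded from the lower bound. The remaining nonlinear terms satisfy a pointwise bound
\begin{equation*}
\left||\bar u|^{p-2}\bar u+\beta |\bar v|^{\frac{p}{2}}|\bar u|^{\frac{p}{2}-2}\bar u\right|\leq C\,w^{p-1},
\end{equation*}
and similarly for the $\bar v$-equation. Since $w(x)\to 0$ and $p>2$, there exists $R>0$ such that $Cw^{p-2}\leq \bar\lambda/2$ for $|x|\geq R$. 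Therefore, on the exterior region $|x|\geq R$, each component satisfies the differential inequality
\begin{equation*}
-\Delta |\bar u|+\frac{\bar\lambda}{2}|\bar u|\leq 0\quad\text{(in the distributional/Kato sense)},
\end{equation*}
with the same inequality for $|\bar v|$; here Kato's inequality $\Delta|\bar u|\geq \operatorname{sgn}(\bar u)\Delta\bar u$ justifies the passage to the modulus.

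Finally I would construct an explicit supersolution. For any $0<\zeta<\sqrt{\bar\lambda/2}$ the function $\Phi(x)=Me^{-\zeta|x|}$ satisfies $-\Delta\Phi+\frac{\bar\lambda}{2}\Phi\geq 0$ on $|x|\geq R$ once $\zeta^2<\bar\lambda/2$ (the curvature term $-\tfrac{2}{|x|}\zeta\,\Phi$ is controlled by enlarging $R$, or absorbed since $\zeta^2+\tfrac{2\zeta}{|x|}\le\bar\lambda/2$ for $|x|$ large), and by choosing $M$ large enough we can guarantee $\Phi\geq |\bar u|$ on the boundary $|x|=R$. A standard comparison principle on the exterior domain $\{|x|\geq R\}$ — valid because both $|\bar u|$ and $\Phi$ tend to zero at infinity — then yields $|\bar u(x)|\leq Me^{-\zeta|x|}$ there, and after adjusting the constant $C$ to also cover the compact region $|x|\leq R$ (where $\bar u$ is bounded) one obtains the claimed bound for all $x\in\mathbb{R}^3$; the argument for $\bar v$ is identical. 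The main obstacle I anticipate is the rigorous handling of the comparison argument for the modulus of a sign-changing $H^1$ solution: one must invoke Kato's inequality to get the subsolution property for $|\bar u|$ weakly, and then either apply a weak maximum principle on the exterior domain or test the inequality against $(|\bar u|-\Phi)^+$ and integrate by parts, checking that the boundary terms at infinity vanish thanks to the $L^2$-decay already established. The positivity of $\bar\lambda$ is essential and cannot be relaxed, since it is exactly what makes $\sqrt{\bar\lambda/2}$ a genuine exponential rate.
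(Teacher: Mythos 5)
Your proposal is correct in its overall strategy but follows a genuinely different route from the paper. The paper's proof (Proposition \ref{P6-1}, following \cite{ACS}) never invokes a maximum principle: it introduces the exterior energies $\vartheta _{1}(r)=\int_{\mathbb{R}^{3}\backslash B_{r}(0)}(|\nabla \bar{u}|^{2}+|\bar{u}|^{2})dx$ and $\vartheta _{2}(r)$, tests the system against the truncations $\chi (|x|-r)\bar{u}$, $\chi (|x|-r)\bar{v}$, and derives the recursive inequality
\begin{equation*}
\frac{2\varrho +1}{2}|\vartheta (r+1)|_{1}-\frac{1}{2}|\vartheta (r)|_{1}\leq C_{1}|\vartheta (r)|_{1}^{\frac{p}{2}},
\end{equation*}
whose iteration yields geometric decay of the tail energy in $r$; pointwise exponential decay then follows from local $W^{2,s}$ elliptic estimates and the embedding $W^{2,s}(B_{1/2}(x))\hookrightarrow L^{\infty }(B_{1/2}(x))$. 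Your route (bootstrap regularity to get vanishing at infinity, Kato's inequality, explicit supersolution and comparison on an exterior domain) is the classical alternative; it buys an explicit decay rate tied to the multiplier (any $\zeta <\sqrt{\bar{\lambda}/2}$), whereas the paper's rate is implicit. The paper's integral method, in exchange, requires no sign or maximum-principle structure and no preliminary pointwise decay, which is why it transfers so easily to systems.

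There is, however, one step of your argument that fails as stated and must be repaired. From the bound $\bigl| |\bar{u}|^{p-2}\bar{u}+\beta |\bar{v}|^{\frac{p}{2}}|\bar{u}|^{\frac{p}{2}-2}\bar{u}\bigr| \leq Cw^{p-1}$ and $Cw^{p-2}\leq \bar{\lambda}/2$ you may only conclude
\begin{equation*}
-\Delta |\bar{u}|+\frac{\bar{\lambda}}{2}|\bar{u}|\leq \frac{\bar{\lambda}}{2}|\bar{v}|\quad \text{on }|x|\geq R,
\end{equation*}
not $\leq 0$: the claimed componentwise subsolution inequality would require the pointwise bound $|\bar{v}|^{\frac{p}{2}}|\bar{u}|^{\frac{p}{2}-1}\leq Cw^{p-2}|\bar{u}|$, which holds when $p\geq 4$ but fails for $\frac{10}{3}\leq p<4$ (where $\frac{p}{2}-2<0$) at points where $|\bar{v}|\gg |\bar{u}|$; take $|\bar{u}|=\varepsilon \rightarrow 0$, $|\bar{v}|=1$, so that the left side is of order $\varepsilon ^{\frac{p}{2}-1}\gg \varepsilon $. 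Since Theorems \ref{t5} and \ref{t6} include exactly this range of $p$, the gap is relevant. The fix is immediate with the quantity you already introduced: add the two Kato inequalities to obtain
\begin{equation*}
-\Delta w+\frac{\bar{\lambda}}{2}w\leq 0\quad \text{on }|x|\geq R,\qquad w=|\bar{u}|+|\bar{v}|,
\end{equation*}
and run the comparison of $w$ against $\Phi =Me^{-\zeta |x|}$; since $|\bar{u}|,|\bar{v}|\leq w$, both bounds follow at once. (Note also that in $\mathbb{R}^{3}$ one has $-\Delta \Phi +\frac{\bar{\lambda}}{2}\Phi =(\frac{\bar{\lambda}}{2}-\zeta ^{2}+\frac{2\zeta }{|x|})\Phi $, so the curvature term has a favorable sign and no enlargement of $R$ is needed.) With this modification your proof is complete and valid.
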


In our approach we do not work in the radial function space $\mathbf{H}_{r}$ and do not need
to consider Palais-Smale sequences, so that we can avoid applying the mini-max approach based on a strong topological argument as in \cite{BS,G1,GJ}. A novel strategy to recover the compactness is to take advantage of the profile decomposition of the minimizing sequence and the monotonicity of ground state energy map. It provides a unified novel approach to study both mass critical and supercritical cases, even for Schr\"{o}dinger-Poisson systems.

Let us summarize the existence and non-existence results presented in our
work in the following table 1.
\begin{table*}[h]
\newcommand{\tabincell}[2]{\begin{tabular}{@{}#1@{}}#2\end{tabular}}
\centering
\fontsize{12}{14}\selectfont
\caption{}
\begin{tabular}{|c|c|c|c|}
\hline
$p$ & parameters conditions  & existence (yes)/non-existence (no) \\
\hline
$2<p<3$ &$0<\gamma<\gamma_{\ast}$  & Yes \\
\hline
$3\leq p<10/3$ & $\tabincell{c}{$\beta>\max\{\beta_{\star},0\}$\\ $\beta<\beta_{\star}$ if $\beta_{\star}>0$}$  & \tabincell{c}{Yes\\ No} \\
\hline
$p=10/3$ & \tabincell{c}{$0<c<c_{\ast}$ \\$c_{\ast}<c<c^{\ast}$} & \tabincell{c}{No \\Yes}\\
\hline
$10/3<p<6$ & $0<c<c^{\ast}$ & Yes\\
\hline
\end{tabular}\vspace{0cm}
\end{table*}

Next, we turn to study the dynamics of the corresponding standing waves for system (%
\ref{e1-2}) with the initial data.

\begin{theorem}[Local well-posedness theory]
\label{t8} Let $2<p<6$ and the initial data $(\psi _{1}(0),\psi _{2}(0))\in
\mathbf{H}$. Then there exists $T_{max}>0$ such that system (\ref{e1-2})
with initial data $(\psi _{1}(0),\psi _{2}(0))$ has a unique solution $(\psi
_{1}(t),\psi _{2}(t))$ with
\begin{equation*}
\psi _{1}(t),\psi _{2}(t)\in \mathcal{C}([0,T_{\max });H^{1}(\mathbb{R}%
^{3})).
\end{equation*}
\end{theorem}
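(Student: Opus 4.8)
The plan is to establish local well-posedness by the classical Kato fixed-point scheme based on Strichartz estimates, exploiting that for $2<p<6$ in $\mathbb{R}^{3}$ all three nonlinear terms in (\ref{e1-2}) --- the local power term $|\psi_{k}|^{p-2}\psi_{k}$, the cooperative coupling term $\beta|\psi_{j}|^{p/2}|\psi_{k}|^{p/2-2}\psi_{k}$, and the nonlocal Coulomb term $\gamma(|x|^{-1}\ast\sum_{j}|\psi_{j}|^{2})\psi_{k}$ --- are energy-subcritical. First I would recast (\ref{e1-2}) in Duhamel form: writing $U(t)=e^{it\Delta}$ for the free Schr\"{o}dinger group and letting $g_{k}(\psi_{1},\psi_{2})$ denote the full nonlinearity in the $k$-th equation, a solution solves
\begin{equation*}
\psi_{k}(t)=U(t)\psi_{k}(0)-i\int_{0}^{t}U(t-s)\,g_{k}(\psi_{1}(s),\psi_{2}(s))\,ds,\qquad k=1,2.
\end{equation*}
I would then fix a Schr\"{o}dinger-admissible pair $(q,r)$ in $\mathbb{R}^{3}$, i.e. $\frac{2}{q}+\frac{3}{r}=\frac{3}{2}$ with $2\le r<6$, chosen so that $r$ matches the homogeneity of the power terms, and work in the complete metric space
\begin{equation*}
X_{T}:=\Big\{(\psi_{1},\psi_{2})\in \mathcal{C}([0,T];\mathbf{H})\cap L^{q}([0,T];W^{1,r}\times W^{1,r}):\ \|(\psi_{1},\psi_{2})\|_{X_{T}}\le M\Big\},
\end{equation*}
for suitable $M,T>0$, equipped with a metric from a lower-order Strichartz norm.

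The core is the nonlinear estimate, for which I would recall the inhomogeneous Strichartz inequality, controlling $\int_{0}^{t}U(t-s)F(s)\,ds$ in every admissible norm (and in $\mathbf{H}$) by $\|F\|_{L^{\tilde{q}'}([0,T];W^{1,\tilde{r}'})}$ for a dual admissible pair $(\tilde{q},\tilde{r})$. Then I would bound each piece of $g_{k}$ separately. The two power terms $|\psi_{k}|^{p-2}\psi_{k}$ and $\beta|\psi_{j}|^{p/2}|\psi_{k}|^{p/2-2}\psi_{k}$ are both homogeneous of degree $p-1$, so they are handled identically: by H\"{o}lder in space and time together with the Sobolev embedding $W^{1,r}\hookrightarrow L^{s}$ for an appropriate exponent $s$ (available precisely because $2<p<6$), one obtains bounds of the form $T^{\theta}\|(\psi_{1},\psi_{2})\|_{X_{T}}^{p-1}$ with $\theta>0$, together with the corresponding locally Lipschitz difference estimates. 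For the Coulomb term I would use the Hardy--Littlewood--Sobolev inequality to estimate $\||x|^{-1}\ast\sum_{j}|\psi_{j}|^{2}\|$ in the appropriate Lebesgue space, and, to control the $\mathbf{H}$-part, differentiate the product so that the gradient falls either on the convolution factor, producing $|x|^{-1}\ast\mathrm{Re}(\bar{\psi}_{j}\nabla\psi_{j})$, or on $\psi_{k}$; both are again closed by Hardy--Littlewood--Sobolev and H\"{o}lder, yielding a bound $T^{\theta'}\|(\psi_{1},\psi_{2})\|_{X_{T}}^{3}$ together with its Lipschitz analogue.

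Combining these, the solution map $\Phi$ defined by the right-hand side of the Duhamel formula sends the ball $X_{T}$ into itself and is a contraction once $M$ is fixed in terms of $\|(\psi_{1}(0),\psi_{2}(0))\|_{\mathbf{H}}$ and $T=T(M)$ is taken small; the Banach fixed-point theorem then yields a unique solution in $X_{T}$, and a standard covering/iteration argument upgrades uniqueness to the whole class $\mathcal{C}([0,T_{\max});\mathbf{H})$. Defining $T_{\max}$ as the supremum of existence times gives the blow-up alternative, and continuous dependence on the initial data follows from the same Lipschitz estimates. The main obstacle I anticipate is the nonlocal Coulomb term: unlike the two local power nonlinearities, its gradient generates the extra factor $|x|^{-1}\ast\mathrm{Re}(\bar{\psi}_{j}\nabla\psi_{j})$, so keeping all three couplings inside a single admissible-pair bookkeeping --- and verifying that the exponents $(q,r)$ simultaneously satisfy admissibility, the Sobolev embedding for the degree-$(p-1)$ terms, and the Hardy--Littlewood--Sobolev constraint for the degree-$3$ term, uniformly in $2<p<6$ --- is the delicate point; near $p=6$ this also explains why Strichartz estimates, rather than a direct $H^{1}$ energy argument, are indispensable.
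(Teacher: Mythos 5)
Your proposal follows essentially the same route as the paper's proof: Duhamel formulation, a ball in $\mathcal{C}([0,T);\mathbf{H})$ intersected with Strichartz spaces and metrized by lower-order Strichartz norms, the two power terms estimated via H\"older--Sobolev giving $T^{\theta}R^{p-1}$, the Coulomb term via Hardy--Littlewood--Sobolev with the gradient falling on either factor giving $T^{\delta}R^{3}$, and then the Banach fixed-point theorem. The one issue you flag as delicate --- making a single admissible pair $(q,r)$ serve all three nonlinearities uniformly in $2<p<6$ --- is resolved in the paper simply by carrying \emph{two} admissible pairs at once, $(s_{1},r_{1})=\left(\tfrac{4p}{3(p-2)},p\right)$ for the degree-$(p-1)$ terms and $(s_{2},r_{2})=\left(8,\tfrac{12}{5}\right)$ for the Hartree term, so no single uniform choice of exponents is ever needed.
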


Define the set of ground states $\mathcal{Z}(c)$ by
\begin{equation*}
\mathcal{Z}(c):=\left\{ (u,v)\in S_{c}:I_{\gamma ,\beta }(u,v)=\sigma
_{\gamma ,\beta }(c)\right\} .
\end{equation*}%
By Theorems \ref{t1} and \ref{t2}, $\mathcal{Z}(c)$ is not empty. With the
help of Theorem \ref{t8}, the following stability holds.

\begin{theorem}
\label{t3} Under the assumptions of Theorem \ref{t1} or \ref{t2}, the set $%
\mathcal{Z}(c)$ is orbitally stable.
\end{theorem}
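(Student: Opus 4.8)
The plan is to prove orbital stability by the classical concentration--compactness argument of Cazenave and Lions, arguing by contradiction and exploiting the fact that, under the hypotheses of Theorem \ref{t1} or \ref{t2}, $\sigma _{\gamma ,\beta }(c)$ is attained and, moreover, every minimizing sequence for $\sigma _{\gamma ,\beta }(c)$ is relatively compact in $\mathbf{H}$ up to translations. The three pillars are: (a) conservation of mass and energy along the flow of system (\ref{e1-2}); (b) global existence of solutions issuing from data close to $\mathcal{Z}(c)$; and (c) the compactness of minimizing sequences for $\sigma _{\gamma ,\beta }(c)$, which is precisely what is extracted in the course of proving Theorems \ref{t1} and \ref{t2} through the profile decomposition and the monotonicity of the ground-state energy map. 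I will phrase (c) as a corollary of those proofs rather than redo it, noting that the same diamagnetic/profile-decomposition reasoning applies verbatim to complex-valued minimizing sequences.

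First I would record the conservation laws. By Theorem \ref{t8}, for initial datum $(\psi _{1}(0),\psi _{2}(0))\in \mathbf{H}$ there is a unique maximal solution, and a standard computation gives that the mass $M(t):=\Vert \psi _{1}(t)\Vert _{2}^{2}+\Vert \psi _{2}(t)\Vert _{2}^{2}$ and the energy $I_{\gamma ,\beta }(\psi _{1}(t),\psi _{2}(t))$ are constant in $t$. Next I would establish global existence near $\mathcal{Z}(c)$: in the mass-subcritical range $2<p<10/3$ covered by Theorems \ref{t1}--\ref{t2}, the Coulomb term $\tfrac{\gamma }{4}B(u,v)$ is nonnegative and, by Gagliardo--Nirenberg, $C(u,v)$ is of lower order than $A(u,v)$ on a fixed-mass set, so $I_{\gamma ,\beta }$ is coercive and controls $A(u,v)$ from below there. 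Combined with the two conservation laws this keeps the $H^{1}$-norm bounded on the whole existence interval, whence the blow-up alternative in Theorem \ref{t8} forces $T_{\max }=+\infty $ for data close to $\mathcal{Z}(c)$. This supplies the global well-posedness tacitly demanded by the remark following Definition \ref{D3}.

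Then comes the contradiction step. Suppose $\mathcal{Z}(c)$ were not orbitally stable. Then there would exist $\varepsilon _{0}>0$, initial data $(\psi _{1}^{n}(0),\psi _{2}^{n}(0))$ with $\inf_{(u,v)\in \mathcal{Z}(c)}\Vert (\psi _{1}^{n}(0),\psi _{2}^{n}(0))-(u,v)\Vert _{\mathbf{H}}\to 0$, and times $t_{n}$ such that $\inf_{(u,v)\in \mathcal{Z}(c)}\Vert (\psi _{1}^{n}(t_{n}),\psi _{2}^{n}(t_{n}))-(u,v)\Vert _{\mathbf{H}}\geq \varepsilon _{0}$. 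Writing $w_{n}:=(\psi _{1}^{n}(t_{n}),\psi _{2}^{n}(t_{n}))$, continuity of $M$ and $I_{\gamma ,\beta }$ together with the conservation laws give $M(w_{n})\to c$ and $I_{\gamma ,\beta }(w_{n})\to \sigma _{\gamma ,\beta }(c)$. Rescaling $w_{n}\mapsto \sqrt{c/M(w_{n})}\,w_{n}$ places the sequence exactly on $S_{c}$ at the cost of an $o(1)$ change in energy (the scaling factor tends to $1$ and $w_{n}$ is bounded in $\mathbf{H}$), so the rescaled sequence is a genuine minimizing sequence for $\sigma _{\gamma ,\beta }(c)$. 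Invoking the compactness from Theorem \ref{t1} or \ref{t2}, there are $y_{n}\in \mathbb{R}^{3}$ and $(\hat{u},\hat{v})\in \mathcal{Z}(c)$ with $w_{n}(\cdot +y_{n})\to (\hat{u},\hat{v})$ strongly in $\mathbf{H}$ along a subsequence. Since $A$, $B$, $C$, and hence $I_{\gamma ,\beta }$, $M$, and $\mathcal{Z}(c)$ are translation invariant, the distance $\inf_{(u,v)\in \mathcal{Z}(c)}\Vert w_{n}-(u,v)\Vert _{\mathbf{H}}$ equals $\inf_{(u,v)\in \mathcal{Z}(c)}\Vert w_{n}(\cdot +y_{n})-(u,v)\Vert _{\mathbf{H}}\to 0$, contradicting the lower bound $\varepsilon _{0}$.

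I expect the principal obstacle to be step (c): guaranteeing precompactness up to translation in the presence of the nonlocal Coulomb term $B(u,v)$ and the cooperative coupling, i.e. ruling out vanishing and dichotomy for the minimizing sequence. This is exactly the delicate point already resolved in Theorems \ref{t1}--\ref{t2}, so the work here is to verify that the hypotheses of those theorems transfer to the (complex-valued, slightly off-mass) sequences $w_{n}$ produced by the flow; the diamagnetic inequality reduces the kinetic estimate to the real case and the mass-renormalization above handles the constraint, after which the cited compactness applies directly. A secondary technical point is merely the continuity of $I_{\gamma ,\beta }$ on bounded sets of $\mathbf{H}$, used both for the rescaling and for passing the energy and mass to the limit, which follows from the Hardy--Littlewood--Sobolev and Sobolev embeddings controlling $B$ and $C$.
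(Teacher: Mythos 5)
Your proposal is correct and follows essentially the same route as the paper: a Cazenave--Lions contradiction argument using mass/energy conservation from Theorem \ref{t8}, a mass renormalization to turn $(\psi_{1}^{n}(t_{n}),\psi_{2}^{n}(t_{n}))$ into a genuine minimizing sequence for $\sigma_{\gamma,\beta}(c)$, and the precompactness (up to translations) of minimizing sequences established in the proofs of Theorems \ref{t1} and \ref{t2}. The only slip is attributing that compactness to the profile decomposition and ground-state-energy monotonicity (those tools are used for Theorems \ref{t5}--\ref{t6}); for Theorems \ref{t1}--\ref{t2} it comes from Lions' concentration-compactness together with the strict subadditivity inequalities, but this mislabeling does not affect the validity of your argument, and your extra care about global existence and complex-valued data addresses points the paper leaves implicit.
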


\begin{theorem}[Global well-posedness theory]
\label{t9} Let the initial data $(\psi _{1}(0),\psi _{2}(0))\in \mathbf{H}$.
Then system (\ref{e1-2}) with the initial data $(\psi _{1}(0),\psi _{2}(0))$
is globally well-posed in $\mathbf{H}$ if any one of the following
conditions is satisfied:\newline
$(i)$ $2<p<\frac{10}{3}.$\newline
$(ii)$ $p=\frac{10}{3}$ and $\Vert \psi _{1}(0)\Vert _{2}^{2}+\Vert \psi
_{2}(0)\Vert _{2}^{2}$ is small.\newline
$(iii)$ $\frac{10}{3}<p<6$, $\mathcal{P}_{\gamma ,\beta }(\psi _{1}(0),\psi
_{2}(0))>0$ and $I_{\gamma ,\beta }(\psi _{1}(0),\psi _{2}(0))<m_{\gamma
,\beta }(c).$
\end{theorem}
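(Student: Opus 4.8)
The plan is to combine the local well-posedness of Theorem~\ref{t8} with the two conservation laws of the flow to produce an a priori bound on the $\mathbf{H}$-norm of the solution, which then rules out finite-time blow-up. Concretely, Theorem~\ref{t8} provides a maximal solution on $[0,T_{\max})$ together with the blow-up alternative: either $T_{\max}=+\infty$, or $\lim_{t\to T_{\max}^{-}}\|(\psi_1(t),\psi_2(t))\|_{\mathbf{H}}=+\infty$. Since system~(\ref{e1-2}) conserves both the mass $\|\psi_1\|_2^2+\|\psi_2\|_2^2=c$ and the energy $I_{\gamma,\beta}(\psi_1(t),\psi_2(t))=I_{\gamma,\beta}(\psi_1(0),\psi_2(0))$, it suffices in each case to bound the kinetic part $A(\psi_1(t),\psi_2(t))$ uniformly in $t$; together with the conserved mass this bounds the full $\mathbf{H}$-norm and forces $T_{\max}=+\infty$.

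For the mass-subcritical case $(i)$ I would argue directly from energy conservation. Dropping the nonnegative Coulomb term $\tfrac{\gamma}{4}B\ge 0$ gives $\tfrac12 A \le I_{\gamma,\beta}+\tfrac1p C$, while after estimating $2\beta|u|^{p/2}|v|^{p/2}\le\beta(|u|^p+|v|^p)$ and applying the Gagliardo--Nirenberg inequality componentwise one obtains $C(\psi_1,\psi_2)\le K\,A(\psi_1,\psi_2)^{3(p-2)/4}\,c^{(6-p)/4}$ with the exponent $3(p-2)/4<1$ precisely because $p<\tfrac{10}{3}$. Young's inequality then absorbs the nonlinear term into a small multiple of $A$, yielding $(\tfrac12-\varepsilon)A\le I_{\gamma,\beta}+C_\varepsilon$ and hence a uniform bound on $A$. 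In the mass-critical case $(ii)$, $p=\tfrac{10}{3}$, the same Gagliardo--Nirenberg step gives the borderline estimate $C\le K\,c^{2/3}A$; feeding it into $\tfrac12 A\le I_{\gamma,\beta}+\tfrac1p C$ produces $(\tfrac12-\tfrac1p K c^{2/3})A\le I_{\gamma,\beta}$, so choosing the mass $c$ small enough that $\tfrac1p K c^{2/3}<\tfrac12$ again bounds $A$.

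The mass-supercritical case $(iii)$ is the heart of the matter, since $I_{\gamma,\beta}$ is unbounded below on $S_c$ and the Gagliardo--Nirenberg argument fails. Here I would run an invariant-set argument built on the Pohozaev manifold $\mathcal{M}_{\gamma,\beta}(c)$. Recalling the $L^2$-preserving dilation $(u_t,v_t)=(t^{3/2}u(t\cdot),t^{3/2}v(t\cdot))$, one has $I_{\gamma,\beta}(u_t,v_t)=\tfrac{t^2}{2}A+\tfrac{\gamma t}{4}B-\tfrac{t^{3(p-2)/2}}{p}C$, whose derivative at $t=1$ is exactly $\mathcal{P}_{\gamma,\beta}(u,v)$. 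The first step is to show that the set $\{(u,v)\in S_c:\mathcal{P}_{\gamma,\beta}(u,v)>0,\ I_{\gamma,\beta}(u,v)<m_{\gamma,\beta}(c)\}$ is invariant under the flow: the energy condition is preserved because $I_{\gamma,\beta}$ is conserved, and the sign condition cannot be lost, for if $\mathcal{P}_{\gamma,\beta}(\psi_1(t_0),\psi_2(t_0))=0$ at some first time $t_0$ then $(\psi_1(t_0),\psi_2(t_0))\in\mathcal{M}_{\gamma,\beta}(c)$ would force $I_{\gamma,\beta}\ge m_{\gamma,\beta}(c)$, contradicting the conserved value $I_{\gamma,\beta}<m_{\gamma,\beta}(c)$; continuity of $t\mapsto\mathcal{P}_{\gamma,\beta}$ along the $\mathcal{C}([0,T_{\max});\mathbf{H})$-trajectory then keeps $\mathcal{P}_{\gamma,\beta}>0$ throughout. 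The second step extracts the bound: setting $\theta=\tfrac{2}{3(p-2)}\in(0,\tfrac12)$ (the inequality $\theta<\tfrac12$ being equivalent to $p>\tfrac{10}{3}$), the combination $I_{\gamma,\beta}-\theta\mathcal{P}_{\gamma,\beta}$ has vanishing $C$-coefficient and equals $(\tfrac12-\theta)A+\tfrac{\gamma}{4}(1-\theta)B$; since $B\ge 0$ and $\mathcal{P}_{\gamma,\beta}>0$, this gives $(\tfrac12-\theta)A\le I_{\gamma,\beta}<m_{\gamma,\beta}(c)$ and hence a uniform bound on $A$.

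The main obstacle is the invariance step in case $(iii)$: it hinges on $m_{\gamma,\beta}(c)$ being a genuine energy threshold, i.e. that every element of $\mathcal{M}_{\gamma,\beta}(c)$ has energy at least $m_{\gamma,\beta}(c)$, and on the continuity of the functionals $A,B,C$ — in particular the Coulomb term $B$ via Hardy--Littlewood--Sobolev — along the $\mathbf{H}$-valued trajectory, so that the sign of $\mathcal{P}_{\gamma,\beta}$ can change only by passing through the manifold. Verifying the conservation of $I_{\gamma,\beta}$ (including the nonlocal term) and confirming that $m_{\gamma,\beta}(c)$ is a well-defined, strictly positive level are the points that require care; the remaining estimates in $(i)$ and $(ii)$ are routine once the Gagliardo--Nirenberg exponent is matched to the value of $p$.
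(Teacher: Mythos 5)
Your proposal is correct and follows essentially the same route as the paper: cases (i) and (ii) use the identical Gagliardo--Nirenberg absorption of $C$ into $A$ after dropping the nonnegative Coulomb term, and case (iii) rests on the same two pillars, namely conservation of the energy below the level $m_{\gamma,\beta}(c)$ and the observation that if $\mathcal{P}_{\gamma,\beta}$ ever vanished the trajectory would lie on $\mathcal{M}_{\gamma,\beta}(c)$ and hence have energy at least $m_{\gamma,\beta}(c)$. The only (cosmetic) difference is in the logic of case (iii): the paper argues by contradiction, letting $T_{\max}<\infty$ force $A\to\infty$ and hence, through the identity $I_{\gamma,\beta}-\tfrac{2}{3(p-2)}\mathcal{P}_{\gamma,\beta}=\tfrac{3p-10}{6(p-2)}A+\tfrac{\gamma(3p-8)}{12(p-2)}B$, $\mathcal{P}_{\gamma,\beta}\to-\infty$ so that $\mathcal{P}_{\gamma,\beta}$ must vanish at some $t_{0}$, whereas you run the very same identity forwards, keeping $\mathcal{P}_{\gamma,\beta}>0$ invariant and extracting the explicit uniform bound $\bigl(\tfrac12-\tfrac{2}{3(p-2)}\bigr)A<m_{\gamma,\beta}(c)$.
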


\begin{theorem}[Finite time blow-up]
\label{t7} Under the assumptions of Theorem \ref{t5} or \ref{t6}. Let $(\psi
_{1}(t),\psi _{2}(t))$ be the solution to system (\ref{e1-2}) with the
initial data $(\psi _{1}(0),\psi _{2}(0))$. If
\begin{equation*}
(|x|\psi _{1}(0),|x|\psi _{2}(0))\in L^{2}(\mathbb{R}^{3})\times L^{2}(%
\mathbb{R}^{3}),\text{ }\mathcal{P}_{\gamma ,\beta }(\psi _{1}(0),\psi
_{2}(0))<0\text{ and }I_{\gamma ,\beta }(\psi _{1}(0),\psi
_{2}(0))<m_{\gamma ,\beta }(c),
\end{equation*}%
then $(\psi _{1}(t),\psi _{2}(t))$ blows up in finite time.
\end{theorem}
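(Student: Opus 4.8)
The plan is to combine a Glassey-type virial (concavity) argument with the variational characterisation of the threshold $m_{\gamma,\beta}(c)$ carried by the Pohozaev manifold $\mathcal{M}_{\gamma,\beta}(c)$. First I would use Theorem \ref{t8} to fix the maximal solution $\Psi(t):=(\psi_1(t),\psi_2(t))$ on $[0,T_{\max})$ and record the conserved mass $\|\psi_1(t)\|_2^2+\|\psi_2(t)\|_2^2=c$ and energy $I_{\gamma,\beta}(\Psi(t))=I_{\gamma,\beta}(\Psi(0))=:E_0<m_{\gamma,\beta}(c)$. Since $(|x|\psi_1(0),|x|\psi_2(0))\in L^2\times L^2$, the variance $J(t):=\int_{\mathbb{R}^3}|x|^2(|\psi_1(t)|^2+|\psi_2(t)|^2)\,dx$ stays finite and $C^2$ on $[0,T_{\max})$, and a direct (but careful) computation yields the virial identity
\[ J''(t)=8\,\mathcal{P}_{\gamma,\beta}(\psi_1(t),\psi_2(t)), \]
the right-hand side being a positive multiple of the Pohozaev functional because $\mathcal{P}_{\gamma,\beta}$ equals $\frac{d}{d\lambda}\big|_{\lambda=1}I_{\gamma,\beta}(u_\lambda,v_\lambda)$ for the mass-preserving scaling $u_\lambda(x)=\lambda^{3/2}u(\lambda x)$.

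Second, I would show that the constraint $\mathcal{P}_{\gamma,\beta}(\Psi(t))<0$ persists on $[0,T_{\max})$. The mass and energy conditions defining the set $\{(u,v)\in S_c:\mathcal{P}_{\gamma,\beta}(u,v)<0,\ I_{\gamma,\beta}(u,v)<m_{\gamma,\beta}(c)\}$ are automatically preserved by the conservation laws, so it suffices to propagate the sign of $\mathcal{P}_{\gamma,\beta}$. If it failed, continuity in $t$ would produce a first time $t_0$ with $\mathcal{P}_{\gamma,\beta}(\Psi(t_0))=0$; then $\Psi(t_0)\in\mathcal{M}_{\gamma,\beta}(c)$, forcing $I_{\gamma,\beta}(\Psi(t_0))\ge m_{\gamma,\beta}(c)$ and contradicting $I_{\gamma,\beta}(\Psi(t_0))=E_0<m_{\gamma,\beta}(c)$. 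Hence $\mathcal{P}_{\gamma,\beta}(\Psi(t))<0$ throughout.

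Third comes the quantitative step, which upgrades strict negativity into a uniform bound. For fixed $t$, consider the fibre map $g_t(\lambda):=I_{\gamma,\beta}((\psi_1(t))_\lambda,(\psi_2(t))_\lambda)$, an explicit combination of $\lambda^2 A$, $\lambda B$ and $\lambda^{3(p-2)/2}C$ satisfying $\lambda g_t'(\lambda)=\mathcal{P}_{\gamma,\beta}((\psi_1(t))_\lambda,(\psi_2(t))_\lambda)$. In the range $\tfrac{10}{3}\le p<6$ the function $g_t$ has a unique maximiser $\lambda_0=\lambda_0(t)>0$, and $\mathcal{P}_{\gamma,\beta}(\Psi(t))=g_t'(1)<0$ places $\lambda_0\in(0,1)$ while $g_t$ is concave on $[\lambda_0,\infty)$ (for $p=\tfrac{10}{3}$ it is a downward parabola, while for $p>\tfrac{10}{3}$ one checks $g_t''$ is decreasing with $g_t''(\lambda_0)\le0$). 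Since $((\psi_1(t))_{\lambda_0},(\psi_2(t))_{\lambda_0})\in\mathcal{M}_{\gamma,\beta}(c)$, we have $g_t(\lambda_0)\ge m_{\gamma,\beta}(c)$; concavity on $[\lambda_0,1]$ then gives
\[ m_{\gamma,\beta}(c)-E_0\le g_t(\lambda_0)-g_t(1)\le g_t'(1)(\lambda_0-1)=-\mathcal{P}_{\gamma,\beta}(\Psi(t))\,(1-\lambda_0)\le-\mathcal{P}_{\gamma,\beta}(\Psi(t)). \]
Writing $\eta:=m_{\gamma,\beta}(c)-E_0>0$, this reads $\mathcal{P}_{\gamma,\beta}(\Psi(t))\le-\eta$, so by the virial identity $J''(t)\le-8\eta$ and thus $J(t)\le J(0)+J'(0)t-4\eta t^2$. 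As $J\ge0$, the maximal time $T_{\max}$ must be finite, which is the asserted finite-time blow-up.

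I expect the main obstacle to be the rigorous justification of the virial identity together with the finiteness and $C^2$-regularity of $J(t)$ for merely $H^1$ data with finite variance: with only $H^1$ regularity the formal computation of $J''$ is not directly licit, and the nonlocal Coulomb term $\gamma\phi_{u,v}$ couples the two components, so one must run a truncation/approximation scheme (replacing $|x|^2$ by a smooth bounded weight $\varphi_R$, differentiating twice, and passing $R\to\infty$ using the uniform $H^1$-bound and the dispersive estimates behind Theorem \ref{t8}) and control the Coulomb contribution to $J''$ in order to identify it with $8\,\mathcal{P}_{\gamma,\beta}$. The variational part — the invariant-set and concavity arguments above — is comparatively robust once this identity is secured.
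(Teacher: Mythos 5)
Your proposal is correct and follows essentially the same route as the paper: the paper's Lemma \ref{L6-1} derives the identical key inequality $\mathcal{P}_{\gamma ,\beta }(\psi _{1}(t),\psi _{2}(t))\leq I_{\gamma ,\beta }(\psi _{1}(t),\psi _{2}(t))-m_{\gamma ,\beta }(c)=-\eta $ from the tangent-line (concavity) estimate on the fibering map past its unique maximum, and then concludes with the same virial identity $\bar{f}^{\prime \prime }(t)=8\mathcal{P}_{\gamma ,\beta }(\psi _{1}(t),\psi _{2}(t))\leq -8\eta $ and the quadratic upper bound on the nonnegative variance. The only cosmetic difference is the propagation step: you propagate the sign of $\mathcal{P}_{\gamma ,\beta }$ by a direct invariant-set contradiction through $\mathcal{M}_{\gamma ,\beta }(c)$, whereas the paper propagates $t^{-}(\psi _{1}(t),\psi _{2}(t))<1$ via continuity of the maximizer along the flow; both yield the same uniform negative bound.
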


Finally, we prove that the associated standing waves are unstable.

\begin{theorem}
\label{t10} Let $(\bar{u},\bar{v})\in \mathbf{H}$ be the energy ground state
obtained in Theorem \ref{t5} or \ref{t6}, then the associated standing wave $%
(e^{i\bar{\lambda}t}\bar{u},e^{i\bar{\lambda}t}\bar{v})$ is strongly
unstable, where $\bar{\lambda}$ is the Lagrange multiplier.
\end{theorem}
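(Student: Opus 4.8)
The plan is to reduce strong instability to the finite‑time blow‑up criterion of Theorem \ref{t7}: I want to produce, arbitrarily close to $(\bar u,\bar v)$ in $\mathbf{H}$, initial data of finite variance that lie in the blow‑up region $\{I_{\gamma,\beta}<m_{\gamma,\beta}(c)\}\cap\{\mathcal{P}_{\gamma,\beta}<0\}$, and then invoke that theorem. The natural device is the $L^{2}$‑preserving dilation
\[
u^{s}(x):=s^{3/2}u(sx),\qquad v^{s}(x):=s^{3/2}v(sx),\qquad s>0,
\]
which keeps $(u^{s},v^{s})\in S_{c}$ and scales the basic functionals as $A(u^{s},v^{s})=s^{2}A(u,v)$, $B(u^{s},v^{s})=s\,B(u,v)$ and $C(u^{s},v^{s})=s^{3(p-2)/2}C(u,v)$; the Coulomb factor $s$ comes from the change of variables $\xi=sx,\ \eta=sy$ in the double integral against $|x-y|^{-1}$.

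Next I would analyse the fibering map attached to the ground state, $g(s):=I_{\gamma,\beta}(\bar u^{s},\bar v^{s})=\tfrac12 s^{2}A+\tfrac{\gamma}{4}sB-\tfrac1p s^{3(p-2)/2}C$, where $A,B,C$ are evaluated at $(\bar u,\bar v)$. Differentiating the scaling relations gives $\mathcal{P}_{\gamma,\beta}(\bar u^{s},\bar v^{s})=s\,g'(s)$, so membership of $(\bar u,\bar v)$ in $\mathcal{M}_{\gamma,\beta}(c)$ is precisely the statement $g'(1)=0$. Using $g'(1)=0$ to eliminate $C$ in terms of $A$ and $B$, one computes
\[
g''(1)=\frac{10-3p}{2}\,A(\bar u,\bar v)-\frac{(3p-8)\gamma}{8}\,B(\bar u,\bar v),
\]
which is strictly negative for every $\tfrac{10}{3}\le p<6$, since there $3p-8\ge 2>0$, $10-3p\le 0$ and $B(\bar u,\bar v)>0$. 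Hence $s=1$ is a strict local maximum of $g$, and because $(\bar u,\bar v)$ is the Pohozaev minimizer furnished by Theorem \ref{t5} or \ref{t6} we have $g(1)=I_{\gamma,\beta}(\bar u,\bar v)=m_{\gamma,\beta}(c)$.

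I would then fix $s>1$ close to $1$ and take $(\psi_{1}(0),\psi_{2}(0)):=(\bar u^{s},\bar v^{s})$. As $s\to 1^{+}$ one has $\|(\bar u^{s},\bar v^{s})-(\bar u,\bar v)\|_{\mathbf{H}}\to 0$, so the data can be placed within any prescribed $\varepsilon$ of $(\bar u,\bar v)$; moreover the exponential decay from Proposition \ref{P11} gives $|x|\bar u^{s},\,|x|\bar v^{s}\in L^{2}(\mathbb{R}^{3})$. The strict local maximality just established yields $I_{\gamma,\beta}(\bar u^{s},\bar v^{s})=g(s)<g(1)=m_{\gamma,\beta}(c)$ and $\mathcal{P}_{\gamma,\beta}(\bar u^{s},\bar v^{s})=s\,g'(s)<0$ for $s>1$. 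All three hypotheses of Theorem \ref{t7} thus hold, so the corresponding solution blows up in finite time, which is exactly strong instability in the sense of Definition \ref{D3}$(ii)$.

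The delicate point is not this construction but the blow‑up mechanism it feeds: one must ensure that along the flow the solution stays in $\{I_{\gamma,\beta}<m_{\gamma,\beta}(c),\ \mathcal{P}_{\gamma,\beta}<0\}$ with $\mathcal{P}_{\gamma,\beta}$ bounded away from $0$, so that the virial quantity is strictly concave and reaches zero in finite time. This uses conservation of mass and energy to keep $I_{\gamma,\beta}(\psi_{1}(t),\psi_{2}(t))<m_{\gamma,\beta}(c)$, together with the fact that $\mathcal{P}_{\gamma,\beta}=0$ would force the trajectory onto $\mathcal{M}_{\gamma,\beta}(c)$ and hence $I_{\gamma,\beta}\ge m_{\gamma,\beta}(c)$, a contradiction that prevents $\mathcal{P}_{\gamma,\beta}$ from vanishing; combined with the virial identity $\frac{d^{2}}{dt^{2}}\int_{\mathbb{R}^{3}}|x|^{2}(|\psi_{1}|^{2}+|\psi_{2}|^{2})\,dx=8\,\mathcal{P}_{\gamma,\beta}(\psi_{1},\psi_{2})$ and the control of the Coulomb term along the flow, this gives the blow‑up. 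Since that invariance‑and‑virial argument is already carried out inside Theorem \ref{t7}, here it suffices to invoke it once the initial data $(\bar u^{s},\bar v^{s})$ has been produced.
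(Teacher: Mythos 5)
Your proposal is correct and follows essentially the same route as the paper: both take the mass-preserving dilations $(\bar u_{s},\bar v_{s})=(s^{3/2}\bar u(sx),s^{3/2}\bar v(sx))$ with $s>1$ as initial data, observe they converge to $(\bar u,\bar v)$ in $\mathbf{H}$ as $s\searrow 1$, verify $I_{\gamma,\beta}(\bar u_{s},\bar v_{s})<m_{\gamma,\beta}(c)$ together with $\mathcal{P}_{\gamma,\beta}(\bar u_{s},\bar v_{s})<0$ (equivalently $t^{-}(\bar u_{s},\bar v_{s})=1/s<1$), and invoke the blow-up criterion (your Theorem \ref{t7} is the paper's Lemma \ref{L6-1} in disguise). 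The only cosmetic differences are that you certify $I<m_{\gamma,\beta}(c)$ via the local computation $g''(1)<0$ rather than the global maximality of the fibering map, and that you explicitly check the finite-variance hypothesis via the exponential decay of Proposition \ref{P11}, a point the paper leaves implicit.
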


The rest of this paper is organized as follows. After introducing some
preliminary results in Section 2, we prove Theorems \ref{t1}--\ref{t2} in
Section 3 and Theorem \ref{t4} in Section 4, respectively. We prove Theorems %
\ref{t5}--\ref{t6} and Proposition \ref{P11} in Section 5. Finally in
Section 6, we study dynamics of the corresponding standing waves and prove
Theorems \ref{t8}--\ref{t10}.

\section{Preliminary results}

In this section, we give some related inequalities and some known results.

\begin{lemma}[Gagliardo-Nirenberg inequality of Power type \protect\cite{W}]

\label{L2-1} Let $2<p<6$. Then there exists a sharp constant $\mathcal{S}%
_{p}>0$ such that
\begin{equation}
\Vert u\Vert _{p}\leq \mathcal{S}_{p}^{1/p}\Vert \nabla u\Vert _{2}^{\frac{%
3(p-2)}{2p}}\Vert u\Vert _{2}^{\frac{6-p}{2p}}\text{ for all }u\in H^{1}(%
\mathbb{R}^{3}),  \label{e2-1}
\end{equation}%
where $\mathcal{S}_{p}=\frac{p}{2\Vert U_{p}\Vert _{2}^{p-2}}$, and $U_{p}$
is the ground state solution of the following equation
\begin{equation*}
-\Delta u+\frac{6-p}{3(p-2)}u=\frac{4}{3(p-2)}|u|^{p-2}u\text{ in }\mathbb{R}%
^{3}.
\end{equation*}
\end{lemma}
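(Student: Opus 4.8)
The plan is to realize $\mathcal{S}_{p}$ as the value of a scale-invariant quotient at its maximizer, and then to evaluate that quotient explicitly on $U_{p}$. First I would fix the exponents by dimensional analysis: under the two-parameter rescaling $u\mapsto \mu\,u(\cdot/\lambda)$ in $\mathbb{R}^{3}$ one has $\Vert\nabla u\Vert_{2}^{2}\mapsto\mu^{2}\lambda\Vert\nabla u\Vert_{2}^{2}$, $\Vert u\Vert_{2}^{2}\mapsto\mu^{2}\lambda^{3}\Vert u\Vert_{2}^{2}$ and $\Vert u\Vert_{p}^{p}\mapsto\mu^{p}\lambda^{3}\Vert u\Vert_{p}^{p}$, and matching the scaling of $\Vert u\Vert_{p}$ against a homogeneous combination of $\Vert\nabla u\Vert_{2}$ and $\Vert u\Vert_{2}$ forces exactly the exponents $\tfrac{3(p-2)}{2p}$ and $\tfrac{6-p}{2p}$. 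Hence the optimal constant in (\ref{e2-1}) is the supremum of the scale-invariant functional
\[
J(u):=\frac{\Vert u\Vert_{p}^{p}}{\Vert\nabla u\Vert_{2}^{3(p-2)/2}\,\Vert u\Vert_{2}^{(6-p)/2}},\qquad u\in H^{1}(\mathbb{R}^{3})\backslash\{0\},
\]
and (\ref{e2-1}) holds with this $\mathcal{S}_{p}$ once the supremum is shown to be finite and attained.

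The main obstacle is precisely the attainment, since $H^{1}(\mathbb{R}^{3})$ embeds in $L^{p}$ only continuously (not compactly) and $J$ is invariant under translations as well as under the two scalings above. I would remove these degeneracies as follows. By the P\'olya--Szeg\H{o} inequality the Schwarz symmetrization $u^{*}$ satisfies $\Vert\nabla u^{*}\Vert_{2}\le\Vert\nabla u\Vert_{2}$ while $\Vert u^{*}\Vert_{2}=\Vert u\Vert_{2}$ and $\Vert u^{*}\Vert_{p}=\Vert u\Vert_{p}$, so $J(u^{*})\ge J(u)$ and one may take a maximizing sequence of radial, nonincreasing functions. Using the two scalings I normalize so that $\Vert\nabla u_{n}\Vert_{2}=\Vert u_{n}\Vert_{2}=1$; the sequence is then bounded in $H^{1}_{rad}(\mathbb{R}^{3})$, and by the compact embedding $H^{1}_{rad}(\mathbb{R}^{3})\hookrightarrow L^{p}(\mathbb{R}^{3})$ for $2<p<6$ (Strauss) a subsequence converges strongly in $L^{p}$ to a limit $u_{\infty}$ with $\Vert u_{\infty}\Vert_{p}>0$; weak lower semicontinuity of the $H^{1}$-norms then shows $u_{\infty}$ attains $\mathcal{S}_{p}$.

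Next I would derive the Euler--Lagrange equation for a maximizer $u$. Differentiating $\log J$ yields, in weak form, $-\Delta u+a\,u=b\,|u|^{p-2}u$ with positive constants $a=\tfrac{(6-p)\Vert\nabla u\Vert_{2}^{2}}{3(p-2)\Vert u\Vert_{2}^{2}}$ and $b=\tfrac{2p\Vert\nabla u\Vert_{2}^{2}}{3(p-2)\Vert u\Vert_{p}^{p}}$. The dilation determines the ratio $\Vert\nabla u\Vert_{2}^{2}/\Vert u\Vert_{2}^{2}$ (hence $a$), and the multiplicative scaling then determines $\Vert\nabla u\Vert_{2}^{2}/\Vert u\Vert_{p}^{p}$ (hence $b$), so the two scaling invariances of $J$ provide exactly enough freedom to normalize $a=\tfrac{6-p}{3(p-2)}$ and $b=\tfrac{4}{3(p-2)}$. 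The rescaled maximizer is then the positive, radial, decreasing ground state $U_{p}$ of the canonical equation in the statement, and since $J$ is scale invariant, $\mathcal{S}_{p}=J(U_{p})$.

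Finally I would evaluate $J(U_{p})$ using the Nehari and Pohozaev identities. Writing $T=\Vert\nabla U_{p}\Vert_{2}^{2}$, $M=\Vert U_{p}\Vert_{2}^{2}$, $K=\Vert U_{p}\Vert_{p}^{p}$, testing the equation with $U_{p}$ gives $T+aM=bK$, and the Pohozaev identity in $\mathbb{R}^{3}$ gives $\tfrac12 T+\tfrac{3a}{2}M=\tfrac{3b}{p}K$; with the canonical values of $a,b$ these two linear relations force $T=M=\tfrac{2}{p}K$. Substituting into $J$, the denominator exponent is $\tfrac{3(p-2)}{4}+\tfrac{6-p}{4}=\tfrac{p}{2}$, so
\[
\mathcal{S}_{p}=J(U_{p})=\frac{K}{M^{p/2}}=\frac{\tfrac{p}{2}M}{M^{p/2}}=\frac{p}{2}\,M^{-(p-2)/2}=\frac{p}{2\Vert U_{p}\Vert_{2}^{p-2}},
\]
which is the asserted value and completes the proof. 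I expect steps one, three and four to be essentially routine computations, with the compactness argument of step two being the only genuinely delicate point.
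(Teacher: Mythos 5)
Your proof is correct, and since the paper itself offers no proof of this lemma (it is quoted directly from Weinstein \cite{W}), the right comparison is with that source: your argument --- P\'olya--Szeg\H{o} symmetrization, normalization by the two-parameter scaling, compactness of the radial embedding $H^{1}_{rad}(\mathbb{R}^{3})\hookrightarrow L^{p}$, the Euler--Lagrange equation, and evaluation of the quotient via the Nehari and Pohozaev identities --- is essentially Weinstein's original proof. The only step you assert rather than prove is that the rescaled maximizer \emph{is} the ground state $U_{p}$ (so that the constant is legitimately expressed through $\Vert U_{p}\Vert_{2}$); this gap closes immediately from your own identities, since they show $J(w)=\frac{p}{2\Vert w\Vert_{2}^{p-2}}$ and action $=\frac{1}{3}\Vert w\Vert_{2}^{2}$ for \emph{every} solution of the canonical equation, so maximizers of $J$ are exactly the solutions of least $L^{2}$-norm, equivalently of least action, and all such share the same $\Vert\cdot\Vert_{2}$.
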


\begin{lemma}[Gagliardo-Nirenberg inequality of Hartree type \protect\cite%
{MS1,Y}]
\label{L2-2} There exists the best constant $\mathcal{B}>0$ such that
\begin{equation}
\int_{\mathbb{R}^{3}}(|x|^{-1}\ast |u|^{2})|u|^{2}dx\leq \mathcal{B}\Vert
\nabla u\Vert _{2}\Vert u\Vert _{2}^{3}\text{ for all }u\in H^{1}(\mathbb{R}%
^{3}),  \label{e2-2}
\end{equation}%
where $\mathcal{B}=\frac{2}{\Vert Q\Vert _{2}^{2}}$ and $Q(x)$ is a positive
ground state solution of the following equation
\begin{equation*}
-\Delta u+3u=2(|x|^{-1}\ast |u|^{2})u\quad \text{in}\quad \mathbb{R}^{3}.
\end{equation*}
\end{lemma}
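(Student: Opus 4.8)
The plan is to characterise $\mathcal{B}$ variationally as the supremum of the associated Weinstein quotient and then to identify the extremal function. Set
\[
D(u):=\int_{\mathbb{R}^{3}}(|x|^{-1}\ast |u|^{2})|u|^{2}\,dx,\qquad
W(u):=\frac{D(u)}{\|\nabla u\|_{2}\,\|u\|_{2}^{3}}\quad (u\in H^{1}(\mathbb{R}^{3})\setminus\{0\}).
\]
A direct scaling computation shows that $W$ is invariant both under the dilation $u\mapsto \mu u$ and under the mass-critical rescaling $u\mapsto u(\lambda\,\cdot)$, so establishing \eqref{e2-2} amounts to showing that $\mathcal{B}:=\sup_{u\neq 0}W(u)$ is finite and attained. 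Finiteness (hence the inequality itself, with some constant) follows immediately from the Hardy--Littlewood--Sobolev inequality, which gives $D(u)\le C\|u\|_{12/5}^{4}$, combined with the Gagliardo--Nirenberg inequality of Lemma~\ref{L2-1} at $p=\tfrac{12}{5}$, namely $\|u\|_{12/5}\le \mathcal{S}_{12/5}^{1/p}\|\nabla u\|_{2}^{1/4}\|u\|_{2}^{3/4}$; multiplying these yields $D(u)\le C\|\nabla u\|_{2}\|u\|_{2}^{3}$, so $\mathcal{B}<\infty$.

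The heart of the matter, and where I expect the main obstacle, is the attainment of $\mathcal{B}$. Using the two scaling invariances I would first normalise a maximising sequence so that $\|\nabla u_{n}\|_{2}=\|u_{n}\|_{2}=1$, whence $W(u_{n})=D(u_{n})\to\mathcal{B}$. Since $D$ depends only on $|u|$, I may replace $u_{n}$ by $|u_{n}|$ and then by its Schwarz symmetric rearrangement $u_{n}^{\ast}$: by the P\'olya--Szeg\H{o} inequality $\|\nabla u_{n}^{\ast}\|_{2}\le 1$, equimeasurability gives $\|u_{n}^{\ast}\|_{2}=1$, and the Riesz rearrangement inequality (applied to the strictly decreasing kernel $|x|^{-1}$) gives $D(u_{n}^{\ast})\ge D(u_{n})$; consequently $(u_{n}^{\ast})$ is still maximising and consists of nonnegative, radially symmetric, nonincreasing functions bounded in $H^{1}$. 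The purpose of symmetrising is to defeat the translation non-compactness: along radial functions the embedding $H^{1}_{rad}(\mathbb{R}^{3})\hookrightarrow L^{12/5}(\mathbb{R}^{3})$ is compact, so up to a subsequence $u_{n}^{\ast}\rightharpoonup Q$ in $H^{1}$ and $u_{n}^{\ast}\to Q$ in $L^{12/5}$. The Hardy--Littlewood--Sobolev bilinear estimate makes $D$ continuous on $L^{12/5}$, hence $D(Q)=\lim_{n}D(u_{n}^{\ast})=\mathcal{B}>0$ and in particular $Q\neq 0$; weak lower semicontinuity of $\|\nabla\cdot\|_{2}$ and $\|\cdot\|_{2}$ then forces $W(Q)\ge\mathcal{B}$, so that $W(Q)=\mathcal{B}$ and the supremum is attained at a nonnegative radial $Q$.

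It remains to identify $Q$ and to compute $\mathcal{B}$. Writing the Euler--Lagrange equation $W'(Q)=0$ and clearing denominators produces an equation of the form $-c_{1}\Delta Q+c_{2}Q=c_{3}(|x|^{-1}\ast|Q|^{2})Q$ with positive constants; rescaling $Q\mapsto\mu Q(\lambda\,\cdot)$ with suitable $\mu,\lambda>0$ normalises it to $-\Delta Q+3Q=2(|x|^{-1}\ast|Q|^{2})Q$, and the strong maximum principle upgrades $Q\ge 0$ to $Q>0$. Finally I would read off the best constant from two integral identities for this profile equation: pairing it with $Q$ (the Nehari identity) gives $\|\nabla Q\|_{2}^{2}+3\|Q\|_{2}^{2}=2D(Q)$, while the Pohozaev identity gives $\|\nabla Q\|_{2}^{2}+9\|Q\|_{2}^{2}=5D(Q)$. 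Solving these two relations yields $\|\nabla Q\|_{2}^{2}=\|Q\|_{2}^{2}$ and $D(Q)=2\|Q\|_{2}^{2}$, and substituting into the quotient gives
\[
\mathcal{B}=W(Q)=\frac{2\|Q\|_{2}^{2}}{\|Q\|_{2}\cdot\|Q\|_{2}^{3}}=\frac{2}{\|Q\|_{2}^{2}},
\]
as claimed.
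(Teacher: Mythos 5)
The paper does not actually prove this lemma: it is quoted as a known result from \cite{MS1,Y}, so there is no internal argument to compare yours against line by line. Your proposal is, in essence, the standard proof from that cited literature (a Weinstein-type variational characterisation of the sharp constant), and I checked it is correct: the two scaling invariances of the quotient $W$, the Hardy--Littlewood--Sobolev plus Gagliardo--Nirenberg bound $D(u)\le C\|u\|_{12/5}^{4}\le C\|\nabla u\|_{2}\|u\|_{2}^{3}$ giving $\mathcal{B}<\infty$, the symmetrisation (P\'olya--Szeg\H{o} and Riesz) combined with the compact embedding $H^{1}_{rad}(\mathbb{R}^{3})\hookrightarrow L^{12/5}(\mathbb{R}^{3})$ giving a nontrivial radial maximiser, the Euler--Lagrange equation and its normalisation by scalar multiplication, and the Nehari/Pohozaev bookkeeping (which indeed yields $\|\nabla Q\|_{2}^{2}=\|Q\|_{2}^{2}$ and $D(Q)=2\|Q\|_{2}^{2}$, hence $\mathcal{B}=2/\|Q\|_{2}^{2}$) are all accurate.

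One small point should be added to fully match the statement. The lemma asserts $\mathcal{B}=2/\|Q\|_{2}^{2}$ where $Q$ is \emph{a positive ground state} of $-\Delta u+3u=2(|x|^{-1}\ast|u|^{2})u$, whereas your $Q$ is a priori only a maximiser of $W$ that happens to solve this equation. The gap is closed by the very identities you derived: every nontrivial solution $v$ of the normalised equation satisfies $\|\nabla v\|_{2}^{2}=\|v\|_{2}^{2}$ and $D(v)=2\|v\|_{2}^{2}$, hence $W(v)=2/\|v\|_{2}^{2}$ and its action equals $\|v\|_{2}^{2}$. Since $W(v)\le\mathcal{B}=2/\|Q\|_{2}^{2}$, your maximiser has least $L^{2}$-norm, equivalently least action, among all nontrivial solutions; so it \emph{is} a ground state, and any other ground state has the same $L^{2}$-norm, which makes the formula for $\mathcal{B}$ independent of the choice of ground state, exactly as the lemma is phrased.
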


By using inequality (\ref{e2-1}), we have
\begin{eqnarray*}
\int_{\mathbb{R}^{3}}(|u|^{p}+|v|^{p})dx &\leq &\mathcal{S}_{p}\left( \Vert
u\Vert _{2}^{\frac{6-p}{2}}\Vert \nabla u\Vert _{2}^{\frac{3(p-2)}{2}}+\Vert
v\Vert _{2}^{\frac{6-p}{2}}\Vert \nabla v\Vert _{2}^{\frac{3(p-2)}{2}}\right)
\notag \\
&\leq &\mathcal{S}_{p}\left( \Vert u\Vert _{2}^{\frac{6-p}{2}}+\Vert v\Vert
_{2}^{\frac{6-p}{2}}\right) \left( \Vert \nabla u\Vert _{2}^{\frac{3(p-2)}{2}%
}+\Vert \nabla v\Vert _{2}^{\frac{3(p-2)}{2}}\right)  \notag \\
&\leq &4\mathcal{S}_{p}\left( \Vert u\Vert _{2}^{2}+\Vert v\Vert
_{2}^{2}\right) ^{\frac{6-p}{4}}A(u,v)^{\frac{3(p-2)}{4}},
\label{e2-3-1}
\end{eqnarray*}%
and then
\begin{eqnarray}
C(u,v) &\leq &(1+\beta )\int_{\mathbb{R}^{3}}(|u|^{p}+|v|^{p})dx  \notag \\
&\leq &4\mathcal{S}_{p}(1+\beta )\left( \Vert u\Vert _{2}^{2}+\Vert v\Vert
_{2}^{2}\right) ^{\frac{6-p}{4}}A(u,v)^{\frac{3(p-2)}{4}}.
\label{e2-3}
\end{eqnarray}

\begin{lemma}[Hardy-Littlewood-Sobolev inequality \protect\cite{LL}]\label{L2-10}
Assume that $1<a<b<+\infty$ is such that $\frac{1}{a}+\frac{1}{b}=\frac{5}{3}.$ Then there exists $\mathcal{C}_{HLS}>0$ such that
\begin{equation*}
\int_{\mathbb{R}^{3}}\int_{\mathbb{R}^{3}}
\frac{|f(x)||g(y)|}{|x-y|}\leq \mathcal{C}_{HLS}
\|f\|_{a}\|g\|_{b},\quad \forall f\in L^{a}(\mathbb{R}^{3}), g\in L^{b}(\mathbb{R}^{3}).
\end{equation*}
\end{lemma}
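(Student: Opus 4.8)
The plan is to deduce the inequality from the mapping properties of the Riesz potential on Lebesgue spaces, via the elementary maximal-function method of Hedberg, which yields the existence of $\mathcal{C}_{HLS}$ without any appeal to its sharp value. By duality it suffices to control the convolution operator $Tf := |\cdot|^{-1} * f$. Indeed, writing the double integral as $\int_{\mathbb{R}^3} |g(y)|\,(T|f|)(y)\,dy$ and applying H\"older's inequality with the conjugate pair $(b,b')$ gives the bound $\|g\|_b\,\|\,T|f|\,\|_{b'}$, so the whole statement reduces to the single-operator estimate $\|Tf\|_{b'} \le C\|f\|_a$ for $f \ge 0$. Here the hypothesis $\frac1a+\frac1b=\frac53$ is precisely what makes this a scale-invariant (hence plausible) estimate: with $n=3$ the kernel $|x|^{-1}=|x|^{-(n-\alpha)}$ is the Riesz kernel of order $\alpha=2$, and $\frac1{b'}=\frac1a-\frac{\alpha}{n}=\frac1a-\frac23$ is exactly the gain of integrability dictated by the constraint. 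Note also that $1<a<b<\infty$ together with $\frac1a+\frac1b=\frac53$ forces $a\in(1,3/2)$, a fact I will use below.

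First I would prove the pointwise \emph{Hedberg estimate}. Fix $x$ and $R>0$ and split $Tf(x)=\int_{|x-y|<R}\frac{f(y)}{|x-y|}\,dy+\int_{|x-y|\ge R}\frac{f(y)}{|x-y|}\,dy$. The near part is estimated by decomposing the ball $|x-y|<R$ into dyadic annuli $2^{-j-1}R\le|x-y|<2^{-j}R$, on each of which $|x-y|^{-1}\le (2^{-j-1}R)^{-1}$ and the annular average of $f$ is bounded by the Hardy--Littlewood maximal function $Mf(x)$; summing the resulting geometric series in $j$ gives a bound $C\,R^{2}\,Mf(x)$. The far part is estimated by H\"older's inequality with exponents $(a,a')$, using that $\int_{|x-y|\ge R}|x-y|^{-a'}\,dy = C\,R^{3-a'}$ converges precisely because $a<3/2$ (equivalently $a'>3$); this yields $C\,R^{2-3/a}\,\|f\|_a$. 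Choosing $R$ to balance the two terms, i.e. $R^{3/a}\sim \|f\|_a/Mf(x)$, produces the interpolated pointwise bound $Tf(x)\le C\,(Mf(x))^{1-2a/3}\,\|f\|_a^{2a/3}$, where the exponent $1-2a/3$ is positive exactly because $a<3/2$.

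It then remains to integrate. Raising the pointwise bound to the power $b'$ and integrating, one gets $\|Tf\|_{b'}^{b'}\le C\,\|f\|_a^{2ab'/3}\int_{\mathbb{R}^3}(Mf)^{(1-2a/3)b'}\,dx$. A direct check using $\frac1{b'}=\frac1a-\frac23$ shows $(1-2a/3)b'=a$, so the integral equals $\|Mf\|_a^a$, which by the Hardy--Littlewood maximal inequality (valid since $a>1$) is at most $C\|f\|_a^a$; the total power of $\|f\|_a$ then sums to $b'$, giving $\|Tf\|_{b'}\le C\|f\|_a$. Combined with the H\"older pairing of the first step, this is the claimed inequality, with $\mathcal{C}_{HLS}$ the product of the constants so obtained.

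The main obstacle, and the reason the endpoints $a=1$ and $b=1$ are excluded, is the use of the Hardy--Littlewood maximal inequality in the last step, which fails on $L^1$: the whole strategy is designed to transfer the non-integrable singularity of $|x|^{-1}$ onto the maximal operator, where it can be absorbed only in the range $a>1$. One must also track the exponent arithmetic carefully so that the powers of $Mf$ and of $\|f\|_a$ align, and verify the far-field integrability condition $a<3/2$ (automatic here from the constraint on $a,b$). An alternative route is to symmetrize via the Riesz rearrangement inequality, reducing to radially decreasing $f,g$ and estimating by hand; this also gives existence of $\mathcal{C}_{HLS}$ and is the natural path toward its sharp value, but the rearrangement bookkeeping is at least as involved.
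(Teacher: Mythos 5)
Your proposal is correct, but it is worth saying up front that the paper does not prove this lemma at all: it is quoted as a classical result with a citation to Lieb--Loss \cite{LL}, whose own proof goes through the Riesz rearrangement inequality (the alternative route you mention at the end) and is geared toward identifying the sharp constant. Your Hedberg-style argument is therefore a genuinely different, self-contained route, and the details check out: duality and H\"older reduce the statement to the Riesz potential bound $\|Tf\|_{b'}\le C\|f\|_{a}$ with $\tfrac{1}{b'}=\tfrac{1}{a}-\tfrac{2}{3}$; the dyadic near/far splitting gives the pointwise bound $Tf(x)\le C\,(Mf(x))^{1-2a/3}\|f\|_{a}^{2a/3}$ (the far-field convergence needs $a<3/2$, which indeed follows from the hypotheses, since $a<b$ forces $a<6/5$); and the exponent identities $(1-2a/3)b'=a$ and $\tfrac{2ab'}{3}+a=b'$ are exactly right, so the maximal inequality on $L^{a}$ (legitimate since $a>1$) closes the estimate. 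What each approach buys: your method produces only the existence of $\mathcal{C}_{HLS}$, but that is all this paper ever uses --- the constant enters purely qualitatively in the profile-decomposition estimates (Proposition \ref{P3-18}) and in the local well-posedness argument (Theorem \ref{t8}) --- whereas the rearrangement proof of \cite{LL} is what one would need for the sharp constant, which is never required here. A small bonus of your argument: it nowhere uses the strict inequality $a<b$, only $1<a,b<\infty$ and the scaling relation $\tfrac1a+\tfrac1b=\tfrac53$; this matters because the paper actually applies the lemma with $a=b=6/5$ (e.g.\ bounding $\int_{\mathbb{R}^{3}}\int_{\mathbb{R}^{3}}|z|^{2}|z|^{2}|x-y|^{-1}dxdy$ by $\mathcal{C}_{HLS}\|z\|_{12/5}^{4}$), a case formally outside the stated hypothesis but fully covered by your proof.
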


\begin{lemma}
\label{L2-3} Let $\beta >0$ and $g_{\beta }(s):=s^{\frac{p}{2}}+(1-s)^{\frac{p%
}{2}}+2\beta s^{\frac{p}{4}}(1-s)^{\frac{p}{4}}$ for $s\in \lbrack 0,1]$.%
\newline
$(i)$ If $2<p<4$, then there exists $s_{\beta }\in (0,1)$ such that $%
g_{\beta }(s_{\beta })=\max_{s\in \lbrack 0,1]}g_{\beta }(s)>1$. In
particular, if $\beta \geq \frac{p-2}{2}$, then $s_{\beta }=\frac{1}{2}.$%
\newline
$(ii)$ If $4\leq p<6$ and $\beta >2^{\frac{p-2}{2}}$, then there exists $%
s_{\beta }=\frac{1}{2}$ such that $g_{\beta }(\frac{1}{2})=\max_{s\in
\lbrack 0,1]}g_{\beta }(s)>1$.
\end{lemma}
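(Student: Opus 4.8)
The plan is to regard $g_\beta$ as a one–variable function on the compact interval $[0,1]$ and to exploit its symmetry $g_\beta(1-s)=g_\beta(s)$, so that it suffices to study $s\in[0,\tfrac12]$. I would first record the boundary and midpoint data: $g_\beta(0)=g_\beta(1)=1$ and
\[
g_\beta(1/2)=2\cdot 2^{-p/2}+2\beta\,2^{-p/2}=2^{1-p/2}(1+\beta),
\]
and note that $s=\tfrac12$ is always a critical point, since $g_\beta'(1/2)=0$ by symmetry. Continuity on a compact set already yields a maximiser $s_\beta$; the real content is to decide whether $\max g_\beta>1$ and whether $s_\beta=\tfrac12$.

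For part $(i)$ ($2<p<4$) the strict inequality $\max g_\beta>1$ comes from a local expansion at the endpoint. Writing $g_\beta(s)-1=s^{p/2}+\big((1-s)^{p/2}-1\big)+2\beta\,s^{p/4}(1-s)^{p/4}$ and using $(1-s)^{p/2}-1=-\tfrac p2 s+O(s^2)$, the competing powers are $s^{p/2}$, $s$ and $s^{p/4}$; since $2<p<4$ forces $p/4<1<p/2$, the term $2\beta s^{p/4}$ has the smallest exponent and dominates as $s\to0^+$, so $g_\beta(s)>1$ for small $s>0$. As this exceeds the common endpoint value, the maximiser is interior, $s_\beta\in(0,1)$, with $g_\beta(s_\beta)>1$. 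To locate $s_\beta$ at $\tfrac12$ under $\beta\ge\tfrac{p-2}{2}$ I would differentiate,
\[
g_\beta'(s)=\tfrac p2\big[s^{p/2-1}-(1-s)^{p/2-1}\big]+\tfrac{\beta p}{2}\,[s(1-s)]^{p/4-1}(1-2s),
\]
and observe that on $(0,\tfrac12)$ the first bracket is negative and the second factor positive, so with $\varphi(s):=\big[(1-s)^{p/2-1}-s^{p/2-1}\big]\big/\big([s(1-s)]^{p/4-1}(1-2s)\big)$ one has $g_\beta'(s)>0$ iff $\beta>\varphi(s)$. A Taylor expansion gives $\varphi(s)\to\tfrac{p-2}{2}$ as $s\to\tfrac12^-$, and the key is the one–variable bound $\varphi(s)<\tfrac{p-2}{2}$ on $(0,\tfrac12)$. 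After the substitution $t=(1-s)/s>1$ and $a=p/2-1\in(0,1)$ this becomes $\frac{t^{a}-1}{t-1}<a\,t^{(a-1)/2}$, which by the mean value theorem reduces to showing that the point $\xi\in(s,1-s)$ with $\big((1-s)^a-s^a\big)/(1-2s)=a\xi^{a-1}$ satisfies $\xi>\sqrt{s(1-s)}$; I would verify this by elementary calculus. Granting it, $g_\beta$ is strictly increasing on $[0,\tfrac12]$ and, by symmetry, strictly decreasing on $[\tfrac12,1]$, whence $s_\beta=\tfrac12$.

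Part $(ii)$ ($4\le p<6$, $\beta>2^{(p-2)/2}$) must be handled by a direct comparison, because for $p>4$ one has $\varphi(s)\to+\infty$ as $s\to0^+$ and genuine interior local minima appear, so the ``unique critical point'' reasoning breaks down. Instead I would prove $g_\beta(s)\le g_\beta(1/2)$ outright: rearranged it reads $2\beta\big(2^{-p/2}-[s(1-s)]^{p/4}\big)\ge s^{p/2}+(1-s)^{p/2}-2^{1-p/2}$, where both sides are nonnegative by $s(1-s)\le\tfrac14$ and by convexity of $x\mapsto x^{p/2}$. Dividing, the needed condition is $2\beta\ge\sup_s\Psi(s)$ with $\Psi(s)=\big(s^{p/2}+(1-s)^{p/2}-2^{1-p/2}\big)\big/\big(2^{-p/2}-[s(1-s)]^{p/4}\big)$; one computes $\Psi(0^+)=2^{p/2}-2$ and $\Psi(1/2^-)=p-2$, and since $\sup\Psi=\Psi(0^+)$ the requirement is $\beta\ge 2^{(p-2)/2}-1$, comfortably met by $\beta>2^{(p-2)/2}$. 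Finally $g_\beta(1/2)=2^{1-p/2}(1+\beta)>1$ is immediate from $1+\beta>2^{(p-2)/2}$.

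The main obstacle in both parts is the same: proving that $s=\tfrac12$ is the \emph{global} maximiser, i.e. the sharp monotonicity-type bounds on $\varphi$ (part $(i)$) and on $\Psi$ (part $(ii)$) that pin the thresholds $\tfrac{p-2}{2}$ and $2^{(p-2)/2}$. I expect establishing that $\sup\Psi$ is attained at the endpoint $s=0$ to be the most delicate step, since near $p=4$ the ratio $\Psi$ degenerates to the constant $2$ and the behaviour is genuinely borderline.
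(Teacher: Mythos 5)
The paper itself offers no argument for this lemma (it defers entirely to \cite[Lemma 2.4]{DMS}), so your proposal stands or falls as a self-contained proof. Its skeleton is correct, and all the quantitative markers are right: $g_\beta(0)=g_\beta(1)=1$, $g_\beta(1/2)=2^{1-p/2}(1+\beta)$, the formula for $g_\beta'$, the endpoint expansion showing that for $2<p<4$ the coupling term $2\beta s^{p/4}$ dominates and forces an interior maximizer with value $>1$, and the limits $\varphi(1/2^-)=\tfrac{p-2}{2}$, $\Psi(0^+)=2^{p/2}-2$, $\Psi(1/2^-)=p-2$, which recover exactly the thresholds in the statement. Part (i) is essentially complete: its one deferred inequality, $\varphi(s)<\tfrac{p-2}{2}$ on $(0,\tfrac12)$, equivalently $\tfrac{t^{a}-1}{t-1}<a\,t^{(a-1)/2}$ for $t>1$ and $a=\tfrac p2-1\in(0,1)$, really is elementary: setting $t=e^{2x}$ it becomes $\sinh(ax)<a\sinh x$, which holds because $a\sinh x-\sinh(ax)$ vanishes at $x=0$ and has derivative $a(\cosh x-\cosh ax)>0$ for $x>0$.

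Part (ii), by contrast, has a genuine gap: everything hinges on the claim $\sup_{s}\Psi(s)=\Psi(0^{+})$, and computing the two values $\Psi(0^{+})$ and $\Psi(1/2^{-})$ does not locate a supremum. This claim is not a technical footnote; it \emph{is} part (ii), since $2\beta\ge\sup\Psi$ is precisely the statement that $s=\tfrac12$ is a global maximizer. Written out, $\sup\Psi\le 2^{p/2}-2$ means
\begin{equation*}
s^{p/2}+(1-s)^{p/2}+\bigl(2^{p/2}-2\bigr)\bigl[s(1-s)\bigr]^{p/4}\le 1\quad\text{on }[0,1],
\end{equation*}
i.e., after homogenization ($s=u/(u+v)$, $q=p/4\in[1,\tfrac32)$),
\begin{equation*}
(u+v)^{2q}\ \ge\ u^{2q}+v^{2q}+\bigl(4^{q}-2\bigr)(uv)^{q}\quad\text{for all }u,v\ge0.
\end{equation*}
This inequality is true, so your route can be completed, but it is not routine: equality holds both at $u=v$ and when $uv=0$, so any proof must be sharp at two different places; the naive superadditivity bound $(A+B)^{q}\ge A^{q}+B^{q}$ only produces the constant $2^{q}$, which is strictly smaller than $4^{q}-2$ for $q>1$; at $q=1$ the inequality is an identity and at $q=3/2$ it reduces to $3t(\sqrt t-1)^{2}\ge0$ (with $v=1$, $u=t$), but for intermediate $q$ one needs a genuine argument (e.g., set $u/v=e^{2x}$ and work with hyperbolic functions). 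As it stands, part (ii) asserts rather than proves its crux. One mitigating remark: for what the paper actually uses downstream (Lemma \ref{L2-4} only needs \emph{some} interior $s$ with $g_\beta(s)>1$), the one-line computation $g_\beta(1/2)=2^{1-p/2}(1+\beta)>1$, valid already for $\beta>2^{(p-2)/2}-1$, would suffice; it is the stronger assertion $s_\beta=\tfrac12$ in the statement that forces the hard inequality.
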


\begin{proof}
The proof is similar to the argument in \cite[Lemma 2.4]{DMS}, and we omit
it here.
\end{proof}

\begin{lemma}
\label{L2-4} Let $2<p<4$ and $\beta >0,$ or $4\leq p<6$ and $\beta >2^{\frac{%
p-2}{2}}$. Then for each $\omega \in H^{1}(\mathbb{R}^{3})\backslash \{0\}$,
there exists $s_{\omega }\in (0,1)$ such that
\begin{equation*}
I_{\gamma ,\beta }\left( \sqrt{s_{\omega }}\omega ,\sqrt{1-s_{\omega }}%
\omega \right) <I_{\gamma ,\beta }(\omega ,0)=I_{\gamma ,\beta }(0,\omega ).
\end{equation*}
\end{lemma}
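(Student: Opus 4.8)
The plan is to reduce the claimed inequality to the scalar maximization property of the function $g_\beta$ from Lemma \ref{L2-3}, by exploiting the fact that the test pair $(\sqrt{s}\,\omega,\sqrt{1-s}\,\omega)$ is built from a single profile $\omega$. First I would evaluate each of the three functionals $A$, $B$, $C$ entering $I_{\gamma,\beta}$ on this pair. For the kinetic term, $A(\sqrt{s}\,\omega,\sqrt{1-s}\,\omega)=s\|\nabla\omega\|_2^2+(1-s)\|\nabla\omega\|_2^2=\|\nabla\omega\|_2^2$, so it does not depend on $s$. For the Coulomb term, the decisive observation is that $B$ depends on $(u,v)$ only through the density $u^2+v^2$; since $(\sqrt{s}\,\omega)^2+(\sqrt{1-s}\,\omega)^2=\omega^2$, the value $B(\sqrt{s}\,\omega,\sqrt{1-s}\,\omega)=B(\omega,0)$ is likewise independent of $s$.

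Hence the only $s$-dependence sits in the nonlinear term. A direct computation gives $C(\sqrt{s}\,\omega,\sqrt{1-s}\,\omega)=g_\beta(s)\,\|\omega\|_p^p$, where $g_\beta(s)=s^{p/2}+(1-s)^{p/2}+2\beta s^{p/4}(1-s)^{p/4}$ is exactly the function studied in Lemma \ref{L2-3}. Evaluating at the semitrivial pair, $g_\beta(1)=g_\beta(0)=1$, so that $C(\omega,0)=C(0,\omega)=\|\omega\|_p^p$. Subtracting, all the $s$-independent pieces cancel and I obtain the clean identity
\[
I_{\gamma,\beta}(\sqrt{s}\,\omega,\sqrt{1-s}\,\omega)-I_{\gamma,\beta}(\omega,0)=-\frac{1}{p}\bigl(g_\beta(s)-1\bigr)\|\omega\|_p^p .
\]

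Since $\omega\neq 0$ forces $\|\omega\|_p^p>0$, the right-hand side is strictly negative precisely when $g_\beta(s)>1$. Under the stated hypotheses, Lemma \ref{L2-3} supplies an $s_\beta\in(0,1)$ with $g_\beta(s_\beta)=\max_{s\in[0,1]}g_\beta(s)>1$, so setting $s_\omega:=s_\beta$ yields the desired strict inequality. The equality $I_{\gamma,\beta}(\omega,0)=I_{\gamma,\beta}(0,\omega)$ then follows at once from the invariance of $A$, $B$ and $C$ under interchange of the two components. There is no genuine obstacle in this argument: the entire proof rests on the density-only dependence of the kinetic and Coulomb terms, which makes them constant along the curve $s\mapsto(\sqrt{s}\,\omega,\sqrt{1-s}\,\omega)$, combined with the already-established maximization property of $g_\beta$. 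The only point demanding a line of care is the verification that $B$ is a functional of $u^2+v^2$ alone, which is immediate from the definition $\phi_{u,v}=\int_{\mathbb{R}^3}\frac{u^2(y)+v^2(y)}{|x-y|}\,dy$.
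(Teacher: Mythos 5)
Your proof is correct and follows essentially the same route as the paper's: compute $A$ and $B$ on the pair $(\sqrt{s}\,\omega,\sqrt{1-s}\,\omega)$ and observe they are independent of $s$ (the Coulomb term because it depends only on the density $u^2+v^2=\omega^2$), reduce the nonlinear term to $g_\beta(s)\|\omega\|_p^p$, and invoke Lemma \ref{L2-3} to get $s$ with $g_\beta(s)>1$. The only cosmetic difference is that you isolate the difference of energies as a clean identity and note explicitly that the optimal $s_\omega$ is in fact the $s_\beta$ of Lemma \ref{L2-3}, independent of $\omega$; the paper's argument is the same computation written out termwise.
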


\begin{proof}
Let $(u,v)=(\sqrt{s}\omega ,\sqrt{1-s}\omega )$ for $\omega \in H^{1}(%
\mathbb{R}^{3})\backslash \{0\}$ and $s\in \lbrack 0,1]$. A direct
calculation shows that
\begin{equation*}
A(u,v)=s\Vert \nabla \omega \Vert _{2}^{2}+(1-s)\Vert \nabla \omega \Vert
_{2}^{2}=\Vert \nabla \omega \Vert _{2}^{2}
\end{equation*}%
and
\begin{equation*}
B(u,v)=\int_{\mathbb{R}^{3}}\phi _{\sqrt{s}\omega ,\sqrt{1-s}\omega
}(s\omega ^{2}+(1-s)\omega ^{2})dx=\int_{\mathbb{R}^{3}}\phi _{\omega
}\omega ^{2}dx.
\end{equation*}%
Moreover, by Lemma \ref{L2-3}, there exists $s_{\omega }\in (0,1)$ such that
\begin{equation*}
C(u,v)=\left[ s_{\omega }^{\frac{p}{2}}+\left( 1-s_{\omega }\right) ^{\frac{p%
}{2}}+2\beta s_{\omega }^{\frac{p}{4}}\left( 1-s_{\omega }\right) ^{\frac{p}{%
4}}\right] \int_{\mathbb{R}^{3}}|\omega |^{p}dx>\int_{\mathbb{R}^{3}}|\omega
|^{p}dx.
\end{equation*}%
Then we have
\begin{eqnarray*}
&&I_{\gamma ,\beta }\left( \sqrt{s_{\omega }}\omega ,\sqrt{1-s_{\omega }}%
\omega \right) \\
&=&\frac{1}{2}\Vert \nabla \omega \Vert _{2}^{2}+\frac{\lambda }{4}\int_{%
\mathbb{R}^{3}}\phi _{\omega }\omega ^{2}dx-\frac{1}{p}\left[ s_{\omega }^{%
\frac{p}{2}}+\left( 1-s_{\omega }\right) ^{\frac{p}{2}}+2\beta s_{\omega }^{%
\frac{p}{4}}\left( 1-s_{\omega }\right) ^{\frac{p}{4}}\right] \int_{\mathbb{R%
}^{3}}|\omega |^{p}dx \\
&<&\frac{1}{2}\Vert \nabla \omega \Vert _{2}^{2}+\frac{\lambda }{4}\int_{%
\mathbb{R}^{3}}\phi _{\omega }\omega ^{2}dx-\frac{1}{p}\int_{\mathbb{R}%
^{3}}|\omega |^{p}dx \\
&=&I_{\gamma ,\beta }(\omega ,0)=I_{\gamma ,\beta }(0,\omega ).
\end{eqnarray*}%
The proof is complete.
\end{proof}

Next, we show Brezis-Lieb type results for two coupled terms in the energy
functional.

\begin{lemma}[Brezis-Lieb Lemma]
\label{L2-5} If $(u_{n},v_{n})\rightharpoonup (u,v)$ in $\mathbf{H}$, then
\begin{equation*}
B(u_{n},v_{n})=B(u,v)+B(u_{n}-u,v_{n}-v)+o_{n}(1)
\end{equation*}%
and
\begin{equation*}
\int_{\mathbb{R}^{3}}|u_{n}|^{\frac{p}{2}}|v_{n}|^{\frac{p}{2}}dx=\int_{%
\mathbb{R}^{3}}|u|^{\frac{p}{2}}|v|^{\frac{p}{2}}dx+\int_{\mathbb{R}%
^{3}}|u_{n}-u|^{\frac{p}{2}}|v_{n}-v|^{\frac{p}{2}}dx+o_{n}(1),
\end{equation*}%
where $o_{n}(1)\rightarrow 0$ as $n\rightarrow \infty $.
\end{lemma}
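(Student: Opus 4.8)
The plan is to treat the two decompositions separately, writing $w_n := u_n - u$ and $z_n := v_n - v$, so that $w_n \rightharpoonup 0$ and $z_n \rightharpoonup 0$ in $H^1(\mathbb{R}^3)$ and, by the Rellich--Kondrachov theorem, $w_n \to 0$ and $z_n \to 0$ strongly in $L^q_{loc}(\mathbb{R}^3)$ for $2 \le q < 6$ (hence a.e.\ along a subsequence). For the Coulomb term I would view $B$ as the symmetric bilinear form $\langle f, g\rangle := \int_{\mathbb{R}^3}\int_{\mathbb{R}^3}\frac{f(x)g(y)}{|x-y|}\,dx\,dy$, so that $B(u,v) = \langle u^2+v^2,\, u^2+v^2\rangle$. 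By the Hardy--Littlewood--Sobolev inequality (Lemma \ref{L2-10}) with $a = b = \tfrac{6}{5}$ (note $\tfrac{5}{6}+\tfrac{5}{6} = \tfrac{5}{3}$) one has $|\langle f,g\rangle| \le \mathcal{C}_{HLS}\|f\|_{6/5}\|g\|_{6/5}$, and since $H^1(\mathbb{R}^3)\hookrightarrow L^{12/5}(\mathbb{R}^3)$ every squared density appearing below is bounded in $L^{6/5}$.

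For the first identity, write $\rho_n := u_n^2+v_n^2$ and $\rho := u^2+v^2$, and decompose $\rho_n - \rho = \tilde\rho_n + \eta_n$ with $\tilde\rho_n := w_n^2 + z_n^2$ and $\eta_n := 2(uw_n + vz_n)$. Expanding $B(u_n,v_n) = \langle\rho_n,\rho_n\rangle$ bilinearly produces $B(u,v) + \langle\tilde\rho_n,\tilde\rho_n\rangle + \langle\eta_n,\eta_n\rangle + 2\langle\rho,\tilde\rho_n\rangle + 2\langle\rho,\eta_n\rangle + 2\langle\tilde\rho_n,\eta_n\rangle$, and since $\langle\tilde\rho_n,\tilde\rho_n\rangle = B(w_n,z_n)$ it remains to kill the four mixed terms. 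The two facts I would establish are: (a) $\eta_n \to 0$ strongly in $L^{6/5}$, proved by splitting $u = u\chi_{B_R} + u\chi_{B_R^c}$, bounding $\|u w_n\|_{6/5} \le \|u\|_{L^3(B_R)}\|w_n\|_{L^2(B_R)} + \|u\|_{L^3(B_R^c)}\|w_n\|_{2}$ and using Rellich on $B_R$ together with the smallness of the tail $\|u\|_{L^3(B_R^c)}$; and (b) $\tilde\rho_n \rightharpoonup 0$ weakly in $L^{6/5}$, proved by testing against $g \in C_c^\infty(\mathbb{R}^3)$ (dense in $L^6 = (L^{6/5})^\ast$), where $\int w_n^2 g \to 0$ by local strong $L^2$ convergence, combined with the uniform $L^{6/5}$ bound. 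With (a), HLS sends the three terms containing $\eta_n$ to zero; with (b), $\langle\rho,\tilde\rho_n\rangle \to 0$ because $\langle\rho,\cdot\rangle$ is the fixed functional $f \mapsto \int (|x|^{-1}\ast\rho)\, f$ with $|x|^{-1}\ast\rho \in L^6$. This gives the Coulomb decomposition for the whole sequence.

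For the second identity I would use a generalized Brezis--Lieb argument applied to $F(s,t):= |s|^{p/2}|t|^{p/2}$. Since $2<p<6$, $F$ is $C^1$, positively $p$-homogeneous, and satisfies $|\nabla F(\xi)| \le C|\xi|^{p-1}$; a first-order Taylor expansion plus Young's inequality then yields, for every $\epsilon > 0$, the pointwise bound $|F(\xi_n) - F(\xi_n - \xi) - F(\xi)| \le \epsilon|\xi_n - \xi|^p + C_\epsilon|\xi|^p$, with $\xi_n := (u_n,v_n)$ and $\xi := (u,v)$. Setting $W_n^\epsilon := \big(|F(\xi_n)-F(\xi_n-\xi)-F(\xi)| - \epsilon|\xi_n-\xi|^p\big)^+$, one has $0\le W_n^\epsilon \le C_\epsilon|\xi|^p \in L^1$ and $W_n^\epsilon \to 0$ a.e.\ (along the subsequence where $\xi_n\to\xi$ a.e.), so dominated convergence gives $\int W_n^\epsilon \to 0$; since $\int|\xi_n-\xi|^p$ is bounded, letting $n\to\infty$ and then $\epsilon\to 0$ yields $\int|F(\xi_n)-F(\xi_n-\xi)-F(\xi)|\to 0$, which is the claim. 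Because the weak limit $(u,v)$ is fixed, the standard subsequence-of-subsequence argument upgrades this a.e.-subsequence conclusion to convergence of the full sequence.

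The main obstacle I anticipate is the Coulomb step: one must correctly separate the genuinely cross density $\eta_n$, which vanishes in the strong $L^{6/5}$ norm, from the retained remainder $\tilde\rho_n$, whose self-interaction $\langle\tilde\rho_n,\tilde\rho_n\rangle = B(w_n,z_n)$ is kept and does \emph{not} vanish. The delicate point is $\langle\rho,\tilde\rho_n\rangle$, since $\tilde\rho_n$ converges only weakly (not strongly) in $L^{6/5}$; it is precisely the pairing against the fixed potential $|x|^{-1}\ast\rho \in L^6$ that forces this term to zero.
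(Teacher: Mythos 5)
Your proof is correct, but it follows a genuinely different route from the paper's. The paper's own proof is a two-line reduction to the literature via a complexification trick: writing $z:=u+iv$, one has $B(u,v)=\int_{\mathbb{R}^{3}}\phi_{z}|z|^{2}dx$, so the Coulomb splitting follows from the scalar Hartree-type Brezis--Lieb lemma of \cite[Lemma 2.4]{MS1}, while the cross-power splitting is obtained by applying \cite[Theorem 2]{BL} to $j(u+iv):=|u|^{p/2}|v|^{p/2}$. You instead give a self-contained argument: for the Coulomb term, a bilinear expansion of the form $\langle f,g\rangle$ with the four error terms killed by Hardy--Littlewood--Sobolev, using strong $L^{6/5}$ convergence of the cross density $\eta_{n}=2(uw_{n}+vz_{n})$ and weak $L^{6/5}$ convergence of $\tilde\rho_{n}=w_{n}^{2}+z_{n}^{2}$ (this is essentially the interior of the cited scalar lemma, adapted directly to the vector densities rather than routed through $|z|^{2}=u^{2}+v^{2}$); for the power term, a re-derivation of the generalized Brezis--Lieb lemma for $F(s,t)=|s|^{p/2}|t|^{p/2}$ via the pointwise bound $|F(\xi_{n})-F(\xi_{n}-\xi)-F(\xi)|\le\epsilon|\xi_{n}-\xi|^{p}+C_{\epsilon}|\xi|^{p}$ and dominated convergence. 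The paper's route buys brevity; yours buys transparency --- in particular, your Young-type bound (valid since $p>2$ makes $F$ of class $C^{1}$ with $|\nabla F(\xi)|\le C|\xi|^{p-1}$) is exactly the structural hypothesis of \cite[Theorem 2]{BL} that the paper's citation leaves for the reader to verify, and you also make explicit the upgrade from the a.e.-convergent subsequence to the full sequence, which the citation-based proof absorbs silently.
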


\begin{proof}
We observe that
\begin{equation*}
B(u,v)=\int_{\mathbb{R}^{3}}\phi _{u,v}(|u|^{2}+|v|^{2})dx=\int_{\mathbb{R}%
^{3}}\phi _{z}|z|^{2}dx,
\end{equation*}%
where $z:=u+iv$ and $i$ is the imaginary unit. The first equality can be
proved by adopting the method in \cite[Lemma 2.4]{MS1}. The second one is a
direct consequence of \cite[Theorem 2]{BL} by taking $j:\mathbb{C}%
\rightarrow \mathbb{C}$ defined by $j(u+iv):=|u|^{\frac{p}{2}}|v|^{\frac{p}{2%
}}$ in that theorem. The proof is complete.
\end{proof}

Finally, we give the following Pohozaev identity, and we refer to \cite{DMS}
for the proof.

\begin{lemma}
\label{L2-6} Let $(u,v)$ be a weak solution to system (\ref{e1-3}). Then it
satisfies the Pohozaev identity
\begin{equation*}
\frac{1}{2}A(u,v)+\frac{3\lambda }{2}\int_{\mathbb{R}^{N}}(u^{2}+v^{2})dx+%
\frac{5\gamma }{4}B(u,v)=\frac{3}{p}C(u,v).
\end{equation*}%
Furthermore, it holds
\begin{equation}
\mathcal{P}_{\gamma ,\beta }(u,v):=A(u,v)+\frac{\gamma }{4}B(u,v)-\frac{%
3(p-2)}{2p}C(u,v)=0.  \label{e2-4}
\end{equation}
\end{lemma}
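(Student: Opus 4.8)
The plan is to derive the Pohozaev identity in the classical way, by pairing the two equations of system (\ref{e1-3}) against the infinitesimal generator of dilations $x\cdot\nabla u$ (resp. $x\cdot\nabla v$), and then to combine the resulting identity with the Nehari identity so as to eliminate the Lagrange multiplier $\lambda$ and reach (\ref{e2-4}). Before any integration by parts, I would first upgrade the regularity of a weak solution. Since $u,v\in H^{1}(\mathbb{R}^{3})$, Lemma \ref{L2-10} shows that $\phi_{u,v}=|x|^{-1}\ast(u^{2}+v^{2})$ is bounded and continuous and vanishes at infinity, so the right-hand sides $|u|^{p-2}u+\beta|v|^{p/2}|u|^{p/2-2}u$ (and the analogue for $v$) together with the potential terms lie in suitable $L^{q}_{loc}$ spaces. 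A standard elliptic bootstrap then gives $(u,v)\in W^{2,q}_{loc}\cap C^{1}$ with decay of $u,v,\nabla u,\nabla v$ at infinity. This regularity and decay is precisely what makes the manipulations below rigorous: integrating over a ball $B_{R}$ and letting $R\to\infty$, all boundary integrals on $\partial B_{R}$ vanish.

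Next I would multiply the first equation by $x\cdot\nabla u$, the second by $x\cdot\nabla v$, integrate over $B_{R}$, sum, and send $R\to\infty$. Each local term is routine. Using $\int(-\Delta w)(x\cdot\nabla w)=-\tfrac{N-2}{2}\int|\nabla w|^{2}$ with $N=3$ gives $-\tfrac12 A(u,v)$ for the kinetic pair; the mass term gives $\tfrac{\lambda}{2}\int x\cdot\nabla(u^{2}+v^{2})=-\tfrac{3\lambda}{2}\int(u^{2}+v^{2})$; the pure power term gives $\tfrac1p\int x\cdot\nabla(|u|^{p}+|v|^{p})=-\tfrac3p\int(|u|^{p}+|v|^{p})$; and the cooperative term combines, via $|w|^{p/2-2}w\,(x\cdot\nabla w)=\tfrac2p\,x\cdot\nabla|w|^{p/2}$, into $\tfrac{2\beta}{p}\int x\cdot\nabla\big(|u|^{p/2}|v|^{p/2}\big)=-\tfrac{6\beta}{p}\int|u|^{p/2}|v|^{p/2}$. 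Together the two nonlinear contributions yield $-\tfrac3p C(u,v)$.

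The only delicate term is the nonlocal Coulomb one. Writing $\rho:=u^{2}+v^{2}$, its contribution is $\tfrac{\gamma}{2}\int\phi_{u,v}(x\cdot\nabla\rho)$. Integrating by parts in $x$ and then symmetrizing in $x\leftrightarrow y$ (which is where the symmetry of the kernel $|x-y|^{-1}$ enters to give $\int\rho\,(x\cdot\nabla\phi_{u,v})=-\tfrac12 B(u,v)$) one computes
$$\int_{\mathbb{R}^{3}}\phi_{u,v}(x\cdot\nabla\rho)\,dx=-3B(u,v)-\int_{\mathbb{R}^{3}}\rho\,(x\cdot\nabla\phi_{u,v})\,dx=\Big(\tfrac12-3\Big)B(u,v)=-\tfrac52 B(u,v),$$
so the Coulomb term contributes $-\tfrac{5\gamma}{4}B(u,v)$. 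Assembling all pieces, the paired identity reads $-\tfrac12 A-\tfrac{3\lambda}{2}\int(u^{2}+v^{2})-\tfrac{5\gamma}{4}B=-\tfrac3p C$, which after multiplying by $-1$ is exactly the stated Pohozaev identity.

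Finally, to obtain (\ref{e2-4}) I would pair the system against $(u,v)$ itself, giving the Nehari identity $A(u,v)+\lambda\int(u^{2}+v^{2})+\gamma B(u,v)=C(u,v)$. Solving this for $\lambda\int(u^{2}+v^{2})$ and substituting into the Pohozaev identity eliminates $\lambda$ and, after collecting terms (using $3(\tfrac12-\tfrac1p)=\tfrac{3(p-2)}{2p}$), produces $A+\tfrac{\gamma}{4}B-\tfrac{3(p-2)}{2p}C=0$, i.e. $\mathcal{P}_{\gamma,\beta}(u,v)=0$. I expect the main obstacle to be the regularity/decay step together with the rigorous vanishing of the boundary integrals as $R\to\infty$, rather than the algebra; the one genuinely system-specific computation is the symmetric integration by parts for the nonlocal term, which is what pins down the coefficient $\tfrac{5\gamma}{4}$.
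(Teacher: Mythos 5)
Your proposal is correct, and it supplies in full the argument that the paper itself omits: for this lemma the paper gives no proof at all, simply deferring to the reference \cite{DMS}, and the proof there is the same classical dilation-multiplier argument you outline. Your coefficients all check: pairing with $(x\cdot\nabla u,\,x\cdot\nabla v)$ gives $-\tfrac12 A$, $-\tfrac{3\lambda}{2}\int(u^{2}+v^{2})$, $-\tfrac{3}{p}C$ (the two power terms and the cooperative term combining exactly as you say), and the symmetrization $\int\rho\,(x\cdot\nabla\phi_{u,v})\,dx=-\tfrac12B$ pins down the nonlocal contribution $-\tfrac{5\gamma}{4}B$; eliminating $\lambda$ via the Nehari identity $A+\lambda\int(u^{2}+v^{2})+\gamma B=C$ then yields $\mathcal{P}_{\gamma,\beta}(u,v)=0$ exactly as claimed.

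One technical point to tighten: the vanishing of the boundary terms should not be made to rest on pointwise decay of $u,v,\nabla u,\nabla v$. The lemma concerns an arbitrary weak solution with an arbitrary $\lambda\in\mathbb{R}$, so exponential (or any rate of) decay is not available a priori, and mere convergence to zero at infinity does not beat the factor $R$ in boundary terms of the form $R\int_{\partial B_{R}}\bigl(|\nabla u|^{2}+\lambda u^{2}+\dots\bigr)\,dS$. The standard fix is to observe that all the densities appearing in these boundary terms are integrable over $\mathbb{R}^{3}$, so by writing their integrals in polar coordinates one finds a sequence $R_{n}\to\infty$ along which $R_{n}\int_{\partial B_{R_{n}}}(\cdots)\,dS\to0$; passing to the limit along this sequence completes the argument. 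With that substitution your proof is rigorous; the bootstrap to $W^{2,q}_{loc}\cap C^{1}$ regularity is still needed (and suffices) to justify the multiplications and local integrations by parts.
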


\section{The existence of global minimizer}

\begin{lemma}
\label{L3-1} Let $2<p<\frac{10}{3}$. Then the following statements are true.%
\newline
$(i)$ $I_{\gamma ,\beta }$ is bounded from below and coercive on $S_{c};$%
\newline
$(ii)$ $\sigma _{\gamma ,\beta }(c)$ satisfies the weak sub-additive
inequality:%
\begin{equation}
\sigma _{\gamma ,\beta }(c)\leq \sigma _{\gamma ,\beta }(c^{\prime })+\sigma
_{\gamma ,\beta }(c-c^{\prime })  \label{e2-5}
\end{equation}%
for any $0<c^{\prime }<c.$
\end{lemma}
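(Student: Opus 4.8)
The plan is to treat the two parts by different mechanisms: (i) follows from the Gagliardo--Nirenberg bound (\ref{e2-3}) together with the subcritical range of $p$, while (ii) is a concentration-at-infinity argument that hinges on the decay of the nonlocal Coulomb interaction.

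For part (i), I would first note that on $S_c$ both $\frac12 A(u,v)$ and $\frac{\gamma}{4}B(u,v)$ are nonnegative, since $\phi_{u,v}\ge 0$ gives $B(u,v)\ge 0$ and $\gamma>0$. Hence $I_{\gamma,\beta}(u,v)\ge \frac12 A(u,v)-\frac1p C(u,v)$, and inserting (\ref{e2-3}) with $\|u\|_2^2+\|v\|_2^2=c$ yields
$$I_{\gamma,\beta}(u,v)\ge \frac12 A(u,v)-\frac{4\mathcal{S}_p(1+\beta)}{p}\,c^{\frac{6-p}{4}}\,A(u,v)^{\theta},\qquad \theta:=\frac{3(p-2)}{4}.$$
The decisive observation is that $2<p<\frac{10}{3}$ forces $0<\theta<1$. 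Writing $t=A(u,v)\ge 0$, the right-hand side has the form $f(t)=\frac12 t-Kt^{\theta}$ with $K>0$ and $0<\theta<1$, which attains a finite minimum on $[0,\infty)$ and diverges to $+\infty$ as $t\to\infty$. This simultaneously gives the lower bound on $I_{\gamma,\beta}$ over $S_c$ and shows $I_{\gamma,\beta}(u,v)\to+\infty$ as $A(u,v)\to\infty$; since on $S_c$ the $H^1$-norm is controlled by $A(u,v)+c$, coercivity follows.

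For part (ii), fix $0<c'<c$ and $\varepsilon>0$, and choose near-minimizers $(u_1,v_1)\in S_{c'}$, $(u_2,v_2)\in S_{c-c'}$ with $I_{\gamma,\beta}(u_1,v_1)\le \sigma_{\gamma,\beta}(c')+\varepsilon$ and $I_{\gamma,\beta}(u_2,v_2)\le \sigma_{\gamma,\beta}(c-c')+\varepsilon$. Because $I_{\gamma,\beta}$ is continuous on $\mathbf{H}$ and the constraint is only the $L^2$-mass, after truncating and rescaling the amplitude (which perturbs energy and mass by $o(1)$) I may assume both pairs compactly supported with exact masses $c'$ and $c-c'$. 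Setting $w_y:=(u_1+u_2(\cdot-y),\,v_1+v_2(\cdot-y))$, once $|y|$ is so large that the two supports are disjoint one has $w_y\in S_c$ together with
$$A(w_y)=A(u_1,v_1)+A(u_2,v_2),\qquad C(w_y)=C(u_1,v_1)+C(u_2,v_2),$$
while the Coulomb term does not split cleanly:
$$B(w_y)=B(u_1,v_1)+B(u_2,v_2)+2\Gamma(y),\qquad \Gamma(y):=\int_{\mathbb{R}^3}\int_{\mathbb{R}^3}\frac{\rho_1(x)\,\rho_2(z-y)}{|x-z|}\,dx\,dz\ge 0,$$
with $\rho_1:=u_1^2+v_1^2$ and $\rho_2:=u_2^2+v_2^2$. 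Therefore $\sigma_{\gamma,\beta}(c)\le I_{\gamma,\beta}(w_y)=I_{\gamma,\beta}(u_1,v_1)+I_{\gamma,\beta}(u_2,v_2)+\frac{\gamma}{2}\Gamma(y)$.

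The main obstacle is precisely the nonlocal cross term $\Gamma(y)$: coming from the long-range kernel $|x-z|^{-1}$, it is nonnegative (hence of the ``wrong'' sign for an upper bound) and, in contrast to the gradient and power cross terms, is not automatically killed by disjoint supports. The point to exploit is that it still vanishes as the separation grows: with $\rho_1,\rho_2$ supported in balls of fixed radius one has $|x-z|\ge |y|/2$ on the relevant product set for $|y|$ large, so $\Gamma(y)\le \frac{2}{|y|}\|\rho_1\|_1\|\rho_2\|_1=\frac{2c'(c-c')}{|y|}\to 0$ (without compact support one splits the integral into near and far regions and estimates the near part by Lemma \ref{L2-10}). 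Sending $|y|\to\infty$ gives $\sigma_{\gamma,\beta}(c)\le \sigma_{\gamma,\beta}(c')+\sigma_{\gamma,\beta}(c-c')+2\varepsilon$, and $\varepsilon\to0$ yields (\ref{e2-5}).
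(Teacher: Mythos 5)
Your proposal is correct, and in outline it follows the same strategy as the paper: part (i) via the Gagliardo--Nirenberg bound (\ref{e2-3}) and the observation that the exponent $\tfrac{3(p-2)}{4}$ is strictly less than $1$ precisely when $p<\tfrac{10}{3}$, and part (ii) via compactly supported near-minimizers of $\sigma_{\gamma,\beta}(c')$ and $\sigma_{\gamma,\beta}(c-c')$ glued together after a large translation. The one place where you genuinely depart from the paper is also the place where you are more careful. The paper asserts the exact splitting $I_{\gamma ,\beta }(\varphi _{1}+\omega _{1}^{n},\varphi _{2}+\omega _{2}^{n})=I_{\gamma ,\beta }(\varphi _{1},\varphi _{2})+I_{\gamma ,\beta }(\omega _{1}^{n},\omega _{2}^{n})$ once the supports are disjoint; this is not literally true, because while the local terms $A$ and $C$ do split under disjoint supports, the nonlocal Coulomb term produces the strictly positive cross interaction $2\Gamma$ that you single out, and disjointness of supports does not annihilate a long-range kernel. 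Your two observations --- that $\Gamma(y)\geq 0$ has the wrong sign for the desired upper bound, and that it nevertheless obeys $\Gamma(y)\leq 2c'(c-c')/|y|$ for well-separated compact supports (or can be controlled via Lemma \ref{L2-10} in general), so that sending $|y|\to\infty$ before $\varepsilon\to 0$ recovers (\ref{e2-5}) --- constitute exactly the step that turns the paper's sketch into a complete argument. This also makes transparent why one only gets the weak inequality (\ref{e2-5}) at this stage, the strict version being obtained later (Proposition \ref{P3-3}) by a different mechanism. So your proposal is not merely correct: it repairs a small but genuine gap in the published proof.
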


\begin{proof}
$(i)$ For $(u,v)\in S_{c}$, by (\ref{e2-3}), one has
\begin{eqnarray*}
I_{\gamma ,\beta }(u,v) &\geq &\frac{1}{2}A(u,v)-\frac{1}{p}C(u,v)\geq \frac{1}{2}A(u,v)-\frac{4(1+\beta )\mathcal{S}_{p}}{p}c^{\frac{6-p}{4}%
}A(u,v)^{\frac{3(p-2)}{4}},
\end{eqnarray*}%
which implies that $I_{\gamma ,\beta }(u,v)$ is bounded from below and
coercive on $S_{c}$.\newline
$(ii)$ Following the idea of \cite{L}. According to the density of $%
C_{0}^{\infty }(\mathbb{R}^{3})$ into $H^{1}(\mathbb{R}^{3})$, for any $%
\varepsilon >0$, there exist $(\varphi _{1},\varphi _{2}),(\omega
_{1},\omega _{2})\in C_{0}^{\infty }(\mathbb{R}^{3})\times C_{0}^{\infty }(%
\mathbb{R}^{3})$ with $\Vert \varphi _{1}\Vert _{2}^{2}+\Vert \varphi
_{2}\Vert _{2}^{2}=c^{\prime }$ and $\Vert \omega _{1}\Vert _{2}^{2}+\Vert
\omega _{2}\Vert _{2}^{2}=c-c^{\prime }$ such that
\begin{equation*}
I_{\gamma ,\beta }(\varphi _{1},\varphi _{2})\leq \sigma _{\gamma ,\beta
}(c^{\prime })+\frac{\varepsilon }{2}\quad \mbox{and}\quad I_{\gamma ,\beta }(\omega _{1},\omega _{2})\leq \sigma _{\gamma ,\beta
}(c-c^{\prime })+\frac{\varepsilon }{2}.
\end{equation*}%
Denote by $(\omega _{1}^{n},\omega _{2}^{n})=(\omega _{1}(\cdot
+ne_{1}),\omega _{2}(\cdot +ne_{1})),$ where $e_{1}$ is some given unit
vector in $\mathbb{R}^{3}$. Since the distance between the supports of $%
(\varphi _{1},\varphi _{2})$ and $(\omega _{1}^{n},\omega _{2}^{n})$ is
strictly positive for $n$ large enough and goes to $+\infty $ as $%
n\rightarrow +\infty $, we deduce that $\text{supp}\varphi _{i}\cap \text{%
supp}\omega _{i}^{n}=\emptyset $ for $i=1,2$. Thus for $n$ large enough, we
have $\Vert \varphi _{1}+\omega _{1}^{n}\Vert _{2}^{2}+\Vert \varphi _{2}+\omega
_{2}^{n}\Vert _{2}^{2}=c$ and
\begin{eqnarray*}
\sigma _{\gamma ,\beta }(c) &\leq &I_{\gamma ,\beta }(\varphi _{1}+\omega
_{1}^{n},\varphi _{2}+\omega _{2}^{n}) \\
&=&I_{\gamma ,\beta }(\varphi _{1},\varphi _{2})+I_{\gamma ,\beta }(\omega
_{1}^{n},\omega _{2}^{n}) \\
&\leq &I_{\gamma ,\beta }(\varphi _{1},\varphi _{2})+I_{\gamma ,\beta
}(\omega _{1},\omega _{2}) \\
&\leq &\sigma _{\gamma ,\beta }(c^{\prime })+\sigma _{\gamma ,\beta
}(c-c^{\prime })+\varepsilon ,
\end{eqnarray*}%
where we have used the fact that $I_{\gamma ,\beta }$ are
translation-invariant. The proof is complete.
\end{proof}

\subsection{The case of $2<p<3$}

\begin{lemma}
\label{L3-2} Let $2<p<3$ and $\gamma ,\beta >0.$ Then there holds $\sigma
_{\gamma ,\beta }(c)<0$ for all $c>0$.
\end{lemma}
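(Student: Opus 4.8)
The plan is to produce, for every $c>0$, a single element of $S_c$ at which $I_{\gamma ,\beta }$ is negative. The natural device is the $L^{2}$-preserving dilation $(u_{t},v_{t}):=(t^{3/2}u(t\cdot ),t^{3/2}v(t\cdot ))$, which keeps $(u_{t},v_{t})\in S_{c}$ for $(u,v)\in S_{c}$, and for which a change of variables gives $A(u_{t},v_{t})=t^{2}A(u,v)$, $B(u_{t},v_{t})=tB(u,v)$ and $C(u_{t},v_{t})=t^{\theta }C(u,v)$ with $\theta :=\tfrac{3(p-2)}{2}$. Hence
\begin{equation*}
I_{\gamma ,\beta }(u_{t},v_{t})=\frac{t^{2}}{2}A(u,v)+\frac{\gamma t}{4}B(u,v)-\frac{t^{\theta }}{p}C(u,v)=:f(t).
\end{equation*}
The three competing exponents are $2$ (kinetic), $1$ (Coulomb) and $\theta $ (nonlinearity), and for $2<p<3$ one has $0<\theta <\tfrac{3}{2}<2$. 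The behaviour as $t\to 0^{+}$ is therefore governed by comparing $\theta $ with $1$, i.e. by whether $p<\tfrac{8}{3}$ or $p\geq \tfrac{8}{3}$; this dichotomy will split the argument into two cases.

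In the first case $2<p<\tfrac{8}{3}$ we have $\theta <1$. Fixing any $w\in C_{0}^{\infty }(\mathbb{R}^{3})\setminus\{0\}$ with $\Vert w\Vert _{2}^{2}=c$ and taking the semitrivial test pair $(u,v)=(w,0)$ (so that $C(w,0)=\Vert w\Vert _{p}^{p}>0$), the term $-\tfrac{t^{\theta }}{p}C$ dominates $\tfrac{t^{2}}{2}A+\tfrac{\gamma t}{4}B=O(t)$ as $t\to 0^{+}$, because $\theta <1$. Thus $f(t)<0$ for all small $t>0$, and since $(w_{t},0)\in S_{c}$ we conclude $\sigma _{\gamma ,\beta }(c)<0$. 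Note that here the sign of $\gamma ,\beta $ plays no role.

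The main obstacle is the second case $\tfrac{8}{3}\leq p<3$, where $\theta \geq 1$: now the positive Coulomb term $\tfrac{\gamma t}{4}B$ dominates as $t\to 0^{+}$ and the naive dilation only yields $f(t)\to 0^{+}$, so a generic profile does not work. An elementary one-variable minimisation of $f$ shows that $\min_{t>0}f(t)<0$ holds precisely when
\begin{equation*}
C(u,v)>\kappa \,A(u,v)^{\frac{3p-8}{2}}\bigl(\gamma B(u,v)\bigr)^{\frac{10-3p}{2}}
\end{equation*}
for an explicit constant $\kappa =\kappa (p)>0$ (the right-hand side is scale invariant under the dilation, consistently with the scaling of $f$). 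I would therefore construct a profile whose Coulomb energy $B$ is anomalously small compared with $A$ and $C$, using $n$ equal, weak, well-separated bumps: set $u=\sum_{i=1}^{n}n^{-1/2}\psi _{0}(\cdot -y_{i})$ and $v=0$, where $\psi _{0}\in C_{0}^{\infty }(\mathbb{R}^{3})$ with $\Vert \psi _{0}\Vert _{2}^{2}=c$ and $\mathrm{supp}\,\psi _{0}\subset B_{R_{0}}$, and the centres satisfy $|y_{i}-y_{j}|\geq d>2R_{0}$ so that the translates have disjoint supports. Then $(u,0)\in S_{c}$, the gradient and $L^{p}$ terms add up without overlap to give $A(u,0)=\Vert \nabla \psi _{0}\Vert _{2}^{2}$ (fixed) and $C(u,0)=n^{1-p/2}\Vert \psi _{0}\Vert _{p}^{p}$, while the Coulomb energy splits into $n$ self-interactions plus cross terms, yielding $B(u,0)\leq \tfrac{B_{0}}{n}+\tfrac{c^{2}}{d-2R_{0}}$ with $B_{0}:=B(\psi _{0},0)<\infty $; choosing $d$ large makes $B(u,0)\leq \tfrac{2B_{0}}{n}$.

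Plugging these into the scale-invariant ratio gives
\begin{equation*}
\frac{C(u,0)}{A(u,0)^{\frac{3p-8}{2}}(\gamma B(u,0))^{\frac{10-3p}{2}}}\gtrsim \frac{n^{1-p/2}}{n^{-\frac{10-3p}{2}}}=\mathrm{const}\cdot n^{6-2p}\longrightarrow \infty \quad (n\to \infty ),
\end{equation*}
since $6-2p>0$ for $p<3$ and $A(u,0)$ is a fixed positive constant. Hence for $n$ large the displayed criterion is met, so $\min_{t>0}f(t)<0$ and, taking the corresponding dilation $(u_{t},0)\in S_{c}$, we obtain $\sigma _{\gamma ,\beta }(c)<0$ for all $\gamma ,\beta >0$. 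The only delicate point is the bookkeeping of the Coulomb cross terms, which is routine because the bumps have fixed radius $R_{0}$ and mutual distance $d-2R_{0}$, giving each cross term the bound $(c/n)^{2}/(d-2R_{0})$ and a total of at most $c^{2}/(d-2R_{0})$; everything else is an explicit scaling computation, and the case $p=\tfrac{8}{3}$ is covered by the same construction (there $\theta =1$ and the bracket $\tfrac{\gamma }{4}B-\tfrac{1}{p}C$ becomes negative for $n$ large).
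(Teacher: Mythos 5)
Your proof is correct, but it follows a genuinely different route from the paper's. The paper uses the mass-changing scaling $(u^{s},v^{s})=(s^{2}u(sx),s^{2}v(sx))$, under which $A$ and $B$ scale like $s^{3}$ while $C$ scales like $s^{2p-3}$ with $2p-3<3$; this immediately gives $\sigma_{\gamma,\beta}(c)<0$ for all small $c$, and the negativity is then propagated to every $c>0$ by iterating the weak sub-additivity inequality $\sigma_{\gamma ,\beta }(c)\leq \sigma _{\gamma ,\beta }(c^{\prime })+\sigma _{\gamma ,\beta }(c-c^{\prime })$ already proved in Lemma \ref{L3-1}$(ii)$ — so the Coulomb term never has to be fought directly, and no case distinction in $p$ is needed. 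You instead work at the fixed mass $c$ throughout, using the $L^{2}$-preserving dilation; this forces you to confront the regime $\tfrac{8}{3}\leq p<3$, where the Coulomb term dominates as $t\to 0^{+}$, and you resolve it with a multi-bump test function whose Coulomb energy $B$ decays like $1/n$ while $A$ stays fixed and $C$ decays only like $n^{1-p/2}$, so that the scale-invariant criterion $C\gtrsim A^{\frac{3p-8}{2}}(\gamma B)^{\frac{10-3p}{2}}$ (which correctly characterizes $\min_{t}f(t)<0$, with the degenerate case $\theta=1$, i.e.\ $p=\tfrac{8}{3}$, handled separately) is eventually satisfied. Your computations — the dilation exponents, the optimization yielding the exponents $\tfrac{3p-8}{2}$ and $\tfrac{10-3p}{2}$, and the cross-term bound $c^{2}/(d-2R_{0})$ — all check out. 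What each approach buys: the paper's argument is much shorter and leans on an auxiliary lemma, but it is indirect (negativity at mass $c$ is inherited from smaller masses); yours is self-contained at fixed $c$, costs a concentration-compactness-flavoured construction, and has the side benefit of making explicit the scale-invariant quantity that reappears in the paper's definition of the threshold $\beta_{\star}$ for $3\leq p<\tfrac{10}{3}$.
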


\begin{proof}
For fixed $(u,v)\in \mathbf{H}\backslash \{(0,0)\},$ we set%
\begin{equation}
(u^{s}(x),v^{s}(x)):=(s^{2}u(sx),s^{2}v(sx))\text{ for }s>0.  \label{e2-19}
\end{equation}%
It is clear that $\left\Vert u^{s}\right\Vert _{2}^{2}+\left\Vert
v^{s}\right\Vert _{2}^{2}=s(\Vert u\Vert _{2}^{2}+\Vert v\Vert _{2}^{2})$
and
\begin{equation*}
I_{\gamma ,\beta }(u^{s},v^{s})=\frac{s^{3}}{2}A(u,v)+\frac{s^{3}\gamma }{4}%
B(u,v)-\frac{s^{2p-3}}{p}C(u,v).
\end{equation*}%
Since $2p-3<3$, it follows that $I_{\gamma ,\beta }(u^{s},v^{s})<0$ for $s>0$
sufficiently small. This implies that there exists $c_{0}>0$ such that $%
\sigma _{\gamma ,\beta }(c)<0$ for $0<c\leq c_{0}$. By taking $c\in \left(
c_{0},2c_{0}\right] $ and using (\ref{e2-5}), we get
\begin{equation*}
\sigma _{\gamma ,\beta }(c)\leq \sigma _{\gamma ,\beta }(c_{0})+\sigma
_{\gamma ,\beta }(c-c_{0})\leq \sigma _{\gamma ,\beta }(c_{0})<0,
\end{equation*}%
since $c-c_{0}\leq c_{0}$. This means that $\sigma _{\gamma ,\beta }(c)<0$
for $c\in \left( c_{0},2c_{0}\right] $. Repeating this procedure, we have $%
\sigma _{\gamma ,\beta }(c)<0$ for all $c>0$. The proof is complete.
\end{proof}

\begin{proposition}
\label{P3-3} Let $2<p<3$ and $\beta >0.$ Then for $\gamma >0$ small enough,
there holds%
\begin{equation}
\sigma _{\gamma ,\beta }(c)<\sigma _{\gamma ,\beta }(c^{\prime })+\sigma
_{\gamma ,\beta }(c-c^{\prime })\text{ for any }0<c^{\prime }<c.
\label{e2-6}
\end{equation}
\end{proposition}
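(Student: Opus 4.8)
The goal is the *strict* sub-additivity inequality \eqref{e2-6}, which (together with the weak inequality \eqref{e2-5}) is the standard device for excluding dichotomy in the concentration-compactness argument that produces the minimizer in Theorem \ref{t1}. The key structural fact I would exploit is the scaling $(u^s,v^s) = (s^2 u(sx), s^2 v(sx))$ introduced in \eqref{e2-19}: it rescales the mass linearly, $\|u^s\|_2^2 + \|v^s\|_2^2 = s(\|u\|_2^2 + \|v\|_2^2)$, while acting on the functional by
\begin{equation*}
I_{\gamma,\beta}(u^s,v^s) = \frac{s^3}{2}A(u,v) + \frac{s^3\gamma}{4}B(u,v) - \frac{s^{2p-3}}{p}C(u,v).
\end{equation*}
Since $2<p<3$ gives $2p-3 < 3$, the negative power-nonlinearity term dominates for small $s$, which is exactly why $\sigma_{\gamma,\beta}(c)<0$ (Lemma \ref{L3-2}). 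The plan is to convert this negativity into a genuine \emph{strict} scaling inequality in the mass variable $c$, and then argue that the smallness of $\gamma$ guarantees strictness cannot degenerate.

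First I would establish the scaling monotonicity of $\sigma_{\gamma,\beta}$ in $c$. Fix $\theta>1$ and take $(u,v)\in S_{c}$ with $I_{\gamma,\beta}(u,v)$ within $\varepsilon$ of $\sigma_{\gamma,\beta}(c)$. Rescaling by $s=\theta$ gives a competitor in $S_{\theta c}$, so
\begin{equation*}
\sigma_{\gamma,\beta}(\theta c) \leq I_{\gamma,\beta}(u^\theta, v^\theta) = \frac{\theta^3}{2}A + \frac{\theta^3\gamma}{4}B - \frac{\theta^{2p-3}}{p}C.
\end{equation*}
The aim is to show $\sigma_{\gamma,\beta}(\theta c) < \theta\,\sigma_{\gamma,\beta}(c)$, because the strict single-parameter inequality $\sigma_{\gamma,\beta}(\theta c) < \theta\,\sigma_{\gamma,\beta}(c)$ for all $\theta>1$ implies \eqref{e2-6} by the standard argument: writing $c = c' + (c-c')$ with $c'<c$, one has $\theta:=c/c'>1$ and $\theta':=c/(c-c')>1$, whence $\sigma_{\gamma,\beta}(c)<\theta^{-1}\cdot(\ldots)$ combines to the strict sum. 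Concretely, I would compare $\theta\,I_{\gamma,\beta}(u,v)$ against $I_{\gamma,\beta}(u^\theta,v^\theta)$ and show the difference is strictly positive: the kinetic and Coulomb terms contribute $(\theta^3-\theta)(\frac12 A + \frac{\gamma}{4}B)>0$, while the nonlinearity contributes $-\frac{1}{p}(\theta^{2p-3}-\theta)C$. Since $2p-3>1$ for $p>2$, we have $\theta^{2p-3}-\theta>0$, so the nonlinearity term has the \emph{wrong} sign and competes against the others.

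This sign competition is the main obstacle, and it is precisely where the smallness of $\gamma$ must enter. The inequality I need is
\begin{equation*}
(\theta^3-\theta)\Big(\tfrac12 A + \tfrac{\gamma}{4}B\Big) - \tfrac{1}{p}(\theta^{2p-3}-\theta)C > 0,
\end{equation*}
evaluated along a near-optimizer. For this I would use that any near-minimizer of $\sigma_{\gamma,\beta}(c)<0$ must have $\frac1p C > \frac12 A$ (from $I_{\gamma,\beta}<0$ one reads $\frac1p C > \frac12 A + \frac{\gamma}{4}B \geq \frac12 A$), which pins the ratio $C/A$ from below and lets me bound the competing terms. The role of $\gamma$ small is twofold: by Lemma \ref{L3-1}(i) coercivity gives a uniform bound on $A(u,v)$ along minimizing sequences, and by the Hartree Gagliardo--Nirenberg estimate (Lemma \ref{L2-2}) the Coulomb term $B$ is controlled by $A$ and the mass, so as $\gamma\to0^+$ the functional $I_{\gamma,\beta}$ converges (uniformly on the relevant sublevel set) to the $\gamma=0$ local NLS system, for which strict sub-additivity in the mass-subcritical regime $2<p<3<10/3$ is classical. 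Thus I would prove \eqref{e2-6} by a continuity/perturbation argument: the $\gamma=0$ functional satisfies the strict inequality with a quantitative gap, and choosing $\gamma$ small enough keeps the gap strictly positive. The delicate point is making the gap \emph{uniform} over the minimizing sequence; this is handled by the a priori $A$-bound from coercivity (Lemma \ref{L3-1}(i)) together with the negativity $\sigma_{\gamma,\beta}(c)<0$ (Lemma \ref{L3-2}), which together confine all near-optimizers to a fixed bounded region of $\mathbf{H}$ on which the $\gamma$-dependence of $I_{\gamma,\beta}$ is uniformly small.
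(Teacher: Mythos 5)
There is a genuine gap, in fact two. Your main route is to reduce (\ref{e2-6}) to the homogeneity inequality $\sigma_{\gamma,\beta}(\theta c)<\theta\sigma_{\gamma,\beta}(c)$ for $\theta>1$ (that reduction itself is correct) and to prove the latter by rescaling near-minimizers via (\ref{e2-19}). To conclude anything you need $I_{\gamma,\beta}(u^{\theta},v^{\theta})-\theta I_{\gamma,\beta}(u,v)<0$, i.e.
\begin{equation*}
(\theta^{3}-\theta)\left(\tfrac{1}{2}A(u,v)+\tfrac{\gamma}{4}B(u,v)\right)<\tfrac{1}{p}\,(\theta^{2p-3}-\theta)\,C(u,v),
\end{equation*}
whereas your displayed inequality asks for the opposite sign; but even after fixing the sign, the information you propose to use, namely $\frac{1}{p}C>\frac{1}{2}A+\frac{\gamma}{4}B$ along near-minimizers (from $I_{\gamma,\beta}<0$), is quantitatively insufficient. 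Since $2<p<3$ gives $2p-3<3$, one has $\theta^{2p-3}-\theta<\theta^{3}-\theta$ for every $\theta>1$, so what is actually required is $\frac{1}{p}C>\frac{\theta^{3}-\theta}{\theta^{2p-3}-\theta}\left(\frac{1}{2}A+\frac{\gamma}{4}B\right)$, and the factor $\frac{\theta^{3}-\theta}{\theta^{2p-3}-\theta}$ exceeds $\frac{1}{p-2}>1$ near $\theta=1$ and grows like $\theta^{6-2p}\to\infty$ as $\theta\to\infty$. Since sub-additivity with $c'\to0$ forces $\theta=c/c'\to\infty$, no a priori bound on near-minimizers can rescue this comparison; this is precisely why the paper argues this way only for $p\geq 3$ (Lemma \ref{L3-8}, where $s^{3}-s^{2p-3}\leq 0$ for $s>1$), and not in Proposition \ref{P3-3}.

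Your fallback, a perturbation from $\gamma=0$ where strict sub-additivity is classical, also has a hole: the gap you want to preserve is not uniform in the splitting parameter. Setting $\delta_{0}(c'):=\sigma_{0,\beta}(c')+\sigma_{0,\beta}(c-c')-\sigma_{0,\beta}(c)>0$, one has $\delta_{0}(c')\to0$ as $c'\to0^{+}$ (because $\sigma_{0,\beta}(c')\to0$ and $\sigma_{0,\beta}(c-c')\to\sigma_{0,\beta}(c)$), so $\inf_{0<c'<c}\delta_{0}(c')=0$ and no single small $\gamma$ preserves positivity for \emph{all} splittings by a crude $O(\gamma)$ estimate. The uniformity problem is over $c'$, not over the minimizing sequence, which is what your coercivity remark addresses. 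This degenerate regime is exactly the crux of the paper's proof (following \cite{CDSS}): after rescaling to the functionals $J^{\gamma,c}_{\rho}$, it supposes equality occurs for some $\gamma_{n}\to0$ and $\kappa_{n}\in(0,1)$, shows $\kappa_{n}\to1$, extracts minimizers converging to a $\gamma=0$ minimizer whose Lagrange multiplier satisfies $\lambda_{\infty}>0$, and reaches a contradiction by comparing the difference quotient $\frac{J^{\gamma_{n},c}_{1}-J^{\gamma_{n},c}_{\kappa_{n}}}{1-\kappa_{n}}\leq-\lambda_{\infty}<0$ with $\frac{J^{\gamma_{n},c}_{1-\kappa_{n}}}{1-\kappa_{n}}=(1-\kappa_{n})^{\frac{2(p-2)}{10-3p}}J^{\gamma_{n},(1-\kappa_{n})c}_{1}\to0$, the latter being the sublinearity of the energy of the vanishing piece (cf.\ Lemma \ref{L3-4}). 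Some argument of this type, controlling the slope of $\sigma_{\gamma,\beta}$ in the mass near a degenerate splitting, is indispensable and is absent from your proposal.
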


\begin{proof}
We follow the arguments in \cite{CDSS} by making some necessary
modifications. By Lemma \ref{L3-2}, we have proved that $\sigma _{\gamma
,\beta }(c)<0$ for all $c>0$. Let $(\varphi ,\omega )\in S_{1}$ and define
\begin{equation*}
(\varphi _{c},\omega _{c}):=(c^{\frac{2}{10-3p}}\varphi (c^{\frac{p-2}{10-3p}%
}x),c^{\frac{2}{10-3p}}\omega (c^{\frac{p-2}{10-3p}}x)).
\end{equation*}%
It is obvious that $(\varphi _{c},\omega _{c})\in S_{c}$ and
\begin{equation*}
I_{\gamma ,\beta }(\varphi _{c},\omega _{c})=\frac{c^{\frac{6-p}{10-3p}}}{2}%
A(\varphi ,\omega )+\frac{\gamma c^{\frac{18-5p}{10-3p}}}{4}B(\varphi
,\omega )-\frac{c^{\frac{6-p}{10-3p}}}{p}C(\varphi ,\omega ).
\end{equation*}%
Then we have
\begin{equation}
\sigma _{\gamma ,\beta }(c)=c^{\frac{6-p}{10-3p}}J_{1}^{\gamma ,c},
\label{e2-7}
\end{equation}%
where
\begin{equation*}
J_{\rho }^{\gamma ,c}:=\inf_{(u,v)\in S_{\rho }}\left\{ \frac{1}{2}A(u,v)+%
\frac{\gamma c^{\frac{4(3-p)}{10-3p}}}{4}B(u,v)-\frac{1}{p}C(u,v)\right\}
\text{ for }\rho >0.
\end{equation*}%
Using the same scaling argument, we also have
\begin{equation}
J_{\rho }^{\gamma ,c}=\rho ^{\frac{6-p}{10-3p}}J_{1}^{\gamma ,\rho c}.
\label{e2-8}
\end{equation}%
Take $\rho =c^{\prime }/c$, and it follows from (\ref{e2-7}) and (\ref{e2-8}%
) that (\ref{e2-6}) is equivalent to
\begin{equation}
J_{1}^{\gamma ,c}<J_{\rho }^{\gamma ,c}+J_{1-\rho }^{\gamma ,c},\quad
\forall \rho \in (0,1).  \label{e2-9}
\end{equation}%
Note that $\lim_{\gamma \rightarrow 0}J_{1}^{\gamma ,c}=J_{1}^{0,c}$.
Similar to the argument in \cite{GJ1}, for each $\kappa >0,$ $J_{\kappa
}^{0,c}$ satisfies the strict sub-additive inequality, that is,
\begin{equation}
J_{\kappa }^{0,c}<J_{\rho }^{0,c}+J_{\kappa -\rho }^{0,c},\quad \forall \rho
\in (0,\kappa ).  \label{e2-10}
\end{equation}

Next, we prove that $J_{1}^{\gamma ,c}$ satisfies (\ref{e2-9}) for $\gamma
>0 $ small enough. Assume on the contrary. Then there exist two sequences $%
\left\{ \gamma _{n}\right\} $ satisfying $\gamma _{n}\rightarrow 0$ as $%
n\rightarrow \infty $ and $\left\{ \kappa _{n}\right\} \subset (0,1)$ such
that
\begin{equation}
J_{1}^{\gamma _{n},c}=J_{\kappa _{n}}^{\gamma _{n},c}+J_{1-\kappa
_{n}}^{\gamma _{n},c}.  \label{e2-11}
\end{equation}%
Without loss of generality, we assume that $\frac{1}{2}\leq \kappa _{n}<1$.
Moreover, we conclude that $\kappa _{n}\rightarrow 1$ as $n\rightarrow
\infty $, otherwise we get a contradiction with (\ref{e2-10}). Now, we prove
the following claim:
\begin{equation}
J_{\kappa _{n}}^{\gamma _{n},c}<J_{\rho }^{\gamma _{n},c}+J_{\kappa
_{n}-\rho }^{\gamma _{n},c},\quad \forall \rho \in \left( 0,\kappa
_{n}\right) ,  \label{e2-12}
\end{equation}%
for $n$ large enough. Suppose that the claim is not true, then there exists
a sequence $\left\{ \rho _{n}\right\} $ with $\rho _{n}\in \left( \frac{1}{2}%
\kappa _{n},\kappa _{n}\right) $ such that
\begin{equation}
J_{\kappa _{n}}^{\gamma _{n},c}=J_{\rho _{n}}^{\gamma _{n},c}+J_{\kappa
_{n}-\rho _{n}}^{\gamma _{n},c}.  \label{e2-13}
\end{equation}%
It follows from (\ref{e2-11}) and (\ref{e2-13}) that
\begin{eqnarray*}
J_{\rho _{n}}^{\gamma _{n},c}+J_{1-\rho _{n}}^{\gamma _{n},c} \geq J_{1}^{\gamma _{n},c}=J_{\rho _{n}}^{\gamma _{n},c}+J_{\kappa _{n}-\rho
_{n}}^{\gamma _{n},c}+J_{1-\kappa _{n}}^{\gamma _{n},c}\geq J_{\rho _{n}}^{\gamma _{n},c}+J_{1-\rho _{n}}^{\gamma _{n},c},
\end{eqnarray*}%
where we have used the fact of $J_{1-\rho _{n}}^{\gamma _{n},c}\leq
J_{1-\kappa _{n}}^{\gamma _{n},c}+J_{1-\rho _{n}-\left( 1-\kappa _{n}\right)
}^{\gamma _{n},c}$. So we have $J_{\rho _{n}}^{\gamma _{n},c}+J_{1-\rho _{n}}^{\gamma _{n},c}=J_{1}^{\gamma
_{n},c}$. Since $\rho _{n}\geq \frac{\kappa _{n}}{2}$ and $\kappa _{n}\geq \frac{1}{2}$%
, we have $\frac{1}{4}\leq \rho _{n}<\frac{1}{2}$. Then we may assume that $%
\rho _{n}\rightarrow \rho ,$ where $\frac{1}{4}\leq \rho \leq \frac{1}{2}$.
Then we deduce that $J_{\rho }^{0,c}+J_{1-\rho }^{0,c}=J_{1}^{0,c}$ as $%
n\rightarrow \infty $, which is a contradiction with (\ref{e2-10}). Hence the
strict inequality (\ref{e2-12}) holds.

For $n$ large enough, there exists a minimizer $(u_{n},v_{n})$ of $J_{\kappa
_{n}}^{\gamma _{n},c}$ such that $\{(\kappa _{n})^{-1/2}(u_{n},v_{n})\}$ is
a minimizing sequence for $J_{1}^{0,c}$. It follows from (\ref{e2-10}) that $%
(\kappa _{n})^{-1/2}(u_{n},v_{n})\rightarrow (u_{\infty },v_{\infty })$ in $\mathbf{H}$%
, where $(u_{\infty },v_{\infty })$ is a minimizer of $J_{1}^{0,c}$. The
same holds for $\left\{ u_{n},v_{n}\right\} $, given that $\kappa
_{n}\rightarrow 1$. Without loss of generality, we may assume that $%
(u_{n},v_{n})$ and $(u_{\infty },v_{\infty })$ satisfy Euler-Lagrange
equations in $\mathbb{R}^{3},$ respectively,
\begin{equation*}
\left\{
\begin{array}{ll}
-\Delta u_{n}+\lambda _{n}u_{n}+\gamma _{n}c^{\frac{4(3-p)}{10-3p}}\phi
_{u_{n},v_{n}}u_{n}=|u_{n}|^{p-2}u_{n}+\beta |v_{n}|^{\frac{p}{2}}|u_{n}|^{%
\frac{p}{2}-2}u_{n} & \quad \text{in}\quad \mathbb{R}^{3}, \\
-\Delta v_{n}+\lambda _{n}v_{n}+\gamma _{n}c^{\frac{4(3-p)}{10-3p}}\phi
_{u_{n},v_{n}}v_{n}=|v_{n}|^{p-2}v_{n}+\beta |u_{n}|^{\frac{p}{2}}|v_{n}|^{%
\frac{p}{2}-2}u_{n} & \quad \text{in}\quad \mathbb{R}^{3},%
\end{array}%
\right.
\end{equation*}%
with $\Vert u_{n}\Vert _{2}^{2}+\Vert v_{n}\Vert _{2}^{2}=\kappa _{n},$ and
\begin{equation*}
\left\{
\begin{array}{ll}
-\Delta u_{\infty }+\lambda _{\infty }u_{\infty }=|u_{\infty
}|^{p-2}u_{\infty }+\beta |v_{\infty }|^{\frac{p}{2}}|u_{\infty }|^{\frac{p}{%
2}-2}u_{\infty } & \quad \text{in}\quad \mathbb{R}^{3}, \\
-\Delta v_{\infty }+\lambda _{\infty }v_{\infty }=|v_{\infty
}|^{p-2}v_{\infty }+\beta |u_{\infty }|^{\frac{p}{2}}|v_{\infty }|^{\frac{p}{%
2}-2}v_{\infty } & \quad \text{in}\quad \mathbb{R}^{3},%
\end{array}%
\right.
\end{equation*}%
with $\Vert u_{\infty }\Vert _{2}^{2}+\Vert v_{\infty }\Vert _{2}^{2}=1$ and
$\lambda _{\infty }>0$. Note that
\begin{equation*}
\frac{J_{1}^{\gamma _{n},c}-J_{\kappa _{n}}^{\gamma _{n},c}}{1-\kappa _{n}}=%
\frac{J_{1-\kappa _{n}}^{\gamma _{n},c}}{1-\kappa _{n}},
\end{equation*}%
and as $\kappa _{n}\rightarrow 1$, we have
\begin{equation*}
-\lambda _{\infty }\geq \frac{J_{1}^{\gamma _{n},c}-J_{\kappa _{n}}^{\gamma
_{n},c}}{1-\kappa _{n}}.
\end{equation*}%
But it follows from (\ref{e2-8}) that
\begin{equation*}
\frac{J_{1-\kappa _{n}}^{\gamma _{n},c}}{1-\kappa _{n}}=\left( 1-\kappa
_{n}\right) ^{\frac{2(p-2)}{10-3p}}J_{1}^{\gamma _{n},\left( 1-\kappa
_{n}\right) c}\rightarrow 0\text{ as }\kappa _{n}\rightarrow 1.
\end{equation*}%
Hence we arrive at a contradiction. This completes the proof.
\end{proof}

\begin{lemma}
\label{L3-10}(\cite{R,R1}) If $u$ is a radially symmetric function, then
there holds
\begin{equation*}
\Vert u\Vert _{p}^{p}\leq \mathcal{C}(p,s)\Vert u\Vert _{\dot{H}^{s}}^{\frac{%
\theta p}{2-\theta }}\left( \int_{\mathbb{R}^{3}}\int_{\mathbb{R}^{3}}\frac{%
u(x)^{2}u(y)^{2}}{|x-y|}dxdy\right) ^{\frac{(1-\theta )p}{4-2\theta }},
\end{equation*}%
where $\theta :=\frac{6-\frac{5}{2}p}{3-ps-p}$ and
$\frac{16s+2}{6s+1}<p\leq \frac{6}{3-2s}$ for $1/2<s<3/2.$ In particular, for $\frac{18}{7}<p<6$, there holds
\begin{equation}
\Vert u\Vert _{p}^{p}\leq \mathcal{C}(p)\Vert \nabla u\Vert _{2}^{\frac{5p-12%
}{3}}\left( \int_{\mathbb{R}^{3}}\int_{\mathbb{R}^{3}}\frac{u(x)^{2}u(y)^{2}%
}{|x-y|}dxdy\right) ^{\frac{6-p}{6}}.  \label{e3-23}
\end{equation}
\end{lemma}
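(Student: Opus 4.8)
The plan is to observe first that both exponents are completely determined by scaling, so that the only genuine content of the inequality is the finiteness of the constant, which is exactly where radiality enters. Testing against the two-parameter family $u\mapsto \mu\, u(\lambda\,\cdot)$ one checks that $\|u\|_p^p$, $\|u\|_{\dot H^s}$ and $D(u):=\int_{\mathbb{R}^3}\int_{\mathbb{R}^3}u(x)^2u(y)^2|x-y|^{-1}\,dx\,dy$ carry the homogeneities $\mu^p\lambda^{-3}$, $\mu\lambda^{s-3/2}$ and $\mu^4\lambda^{-5}$ respectively. Matching the amplitude degree and the dilation degree in an inequality of the form $\|u\|_p^p\le C\|u\|_{\dot H^s}^{\alpha}D(u)^{\gamma}$ forces $\alpha+4\gamma=p$ and $\alpha(s-\tfrac32)-5\gamma=-3$, whose unique solution is $\gamma=\frac{6+2ps-3p}{2(4s-1)}$ and $\alpha=p-4\gamma$. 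A direct computation shows these are precisely the exponents $\frac{\theta p}{2-\theta}$ on $\|u\|_{\dot H^s}$ and $\frac{(1-\theta)p}{4-2\theta}$ on $D(u)$ for the stated $\theta$, and that they specialize at $s=1$ to $\alpha=\frac{5p-12}{3}$, $\gamma=\frac{6-p}{6}$, recovering (\ref{e3-23}). Thus it remains only to bound the constant.

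For the finiteness I would pass to radial coordinates and reduce everything to a one-dimensional weighted estimate. Writing $u=u(r)$ and $g(r):=\int_0^r u(s)^2 s^2\,ds$, Newton's theorem gives the exact identity $D(u)=(4\pi)^2\int_0^\infty r^{-2}g(r)^2\,dr$; keeping a single dyadic shell then yields the pointwise control $g(r)\le C\,(r\,D(u))^{1/2}$. This is the quantitative way in which the Coulomb energy constrains the radial mass distribution. On the other side, the homogeneous radial Strauss estimate $|u(r)|\le C_s\,r^{s-3/2}\|u\|_{\dot H^s}$, valid precisely for $1/2<s<3/2$ (cf. \cite{R,R1}), furnishes both a pointwise bound on $u$ and, after integrating $u(s)^2 s^2\le C_s^2 s^{2s-1}\|u\|_{\dot H^s}^2$, the complementary control $g(r)\le C\,r^{2s}\|u\|_{\dot H^s}^2$.

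The heart of the argument is an interpolation by splitting. I would write $\|u\|_p^p=4\pi\int_0^\infty u(r)^{p-2}g'(r)\,dr$, insert the Strauss bound for the factor $u(r)^{p-2}$, integrate by parts, and estimate the resulting weighted integral $\int_0^\infty r^{-b-1}g(r)\,dr$ with $b=(\tfrac32-s)(p-2)$ by breaking it at the crossover radius $r_\ast$ where the two controls on $g$ coincide, namely $r_\ast^{2s-1/2}=D(u)^{1/2}\|u\|_{\dot H^s}^{-2}$. Since $2s-\tfrac12>0$, the bound $g\le C r^{2s}\|u\|_{\dot H^s}^2$ is the smaller one on $(0,r_\ast)$ and $g\le C(r\,D(u))^{1/2}$ is the smaller one on $(r_\ast,\infty)$; substituting each on its interval and collecting powers of $r_\ast$ reproduces exactly the homogeneity $\|u\|_{\dot H^s}^{\alpha}D(u)^{\gamma}$ predicted above, the $D(u)$-exponent emerging as $\frac{2s-b}{4s-1}=\gamma$ after simplifying $2s-b=\frac{6-3p+2sp}{2}$.

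The main obstacle I anticipate is the bookkeeping in this last step: one must verify that every exponent appearing in the split integrals (in particular $2s-b$ and $b-\tfrac12$) and in the integration-by-parts boundary terms has the correct sign throughout the stated range, and that $\int_0^r s^{2s-1}\,ds$ converges. It is precisely these convergence requirements that cut out the admissibility window $\frac{16s+2}{6s+1}<p\le \frac{6}{3-2s}$. Establishing the fractional radial Strauss estimate with the sharp weight $r^{s-3/2}$ is the other delicate point; for the special case $s=1$ relevant to (\ref{e3-23}) it is elementary, following from $\frac{d}{dr}\big(r\,u(r)^2\big)$ and Cauchy--Schwarz, whereas for general $s\in(1/2,3/2)$ I would invoke the radial interpolation estimates of Ruiz \cite{R,R1}.
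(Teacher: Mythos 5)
Your proposal must be judged against Ruiz's original argument, since the paper offers no proof of this lemma and simply cites \cite{R,R1}. To your credit, the scaling computation is correct (the exponents are forced, and your $\gamma=\frac{6+2ps-3p}{2(4s-1)}$ does equal $\frac{(1-\theta)p}{4-2\theta}$), and two of your ingredients are genuinely the right ones: the exact radial identity $D(u)=(4\pi)^2\int_0^\infty r^{-2}g(r)^2\,dr$ with its consequence $g(r)\le C(rD(u))^{1/2}$, and the radial Strauss bound $|u(r)|\le C_s r^{s-3/2}\Vert u\Vert_{\dot H^s}$, all appear in Ruiz's proof. The gap is in the splitting step, and it is not bookkeeping. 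Your outer integral is $\int_{r_*}^\infty r^{-b-\frac12}\,dr$ with $b=(\tfrac32-s)(p-2)$, which converges only when $b>\tfrac12$, i.e. $p>\frac{7-4s}{3-2s}$; the same constraint reappears in your integration-by-parts boundary term $r^{-b}g(r)$ at $r=\infty$, so it cannot be dodged by rearranging the computation. But $\frac{7-4s}{3-2s}-\frac{16s+2}{6s+1}$ has numerator $(2s-1)(4s-1)>0$ on $(\tfrac12,\tfrac32)$, so your admissibility window is strictly smaller than the one claimed: at $s=1$ your method proves the inequality only for $p>3$, whereas the lemma asserts it for $p>\frac{18}{7}$. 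This is fatal for the application, because the paper invokes (\ref{e3-23}) exactly in the window $\frac{18}{7}<p<3$ (Lemma \ref{L3-3}, which drives the symmetry-breaking conclusion of Theorem \ref{t1}); your proof covers none of the range actually used.

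The missing idea is localization. The global Strauss bound throws away the spatial distribution of the $\dot H^s$ norm, and on the far exterior the factor $r^{(s-3/2)(p-2)}\Vert u\Vert_{\dot H^s}^{p-2}$ decays too slowly once $p\le\frac{7-4s}{3-2s}$. Ruiz instead works on dyadic annuli $A_j=\{2^j\le|x|<2^{j+1}\}$ and replaces the global sup bound by an annulus-localized radial estimate, which for $s=1$ reads $\Vert u\Vert_{L^\infty(A_j)}^2\lesssim 2^{-3j}\int_{\tilde A_j}u^2+2^{-2j}\bigl(\int_{\tilde A_j}u^2\bigr)^{1/2}\bigl(\int_{\tilde A_j}|\nabla u|^2\bigr)^{1/2}$, then inserts the charge bound $\int_{\tilde A_j}u^2\lesssim(2^jD(u))^{1/2}$ and sums over $j$ using H\"older in the annulus index together with the orthogonality $\sum_j\Vert\nabla u\Vert_{L^2(\tilde A_j)}^2\lesssim\Vert\nabla u\Vert_2^2$. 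Carrying this out with your splitting $|u|^p=|u|^{p-2}u^2$, the critical term is $\sum_j 2^{j(\frac12-\frac{7(p-2)}{8})}d_j^{\frac{p-2}{2}}$ with $d_j=\Vert\nabla u\Vert_{L^2(\tilde A_j)}$, and H\"older with the conjugate pair $\bigl(\frac{4}{6-p},\frac{4}{p-2}\bigr)$ requires $\frac12-\frac{7(p-2)}{8}<0$, i.e. exactly $p>\frac{18}{7}$; the fractional-$s$ version of this argument is what produces the threshold $\frac{16s+2}{6s+1}$. So the passage from the global Strauss estimate to localized estimates plus H\"older/orthogonality over the dyadic decomposition is not an optional refinement but the precise mechanism behind the stated range; without it your argument proves a weaker lemma with window $\bigl(\frac{7-4s}{3-2s},\frac{6}{3-2s}\bigr]$.
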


\begin{lemma}
\label{L3-3} Let $\frac{18}{7}<p<3.$ Then the energy functional $I_{\gamma
,\beta }$ is bounded from below by a constant not depending on $c,$ on $S_{r,c}:=\{(u,v)\in \mathbf{H}_{r}:\Vert u\Vert _{2}^{2}+\Vert v\Vert _{2}^{2}=c\}.$
\end{lemma}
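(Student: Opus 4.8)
The plan is to bound the coupled power term $C(u,v)$ from above by a product of powers of $A(u,v)$ and $B(u,v)$ whose total homogeneity degree is strictly less than one, and then to absorb this term into the positive kinetic and Coulomb contributions by Young's inequality. First I would use the elementary bound $2|u|^{p/2}|v|^{p/2}\le |u|^{p}+|v|^{p}$ to get $C(u,v)\le (1+\beta)(\|u\|_{p}^{p}+\|v\|_{p}^{p})$. Since $(u,v)\in S_{r,c}\subset \mathbf{H}_{r}$, both $u$ and $v$ are radial, so Lemma \ref{L3-10} applies to each component: writing $D(w):=\int_{\mathbb{R}^{3}}\int_{\mathbb{R}^{3}}\frac{w(x)^{2}w(y)^{2}}{|x-y|}\,dx\,dy$, inequality (\ref{e3-23}) gives $\|u\|_{p}^{p}\le \mathcal{C}(p)\|\nabla u\|_{2}^{\frac{5p-12}{3}}D(u)^{\frac{6-p}{6}}$ and analogously for $v$.

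Next I would pass from the single-component quantities to $A$ and $B$. Because $\|\nabla u\|_{2}^{2}\le A(u,v)$ and, expanding $B(u,v)=D(u)+D(v)+2\int_{\mathbb{R}^{3}}\int_{\mathbb{R}^{3}}\frac{u(x)^{2}v(y)^{2}}{|x-y|}\,dx\,dy$ with all three summands nonnegative, $D(u),D(v)\le B(u,v)$, the positivity of the exponents (here $p>18/7$ guarantees $\frac{5p-12}{3}>0$ and $p<6$ gives $\frac{6-p}{6}>0$) yields $\|u\|_{p}^{p}+\|v\|_{p}^{p}\le 2\mathcal{C}(p)A^{\alpha}B^{\theta}$ with $\alpha:=\frac{5p-12}{6}$ and $\theta:=\frac{6-p}{6}$. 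Combining, $C(u,v)\le K A^{\alpha}B^{\theta}$ with $K:=\frac{2\mathcal{C}(p)(1+\beta)}{p}$ depending only on $p,\beta$, so that $I_{\gamma,\beta}(u,v)\ge \tfrac{1}{2}A+\tfrac{\gamma}{4}B-K A^{\alpha}B^{\theta}$.

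The crucial observation is that $\alpha+\theta=\frac{2p-3}{3}<1$ precisely when $p<3$, so the subtracted term has sublinear total degree while both positive terms are linear in $A$ and $B$. I would then apply the three-term Young inequality with exponents $\frac{1}{\alpha},\frac{1}{\theta},\frac{1}{1-\alpha-\theta}$ (all finite and positive thanks to $0<\alpha,\theta$ and $\alpha+\theta<1$), writing $KA^{\alpha}B^{\theta}=(\delta_{1}A^{\alpha})(\delta_{2}B^{\theta})\frac{K}{\delta_{1}\delta_{2}}$ and choosing $\delta_{1},\delta_{2}>0$ so that the resulting multiples of $A$ and $B$ equal $\tfrac{1}{4}$ and $\tfrac{\gamma}{8}$ respectively. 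This gives $KA^{\alpha}B^{\theta}\le \tfrac{1}{4}A+\tfrac{\gamma}{8}B+C_{0}$, whence $I_{\gamma,\beta}(u,v)\ge \tfrac{1}{4}A+\tfrac{\gamma}{8}B-C_{0}\ge -C_{0}$.

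The point to check carefully, and the only place where the claimed uniformity in $c$ could fail, is that the constant $C_{0}=(1-\alpha-\theta)\left(K/(\delta_{1}\delta_{2})\right)^{1/(1-\alpha-\theta)}$ depends solely on $p,\beta,\gamma$ through $K,\alpha,\theta,\delta_{1},\delta_{2}$, and never on $c$. This is automatic here because neither the radial interpolation inequality (\ref{e3-23}) nor Young's inequality introduces the mass $c$; indeed the constraint $\|u\|_{2}^{2}+\|v\|_{2}^{2}=c$ is never invoked in the estimate. Hence $I_{\gamma,\beta}\ge -C_{0}$ on every $S_{r,c}$, with $-C_{0}$ independent of $c$, which is exactly the assertion. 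I expect the only mild bookkeeping to be the verification of the exponent signs and the explicit choice of $\delta_{1},\delta_{2}$; there is no genuine compactness difficulty at this stage, the entire force of the statement resting on the strict inequality $\alpha+\theta<1$ that fails at $p=3$.
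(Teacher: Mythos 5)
Your proposal is correct, and its core is the same as the paper's: you invoke the radial interpolation inequality (\ref{e3-23}) of Lemma \ref{L3-10} componentwise (valid since $S_{r,c}\subset\mathbf{H}_r$ and $\frac{18}{7}<p$ makes the exponent $\frac{5p-12}{3}$ positive), use $\|\nabla u\|_2^2,\|\nabla v\|_2^2\le A$ and $D(u),D(v)\le B$ to reach the same structural lower bound $I_{\gamma,\beta}\ge\frac12 A+\frac{\gamma}{4}B-K\,A^{\alpha}B^{\theta}$ with $\alpha=\frac{5p-12}{6}$, $\theta=\frac{6-p}{6}$, and the whole lemma rests on $\alpha+\theta=\frac{2p-3}{3}<1$ for $p<3$. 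Where you diverge is the finishing step. The paper minimizes the two-variable function $f(x,y)=\frac12 x+\frac{\gamma}{4}y-\frac{4\mathcal{C}(p)(1+\beta)}{p}x^{\alpha}y^{\theta}$ explicitly, locating a stationary point $(x_0,y_0)$ and checking the Hessian is positive definite there; note that as written this only certifies a local minimum, and promoting it to a global lower bound implicitly uses the coercivity of $f$, which again comes from $\alpha+\theta<1$. You instead absorb the sublinear term by the three-term Young inequality with exponents $\frac1\alpha,\frac1\theta,\frac{1}{1-\alpha-\theta}$, choosing $\delta_1,\delta_2$ so that $KA^{\alpha}B^{\theta}\le\frac14A+\frac{\gamma}{8}B+C_0$ with $C_0$ depending only on $p,\beta,\gamma$. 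This is a genuine, if modest, improvement in transparency: it makes the strict inequality $\alpha+\theta<1$ (failing exactly at $p=3$) the visible engine of the proof, it sidesteps the local-versus-global minimum issue entirely, and the independence of all constants from $c$ is manifest since the constraint $\|u\|_2^2+\|v\|_2^2=c$ never enters the estimate. The paper's route, in exchange, produces an explicit minimizing point $(x_0,y_0)$, but for the purposes of this lemma that extra information is not needed.
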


\begin{proof}
For $(u,v)\in S_{r,c},$ it follows from (\ref{e3-23}) that
\begin{eqnarray}
C(u,v) &=&\int_{\mathbb{R}^{3}}(|u|^{p}+|v|^{p}+2\beta |u|^{\frac{p}{2}}|v|^{%
\frac{p}{2}})dx  \notag \\
&\leq &(1+\beta )\int_{\mathbb{R}^{3}}(|u|^{p}+|v|^{p})dx  \notag \\
&\leq &\mathcal{C}(p)(1+\beta )\left[ \Vert \nabla u\Vert _{2}^{\frac{5p-12}{%
3}}\left( \int_{\mathbb{R}^{3}}\int_{\mathbb{R}^{3}}\frac{u(x)^{2}u(y)^{2}}{%
|x-y|}dxdy\right) ^{\frac{6-p}{6}}\right.  \notag \\
&&\left. +\Vert \nabla v\Vert _{2}^{\frac{5p-12}{3}}\left( \int_{\mathbb{R}%
^{3}}\int_{\mathbb{R}^{3}}\frac{v(x)^{2}v(y)^{2}}{|x-y|}dxdy\right) ^{\frac{%
6-p}{6}}\right]  \notag \\
&\leq &\mathcal{C}(p)(1+\beta )\left( \Vert \nabla u\Vert _{2}^{\frac{5p-12}{%
3}}+\Vert \nabla v\Vert _{2}^{\frac{5p-12}{3}}\right) \left[ \left( \int_{%
\mathbb{R}^{3}}\int_{\mathbb{R}^{3}}\frac{u(x)^{2}u(y)^{2}}{|x-y|}%
dxdy\right) ^{\frac{6-p}{6}}\right.  \notag \\
&&\left. +\left( \int_{\mathbb{R}^{3}}\int_{\mathbb{R}^{3}}\frac{%
v(x)^{2}v(y)^{2}}{|x-y|}dxdy\right) ^{\frac{6-p}{6}}\right]  \notag \\
&\leq &4\mathcal{C}(p)(1+\beta )A(u,v)^{\frac{5p-12}{6}%
}B(u,v)^{\frac{6-p}{6}}.  \label{2-15}
\end{eqnarray}%
Then by (\ref{2-15}) we have%
\begin{equation*}
I_{\gamma ,\beta }(u,v)\geq \frac{1}{2}A(u,v)+\frac{\gamma }{4}B(u,v)-\frac{4%
\mathcal{C}(p)(1+\beta )}{p}A(u,v)^{\frac{5p-12}{6}}B(u,v)^{\frac{6-p}{6}%
},
\end{equation*}%
which implies that $I_{\gamma ,\beta }$ is bounded from below by a constant
not depending on $c$. In fact, we set the function of two variables
\begin{equation*}
f(x,y):=\frac{1}{2}x+\frac{\gamma }{4}y-\frac{4\mathcal{C}(p)(1+\beta )}{p}%
x^{\frac{5p-12}{6}}y^{\frac{6-p}{6}},\text{ }\forall (x,y)\in \mathbb{R}%
^{+}\times \mathbb{R}^{+}.
\end{equation*}%
By a direct calculation, there exists stationary point $(x_{0},y_{0})\in
\mathbb{R}^{+}\times \mathbb{R}^{+}$ not depending on $c$ such that $%
f_{x}(x_{0},y_{0})=f_{y}(x_{0},y_{0})=0$. Furthermore, we have%
\begin{equation*}
f_{xx}(x_{0},y_{0})=\frac{\mathcal{C}(p)(1+\beta )(5p-12)(18-5p)}{9p}x_{0}^{-%
\frac{24-5p}{6}}y_{0}^{\frac{6-p}{6}}
\end{equation*}%
and
\begin{equation*}
f_{xx}(x_{0},y_{0})f_{yy}(x_{0},y_{0})-f_{xy}^{2}(x_{0},y_{0})=\frac{8[%
\mathcal{C}(p)(1+\beta )]^{2}(5p-12)(6-p)(3-p)}{27p^{2}}x_{0}^{-\frac{18-5p}{%
3}}y_{0}^{-\frac{p}{6}}>0,
\end{equation*}%
which implies that the function $f(x,y)$ achieves its minimum at point $%
(x_{0},y_{0})$, and so the energy functional $I_{\gamma ,\beta }$ is bounded
from below. This completes the proof.
\end{proof}

\begin{lemma}
\label{L3-4} Let $2<p<3.$ Then it holds $\lim_{c\rightarrow 0}\frac{\sigma
_{\gamma ,\beta }(c)}{c}=0$.
\end{lemma}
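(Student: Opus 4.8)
The plan is to read off the limit directly from the exact scaling identity already established in the proof of Proposition \ref{P3-3}. By (\ref{e2-7}), for every $c>0$ one has $\sigma_{\gamma,\beta}(c)=c^{\frac{6-p}{10-3p}}J_1^{\gamma,c}$, where $J_1^{\gamma,c}=\inf_{(u,v)\in S_1}[\frac12 A(u,v)+\frac{\gamma c^{4(3-p)/(10-3p)}}{4}B(u,v)-\frac1p C(u,v)]$. Since $\frac{6-p}{10-3p}-1=\frac{2(p-2)}{10-3p}$, dividing by $c$ gives $\frac{\sigma_{\gamma,\beta}(c)}{c}=c^{\frac{2(p-2)}{10-3p}}J_1^{\gamma,c}$. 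For $2<p<3$ the exponent $\frac{2(p-2)}{10-3p}$ is strictly positive, so the prefactor $c^{\frac{2(p-2)}{10-3p}}\to 0$, and it suffices to prove that $J_1^{\gamma,c}$ stays bounded as $c\to 0^+$; the product is then forced to vanish.

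To bound $J_1^{\gamma,c}$, I would exploit that $B\ge 0$ and that the coupling coefficient $\gamma c^{4(3-p)/(10-3p)}$ is nonnegative and, because $p<3$ makes the exponent positive, is bounded by $\gamma$ once $c\le 1$. Dropping the (nonnegative) Coulomb term yields the lower bound $J_1^{\gamma,c}\ge J_1^{0,c}:=\inf_{S_1}[\frac12 A-\frac1p C]$, while evaluating the functional at any fixed $(u_0,v_0)\in S_1$ gives, for $0<c\le 1$, the upper bound $J_1^{\gamma,c}\le \frac12 A(u_0,v_0)+\frac{\gamma}{4}B(u_0,v_0)-\frac1p C(u_0,v_0)=:M<\infty$. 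Thus $J_1^{0,c}\le J_1^{\gamma,c}\le M$ uniformly for $c\in(0,1]$, and no minimizer needs to be produced since only boundedness of the infimum value is required.

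The one genuine point—and the step I expect to be the main obstacle—is that the lower bound $J_1^{0,c}$ is finite, i.e.\ that the decoupled functional $\frac12 A-\frac1p C$ is bounded below on $S_1$. This is precisely where the mass-subcritical restriction $2<p<3<\frac{10}{3}$ enters: applying the Gagliardo-Nirenberg estimate (\ref{e2-3}) on $S_1$ gives $\frac12 A(u,v)-\frac1p C(u,v)\ge \frac12 A(u,v)-\frac{4\mathcal{S}_p(1+\beta)}{p}A(u,v)^{\frac{3(p-2)}{4}}$, and the exponent $\frac{3(p-2)}{4}<1$ ensures the right-hand side is bounded below over all $A(u,v)\ge 0$. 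Hence $J_1^{0,c}>-\infty$, so $J_1^{\gamma,c}$ is bounded, and therefore $\frac{\sigma_{\gamma,\beta}(c)}{c}=c^{\frac{2(p-2)}{10-3p}}J_1^{\gamma,c}\to 0$ as $c\to 0^+$, which is the assertion.
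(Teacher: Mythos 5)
Your proof is correct, and it takes a genuinely different route from the paper's. The paper first squeezes $\sigma_{\gamma,\beta}(c)$ between $\sigma_{0,\beta}(c)$ and $0$, then handles the decoupled problem by invoking the existence of a minimizer $(u_0,v_0)$ for $\sigma_{0,\beta}(c)$ from the literature, bounding $-\sigma_{0,\beta}(c)/c$ by the Lagrange multiplier $\lambda_c/2$ via the Euler--Lagrange system, and finally proving $\lambda_c\to 0$ by a contradiction argument based on (\ref{e2-3}). You instead read the limit off the exact dilation identity (\ref{e2-7}), $\sigma_{\gamma,\beta}(c)=c^{\frac{6-p}{10-3p}}J_1^{\gamma,c}$, and reduce everything to showing that $J_1^{\gamma,c}$ is bounded above and below uniformly for $c\in(0,1]$; the lower bound follows from dropping the nonnegative Coulomb term and applying the Gagliardo--Nirenberg estimate with exponent $\frac{3(p-2)}{4}<1$, and the upper bound from testing with a fixed element of $S_1$ (indeed, Lemma \ref{L3-2} would even give $J_1^{\gamma,c}<0$ for free). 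All exponent checks are right: $\frac{6-p}{10-3p}-1=\frac{2(p-2)}{10-3p}>0$ and $\frac{4(3-p)}{10-3p}>0$ precisely because $2<p<3$, and there is no circularity since (\ref{e2-7}) is established in Proposition \ref{P3-3} independently of this lemma. Your argument is more self-contained (no appeal to the existence theory of \cite{GJ1}, no minimizers or multipliers needed) and yields a quantitative rate $|\sigma_{\gamma,\beta}(c)|/c\lesssim c^{\frac{2(p-2)}{10-3p}}$; the paper's argument is heavier but produces the additional information that the multipliers $\lambda_c$ of the decoupled problem vanish as $c\to 0$, and its structure does not rely on the functional having an exact scaling symmetry.
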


\begin{proof}
Notice that
\begin{equation*}
\frac{\sigma _{0,\beta }(c)}{c}\leq \frac{\sigma _{\gamma ,\beta }(c)}{c}<0,
\end{equation*}%
where
\begin{equation*}
\sigma _{0,\beta }(c):=\inf_{(u,v)\in S_{c}}I_{0,\beta }(u,v)
\end{equation*}%
with
\begin{equation*}
I_{0,\beta }(u,v):=\frac{1}{2}A(u,v)-\frac{1}{p}C(u,v).
\end{equation*}%
Due to \cite{GJ1}, there exists a minimizer $(u_{0},v_{0})$ for $\sigma
_{0,\beta }(c)$ satisfying
\begin{equation}
\left\{
\begin{array}{ll}
-\Delta u_{0}+\lambda _{c}u_{0}=|u_{0}|^{p-2}u_{0}+\beta |v_{0}|^{\frac{p}{2}%
}|u_{0}|^{\frac{p}{2}-2}u_{0} & \quad \text{in}\quad \mathbb{R}^{3}, \\
-\Delta v_{0}+\lambda _{c}v_{0}=|v_{0}|^{p-2}v_{0}+\beta |u_{0}|^{\frac{p}{2}%
}|v_{0}|^{\frac{p}{2}-2}u_{0} & \quad \text{in}\quad \mathbb{R}^{3},%
\end{array}%
\right.  \label{e2-16}
\end{equation}%
which indicates that
\begin{equation*}
\frac{\lambda _{c}}{2}=\frac{C(u_{0},v_{0})-A(u_{0},v_{0})}{2\int_{\mathbb{R}%
^{3}}(|u_{0}|^{2}+|v_{0}|^{2})dx}\geq \frac{\frac{1}{p}C(u_{0},v_{0})-\frac{1%
}{2}A(u_{0},v_{0})}{2\int_{\mathbb{R}^{3}}(|u_{0}|^{2}+|v_{0}|^{2})dx}=\frac{%
-\sigma _{0,\beta }(c)}{c}>0,  \label{e2-17}
\end{equation*}%
where $\lambda _{c}$ is the Lagrange multiplier associated to the minimizer $%
(u_{0},v_{0})$. Next, we prove that $\lim_{c\rightarrow 0}\lambda _{c}=0$.
Suppose on the contrary. Then we can assume that there exists a sequence $%
\{c_{n}\}\subset \mathbb{R}$ satisfying $c_{n}\rightarrow 0$ as $%
n\rightarrow \infty $ such that $\lambda _{c_{n}}>M$ for some $M\in (0,1)$.
Since the minimizers $(u_{n},v_{n}):=(u_{c_{n}},v_{c_{n}})$ satisfy system (%
\ref{e2-16}), we have
\begin{eqnarray*}
M(\Vert u_{n}\Vert _{H^{1}}^{2}+\Vert v_{n}\Vert _{H^{1}}^{2}) &\leq &\Vert
\nabla u_{n}\Vert _{2}^{2}+\Vert \nabla v_{n}\Vert _{2}^{2}+M(\Vert
u_{n}\Vert _{2}^{2}+\Vert v_{n}\Vert _{2}^{2}) \\
&\leq &\Vert \nabla u_{n}\Vert _{2}^{2}+\Vert \nabla v_{n}\Vert
_{2}^{2}+\lambda _{c_{n}}(\Vert u_{n}\Vert _{2}^{2}+\Vert v_{n}\Vert
_{2}^{2}) \\
&=&C(u_{n},v_{n}) \\
&\leq &K(p,\beta )(\Vert u_{n}\Vert _{H^{1}}^{p}+\Vert v_{n}\Vert
_{H^{1}}^{p})
\end{eqnarray*}%
for some $K(p,\beta )>0$. This implies that there exists $M_{1}>0$ such that
$A(u_{n},v_{n})\geq M_{1}$. But as $n\rightarrow +\infty $, by (\ref{e2-3}%
), we get
\begin{equation*}
0\geq I_{0,\beta }(u_{n},v_{n})\geq \frac{1}{2}A(u_{n},v_{n})-\frac{%
(4+2\beta )\mathcal{S}_{p}}{p}c_{n}^{\frac{6-p}{4}}A(u_{n},v_{n})^{\frac{%
3(p-2)}{4}}\geq \frac{1}{2}M_{1}-o_{n}(1),
\end{equation*}%
which is a contradiction. This completes the proof.
\end{proof}

\begin{lemma}
\label{L3-5} Let $2<p<3$. Then it holds $\inf_{c>0}\sigma _{\gamma ,\beta
}(c)=-\infty $.
\end{lemma}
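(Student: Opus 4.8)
The plan is to deduce the claim directly from the two facts already established for $2<p<3$: the strict negativity $\sigma_{\gamma,\beta}(c)<0$ for every $c>0$ (Lemma \ref{L3-2}) and the weak sub-additivity inequality (\ref{e2-5}) from Lemma \ref{L3-1}$(ii)$. No new test function or scaling computation is needed; the whole point is to iterate (\ref{e2-5}) and exploit that each summand is negative.

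First I would fix an arbitrary mass $c_{0}>0$ and set $m:=-\sigma_{\gamma,\beta}(c_{0})$, which is a strictly positive constant by Lemma \ref{L3-2}. Applying (\ref{e2-5}) with $c=2c_{0}$ and $c'=c_{0}$ gives $\sigma_{\gamma,\beta}(2c_{0})\le 2\sigma_{\gamma,\beta}(c_{0})$. More generally, a straightforward induction on $n$ yields
\[
\sigma_{\gamma,\beta}(nc_{0})\le n\,\sigma_{\gamma,\beta}(c_{0})=-nm\quad\text{for all }n\in\mathbb{N},
\]
where the inductive step applies (\ref{e2-5}) with $c=(n+1)c_{0}$ and $c'=nc_{0}$ (so that $c-c'=c_{0}$), combined with the induction hypothesis.

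Letting $n\to\infty$, the right-hand side $-nm$ tends to $-\infty$, so
\[
\inf_{c>0}\sigma_{\gamma,\beta}(c)\le \lim_{n\to\infty}\sigma_{\gamma,\beta}(nc_{0})=-\infty,
\]
which is exactly the assertion. Equivalently, since every summand appearing on the right of (\ref{e2-5}) is negative, $\sigma_{\gamma,\beta}$ is in fact strictly decreasing in $c$, so the infimum over $c>0$ coincides with $\lim_{c\to\infty}\sigma_{\gamma,\beta}(c)$, and the iteration above identifies this limit as $-\infty$.

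There is essentially no analytic obstacle at this stage: the substantive work, namely producing negative-energy configurations at every mass level and proving sub-additivity, was already carried out in Lemmas \ref{L3-2} and \ref{L3-1}. The only point requiring a little care is to invoke (\ref{e2-5}) in the correct direction and to keep $\sigma_{\gamma,\beta}(c_{0})$ fixed as a genuine negative constant while the multiplicity $n$ grows, so that the product $n\,\sigma_{\gamma,\beta}(c_{0})$ really diverges to $-\infty$ rather than being treated as an indeterminate quantity.
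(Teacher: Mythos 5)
Your argument is correct, and it is a genuinely different---and more elementary---route than the one the paper takes. Both proofs rest on the same two ingredients, Lemma \ref{L3-2} (negativity of $\sigma_{\gamma,\beta}(c)$ at every mass) and the weak sub-additivity (\ref{e2-5}) from Lemma \ref{L3-1}$(ii)$, but you use them forwards: the induction $\sigma_{\gamma,\beta}(nc_{0})\le n\,\sigma_{\gamma,\beta}(c_{0})=-nm$ (valid since at each step $c'=nc_{0}$ and $c-c'=c_{0}$ are admissible in (\ref{e2-5})) immediately forces the infimum to be $-\infty$. The paper instead argues by contradiction: assuming $\sigma_{\gamma,\beta}(c)>-K$ for all $c$, it studies the normalized quantity $c\mapsto\sigma_{\gamma,\beta}(c)/c$, which under that assumption tends to $0$ as $c\to\infty$ and also tends to $0$ as $c\to 0$ (this last fact is Lemma \ref{L3-4}); hence the ratio attains a negative global minimum at some $c_{0}$, sub-additivity shows the minimum is also attained at $kc_{0}$ for every $k\in\mathbb{N}$, and this contradicts the vanishing of the ratio at infinity. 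Your route is shorter, bypasses Lemma \ref{L3-4} altogether (whose proof relies on the existence theory of \cite{GJ1} for the $\gamma=0$ system), and yields the quantitative bound $\sigma_{\gamma,\beta}(nc_{0})\le -nm$, i.e.\ at least linear divergence along an arithmetic sequence of masses; the paper's detour through $\sigma_{\gamma,\beta}(c)/c$ buys nothing extra for this particular lemma. Two small points of care in your write-up: the expression $\lim_{n\to\infty}\sigma_{\gamma,\beta}(nc_{0})=-\infty$ should be read as divergence forced by the upper bound $-nm$ (a priori you only have an inequality, which of course suffices); and your closing observation that $\sigma_{\gamma,\beta}$ is strictly decreasing does follow from (\ref{e2-5}) together with Lemma \ref{L3-2}, but it is not needed for the conclusion.
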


\begin{proof}
Now we argue by contradiction and assume that there exists $K>0$ such that $%
\sigma _{\gamma ,\beta }(c)>-K$ for all $c>0$. In this case we would have $%
\lim_{c\rightarrow \infty }\frac{\sigma _{\gamma ,\beta }(c)}{c}=0$. Since $%
\lim_{c\rightarrow 0}\frac{\sigma _{\gamma ,\beta }(c)}{c}=0$ and $\sigma
_{\gamma ,\beta }(c)<0$, the function $c\rightarrow \frac{\sigma _{\gamma
,\beta }(c)}{c}$ attains a global minimum, i.e. there exists $c_{0}$ such
that
\begin{equation}
\frac{\sigma _{\gamma ,\beta }(c_{0})}{c_{0}}\leq \frac{\sigma _{\gamma
,\beta }(c)}{c}\text{ for all }c>0.  \label{e2-15}
\end{equation}%
On the other hand, by Lemma \ref{L3-1}, one has
\begin{equation}
\frac{\sigma _{\gamma ,\beta }(2c_{0})}{2c_{0}}\leq \frac{\sigma _{\gamma
,\beta }(c_{0})+\sigma _{\gamma ,\beta }(c_{0})}{2c_{0}}=\frac{\sigma
_{\gamma ,\beta }(c_{0})}{c_{0}}.  \label{e2-18}
\end{equation}%
It follows from (\ref{e2-15}) and (\ref{e2-18}) that the function $%
c\rightarrow \frac{\sigma _{\gamma ,\beta }(c)}{c}$ attains a global minimum
also at $2c_{0}$. Repeating such step, we deduce that%
\begin{equation*}
\frac{\sigma _{\gamma ,\beta }(kc_{0})}{kc_{0}}=\frac{\sigma _{\gamma ,\beta
}(c_{0})}{c_{0}},\quad \forall k\in \mathbb{N},
\end{equation*}%
which implies that $\liminf_{c\rightarrow \infty }\frac{\sigma _{\gamma
,\beta }(c)}{c}<0.$ This contradicts with the fact that $\lim_{c\rightarrow
\infty }\frac{\sigma _{\gamma ,\beta }(c)}{c}=0$. This completes the proof.
\end{proof}

\textbf{Now we give the proof of Theorem \ref{t1}:} Let $\left\{
(u_{n},v_{n})\right\} \subset S_{c}$ be a minimizing sequence to $\sigma
_{\gamma ,\beta }(c)$. Our aim is to prove that $\left\{
(u_{n},v_{n})\right\} $ is compact in $\mathbf{H}$ up to translations. By
Lemma \ref{L3-1}$(i)$, we obtain that $\left\{ (u_{n},v_{n})\right\} $ is
bounded in $\mathbf{H}$. If
\begin{equation*}
\sup_{y\in \mathbb{R}^{3}}\int_{B_{R}(y)}(u_{n}^{2}+v_{n}^{2})dx=o_{n}(1)%
\text{ for some }R>0,
\end{equation*}%
then it follows from Vanishing lemma in \cite[Lemma I.1]{L1} that $\left\{
(u_{n},v_{n})\right\} \rightarrow (0,0)$ in $L^{p}(\mathbb{R}^{3})\times
L^{p}(\mathbb{R}^{3})$ for $2<p<6$. This is incompatible with the fact that $%
I_{\gamma ,\beta }(u_{n},v_{n})<0$. Thus there exist a constant $D_{0}>0$
and a sequence $\left\{ y_{n}\right\} \subset \mathbb{R}^{3}$ such that
\begin{equation*}
\int_{B_{R}(y_{n})}(u_{n}^{2}+v_{n}^{2})dx\geq D_{0}>0.
\end{equation*}%
Since $\left\{ (u_{n},v_{n})\right\} $ is bounded in $\mathbf{H}$, we obtain
that $\left\{ (u_{n},v_{n})\right\} $ is weakly convergent in $\mathbf{H}$
and is locally compact in $L^{p}(\mathbb{R}^{3})\times L^{p}(\mathbb{R}^{3})$
for $2\leq p<6$, and so
\begin{equation*}
(u_{n}(x-y_{n}),v_{n}(x-y_{n}))\rightharpoonup (\hat{u},\hat{v})\neq
(0,0)\quad \text{in}\, \mathbf{H}.
\end{equation*}%
Hence, $\rho :=\Vert \hat{u}\Vert _{2}^{2}+\Vert \hat{v}\Vert _{2}^{2}>0$.
Next, we prove that $(u,v)\in S_{c}$. Suppose that $\rho <c$. Since
\begin{equation*}
\Vert u_{n}\Vert _{2}^{2}+\Vert v_{n}\Vert _{2}^{2}=\Vert u_{n}-\hat{u}\Vert
_{2}^{2}+\Vert v_{n}-\hat{v}\Vert _{2}^{2}+\Vert \hat{u}\Vert _{2}^{2}+\Vert
\hat{v}\Vert _{2}^{2}+o_{n}(1),
\end{equation*}%
by Lemma \ref{L2-5} and the translational invariance, we conclude that
\begin{equation*}
\sigma _{\gamma ,\beta }(c)=\lim_{n\rightarrow \infty }I_{\gamma ,\beta
}(u_{n},v_{n})\geq I_{\gamma ,\beta }(\hat{u},\hat{v})+\lim_{n\rightarrow
\infty }I_{\gamma ,\beta }(u_{n}-u,v_{n}-v)\geq \sigma _{\gamma ,\beta
}(\rho )+\sigma _{\gamma ,\beta }(c-\rho ),
\end{equation*}%
which contradicts with (\ref{e2-6}). So, we have $\Vert \hat{u}\Vert
_{2}^{2}+\Vert \hat{v}\Vert _{2}^{2}=c$ and $(\hat{u},\hat{v})\in S_{c}$.
Moreover, using (\ref{e2-3-1}), we deduce that $(u_{n},v_{n})\rightarrow (%
\hat{u},\hat{v})$ in $L^{p}(\mathbb{R}^{3})\times L^{p}(\mathbb{R}^{3})$.
Then there holds
\begin{equation*}
\sigma _{\gamma ,\beta }(c)\leq I_{\gamma ,\beta }(\hat{u},\hat{v})\leq
\lim_{n\rightarrow \infty }I_{\gamma ,\beta }(u_{n},v_{n})=\sigma _{\gamma
,\beta }(c).
\end{equation*}%
Hence $(\hat{u},\hat{v})\in S_{c}$ is a nontrivial minimizer of $\sigma
_{\gamma ,\beta }(c)$.

Next, we claim that $\hat{u}\neq 0$ and $\hat{v}\neq 0$. If not, we may
assume that $\hat{v}\equiv 0$. Then by Lemma \ref{L2-4}, there exists $%
s_{\beta }\in (0,1)$ such that $(\sqrt{s_{\beta }}\hat{u},\sqrt{1-s_{\beta }}%
\hat{u})\in S_{c}$ and
\begin{equation*}
I_{\gamma ,\beta }(\sqrt{s_{\beta }}\hat{u},\sqrt{1-s_{\beta }}\hat{u}%
)<I_{\gamma ,\beta }(\hat{u},0)=I_{\gamma ,\beta }(0,\hat{u})=\sigma
_{\gamma ,\beta }(c),
\end{equation*}%
which is a contradiction. So, $(\hat{u},\hat{v})\in S_{c}$ is a vectorial
minimizer of $\sigma _{\gamma ,\beta }(c)$.

Finally, we study the symmetry breaking phenomenon for minimizer $(\hat{u},%
\hat{v})$ when $\frac{18}{7}<p<3$. By Proposition \ref{P3-3} and Lemma \ref%
{L3-5}, we know that the ground state energy $\sigma _{\gamma ,\beta }(c)$
is strictly decreasing on $c$ and $\inf_{c>0}\sigma _{\gamma ,\beta
}(c)=-\infty $. However, it follows Lemma \ref{L3-3} that $\inf_{c>0}\sigma
_{\gamma ,\beta }^{r}(c)>-\infty ,$ where $\sigma _{\gamma ,\beta
}^{r}(c):=\inf_{(u,c)\in S_{r,c}}I_{\gamma ,\beta }(u,v).$ Then the symmetry
breaking phenomenon occurs for $c>0$ sufficiently large.

\subsection{The case of $3\leq p<\frac{10}{3}$}

Following the idea of Lions \cite{L}, we have the following results:
\begin{eqnarray}
\int_{\mathbb{R}^{3}}(|u|^{3}+|u|v^{2})dx &=&\int_{\mathbb{R}^{3}}(-\Delta
\phi _{u,v})|u|dx  \notag \\
&=&\int_{\mathbb{R}^{3}}\langle \nabla \phi _{u,v},\nabla |u|\rangle dx
\notag \\
&\leq &\left( \int_{\mathbb{R}^{3}}|\nabla u|^{2}dx\right) ^{\frac{1}{2}%
}\left( \int_{\mathbb{R}^{3}}|\nabla \phi _{u,v}|^{2}dx\right) ^{\frac{1}{2}}
\notag \\
&\leq &\Vert \nabla u\Vert _{2}B(u,v)^{\frac{1}{2}},  \label{e2-28}
\end{eqnarray}%
and
\begin{eqnarray}
\int_{\mathbb{R}^{3}}(|v|^{3}+|v|u^{2})dx &=&\int_{\mathbb{R}^{3}}(-\Delta
\phi _{u,v})|v|dx  \notag \\
&=&\int_{\mathbb{R}^{3}}\langle \nabla \phi _{u,v},\nabla |v|\rangle dx
\notag \\
&\leq &\left( \int_{\mathbb{R}^{3}}|\nabla v|^{2}dx\right) ^{\frac{1}{2}%
}\left( \int_{\mathbb{R}^{3}}|\nabla \phi _{u,v}|^{2}dx\right) ^{\frac{1}{2}}
\notag \\
&\leq &\Vert \nabla v\Vert _{2}B(u,v)^{\frac{1}{2}}.  \label{e2-29}
\end{eqnarray}%
For $(u,v)\in \mathbf{H}$ and $3\leq p<\frac{10}{3}$, by (\ref{e2-1}), (\ref%
{e2-28}) and (\ref{e2-29}), we have
\begin{eqnarray*}
&&\int_{\mathbb{R}^{3}}(|u|^{p}+|v|^{p})dx  \notag \\
&\leq & \left( \int_{\mathbb{R}^{3}}|u|^{3}dx\right) ^{10-3p}\left(
\int_{\mathbb{R}^{3}}|u|^{\frac{10}{3}}dx\right) ^{3(p-3)}+\left( \int_{%
\mathbb{R}^{3}}|v|^{3}dx\right) ^{10-3p}\left( \int_{\mathbb{R}^{3}}|v|^{%
\frac{10}{3}}dx\right) ^{3(p-3)}  \notag \\
&\leq &\left( \Vert u\Vert _{3}^{3(10-3p)}+\Vert v\Vert
_{3}^{3(10-3p)}\right) \left( \Vert u\Vert _{10/3}^{10(p-3)}+\Vert v\Vert
_{10/3}^{10(p-3)}\right)  \notag \\
&\leq &16\mathcal{S}_{10/3}\left( \Vert u\Vert _{2}^{2}+\Vert v\Vert
_{2}^{2}\right) ^{2(p-3)}A(u,v)^{\frac{3p-8}{2}}B(u,v)^{\frac{10-3p}{2}},  \label{e2-20}
\end{eqnarray*}%
which implies that%
\begin{eqnarray}
&&\frac{\frac{p}{3p-8}\left( \frac{\gamma (3p-8)}{2(10-3p)}\right) ^{\frac{%
10-3p}{2}}c^{-2(p-3)}A(u,v)^{\frac{3p-8}{2}}B(u,v)^{\frac{10-3p}{2}}\left( \Vert u\Vert _{2}^{2}+\Vert v\Vert _{2}^{2}\right)
^{2(p-3)}-\int_{\mathbb{R}^{3}}(|u|^{p}+|v|^{p})dx}{2\int_{\mathbb{R}%
^{3}}|u|^{\frac{p}{2}}|v|^{\frac{p}{2}}dx}  \notag \\
&\geq &\frac{\left[ \frac{p}{16\mathcal{S}_{10/3}(3p-8)}\left( \frac{\gamma
(3p-8)}{2(10-3p)}\right) ^{\frac{10-3p}{2}}c^{-2(p-3)}-1\right] \int_{%
\mathbb{R}^{3}}(|u|^{p}+|v|^{p})dx}{\int_{\mathbb{R}^{3}}(|u|^{p}+|v|^{p})dx}
\notag \\
&\geq &\frac{p}{16\mathcal{S}_{10/3}(3p-8)}\left( \frac{\gamma (3p-8)}{%
2(10-3p)}\right) ^{\frac{10-3p}{2}}c^{-2(p-3)}-1.  \label{e2-21}
\end{eqnarray}%
Thus we can define
\begin{eqnarray*}
\beta _{\star } &=&\beta _{\star }(c,\gamma ,p) \\
&:=&\inf_{(u,v)\in \mathbf{H}\backslash \{(0,0)\}}\left[\frac{\frac{p}{3p-8}\left(
\frac{\gamma (3p-8)}{2(10-3p)}\right) ^{\frac{10-3p}{2}}c^{-2(p-3)}A\left(
u,v\right)^{\frac{3p-8}{2}}B\left( u,v\right)^{\frac{10-3p}{2}}\left(
\Vert u\Vert _{2}^{2}+\Vert v\Vert _{2}^{2}\right) ^{2(p-3)}}{2\int_{\mathbb{R}^{3}}|u|^{\frac{p}{2}}|v|^{\frac{p%
}{2}}dx}\right. \\
&&\left.-\frac{\int_{\mathbb{R}%
^{3}}(|u|^{p}+|v|^{p})dx}{2\int_{\mathbb{R}^{3}}|u|^{\frac{p}{2}}|v|^{\frac{p%
}{2}}dx}\right]>-\infty .
\end{eqnarray*}%
In particular, for $p=3$, it follows from (\ref{e2-28}) and (\ref{e2-29})
that
\begin{equation*}
\int_{\mathbb{R}^{3}}(|u|^{3}+|v|^{3})dx\leq \sqrt{2}A(u,v)^{\frac{1}{2}}B(u,v)^{\frac{1}{2}},
\end{equation*}%
and further
\begin{eqnarray*}
\beta _{\star } &=&\inf_{(u,v)\in \mathbf{H}\backslash \{(0,0)\}}\frac{\frac{%
3\sqrt{2\gamma }}{2}A(u,v)^{\frac{1}{2}}B(u,v)^{\frac{1}{2}%
}-\int_{\mathbb{R}^{3}}(|u|^{3}+|v|^{3})dx}{2\int_{\mathbb{R}^{3}}|u|^{\frac{%
3}{2}}|v|^{\frac{3}{2}}dx} \\
&\geq &\frac{\left( \frac{3\sqrt{\gamma }}{2}-1\right) \int_{\mathbb{R}%
^{3}}(|u|^{3}+|v|^{3})dx}{2\int_{\mathbb{R}^{3}}|u|^{\frac{3}{2}}|v|^{\frac{3%
}{2}}dx} \\
&\geq &\frac{\left( \frac{3\sqrt{\gamma }}{2}-1\right) \int_{\mathbb{R}%
^{3}}(|u|^{3}+|v|^{3})dx}{\int_{\mathbb{R}^{3}}(|u|^{3}+|v|^{3})dx} \\
&=&\frac{3\sqrt{\gamma }}{2}-1.
\end{eqnarray*}

\begin{lemma}
\label{L3-6} Let $3\leq p<\frac{10}{3}$ and $\gamma >0.$ Then for $\beta
>\beta _{\star },$ it holds $\sigma _{\gamma ,\beta }(c)<0$ for all $c>0$.
\end{lemma}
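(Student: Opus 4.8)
The plan is to convert the hypothesis $\beta>\beta_\star$ into a single pointwise inequality for a well-chosen test pair and then push that pair along a mass-preserving dilation on which the energy can be driven below zero. First I would reformulate the assumption. The bracketed quantity defining $\beta_\star$ is invariant under the scaling $(u,v)\mapsto(a\,u(s\cdot),a\,v(s\cdot))$, so its infimum may be evaluated on the fixed mass level $S_c$, where the factors $c^{-2(p-3)}$ and $(\Vert u\Vert_2^2+\Vert v\Vert_2^2)^{2(p-3)}$ cancel. Consequently, $\beta>\beta_\star$ yields a vectorial pair $(u,v)\in S_c$ with $\int_{\mathbb{R}^3}|u|^{p/2}|v|^{p/2}\,dx>0$ such that, with $K:=\frac{p}{3p-8}\left(\frac{\gamma(3p-8)}{2(10-3p)}\right)^{\frac{10-3p}{2}}$,
\[
C(u,v)=\int_{\mathbb{R}^3}(|u|^p+|v|^p)\,dx+2\beta\int_{\mathbb{R}^3}|u|^{p/2}|v|^{p/2}\,dx>K\,A(u,v)^{\frac{3p-8}{2}}B(u,v)^{\frac{10-3p}{2}}.
\]

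Next I would introduce the $L^2$-preserving dilation $u_s(x):=s^{3/2}u(sx)$, $v_s(x):=s^{3/2}v(sx)$, so that $(u_s,v_s)\in S_c$ for all $s>0$. A direct check of the scaling laws gives $A(u_s,v_s)=s^2A(u,v)$, $B(u_s,v_s)=s\,B(u,v)$ and $C(u_s,v_s)=s^{q}C(u,v)$ with $q:=\frac{3p-6}{2}$, and since $3\le p<\frac{10}{3}$ forces $q-1=\frac{3p-8}{2}>0$ and $2-q=\frac{10-3p}{2}>0$ we have $1<q<2$. Hence
\[
I_{\gamma,\beta}(u_s,v_s)=\frac{A(u,v)}{2}s^2+\frac{\gamma B(u,v)}{4}s-\frac{C(u,v)}{p}s^{q}.
\]

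Dividing by $s^{q}>0$, the sign of the energy is governed by $\varphi(s):=\frac{A(u,v)}{2}s^{2-q}+\frac{\gamma B(u,v)}{4}s^{1-q}$, in the sense that $I_{\gamma,\beta}(u_s,v_s)<0$ holds for some $s>0$ exactly when $\frac{C(u,v)}{p}>\min_{s>0}\varphi(s)$. Because $2-q>0$ and $1-q<0$, the function $\varphi$ has a unique minimum at $s_\ast=\frac{\gamma B(u,v)(3p-8)}{2A(u,v)(10-3p)}$, and using the critical relation $\frac{A}{2}(2-q)s_\ast=\frac{\gamma B}{4}(q-1)$ I would compute
\[
\min_{s>0}\varphi(s)=\varphi(s_\ast)=\frac{1}{3p-8}\left(\frac{\gamma(3p-8)}{2(10-3p)}\right)^{\frac{10-3p}{2}}A(u,v)^{\frac{3p-8}{2}}B(u,v)^{\frac{10-3p}{2}}=\frac{K}{p}\,A(u,v)^{\frac{3p-8}{2}}B(u,v)^{\frac{10-3p}{2}}.
\]
Therefore the displayed hypothesis $C(u,v)>K\,A^{\frac{3p-8}{2}}B^{\frac{10-3p}{2}}$ is precisely $\frac{C(u,v)}{p}>\varphi(s_\ast)$, so $I_{\gamma,\beta}(u_{s_\ast},v_{s_\ast})<0$ and hence $\sigma_{\gamma,\beta}(c)\le I_{\gamma,\beta}(u_{s_\ast},v_{s_\ast})<0$. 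As $c>0$ is arbitrary (with $\beta_\star=\beta_\star(c,\gamma,p)$), the argument gives the conclusion at each mass level.

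I expect the only genuinely delicate point to be the calibration step: verifying that the threshold constant $K$ built into $\beta_\star$ coincides with $p\,\varphi(s_\ast)\,A(u,v)^{-\frac{3p-8}{2}}B(u,v)^{-\frac{10-3p}{2}}$. This identity is exactly what makes $\beta>\beta_\star$ equivalent to the existence of a negative-energy point on the dilation orbit, and it is what dictates the exponents $\frac{3p-8}{2}$ and $\frac{10-3p}{2}$ appearing in the definition of $\beta_\star$; the remaining computations (the scaling laws for $A,B,C$ and the elementary minimisation of $\varphi$) are routine.
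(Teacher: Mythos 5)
Your proof is correct and takes essentially the same route as the paper: both exploit the mass-preserving dilation $(u_s,v_s)=(s^{3/2}u(sx),s^{3/2}v(sx))$, minimize the resulting one-variable function of $s$ (the paper normalizes by $s^{-1}$, you by $s^{-q}$ with $q=\tfrac{3p-6}{2}$, which is an equivalent bookkeeping), and identify $\beta>\beta_\star$ as precisely the condition that this minimum is negative, with matching constant $K=\tfrac{p}{3p-8}\bigl(\tfrac{\gamma(3p-8)}{2(10-3p)}\bigr)^{\frac{10-3p}{2}}$. Your preliminary observation that the bracket defining $\beta_\star$ is invariant under $(u,v)\mapsto(a\,u(s\cdot),a\,v(s\cdot))$, so that the infimum may be computed on $S_c$ and the mass factors cancel, is a careful justification of a normalization the paper leaves implicit.
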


\begin{proof}
Let $(u,v)\in \mathbf{H}\backslash \{(0,0)\}$ and set the scaled function%
\begin{equation}
(\bar{u}_{s},\bar{v}_{s}):=(s^{3/2}u(sx),s^{3/2}v(sx))\text{ for }s>0.
\label{e2-20}
\end{equation}%
Clearly, $(\bar{u}_{s},\bar{v}_{s})\in S_{c}.$ Define the function $f_{\star
}(s)$ by
\begin{eqnarray*}
f_{\star }(s):&=&\frac{1}{s}I_{\gamma ,\beta }(\bar{u}_{s},\bar{v}_{s}) \\
&=&\frac{s}{2}A\left( u,v\right) +\frac{\gamma }{4}B(u,v)-\frac{s^{\frac{3p-8%
}{2}}}{p}\int_{\mathbb{R}^{3}}(|u|^{p}+|v|^{p}+2\beta |u|^{\frac{p}{2}}|v|^{%
\frac{p}{2}})dx.
\end{eqnarray*}%
By a direct calculation, it is easy to see that
\begin{eqnarray*}
\min_{s\geq 0}f_{\star }(s) &=&f_{\star }(\overline{s}) \\
&=&-\frac{10-3p}{2(3p-8)}\left( \frac{3p-8}{p}\right) ^{\frac{2}{10-3p}%
}A\left( u,v\right)^{\frac{8-3p}{10-3p}}\left( \int_{\mathbb{R}%
^{3}}(|u|^{p}+|v|^{p}+2\beta |u|^{\frac{p}{2}}|v|^{\frac{p}{2}})dx\right) ^{%
\frac{2}{10-3p}} \\
&&+\frac{\gamma }{4}B(u,v),
\end{eqnarray*}%
where%
\begin{equation*}
\overline{s}:=\left( \frac{(3p-8)\int_{\mathbb{R}^{3}}(|u|^{p}+|v|^{p}+2%
\beta |u|^{\frac{p}{2}}|v|^{\frac{p}{2}})dx}{pA\left( u,v\right) }\right) ^{%
\frac{2}{3p-10}}.
\end{equation*}%
This indicates that $\min_{s\geq 0}f_{\star }(s)<0$ for $\beta >\beta
_{\star }$, and so $\sigma _{\gamma ,\beta }(c)<0$ for $\beta >\beta _{\star
}$. This completes the proof.
\end{proof}

\begin{remark}
\label{r4} According to the definition of $\beta _{\star }$ and the proof of
Lemma \ref{L3-6}, we see that $\beta _{\star }$ can be characterized by
\begin{equation*}
\beta _{\star }=\inf \left\{ \beta >0:\sigma _{\gamma ,\beta }(c)<0\right\}
\text{ for }3\leq p<\frac{10}{3}.
\end{equation*}
\end{remark}

\begin{lemma}
\label{L3-8} Let $3\leq p<\frac{10}{3}$ and $\gamma >0.$ Then for $\beta
>\beta _{\star },$ we have
\begin{equation*}
\sigma _{\gamma ,\beta }(sc)<s\sigma _{\gamma ,\beta }(c)\text{ for any }s>1,
\end{equation*}%
and
\begin{equation*}
\sigma _{\gamma ,\beta }(c)<\sigma _{\gamma ,\beta }(c^{\prime })+\sigma
_{\gamma ,\beta }(c-c^{\prime })\text{ for all }0<c^{\prime }<c.
\end{equation*}
\end{lemma}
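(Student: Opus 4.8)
The plan is to establish the scaling inequality $\sigma_{\gamma,\beta}(sc)<s\sigma_{\gamma,\beta}(c)$ for $s>1$ first, and then to deduce the strict subadditivity from it by a purely algebraic monotonicity argument. The engine for the first inequality is the mass-increasing scaling already recorded in (\ref{e2-19}), namely $(u^{s},v^{s})=(s^{2}u(sx),s^{2}v(sx))$, which satisfies $\Vert u^{s}\Vert_{2}^{2}+\Vert v^{s}\Vert_{2}^{2}=s(\Vert u\Vert_{2}^{2}+\Vert v\Vert_{2}^{2})$ and hence maps $S_{c}$ into $S_{sc}$, and whose energy was computed there to be
\begin{equation*}
I_{\gamma,\beta}(u^{s},v^{s})=\frac{s^{3}}{2}A(u,v)+\frac{\gamma s^{3}}{4}B(u,v)-\frac{s^{2p-3}}{p}C(u,v).
\end{equation*}
The decisive structural fact is that for $p\geq 3$ one has $2p-3\geq 3$, i.e. the two nonnegative terms $A,B$ scale no faster than the focusing term $C$; this is exactly the regime complementary to the $2<p<3$ case treated in Lemma \ref{L3-2}.

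First I would take a minimizing sequence $(u_{n},v_{n})\subset S_{c}$ for $\sigma_{\gamma,\beta}(c)$ and use the identity $\tfrac{1}{p}C(u_{n},v_{n})=\tfrac12A(u_{n},v_{n})+\tfrac{\gamma}{4}B(u_{n},v_{n})-I_{\gamma,\beta}(u_{n},v_{n})\geq -I_{\gamma,\beta}(u_{n},v_{n})$, valid because $A,B\geq 0$. Substituting $\tfrac12A(u_n,v_n)+\tfrac{\gamma}{4}B(u_n,v_n)=I_{\gamma,\beta}(u_n,v_n)+\tfrac1pC(u_n,v_n)$ into the energy of $(u_{n}^{s},v_{n}^{s})$ gives $I_{\gamma,\beta}(u_{n}^{s},v_{n}^{s})=s^{3}I_{\gamma,\beta}(u_{n},v_{n})+\tfrac{s^{3}-s^{2p-3}}{p}C(u_{n},v_{n})$, and since the coefficient $s^{3}-s^{2p-3}\leq 0$ while $C(u_n,v_n)\geq -pI_{\gamma,\beta}(u_n,v_n)$, I obtain the clean uniform bound $I_{\gamma,\beta}(u_{n}^{s},v_{n}^{s})\leq s^{2p-3}I_{\gamma,\beta}(u_{n},v_{n})$. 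Letting $n\to\infty$ yields $\sigma_{\gamma,\beta}(sc)\leq s^{2p-3}\sigma_{\gamma,\beta}(c)$. Because Lemma \ref{L3-6} gives $\sigma_{\gamma,\beta}(c)<0$ and $2p-3>1$ forces $s^{2p-3}>s$ for $s>1$, this closes the argument: $\sigma_{\gamma,\beta}(sc)\leq s^{2p-3}\sigma_{\gamma,\beta}(c)<s\sigma_{\gamma,\beta}(c)$.

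For the subadditivity I would reformulate the scaling inequality as strict monotonicity of $c\mapsto\sigma_{\gamma,\beta}(c)/c$: for $0<a<b$, applying the first part with $\tau=b/a>1$ gives $\sigma_{\gamma,\beta}(b)<\tfrac{b}{a}\sigma_{\gamma,\beta}(a)$. Given $0<c'<c$ I assume without loss of generality $c'\geq c-c'$. Then $\sigma_{\gamma,\beta}(c)<\tfrac{c}{c'}\sigma_{\gamma,\beta}(c')=\sigma_{\gamma,\beta}(c')+\tfrac{c-c'}{c'}\sigma_{\gamma,\beta}(c')$; when $c'>c-c'$ a second application to $\sigma_{\gamma,\beta}(c')=\sigma_{\gamma,\beta}\!\big(\tfrac{c'}{c-c'}(c-c')\big)$ gives $\tfrac{c-c'}{c'}\sigma_{\gamma,\beta}(c')\leq\sigma_{\gamma,\beta}(c-c')$, so that $\sigma_{\gamma,\beta}(c)<\sigma_{\gamma,\beta}(c')+\sigma_{\gamma,\beta}(c-c')$, while the borderline $c'=c-c'=c/2$ is covered directly by $\sigma_{\gamma,\beta}(c)<2\sigma_{\gamma,\beta}(c/2)$.

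The step I expect to need the most care is keeping the inequality strict while working with a minimizing sequence instead of an actual minimizer, which is not yet available at this point in the argument. This is exactly what the algebraic bound $I_{\gamma,\beta}(u_{n}^{s},v_{n}^{s})\leq s^{2p-3}I_{\gamma,\beta}(u_{n},v_{n})$ circumvents: the strictness is generated by the gap between $s^{2p-3}$ and $s$ combined with $\sigma_{\gamma,\beta}(c)<0$, so no compactness or limit information about $(u_{n},v_{n})$ is required. I would only need to record explicitly that $A,B\geq 0$ (so that dropping them strengthens the estimate) and that $C(u_{n},v_{n})$ stays bounded away from zero, which follows since $\tfrac1pC(u_{n},v_{n})\geq -I_{\gamma,\beta}(u_{n},v_{n})\to -\sigma_{\gamma,\beta}(c)>0$.
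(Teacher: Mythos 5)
Your proof is correct and follows essentially the same route as the paper: the same mass-scaling $(u^{s},v^{s})=(s^{2}u(sx),s^{2}v(sx))$ from (\ref{e2-19}), the identity $I_{\gamma ,\beta }(u^{s},v^{s})=s^{3}I_{\gamma ,\beta }(u,v)+\frac{s^{3}-s^{2p-3}}{p}C(u,v)$ with $s^{3}-s^{2p-3}\leq 0$ for $p\geq 3$, the negativity $\sigma _{\gamma ,\beta }(c)<0$ from Lemma \ref{L3-6} to produce strictness, and then the identical algebraic deduction of strict subadditivity from the scaling inequality. The only cosmetic differences are that the paper discards the coupling term via $C\geq 0$ to get $\sigma _{\gamma ,\beta }(sc)\leq s^{3}\sigma _{\gamma ,\beta }(c)$ while you bound $C\geq -pI_{\gamma ,\beta }$ to get the equally sufficient exponent $s^{2p-3}$, and that you treat the borderline case $c^{\prime }=c/2$ explicitly, which the paper's ``without loss of generality'' glosses over.
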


\begin{proof}
Let $(u,v)\in S_{c}.$ By (\ref{e2-19}) one has $\Vert u^{s}\Vert
_{2}^{2}+\Vert v^{s}\Vert _{2}^{2}=s(\Vert u\Vert _{2}^{2}+\Vert v\Vert
_{2}^{2})$ and
\begin{equation*}
I_{\gamma ,\beta }\left( u^{s},v^{s}\right) =s^{3}I_{\gamma ,\beta }(u,v)+%
\frac{s^{3}-s^{2p-3}}{p}C(u,v).
\end{equation*}%
Note that $s^{3}-s^{2p-3}\leqslant 0$ for $s>1$. Then we get $I_{\gamma
,\beta }\left( u^{s},v^{s}\right) \leq s^{3}I_{\gamma ,\beta }(u,v)$,
leading to $\sigma _{\gamma ,\beta }(sc)\leq s^{3}\sigma _{\gamma ,\beta
}(c).$ Furthermore, since $\sigma _{\gamma ,\beta }(c)<0$ for $\beta >\beta
_{\star }$ by Lemma \ref{L3-6}, we have
\begin{equation*}
\sigma _{\gamma ,\beta }(sc)\leq s^{3}\sigma _{\gamma ,\beta }(c)<s\sigma
_{\gamma ,\beta }(c).
\end{equation*}%
Without loss of generality, we assume that $c-c^{\prime }>c^{\prime }$. Then
we have
\begin{equation*}
\sigma _{\gamma ,\beta }(c)<\frac{c}{c-c^{\prime }}\sigma _{\gamma ,\beta
}(c-c^{\prime })=\sigma _{\gamma ,\beta }(c-c^{\prime })+\frac{c^{\prime }}{%
c-c^{\prime }}\sigma _{\gamma ,\beta }(c-c^{\prime })<\sigma _{\gamma ,\beta
}(c-c^{\prime })+\sigma _{\gamma ,\beta }(c^{\prime }).
\end{equation*}%
This completes the proof.
\end{proof}

\textbf{Now we give the proof of Theorem \ref{t2}:} $(i)$ The proof of the
existence of the minimizer is similar to that of Theorem \ref{t1}, we omit
it here. Next we discuss the sign of the Lagrange multiplier $\tilde{\lambda}
$ associated with the minimizer $(\tilde{u},\tilde{v})\in S_{c}.$ Since $%
\sigma _{\gamma ,\beta }(c)<0$ and
\begin{equation*}
(5p-12)\sigma _{\gamma ,\beta }(c)=(5p-12)I_{\gamma ,\beta }(\tilde{u},%
\tilde{v})=2(p-3)A(\tilde{u},\tilde{v})-\frac{(3p-8)\tilde{\lambda}c}{2},
\end{equation*}%
we have $\tilde{\lambda}>0$.\newline
$(ii)$ Note that
\begin{equation}
\frac{\sqrt{\gamma }}{2}\int_{\mathbb{R}^{3}}\left( |u|^{3}+v^{2}|u|\right)
dx\leq \frac{1}{2}\int_{\mathbb{R}^{3}}|\nabla u|^{2}dx+\frac{\gamma }{8}%
\int_{\mathbb{R}^{3}}\phi _{u,v}\left( u^{2}+v^{2}\right) dx  \label{e3-16}
\end{equation}%
and
\begin{equation}
\frac{\sqrt{\gamma }}{2}\int_{\mathbb{R}^{3}}\left( u^{2}|v|+|v|^{3}\right)
dx\leq \frac{1}{2}\int_{\mathbb{R}^{3}}|\nabla v|^{2}dx+\frac{\gamma }{8}%
\int_{\mathbb{R}^{3}}\phi _{u,v}\left( u^{2}+v^{2}\right) dx  \label{e3-17}
\end{equation}%
for all $(u,v)\in \mathbf{H}$. When $p=3,$ for any $(u,v)\in S_{c},$ it
follows from (\ref{e3-16}) and (\ref{e3-17}) that
\begin{eqnarray*}
I_{\gamma ,\beta }(u,v) &=&\frac{1}{2}A(u,v)+\frac{\gamma }{4}B(u,v)-\frac{1%
}{3}\int_{\mathbb{R}^{3}}C(u,v) \\
&\geq &\frac{\sqrt{\gamma }}{2}\int_{\mathbb{R}^{3}}(|u|^{3}+|v|^{3})dx-%
\frac{1+\beta }{3}\int_{\mathbb{R}^{3}}(|u|^{3}+|v|^{3})dx \\
&\geq &\frac{3\sqrt{\gamma }-2(1+\beta )}{6}\int_{\mathbb{R}%
^{3}}(|u|^{3}+|v|^{3})dx \\
&>&0\text{ if }\beta <\frac{3\sqrt{\gamma }}{2}-1.
\end{eqnarray*}%
On the other hand, for fixed $(u,v)\in \mathbf{H}\backslash \{(0,0)\},$ by (%
\ref{e2-20}) we have
\begin{equation*}
\sigma _{\gamma ,\beta }(c)\leq I_{\gamma ,\beta }(u_{s},v_{s})=\frac{s^{2}}{%
2}A(u,v)+\frac{\gamma s}{4}B(u,v)-\frac{s^{\frac{3(p-2)}{2}}}{3}C(u,v)\text{
for }s>0,
\end{equation*}%
which implies that $\sigma _{\gamma ,\beta }(c)\leq 0$ as $s\rightarrow 0$.
So, $\sigma _{\gamma ,\beta }(c)$ has no minimizer for $\beta <\frac{3\sqrt{%
\gamma }}{2}-1$.

For $3<p<\frac{10}{3}$, we follow the idea in \cite{CW}. Suppose by
contradiction that there exist $c>0$ and $\widetilde{\beta }<\beta _{\star
}(c,\gamma ,p)$ such that $\sigma _{\gamma ,\widetilde{\beta }}(c)$ has a
minimizer. According to the definition of $\beta _{\star }(c,\gamma ,p)$, we
find that $\beta _{\star }(c,\gamma ,p)$ is continuous and decreasing on $c$%
. Thus one can choose $0<c<\widetilde{c}$ so that
\begin{equation*}
\widetilde{\beta }<\beta _{\star }(\widetilde{c},\gamma ,p)<\beta _{\star
}(c,\gamma ,p).
\end{equation*}%
On the other hand, by the characterization of $\beta _{\star }(c,\gamma ,p)$
, it follows that $\sigma _{\gamma ,\widetilde{\beta }}(c)=0$. Then by Lemma %
\ref{L3-8}, we get
\begin{equation*}
\sigma _{\gamma ,\widetilde{\beta }}(\widetilde{c})<\frac{\widetilde{c}}{c}%
\sigma _{\gamma ,\widetilde{\beta }}(c)=0,
\end{equation*}%
which implies that $\widetilde{\beta }>\beta _{\star }(\widetilde{c},\gamma
,p)$. This is a contradiction. So, $\sigma _{\gamma ,\beta }(c)$ has no
minimizer for $\beta <\beta _{\star }.$ We complete the proof.

\section{The relation between action ground states and energy ground states}

\begin{lemma}
\label{L3-9} Let $p=3$ and $\gamma ,\lambda >0$. Then for $\beta >\max
\{\beta _{\star },0\}$, system (\ref{e1-3}) has a vectorial action ground
state $(u_{\lambda },v_{\lambda })\in \mathbf{H}$.
\end{lemma}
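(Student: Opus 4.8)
The plan is to obtain the action ground state as a constrained minimizer of $E_{\lambda,\gamma,\beta}$ on a Nehari--Pohozaev manifold that contains every solution of (\ref{e1-3}), so that minimality on the manifold automatically yields least energy among all solutions. Since $E_{\lambda,\gamma,\beta}=I_{\gamma,\beta}+\frac{\lambda}{2}(\|u\|_2^2+\|v\|_2^2)$, using the dilation $(u^s,v^s)=(s^2u(sx),s^2v(sx))$ from (\ref{e2-19}) and $p=3$ gives the fibering identity
\begin{equation*}
E_{\lambda,\gamma,\beta}(u^s,v^s)=s^3I_{\gamma,\beta}(u,v)+\frac{\lambda s}{2}\big(\|u\|_2^2+\|v\|_2^2\big).
\end{equation*}
Differentiating at $s=1$ produces $J_\lambda(u,v):=3I_{\gamma,\beta}(u,v)+\frac{\lambda}{2}(\|u\|_2^2+\|v\|_2^2)=\frac{3}{2}A(u,v)+\frac{3\gamma}{4}B(u,v)-C(u,v)+\frac{\lambda}{2}(\|u\|_2^2+\|v\|_2^2)$, which by Lemma \ref{L2-6} is exactly twice the Nehari identity minus the Pohozaev identity. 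Hence every solution lies on $\mathcal{M}_\lambda:=\{(u,v)\in\mathbf{H}\setminus\{(0,0)\}:J_\lambda(u,v)=0\}$, and I would set $m_\lambda:=\inf_{\mathcal{M}_\lambda}E_{\lambda,\gamma,\beta}$.

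For fixed $(u,v)$ the fibering map $s\mapsto E_{\lambda,\gamma,\beta}(u^s,v^s)=s^3K+sL$, with $K=I_{\gamma,\beta}(u,v)$ and $L=\frac{\lambda}{2}(\|u\|_2^2+\|v\|_2^2)>0$, has a unique positive critical point, a strict global maximum, exactly when $K<0$, and there $(u^{s_0},v^{s_0})\in\mathcal{M}_\lambda$. As $p=3\in[3,\frac{10}{3})$ and $\beta>\beta_\star$, Lemma \ref{L3-6} provides pairs with $I_{\gamma,\beta}<0$, so $\mathcal{M}_\lambda\neq\emptyset$. On $\mathcal{M}_\lambda$ the constraint forces $I_{\gamma,\beta}=-\frac{\lambda}{6}(\|u\|_2^2+\|v\|_2^2)$, hence $E_{\lambda,\gamma,\beta}=\frac{\lambda}{3}(\|u\|_2^2+\|v\|_2^2)$. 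Combining $C\ge\frac{3}{2}A+\frac{\lambda}{2}(\|u\|_2^2+\|v\|_2^2)$ (from $B\ge0$) with the Gagliardo--Nirenberg bound (\ref{e2-3}) for $p=3$ yields simultaneously a positive lower bound for $A\cdot(\|u\|_2^2+\|v\|_2^2)$ and an estimate $A\le\kappa_0(\|u\|_2^2+\|v\|_2^2)^3$, which together force $\|u\|_2^2+\|v\|_2^2\ge\delta_0>0$ on $\mathcal{M}_\lambda$. Thus $m_\lambda>0$, and along any minimizing sequence the mass stays bounded, whence $A(u_n,v_n)$ is bounded and the sequence is bounded in $\mathbf{H}$.

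The main obstacle is recovering compactness without radial symmetry. Vanishing is excluded because on $\mathcal{M}_\lambda$ one has $C(u_n,v_n)\ge\frac{\lambda}{2}(\|u_n\|_2^2+\|v_n\|_2^2)\ge\frac{\lambda\delta_0}{2}>0$, whereas the Lions vanishing lemma \cite{L1} would force $C(u_n,v_n)\to0$; so after a translation $(u_n,v_n)\rightharpoonup(u_\lambda,v_\lambda)\neq(0,0)$. To exclude dichotomy I would use the Brezis--Lieb splitting of Lemma \ref{L2-5} (with the elementary splitting of $A$, the $L^2$-norms, and $\int(|u|^3+|v|^3)$), which makes $E_{\lambda,\gamma,\beta}$ and $J_\lambda$ additive along the decomposition into weak limit plus remainder, and then invoke the dilation invariance of the quotient $(\|u\|_2^2+\|v\|_2^2)^3/(-I_{\gamma,\beta}(u,v))$ and the resulting strict subadditivity of $m_\lambda$ to rule out a nonvanishing remainder. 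Equivalently, since $E_{\lambda,\gamma,\beta}=\frac{\lambda}{3}(\|u\|_2^2+\|v\|_2^2)$ on $\mathcal{M}_\lambda$ and that quotient is scale invariant, the problem reduces to $\inf_{c>0}c^3/(-\sigma_{\gamma,\beta}(c))$, so the required compactness can be inherited directly from the minimizers of $\sigma_{\gamma,\beta}(c)$ supplied by Theorem \ref{t2}. This is the step I expect to demand the most care; either route yields strong convergence and a minimizer $(u_\lambda,v_\lambda)\in\mathcal{M}_\lambda$ with $E_{\lambda,\gamma,\beta}(u_\lambda,v_\lambda)=m_\lambda$.

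It remains to check that the minimizer is a genuine vectorial solution. A Lagrange multiplier $\mu$ gives $E_{\lambda,\gamma,\beta}'(u_\lambda,v_\lambda)=\mu J_\lambda'(u_\lambda,v_\lambda)$; testing against the infinitesimal generator of the dilation $(u^s,v^s)$ and using that $s=1$ is a nondegenerate maximum of the fibering map (second derivative $6I_{\gamma,\beta}(u_\lambda,v_\lambda)=-\lambda(\|u_\lambda\|_2^2+\|v_\lambda\|_2^2)<0$ on $\mathcal{M}_\lambda$) forces $\mu=0$, so $(u_\lambda,v_\lambda)$ solves (\ref{e1-3}); being a minimizer on $\mathcal{M}_\lambda$, which contains all solutions, it is an action ground state. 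Finally, if it were semitrivial, say $v_\lambda=0$, then by Lemma \ref{L2-3}$(i)$ and Lemma \ref{L2-4} (here $p=3<4$ and $\beta>0$) the pair $(\sqrt{s}\,u_\lambda,\sqrt{1-s}\,u_\lambda)$ leaves $A$, $B$ and $\|u_\lambda\|_2^2+\|v_\lambda\|_2^2$ unchanged while strictly increasing $C$, hence strictly increasing $-I_{\gamma,\beta}$ and strictly lowering the maximal value of its fibering map; projecting it onto $\mathcal{M}_\lambda$ then produces a point with energy below $m_\lambda$, a contradiction. Therefore $(u_\lambda,v_\lambda)$ is a vectorial action ground state.
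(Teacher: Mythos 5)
Your proposal is correct, but it takes a genuinely different route from the paper. The paper never introduces a constraint manifold: it takes the energy ground state $(\tilde u,\tilde v)$ with multiplier $\tilde\lambda>0$ furnished by Theorem \ref{t2}, observes that at $p=3$ the Nehari and Pohozaev identities force $I_{\gamma,\beta}=-\frac{\lambda}{6}(\Vert u\Vert_2^2+\Vert v\Vert_2^2)$ and $E_{\lambda,\gamma,\beta}=\frac{\lambda}{3}(\Vert u\Vert_2^2+\Vert v\Vert_2^2)$ for \emph{every} solution, and then shows by a direct mass comparison (rescaling any competitor solution to mass $c$ and using minimality of $\sigma_{\gamma,\beta}(c)$) that $(\tilde u,\tilde v)$ has least action among solutions at frequency $\tilde\lambda$; finally the frequency rescaling $(\varpi^{2}\tilde u(\varpi x),\varpi^{2}\tilde v(\varpi x))$, $\varpi=\sqrt{\lambda/\tilde\lambda}$, transports this to the prescribed $\lambda$. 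Your Nehari--Pohozaev manifold $\mathcal{M}_\lambda$, the identity $E_{\lambda,\gamma,\beta}=\frac{\lambda}{3}(\Vert u\Vert_2^2+\Vert v\Vert_2^2)$ on $\mathcal{M}_\lambda$, the positive lower bound on the mass, and the semitriviality exclusion via Lemma \ref{L2-4} are all sound, and your ``equivalently'' reduction is in fact the same mechanism as the paper's: at $p=3$ the quotient $(\Vert u\Vert_2^2+\Vert v\Vert_2^2)^3/(-I_{\gamma,\beta})$ is invariant under $(u,v)\mapsto(s^2u(sx),s^2v(sx))$, so $\sigma_{\gamma,\beta}(c)=c^{3}\sigma_{\gamma,\beta}(1)$, and the projection onto $\mathcal{M}_\lambda$ of the Theorem \ref{t2} minimizer (with $I=-\frac{\tilde\lambda}{6}c$, hence projection parameter $s_0=\sqrt{\lambda/\tilde\lambda}$) is exactly the paper's $(u_\lambda,v_\lambda)$. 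What your framework buys is systematicity: it would survive perturbations where the clean frequency-rescaling correspondence fails. What the paper's argument buys is brevity and the complete avoidance of manifold regularity and Lagrange-multiplier issues.

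Two caveats. First, your primary compactness route (Brezis--Lieb splitting plus ``strict subadditivity of $m_\lambda$'') is too vague to stand alone: $m_\lambda$ is a single number, not a function of a mass parameter, so the subadditivity statement needs reformulation; your second route through Theorem \ref{t2} is what actually closes the argument. Second, ``testing against the infinitesimal generator of the dilation'' is only formal, since $x\cdot\nabla u_\lambda$ need not belong to $L^2$ for an $H^1$ minimizer; the standard rigorous fix is to write $E_{\lambda,\gamma,\beta}'=\mu J_\lambda'$, note that the resulting elliptic system satisfies its own Nehari and Pohozaev identities, and combine these with $J_\lambda(u_\lambda,v_\lambda)=0$ to force $\mu=0$. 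Alternatively, in your second route this step is unnecessary: the projected Theorem \ref{t2} minimizer is already a solution of (\ref{e1-3}) with frequency exactly $\lambda$, and minimality on $\mathcal{M}_\lambda$, which contains all solutions, then gives the ground-state property directly.
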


\begin{proof}
By Theorem \ref{t2}, for $\beta >\max \{\beta _{\star },0\}$, system (\ref%
{e1-3}) admits an energy ground state $(\tilde{u},\tilde{v})\in S_{c}$ with
the corresponding Lagrange multiplier $\tilde{\lambda}>0.$ In other words, $(%
\tilde{u},\tilde{v})$ satisfies the system:%
\begin{equation}
\left\{
\begin{array}{ll}
-\Delta u+\tilde{\lambda}u+\gamma \phi _{u,v}u=|u|^{p-2}u+\beta |v|^{\frac{p%
}{2}}|u|^{\frac{p}{2}-2}u & \quad \text{in}\quad \mathbb{R}^{3}, \\
-\Delta v+\tilde{\lambda}v+\gamma \phi _{u,v}v=|v|^{p-2}v+\beta |u|^{\frac{p%
}{2}}|v|^{\frac{p}{2}-2}v & \quad \text{in}\quad \mathbb{R}^{3}.%
\end{array}%
\right.  \label{e3-19}
\end{equation}%
First of all, we claim that $(\tilde{u},\tilde{v})$ is a vectorial action
ground state of system (\ref{e3-19}). Indeed, let $(\phi _{1},\phi _{2})\in
\mathbf{H}$ be a nontrivial solution of system (\ref{e3-19}). We define%
\begin{equation*}
\tilde{c}:=\frac{c}{\Vert \phi _{1}\Vert _{2}^{2}+\Vert \phi _{2}\Vert
_{2}^{2}}\text{ and }(\tilde{\phi}_{1},\tilde{\phi}_{2}):=(\tilde{c}^{2}\phi
_{1}(\tilde{c}x),\tilde{c}^{2}\phi _{2}(\tilde{c}x))
\end{equation*}%
so that $\Vert \tilde{\phi}_{1}\Vert _{2}^{2}+\Vert \tilde{\phi}_{2}\Vert
_{2}^{2}=c$. By using the characterization of $(\tilde{u},\tilde{v})$, one
has
\begin{equation*}
-\frac{\tilde{\lambda}c}{6}=I_{\gamma ,\beta }(\tilde{u},\tilde{v})\leq
I_{\gamma ,\beta }(\tilde{\phi}_{1},\tilde{\phi}_{2})=\tilde{c}^{3}I_{\gamma
,\beta }(\phi _{1},\phi _{2})=-\frac{\tilde{\lambda}\tilde{c}^{3}}{6}(\Vert
\phi _{1}\Vert _{2}^{2}+\Vert \phi _{2}\Vert _{2}^{2})=-\frac{\tilde{\lambda}%
c}{6}\tilde{c}^{2},
\end{equation*}%
which shows that $\tilde{c}\leq 1,$ and hence $c\leq \frac{c}{\tilde{c}}%
=\Vert \phi _{1}\Vert _{2}^{2}+\Vert \phi _{2}\Vert _{2}^{2}$. Then we get
\begin{equation*}
E_{\tilde{\lambda},\gamma ,\beta }(\tilde{u},\tilde{v})=\frac{\tilde{\lambda}%
c}{3}\leq \frac{\tilde{\lambda}}{3}(\Vert \phi _{1}\Vert _{2}^{2}+\Vert \phi
_{2}\Vert _{2}^{2})=E_{\tilde{\lambda},\gamma ,\beta }(\phi _{1},\phi _{2}),
\end{equation*}%
which implies that $(\tilde{u},\tilde{v})$ is a vectorial action ground
state of system (\ref{e3-19}).

Set $(u_{\lambda },v_{\lambda }):=(\varpi ^{2}\tilde{u}(\varpi x),\varpi ^{2}%
\tilde{v}(\varpi x)),$ where $\varpi :=\sqrt{\frac{\lambda }{\tilde{\lambda}}%
}$. A direct computation shows that $(u_{\lambda },v_{\lambda })$ is a
vectorial solution of system (\ref{e1-3}). Next, we show that $(u_{\lambda
},v_{\lambda })$ is a vectorial action ground state of system (\ref{e1-3}).
Indeed, let $(u,v)\in \mathbf{H}$ be any nontrivial solution of system (\ref%
{e1-3}) and define
\begin{equation*}
(u^{\ast },v^{\ast }):=\left( \varpi ^{-2}u(x/\varpi),\varpi ^{-2}v(x/\varpi)\right) .
\end{equation*}%
Similarly, we can prove that $(u^{\ast },v^{\ast })$ is a nontrivial
solution of system (\ref{e3-19}) and hence we have
\begin{eqnarray*}
\frac{\tilde{\lambda}}{3\varpi }(\Vert u_{\lambda }\Vert _{2}^{2}+\Vert
v_{\lambda }\Vert _{2}^{2}) &=&\frac{\tilde{\lambda}}{3}(\Vert \tilde{u}%
\Vert _{2}^{2}+\Vert \tilde{v}\Vert _{2}^{2}) \\
&=&E_{\tilde{\lambda},\gamma ,\beta }(\tilde{u},\tilde{v}) \\
&\leq &E_{\tilde{\lambda},\gamma ,\beta }(u^{\ast },v^{\ast }) \\
&=&\frac{\tilde{\lambda}}{3}(\Vert u^{\ast }\Vert _{2}^{2}+\Vert v^{\ast
}\Vert _{2}^{2}) \\
&=&\frac{\tilde{\lambda}}{3\varpi }(\Vert u\Vert _{2}^{2}+\Vert v\Vert
_{2}^{2}),
\end{eqnarray*}%
which implies that $\Vert u_{\lambda }\Vert _{2}^{2}+\Vert v_{\lambda }\Vert
_{2}^{2}\leq \Vert u\Vert _{2}^{2}+\Vert v\Vert _{2}^{2}$. Then we have
\begin{equation*}
E_{\lambda ,\gamma ,\beta }(u_{\lambda },v_{\lambda })=\frac{\lambda }{3}%
(\Vert u_{\lambda }\Vert _{2}^{2}+\Vert v_{\lambda }\Vert _{2}^{2})\leq
\frac{\lambda }{3}(\Vert u\Vert _{2}^{2}+\Vert v\Vert _{2}^{2})=E_{\lambda
,\gamma ,\beta }(u,v),
\end{equation*}%
which implies that $(u_{\lambda },v_{\lambda })$ is a vectorial action
ground state of system (\ref{e1-3}). We complete the proof.
\end{proof}

Next, we give a characterization of the vectorial action ground state $%
(u_{\lambda },v_{\lambda })$ of system (\ref{e1-3}) when $p=3$.

\begin{proposition}
\label{P3-10} Let $p=3$ and $\gamma ,\lambda >0$. Then for $\beta >\max
\{\beta _{\star },0\},$ the following statements are true.\newline
$(i)$ For any vectorial action ground state $(u,v)$ of system (\ref{e1-3}),
there exists a unique $c=c(\lambda )>0$ such that $\Vert u\Vert
_{2}^{2}+\Vert v\Vert _{2}^{2}=c(\lambda )$.\newline
$(ii)$ $(u_{\lambda },v_{\lambda })$ is a vectorial action ground state of
system (\ref{e1-3}) if and only if $(u_{\lambda },v_{\lambda })$ is a
minimizer of $\sigma _{\gamma ,\beta }(c(\lambda ))$. In particular, we have
\begin{equation*}
I_{\gamma ,\beta }(u_{\lambda },v_{\lambda })=\sigma _{\gamma ,\beta
}(c(\lambda ))=\inf_{(u,v)\in S_{c(\lambda )}}I_{\gamma ,\beta }(u,v).
\end{equation*}
\end{proposition}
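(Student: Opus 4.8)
The plan is to reduce everything to the two algebraic identities that $p=3$ forces on solutions, together with the mass-changing scaling (\ref{e2-19}). First I would record that for \emph{any} solution $(u,v)$ of system (\ref{e1-3}) at frequency $\lambda$, combining the Nehari identity (obtained by testing the two equations with $(u,v)$) with the Pohozaev identity (\ref{e2-4}) gives, when $p=3$,
\begin{equation*}
I_{\gamma,\beta}(u,v)=-\frac{\lambda}{6}\left(\|u\|_2^2+\|v\|_2^2\right),\qquad E_{\lambda,\gamma,\beta}(u,v)=\frac{\lambda}{3}\left(\|u\|_2^2+\|v\|_2^2\right),
\end{equation*}
exactly the relations already used in Lemma \ref{L3-9}. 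Thus, along solutions at a fixed frequency $\lambda>0$, the action $E_{\lambda,\gamma,\beta}$ is an increasing multiple of the mass, so minimizing the action among all solutions is the same as minimizing the mass among them.

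For $(i)$, this is decisive: every vectorial action ground state carries the least action, hence --- since the action is $\frac{\lambda}{3}$ times the mass --- the least mass, and therefore all of them share one common mass. I would \emph{define} $c(\lambda)$ to be this value, equivalently $c(\lambda)=\min\{\|u\|_2^2+\|v\|_2^2:(u,v)\text{ solves (\ref{e1-3}) at frequency }\lambda\}$; it is attained and strictly positive because the ground state $(u_\lambda,v_\lambda)$ produced in Lemma \ref{L3-9} is nontrivial and, as shown there, already realizes the minimal mass. This yields existence and uniqueness of $c(\lambda)$.

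For $(ii)$ the bridge is the scaling $(u,v)\mapsto(u^s,v^s)=(s^2u(s\,\cdot),s^2v(s\,\cdot))$: a direct computation (using $\phi_{u^s,v^s}(x)=s^2\phi_{u,v}(sx)$) shows it maps a frequency-$\lambda$ solution to a frequency-$s^2\lambda$ solution, multiplies the mass by $s$ and $I_{\gamma,\beta}$ by $s^3$. For the forward implication let $(u_\lambda,v_\lambda)$ be an action ground state, so $(u_\lambda,v_\lambda)\in S_{c(\lambda)}$ by $(i)$, and let $(\hat u,\hat v)$ be a minimizer of $\sigma_{\gamma,\beta}(c(\lambda))$, which exists by Theorem \ref{t2} and solves (\ref{e1-3}) at some $\hat\lambda>0$. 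Rescaling $(\hat u,\hat v)$ to frequency $\lambda$ with $s=\sqrt{\lambda/\hat\lambda}$ gives a frequency-$\lambda$ solution of mass $s\,c(\lambda)$, whence $s\,c(\lambda)\ge c(\lambda)$ by minimality of $c(\lambda)$, i.e.\ $\hat\lambda\le\lambda$. Therefore
\begin{equation*}
\sigma_{\gamma,\beta}(c(\lambda))=I_{\gamma,\beta}(\hat u,\hat v)=-\frac{\hat\lambda}{6}c(\lambda)\ge-\frac{\lambda}{6}c(\lambda)=I_{\gamma,\beta}(u_\lambda,v_\lambda)\ge\sigma_{\gamma,\beta}(c(\lambda)),
\end{equation*}
so equality holds throughout: $\hat\lambda=\lambda$ and $(u_\lambda,v_\lambda)$ is a minimizer of $\sigma_{\gamma,\beta}(c(\lambda))$, which also gives the displayed chain $I_{\gamma,\beta}(u_\lambda,v_\lambda)=\sigma_{\gamma,\beta}(c(\lambda))=\inf_{S_{c(\lambda)}}I_{\gamma,\beta}$. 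Conversely, if $(u_\lambda,v_\lambda)$ minimizes $\sigma_{\gamma,\beta}(c(\lambda))$ it solves (\ref{e1-3}) at a Lagrange multiplier $\mu>0$ (Theorem \ref{t2}); comparing $I_{\gamma,\beta}(u_\lambda,v_\lambda)=-\tfrac{\mu}{6}c(\lambda)$ with the value $\sigma_{\gamma,\beta}(c(\lambda))=-\tfrac{\lambda}{6}c(\lambda)$ just obtained forces $\mu=\lambda$, and then $E_{\lambda,\gamma,\beta}(u_\lambda,v_\lambda)=\tfrac{\lambda}{3}c(\lambda)\le\tfrac{\lambda}{3}(\|\phi_1\|_2^2+\|\phi_2\|_2^2)=E_{\lambda,\gamma,\beta}(\phi_1,\phi_2)$ for every solution $(\phi_1,\phi_2)$ at frequency $\lambda$, so $(u_\lambda,v_\lambda)$ is an action ground state.

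I expect the main obstacle to be the non-circular bookkeeping of the three quantities $\lambda$, $\hat\lambda$ (resp.\ $\mu$) and $c(\lambda)$: the comparison only closes because the scaling transports an energy minimizer of \emph{a priori} unknown frequency onto the frequency-$\lambda$ solution set, where the minimal-mass characterization of $c(\lambda)$ from $(i)$ can be invoked. The verification of the frequency law $\lambda\mapsto s^2\lambda$ and of the homogeneities of the scaling is routine, but the exponent in $\phi_{u^s,v^s}(x)=s^2\phi_{u,v}(sx)$ must be tracked carefully, since it is precisely what makes $I_{\gamma,\beta}$ scale by the single factor $s^3$ and hence makes the whole $p=3$ argument work.
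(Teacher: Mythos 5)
Your proposal is correct and follows essentially the same route as the paper's proof: the $p=3$ identities $I_{\gamma ,\beta }(u,v)=-\tfrac{\lambda }{6}\left( \Vert u\Vert _{2}^{2}+\Vert v\Vert _{2}^{2}\right) $ and $E_{\lambda ,\gamma ,\beta }(u,v)=\tfrac{\lambda }{3}\left( \Vert u\Vert _{2}^{2}+\Vert v\Vert _{2}^{2}\right) $ on solutions, the frequency-changing scaling $(s^{2}u(s\cdot ),s^{2}v(s\cdot ))$, and the inputs from Theorem \ref{t2} and Lemma \ref{L3-9}. The only differences are organizational rather than methodological: you package (i) as a minimal-mass characterization of $c(\lambda )$ and then run direct inequality chains in both implications of (ii), whereas the paper argues the forward implication by contradiction (comparing actions of the rescaled minimizer) and, in the backward implication, re-runs the Lemma \ref{L3-9}-type argument before pinning down $\tilde{\lambda}=\lambda $ by a two-sided rescaling comparison.
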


\begin{proof}
$(i)$ Let $(u_{1},v_{1})$ and $(u_{2},v_{2})$ be two vectorial action ground
states of system (\ref{e1-3}). Then we have
\begin{equation*}
\frac{\lambda }{3}(\Vert u_{1}\Vert _{2}^{2}+\Vert v_{1}\Vert
_{2}^{2})=I_{\gamma ,\beta }(u_{1},v_{1})=I_{\gamma ,\beta }(u_{2},v_{2})=%
\frac{\lambda }{3}(\Vert u_{2}\Vert _{2}^{2}+\Vert v_{2}\Vert _{2}^{2}),
\end{equation*}%
which implies the conclusion holds.\newline
$(ii)$ First of all, we show that if $(u_{\lambda },v_{\lambda })$ is a
vectorial action ground state of system (\ref{e1-3}), then $(u_{\lambda
},v_{\lambda })$ is a minimizer of $\sigma _{\gamma ,\beta }(c(\lambda ))$.
To this aim, suppose by contradiction that there exists $(u_{1},v_{1})\in
\mathbf{H}$ such that $\Vert u_{1}\Vert _{2}^{2}+\Vert v_{1}\Vert
_{2}^{2}=c(\lambda )$ and
\begin{equation}
\sigma _{\gamma ,\beta }(c(\lambda ))=I_{\gamma ,\beta
}(u_{1},v_{1})<I_{\gamma ,\beta }(u_{\lambda },v_{\lambda }).  \label{e3-20}
\end{equation}%
Moreover, there exists a Lagrange multiplier $\lambda _{1}>0$ such that $%
(u_{1},v_{1})$ satisfies the system:%
\begin{equation*}
\left\{
\begin{array}{ll}
-\Delta u+\lambda _{1}u+\gamma \phi _{u,v}u=|u|^{p-2}u+\beta |v|^{\frac{p}{2}%
}|u|^{\frac{p}{2}-2}u & \quad \text{in}\quad \mathbb{R}^{3}, \\
-\Delta v+\lambda _{1}v+\gamma \phi _{u,v}v=|v|^{p-2}v+\beta |u|^{\frac{p}{2}%
}|v|^{\frac{p}{2}-2}v & \quad \text{in}\quad \mathbb{R}^{3}.%
\end{array}%
\right.  \label{e3-21}
\end{equation*}%
Then by (\ref{e3-20}) and item $(i)$, we have
\begin{equation*}
-\frac{\lambda _{1}}{6}c(\lambda )=I_{\gamma ,\beta }(u_{1},v_{1})<I_{\gamma
,\beta }(u_{\lambda },v_{\lambda })=-\frac{\lambda }{6}c(\lambda ),
\end{equation*}%
which implies that $\lambda <\lambda _{1}$. Set $\varpi :=\sqrt{\frac{%
\lambda }{\lambda _{1}}}$ and $(\tilde{u}_{1},\tilde{v}_{1}):=(\varpi
^{2}u_{1}(\varpi x),\varpi ^{2}v_{1}(\varpi x))$. It is clear that $(\tilde{u%
}_{1},\tilde{v}_{1})$ is a vectorial solution of system (\ref{e1-3}). Then
we have
\begin{equation*}
\frac{\lambda }{3}c(\lambda )=E_{\lambda ,\gamma ,\beta }(u_{\lambda
},v_{\lambda })\leq E_{\lambda ,\gamma ,\beta }(\tilde{u}_{1},\tilde{v}_{1})=%
\frac{\lambda }{3}(\Vert \tilde{u}_{1}\Vert _{2}^{2}+\Vert \tilde{v}%
_{1}\Vert _{2}^{2})=\frac{\lambda \varpi }{3}(\Vert u_{1}\Vert
_{2}^{2}+\Vert v_{1}\Vert _{2}^{2})=\frac{\lambda \varpi }{3}c(\lambda ),
\end{equation*}%
which implies that $\varpi \geq 1$. This contradicts with $\lambda <\lambda
_{1}$. So, $(u_{\lambda },v_{\lambda })$ is a minimizer of $\sigma _{\gamma
,\beta }(c(\lambda ))$.

Finally we show that if $(u_{\lambda },v_{\lambda })$ is a minimizer of $%
\sigma _{\gamma ,\beta }(c(\lambda ))$, then $(u_{\lambda },v_{\lambda })$
is a vectorial action ground state of system (\ref{e1-3}). Indeed, we note
that there exists a Lagrange multiplier $\tilde{\lambda}>0$ such that $%
(u_{\lambda },v_{\lambda })$ satisfies the system
\begin{equation}
\left\{
\begin{array}{ll}
-\Delta u+\tilde{\lambda}u+\gamma \phi _{u,v}u=|u|^{p-2}u+\beta |v|^{\frac{p%
}{2}}|u|^{\frac{p}{2}-2}u & \quad \text{in}\quad \mathbb{R}^{3}, \\
-\Delta v+\tilde{\lambda}v+\gamma \phi _{u,v}v=|v|^{p-2}v+\beta |u|^{\frac{p%
}{2}}|v|^{\frac{p}{2}-2}v & \quad \text{in}\quad \mathbb{R}^{3}.%
\end{array}%
\right.  \label{e3-22}
\end{equation}%
Similar to the argument in Lemma \ref{L3-9}, one can show that $(u_{\lambda
},v_{\lambda })$ is a vectorial action ground state of system (\ref{e3-22}).
It remains to prove that $\tilde{\lambda}=\lambda $. Let $(u,v)$ be a
vectorial action ground state of system (\ref{e1-3}). It follows from item $%
(i)$ that
\begin{equation*}
-\frac{\tilde{\lambda}}{6}c(\lambda )=I_{\gamma ,\beta }(u_{\lambda
},v_{\lambda })\leq I_{\gamma ,\beta }(u,v)=-\frac{\lambda }{6}c(\lambda ),
\end{equation*}%
which implies that $\lambda \leq \tilde{\lambda}$. Set $(u_{\lambda }^{\ast
},v_{\lambda }^{\ast }):=(\varpi ^{2}u_{\lambda }(\varpi x),\varpi
^{2}u_{\lambda }(\varpi x))$, where $\varpi :=\sqrt{\frac{\lambda }{\tilde{%
\lambda}}}$. Then we have
\begin{equation*}
\frac{\lambda }{3}c(\lambda )=E_{\lambda ,\gamma ,\beta }(u,v)\leq
E_{\lambda ,\gamma ,\beta }(u_{\lambda }^{\ast },v_{\lambda }^{\ast })=\frac{%
\lambda }{3}(\Vert u_{\lambda }^{\ast }\Vert _{2}^{2}+\Vert u_{\lambda
}^{\ast }\Vert _{2}^{2})=\frac{\lambda \varpi }{3}c(\lambda ),
\end{equation*}%
which implies that $\tilde{\lambda}\leq \lambda .$ Hence, $\tilde{\lambda}%
=\lambda $. We complete the proof.
\end{proof}

\textbf{Now we give the proof of Theorem \ref{t4}:} This is an immediate
consequence of Lemma \ref{L3-9} and Proposition \ref{P3-10}.

\section{The existence of local minimizer}

For each $(u,v)\in S_{1}$ and $t>0$, we set $%
(u,v)_{t}:=(u_{t},v_{t})=(t^{3/2}u(tx),t^{3/2}v(tx))$. Then $%
c^{1/2}(u,v)_{t}\in S_{c}$. Define the fibering map $\Phi _{c,(u,v)}(t)$
given by
\begin{equation*}
\Phi _{c,(u,v)}(t):=I_{\gamma ,\beta }(c^{1/2}(u,v)_{t})=\frac{t^{2}}{2}%
cA(u,v)+\frac{\gamma t}{4}c^{2}B(u,v)-\frac{t^{\frac{3p-6}{2}}}{p}c^{\frac{p%
}{2}}C(u,v)\text{ for }t>0.
\end{equation*}%
By calculating the first and second derivatives of $\Phi _{c,(u,v)}(t)$, we
have%
\begin{equation*}
\Phi _{c,(u,v)}^{\prime }(t)=tcA(u,v)+\frac{\gamma }{4}c^{2}B(u,v)-\frac{%
3(p-2)}{2p}c^{\frac{p}{2}}C(u,v)t^{\frac{3p-8}{2}}
\end{equation*}%
and
\begin{equation*}
\Phi _{c,(u,v)}^{\prime \prime }(t)=cA(u,v)-\frac{3(p-2)(3p-8)}{4p}c^{\frac{p%
}{2}}C(u,v)t^{\frac{3p-10}{2}}.
\end{equation*}%
Notice that $\frac{d}{dt}I_{\gamma ,\beta }(c^{1/2}(u,v)_{t})=\frac{d}{dt}%
\Phi _{c,(u,v)}(t)=\frac{\mathcal{P}_{\gamma ,\beta }((u,v)_{t})}{t},$ where
$\mathcal{P}_{\gamma ,\beta }(u,v)$ corresponds to a Pohozaev type identity (%
\ref{e2-4}). We also recognize that for any $(u,v)\in S_{1}$, the dilated
function $c^{1/2}(u,v)_{t}$ belongs to the constraint manifold $\mathcal{M}%
_{\gamma ,\beta }(c)$ if and only if $t\in \mathbb{R}$ is a critical value
of the fibering map $t\in (0,\infty )\mapsto \Phi _{c,(u,v)}(t)$, namely, $%
\Phi _{c,(u,v)}^{\prime }(t)=0$. Thus, following the idea of Soave \cite{S2}
(or Siciliano and Silva \cite{SS}), it is natural to split $\mathcal{M}%
_{\gamma ,\beta }(c)$ into three parts corresponding to local minima, points
of inflection and local maxima. Thus we define
\begin{eqnarray*}
\mathcal{M}_{\gamma ,\beta }^{+}(c) &:&=\left\{ c^{1/2}(u,v)\in S_{c}:\Phi
_{c,(u,v)}^{\prime }(1)=0,\Phi _{c,(u,v)}^{\prime \prime }(1)>0\right\} ; \\
\mathcal{M}_{\gamma ,\beta }^{-}(c) &:&=\left\{ c^{1/2}(u,v)\in S_{c}:\Phi
_{c,(u,v)}^{\prime }(1)=0,\Phi _{c,(u,v)}^{\prime \prime }(1)<0\right\} ; \\
\mathcal{M}_{\gamma ,\beta }^{0}(c) &:&=\left\{ c^{1/2}(u,v)\in S_{c}:\Phi
_{c,(u,v)}^{\prime }(1)=0,\Phi _{c,(u,v)}^{\prime \prime }(1)=0\right\} .
\end{eqnarray*}

Furthermore, we have the following lemma.

\begin{lemma}
\label{L4-1} If $\mathcal{M}_{\gamma ,\beta }^{0}(c)=\emptyset $, then $%
\mathcal{M}_{\gamma ,\beta }(c)$ is a smooth submanifold in $S_{c}$ with
codimension $1$, and it is a natural constraint in $\mathbf{H}$.
\end{lemma}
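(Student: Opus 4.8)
The plan is to realise $\mathcal{M}_{\gamma ,\beta }(c)$ as a regular level set inside $S_c$ and to use the mass-preserving dilation $(u,v)\mapsto(u,v)_t$ as a distinguished tangent direction. Write $G(u,v):=\Vert u\Vert_2^2+\Vert v\Vert_2^2$, so that $S_c=G^{-1}(c)$ is a smooth codimension-one submanifold of $\mathbf{H}$, with $G'(u,v)=2(u,v)\neq(0,0)$ on $S_c$ since $c>0$. Both $G$ and $\mathcal{P}_{\gamma ,\beta }$ are of class $C^1$ on $\mathbf{H}$ (the nonlocal term $B$ being smooth by the Hardy--Littlewood--Sobolev estimate of Lemma \ref{L2-10}), so that $\mathcal{M}_{\gamma ,\beta }(c)=\{w\in S_c:\mathcal{P}_{\gamma ,\beta }(w)=0\}$ is the zero set of the $C^1$ map $\mathcal{P}_{\gamma ,\beta }|_{S_c}$. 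It therefore suffices to show that $0$ is a regular value, i.e. that $d(\mathcal{P}_{\gamma ,\beta }|_{S_c})(w)\neq 0$ at every $w\in\mathcal{M}_{\gamma ,\beta }(c)$. The crucial point enabling this is that the dilation preserves the $L^2$-mass, $\Vert w_t\Vert_2^2=\Vert w\Vert_2^2$, so for $w\in S_c$ the whole curve $t\mapsto w_t$ stays in $S_c$ and its velocity $\dot w:=\frac{d}{dt}\big|_{t=1}w_t$ lies in $T_wS_c$.

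Along this curve I would use the fibering-map identities already recorded, namely $\mathcal{P}_{\gamma ,\beta }(w_t)=t\,\Phi'_{c,(u,v)}(t)$ together with $\Phi'_{c,(u,v)}(1)=\mathcal{P}_{\gamma ,\beta }(w)$. Differentiating at $t=1$ and using $\Phi'_{c,(u,v)}(1)=\mathcal{P}_{\gamma ,\beta }(w)=0$ on $\mathcal{M}_{\gamma ,\beta }(c)$ gives the key identity
\[
\langle \mathcal{P}_{\gamma ,\beta }'(w),\dot w\rangle=\frac{d}{dt}\Big|_{t=1}\mathcal{P}_{\gamma ,\beta }(w_t)=\Phi'_{c,(u,v)}(1)+\Phi''_{c,(u,v)}(1)=\Phi''_{c,(u,v)}(1).
\]
Since $\mathcal{M}^0_{\gamma ,\beta }(c)=\emptyset$ means exactly $\Phi''_{c,(u,v)}(1)\neq 0$ at every point of $\mathcal{M}_{\gamma ,\beta }(c)$, I obtain $\langle \mathcal{P}_{\gamma ,\beta }'(w),\dot w\rangle\neq 0$ with $\dot w\in T_wS_c$. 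Thus $\mathcal{P}_{\gamma ,\beta }|_{S_c}$ is a submersion at $w$, and the regular value theorem on Banach manifolds yields that $\mathcal{M}_{\gamma ,\beta }(c)$ is a smooth submanifold of $S_c$ of codimension $1$.

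For the natural-constraint property, let $w\in\mathcal{M}_{\gamma ,\beta }(c)$ be a critical point of $I_{\gamma ,\beta }$ restricted to $\mathcal{M}_{\gamma ,\beta }(c)$. The step above shows $G'(w)$ and $\mathcal{P}_{\gamma ,\beta }'(w)$ are linearly independent (the latter pairs nontrivially with $\dot w\in\ker G'(w)$), so $(G,\mathcal{P}_{\gamma ,\beta })$ is a submersion at $w$ and the Lagrange multiplier rule furnishes $\lambda,\mu\in\mathbb{R}$ with $I_{\gamma ,\beta }'(w)=\lambda G'(w)+\mu\,\mathcal{P}_{\gamma ,\beta }'(w)$. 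Testing against $\dot w$, using $\langle G'(w),\dot w\rangle=0$ and $\langle I_{\gamma ,\beta }'(w),\dot w\rangle=\frac{d}{dt}\big|_{t=1}I_{\gamma ,\beta }(w_t)=\Phi'_{c,(u,v)}(1)=0$, leaves $\mu\,\langle \mathcal{P}_{\gamma ,\beta }'(w),\dot w\rangle=0$; since $\langle \mathcal{P}_{\gamma ,\beta }'(w),\dot w\rangle=\Phi''_{c,(u,v)}(1)\neq 0$, this forces $\mu=0$. Hence $I_{\gamma ,\beta }'(w)=\lambda G'(w)$, i.e. $w$ solves system (\ref{e1-3}) as a critical point of $I_{\gamma ,\beta }$ constrained to $S_c$, which is precisely the natural-constraint assertion.

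The routine parts are the differentiability of the functionals and the bookkeeping of the dilation exponents in computing $\Phi'$ and $\Phi''$. The one genuinely load-bearing computation is the identity $\langle \mathcal{P}_{\gamma ,\beta }'(w),\dot w\rangle=\Phi''_{c,(u,v)}(1)$ on $\mathcal{M}_{\gamma ,\beta }(c)$: it is exactly here that the hypothesis $\mathcal{M}^0_{\gamma ,\beta }(c)=\emptyset$ enters, and it simultaneously drives both the submanifold structure and the vanishing of the Pohozaev multiplier. The subtlety to handle with care is verifying that $\dot w$ truly lies in $T_wS_c$, which rests entirely on the mass-invariance of the chosen dilation.
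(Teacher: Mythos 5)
Your algebraic skeleton is the right one, but the argument has a genuine gap at exactly the step you yourself label as load-bearing: the vector $\dot w=\frac{d}{dt}\big|_{t=1}w_{t}=\left(\tfrac{3}{2}u+x\cdot\nabla u,\ \tfrac{3}{2}v+x\cdot\nabla v\right)$ need not exist as an element of $\mathbf{H}$. For a general $(u,v)\in H^{1}(\mathbb{R}^{3})\times H^{1}(\mathbb{R}^{3})$ one has no control on $x\cdot\nabla u$; the dilation $t\mapsto w_{t}$ is a strongly continuous group on $\mathbf{H}$, and its orbit through $w$ is norm-differentiable at $t=1$ only when $w$ lies in the (dense, proper) domain of the generator. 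Points of $\mathcal{M}_{\gamma,\beta}(c)$ have no reason to lie in that domain, so the pairings $\langle \mathcal{P}_{\gamma ,\beta }'(w),\dot w\rangle$ and $\langle I_{\gamma ,\beta }'(w),\dot w\rangle$, on which both the submersion claim and the vanishing of the multiplier $\mu$ rest, are not defined. Nor can you bypass this by noting that $t\mapsto \mathcal{P}_{\gamma,\beta}(w_{t})=t\,\Phi_{c,(u,v)}'(t)$ is a smooth real function with nonzero derivative: without norm-differentiability (or at least Lipschitz continuity) of the curve $t\mapsto w_{t}$, the chain rule gives no information about $d(\mathcal{P}_{\gamma,\beta}|_{S_{c}})(w)$, since $d(\mathcal{P}_{\gamma,\beta}|_{S_{c}})(w)=0$ is perfectly compatible with $\frac{d}{dt}\mathcal{P}_{\gamma,\beta}(w_{t})\big|_{t=1}\neq 0$ along a merely continuous curve.

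For what it is worth, the paper offers no proof of this lemma at all, deferring to the framework of Soave \cite{S2} and Siciliano--Silva \cite{SS}; their argument is designed precisely to avoid your step, and it is the natural repair of your proposal. Argue by contradiction: if $d(\mathcal{P}_{\gamma,\beta}|_{S_{c}})(w)=0$ at some $w\in\mathcal{M}_{\gamma,\beta}(c)$, then $\mathcal{P}_{\gamma ,\beta }'(w)=\nu G'(w)$ for some $\nu\in\mathbb{R}$, so $w$ is a weak solution of an elliptic system of the same form as (\ref{e1-3}) with rescaled coefficients (Laplacian multiplied by $2$, Hartree term by $1$, power terms by $\tfrac{3(p-2)}{2}$, and $\lambda=-2\nu$). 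A weak solution of such a system satisfies the corresponding Pohozaev identity -- this is what Lemma \ref{L2-6} provides, and it is the rigorous substitute for ``testing against $x\cdot\nabla u$'' -- and combining that identity with $\mathcal{P}_{\gamma,\beta}(w)=0$ yields exactly $\Phi_{c,(u,v)}''(1)=0$, i.e. $w\in\mathcal{M}^{0}_{\gamma,\beta}(c)$, contradicting the hypothesis. The same scheme applied to $I_{\gamma ,\beta }'(w)=\lambda G'(w)+\mu \mathcal{P}_{\gamma ,\beta }'(w)$ produces $\mu\,\Phi_{c,(u,v)}''(1)=0$, hence $\mu=0$, which is the natural-constraint assertion. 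So your identities, and your identification of where $\mathcal{M}^{0}_{\gamma,\beta}(c)=\emptyset$ enters, are correct; what is missing is that their justification must come from the Pohozaev identity for the auxiliary Euler--Lagrange system rather than from differentiating the dilation curve in $\mathbf{H}$.
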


For $(u,v)\in \mathbf{H},$ by (\ref{e2-2}), the Cauchy-Schwarz inequality
and the semigroup property of the Riesz potential, we have
\begin{eqnarray}
&&B(u,v)  \notag \\
&=&\int_{\mathbb{R}^{3}}\phi _{u,v}(u^{2}+v^{2})dx  \notag \\
&=&\int_{\mathbb{R}^{3}}\int_{\mathbb{R}^{3}}\frac{u^{2}(x)u^{2}(y)}{|x-y|}%
dxdy+\int_{\mathbb{R}^{3}}\int_{\mathbb{R}^{3}}\frac{v^{2}(x)v^{2}(y)}{|x-y|}%
dxdy+2\int_{\mathbb{R}^{3}}\int_{\mathbb{R}^{3}}\frac{u^{2}(x)v^{2}(y)}{|x-y|%
}dxdy  \notag \\
&\leq &\mathcal{B}\left( \Vert u\Vert _{2}^{3}\Vert \nabla u\Vert _{2}+\Vert
v\Vert _{2}^{3}\Vert \nabla v\Vert _{2}\right) +2\left( \int_{\mathbb{R}%
^{3}}\int_{\mathbb{R}^{3}}\frac{u^{2}(x)u^{2}(y)}{|x-y|}dxdy\right) ^{\frac{1%
}{2}}\left( \int_{\mathbb{R}^{3}}\int_{\mathbb{R}^{3}}\frac{v^{2}(x)v^{2}(y)%
}{|x-y|}dxdy\right) ^{\frac{1}{2}}  \notag \\
&\leq &\mathcal{B}\left( \Vert u\Vert _{2}^{3}+\Vert v\Vert _{2}^{3}\right)
\left( \Vert \nabla u\Vert _{2}+\Vert \nabla v\Vert _{2}\right) +2\mathcal{B}%
\Vert \nabla u\Vert _{2}^{\frac{1}{2}}\Vert u\Vert _{2}^{\frac{3}{2}}\Vert
\nabla v\Vert _{2}^{\frac{1}{2}}\Vert v\Vert _{2}^{\frac{3}{2}}  \notag \\
&\leq &4\mathcal{B}\left( \Vert u\Vert _{2}^{2}+\Vert v\Vert _{2}^{2}\right)
^{\frac{3}{2}}A(u,v)^{\frac{1}{2}}+\frac{\mathcal{B}}{2}%
\left( \Vert u\Vert _{2}^{2}+\Vert v\Vert _{2}^{2}\right) ^{\frac{3}{2}%
}A(u,v)^{\frac{1}{2}}  \notag \\
&\leq &\frac{9\mathcal{B}}{2}\left( \Vert u\Vert _{2}^{2}+\Vert v\Vert
_{2}^{2}\right) ^{\frac{3}{2}}A(u,v)^{\frac{1}{2}}.
\label{e4-9}
\end{eqnarray}%
It follows from (\ref{e2-3}) and (\ref{e4-9}) that there exists a constant $K(\gamma
,\beta ,p)>0$ such that%
\begin{eqnarray*}
\inf_{(u,v)\in S_{1}}\mathcal{R}_{p}(u,v) &=&\inf_{(u,v)\in S_{1}}\frac{%
A(u,v)^{\frac{3p-8}{4(p-3)}}(\gamma B(u,v))^{\frac{10-3p}{4(p-3)}}}{C(u,v)^{%
\frac{1}{2(p-3)}}} \\
&\geq &K(\gamma ,\beta ,p)\text{ for }\frac{10}{3}\leq p<6,
\end{eqnarray*}%
which implies that $c_{\ast }$ and $c^{\ast }$ defined as (\ref{e1-7}) are both positive. Then
we have the following two results.

\begin{lemma}
\label{L4-2} Let $\gamma ,\beta >0$. The following statements are true.%
\newline
$(i)$ If $p=10/3$, then $\mathcal{M}_{\gamma ,\beta }(c)=\mathcal{M}_{\gamma
,\beta }^{-}(c)\neq \emptyset $ for $c>c_{\ast }$ while $\mathcal{M}_{\gamma
,\beta }(c)=\emptyset $ for $0<c<c_{\ast }$.\newline
$(ii)$ If $10/3<p<6$, then $\mathcal{M}_{\gamma ,\beta }(c)=\mathcal{M}%
_{\gamma ,\beta }^{-}(c)\neq \emptyset $.
\end{lemma}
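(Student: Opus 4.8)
The plan is to reduce both statements to an analysis of the fibering map $\Phi_{c,(u,v)}$ for $(u,v)\in S_1$. Recall from the discussion preceding Lemma \ref{L4-1} that $c^{1/2}(u,v)_t\in\mathcal{M}_{\gamma,\beta}(c)$ if and only if $\Phi'_{c,(u,v)}(t)=0$, and that the type of such a point (lying in $\mathcal{M}^+$, $\mathcal{M}^-$ or $\mathcal{M}^0$) is decided by the sign of $\Phi''_{c,(u,v)}$ at the critical parameter. Since every element of $S_c$ has the form $c^{1/2}(u,v)_1$ with $(u,v)=c^{-1/2}(\cdot)\in S_1$, and since the scaling $(u,v)\mapsto(u,v)_{t_0}$ maps $S_1$ into itself and carries a maximum of $\Phi_{c,(u,v)}$ at $t_0$ to a maximum at $t=1$, it suffices to count the critical points of $t\mapsto\Phi_{c,(u,v)}(t)$ on $(0,\infty)$ and to identify their nature, for each fixed $(u,v)\in S_1$.

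For part $(i)$, where $p=\tfrac{10}{3}$, the key observation is that the exponent $\tfrac{3p-6}{2}$ equals $2$, so that
\[
\Phi_{c,(u,v)}(t)=\left(\tfrac{c}{2}A(u,v)-\tfrac{c^{5/3}}{p}C(u,v)\right)t^{2}+\tfrac{\gamma c^{2}}{4}B(u,v)\,t
\]
is a quadratic with positive coefficient of $t$. It therefore admits a critical point on $(0,\infty)$ — necessarily a strict maximum, since $\Phi''_{c,(u,v)}$ is then the negative constant $2\bigl(\tfrac{c}{2}A-\tfrac{c^{5/3}}{p}C\bigr)$ — if and only if the leading coefficient is negative, i.e. if and only if $c^{2/3}>\tfrac{p A(u,v)}{2C(u,v)}$. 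Specializing the definition of $\mathcal{R}_p$ at $p=\tfrac{10}{3}$ (where $10-3p=0$) gives $\mathcal{R}_{10/3}(u,v)=\bigl(A(u,v)/C(u,v)\bigr)^{3/2}$, so this condition is exactly $c>(\tfrac{5}{3})^{3/2}\mathcal{R}_{10/3}(u,v)$. Hence some $(u,v)\in S_1$ produces a critical point precisely when $c>(\tfrac{5}{3})^{3/2}\inf_{S_1}\mathcal{R}_{10/3}=c_{\ast}$; for $0<c<c_{\ast}$ no fibering map is critical anywhere, so $\mathcal{M}_{\gamma,\beta}(c)=\emptyset$, whereas for $c>c_{\ast}$ at least one fibering map has a critical point and every critical point is a maximum, giving $\mathcal{M}_{\gamma,\beta}(c)=\mathcal{M}^-_{\gamma,\beta}(c)\neq\emptyset$. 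Positivity of $c_{\ast}$ follows from the bound $\inf_{S_1}\mathcal{R}_p\ge K(\gamma,\beta,p)>0$ recorded above.

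For part $(ii)$, where $\tfrac{10}{3}<p<6$, set $q:=\tfrac{3p-6}{2}\in(2,6)$ and abbreviate $a=cA(u,v)>0$, $b=\tfrac{\gamma c^{2}}{4}B(u,v)>0$, $d=\tfrac{c^{p/2}}{p}C(u,v)>0$, so that $\Phi'_{c,(u,v)}(t)=at+b-dq\,t^{q-1}$ and $\Phi''_{c,(u,v)}(t)=a-dq(q-1)t^{q-2}$. Because $q-2>0$, the second derivative decreases strictly from $a>0$ to $-\infty$, so it vanishes at a single $t_1>0$ with $\Phi''>0$ on $(0,t_1)$ and $\Phi''<0$ on $(t_1,\infty)$. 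Consequently $\Phi'_{c,(u,v)}$ increases on $(0,t_1)$ and decreases on $(t_1,\infty)$; since $\Phi'_{c,(u,v)}(0^{+})=b>0$ and $\Phi'_{c,(u,v)}(t)\to-\infty$ as $t\to\infty$, the derivative stays positive throughout $(0,t_1)$ and crosses zero exactly once, at some $t_0>t_1$, where $\Phi''_{c,(u,v)}(t_0)<0$. Thus for every $c>0$ each fibering map has a unique critical point, which is a strict maximum, and therefore $\mathcal{M}_{\gamma,\beta}(c)=\mathcal{M}^-_{\gamma,\beta}(c)\neq\emptyset$, with $\mathcal{M}^+_{\gamma,\beta}(c)=\mathcal{M}^0_{\gamma,\beta}(c)=\emptyset$.

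The computations are elementary, so I expect the only delicate points to be bookkeeping ones: in part $(i)$, correctly tying the sign of the leading coefficient to the threshold $c_{\ast}$ via the degenerate specialization of $\mathcal{R}_p$ at $p=\tfrac{10}{3}$; and in part $(ii)$, making sure that the monotonicity of $\Phi''_{c,(u,v)}$ together with the boundary behaviour of $\Phi'_{c,(u,v)}$ forces a single sign change of $\Phi'_{c,(u,v)}$, rather than allowing additional critical points. This uniqueness is what guarantees $\mathcal{M}^0_{\gamma,\beta}(c)=\emptyset$ (so that Lemma \ref{L4-1} applies) and is the main, if still routine, obstacle.
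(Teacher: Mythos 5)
Your proof is correct and follows essentially the same fibering-map analysis as the paper: at $p=10/3$ the map $\Phi_{c,(u,v)}$ is a quadratic in $t$ whose leading coefficient is negative for some $(u,v)\in S_1$ exactly when $c>\left(\tfrac{5}{3}\right)^{3/2}\inf_{S_1}\mathcal{R}_{10/3}=c_{\ast}$ (giving a unique critical point which is a maximum) and nonnegative for all $(u,v)$ when $0<c<c_{\ast}$ (giving none), while for $\tfrac{10}{3}<p<6$ every fibering map has a unique critical point, a strict maximum. The paper actually omits the proof of part $(ii)$ (calling it ``similar''), so your monotonicity argument for $\Phi''_{c,(u,v)}$ together with the boundary behaviour of $\Phi'_{c,(u,v)}$ is a correct filling-in of that detail, in the spirit the paper intends.
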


\begin{proof}
$(i)$ Since $c>c_{\ast }$, there exists $(u,v)\in S_{1}$ such that
\begin{equation*}
\frac{c}{2}A(u,v)<\frac{3c^{5/3}}{10}C(u,v),
\end{equation*}%
and so $\varphi _{c,(u,v)}$ has only one critical point at $t_{c}^{-}(u,v)$
which is a global maximum with $\varphi _{c,(u,v)}^{\prime \prime
}(t_{c}^{-}(u,v))<0$. This implies that $\mathcal{M}_{\gamma ,\beta }(c)=%
\mathcal{M}_{\gamma ,\beta }^{-}(c)\neq \emptyset $. While if $0<c<c_{\ast }$%
, then for any $(u,v)\in S_{1}$, we have
\begin{equation*}
\frac{c}{2}A(u,v)-\frac{3c^{5/3}}{10}C(u,v)\geq 0,
\end{equation*}%
and $\varphi _{c,(u,v)}$ is strictly increasing and has no critical points.%
\newline
$(ii)$ The proof is similar to that of item $(ii)$, we omit it here.
\end{proof}

\begin{lemma}
\label{L4-3} Let $\gamma ,\beta >0$. The following statements are true.%
\newline
$(i)$ The functional $I_{\gamma ,\beta }$ is bounded from below by a
positive constant on $\mathcal{M}_{\gamma ,\beta }(c)=\mathcal{M}_{\gamma
,\beta }^{-}(c)$ for either $p=\frac{10}{3}$ and $c>c_{\ast },$ or $\frac{10%
}{3}<p<6$ and $c>0.$\newline
$(ii)$ For every minimizing sequence $\left\{ u_{n},v_{n}\right\} \subset
\mathcal{M}_{\gamma ,\beta }(c)$ of $m_{\gamma ,\beta }(c)$, there holds $%
D_{1}\leq A(u_{n},v_{n})\leq D_{2}$ for some $D_{1},D_{2}>0.$
\end{lemma}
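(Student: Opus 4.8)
The plan is to use the Pohozaev constraint to rewrite $I_{\gamma,\beta}$ on $\mathcal{M}_{\gamma,\beta}(c)$ as an explicit \emph{positive} combination of $A$ and $B$, and then to control these two quantities by the Gagliardo--Nirenberg inequalities of Section 2. Since $\mathcal{P}_{\gamma,\beta}(u,v)=0$ reads $\frac{3(p-2)}{2p}C(u,v)=A(u,v)+\frac{\gamma}{4}B(u,v)$, substituting $\frac{1}{p}C$ into $I_{\gamma,\beta}$ gives, for every $(u,v)\in\mathcal{M}_{\gamma,\beta}(c)$,
\[
I_{\gamma,\beta}(u,v)=\frac{3p-10}{6(p-2)}A(u,v)+\frac{3p-8}{3(p-2)}\cdot\frac{\gamma}{4}B(u,v).
\]
Both coefficients are nonnegative for $\frac{10}{3}\le p<6$ (the first vanishes exactly at $p=\frac{10}{3}$), so $I_{\gamma,\beta}\ge0$ on the manifold; the whole point is to upgrade this to a \emph{uniform positive} lower bound, which amounts to keeping $A$ (hence $B$) away from $0$ on $\mathcal{M}_{\gamma,\beta}(c)$.

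For $\frac{10}{3}<p<6$ I would argue directly. On the manifold $A\le A+\frac{\gamma}{4}B=\frac{3(p-2)}{2p}C$, and feeding in the power-type estimate (\ref{e2-3}) gives $A\le K\,c^{\frac{6-p}{4}}A^{\frac{3(p-2)}{4}}$ with $\frac{3(p-2)}{4}>1$; the superlinearity forces $A\ge D_1>0$ for a constant $D_1=D_1(p,\beta,c)$, uniformly over \emph{all} of $\mathcal{M}_{\gamma,\beta}(c)$. Together with the displayed identity this proves $(i)$, since $I_{\gamma,\beta}\ge\frac{3p-10}{6(p-2)}D_1>0$. For $(ii)$ the lower bound $A(u_n,v_n)\ge D_1$ is already in hand, while the upper bound is immediate from the same identity: along a minimizing sequence $I_{\gamma,\beta}(u_n,v_n)\to m_{\gamma,\beta}(c)$ is bounded, so $A(u_n,v_n)\le\frac{6(p-2)}{3p-10}I_{\gamma,\beta}(u_n,v_n)\le D_2$.

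The genuinely delicate case is $p=\frac{10}{3}$, where the coefficient of $A$ collapses, $I_{\gamma,\beta}=\frac{\gamma}{8}B$, and (\ref{e2-3}) degenerates to $C\le K\,c^{2/3}A$ with no $B$-dependence, so it can no longer control $A$. The new ingredient I would introduce is a \emph{mixed} estimate coupling $A$ and $B$: starting from the Hartree-type bounds (\ref{e2-28})--(\ref{e2-29}), which give $\|u\|_3^3,\|v\|_3^3\le A^{1/2}B^{1/2}$, then interpolating $L^{10/3}$ between $L^3$ and $L^6$ and using $\dot{H}^1\hookrightarrow L^6$, one obtains
\[
C(u,v)\le K(\beta)\,A(u,v)^{7/9}B(u,v)^{4/9}.
\]
On the manifold $A\le\frac{3}{5}C$, so this forces $B\ge k_1 A^{1/2}$, while (\ref{e4-9}) supplies the matching upper bound $B\le\frac{9\mathcal{B}}{2}c^{3/2}A^{1/2}$. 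Inserting both into $\frac{\gamma}{4}B=\frac{3}{5}C-A$ (using $C\le K(\beta)A^{7/9}B^{4/9}\le k_2\,A$) yields $\frac{\gamma}{4}k_1 A^{1/2}\le(\tfrac{3}{5}k_2-1)A$, and the fact that $\mathcal{M}_{\gamma,\beta}(c)\ne\emptyset$ for $c>c_\ast$ (Lemma \ref{L4-2}, i.e. the favorable sign of $\frac{c}{2}A-\frac{3c^{5/3}}{10}C$) makes the right-hand coefficient positive, pinning $A\ge D_1>0$ on all of $\mathcal{M}_{\gamma,\beta}(c)$. This gives $(i)$, as $I_{\gamma,\beta}=\frac{\gamma}{8}B\ge\frac{\gamma}{8}k_1 D_1^{1/2}>0$; and for $(ii)$ one adds that along a minimizing sequence $B(u_n,v_n)\to\frac{8}{\gamma}m_{\gamma,\beta}(c)$ is bounded, whence $A(u_n,v_n)^{1/2}\le k_1^{-1}B(u_n,v_n)\le D_2^{1/2}$.

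I expect the $p=\frac{10}{3}$ case to be the main obstacle, and within it the sharp matching of the threshold: the elementary mixed inequality above only produces \emph{some} mass threshold above which $A$ is bounded below, and one must verify — through the nonemptiness of $\mathcal{M}_{\gamma,\beta}(c)$ for $c>c_\ast$ from Lemma \ref{L4-2} — that this is consistent with the sharp constant $c_\ast$ in (\ref{e1-7}). The remaining ingredients, namely the Pohozaev substitution, the superlinear bootstrap for $p>\frac{10}{3}$, and the energy bound along minimizing sequences, are routine once the mixed estimate is established.
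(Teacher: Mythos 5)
Your proposal is correct, and for the mass-critical case $p=\tfrac{10}{3}$ it takes a genuinely different route from the paper; for $\tfrac{10}{3}<p<6$ the two arguments coincide (the paper also substitutes the Pohozaev constraint to get $I_{\gamma ,\beta }=\frac{3p-10}{6(p-2)}A+\frac{3p-8}{12(p-2)}\gamma B$ on $\mathcal{M}_{\gamma ,\beta }(c)$, then pins $A$ from below by feeding (\ref{e2-3}) into $A\le \frac{3(p-2)}{2p}C$ and uses coercivity for the upper bound, exactly as you do). At $p=\tfrac{10}{3}$ the paper argues softly: part $(i)$ is disposed of there by the pointwise identity $I_{\gamma ,\beta }=\frac{3}{10}C-\frac{1}{2}A=\frac{\gamma }{8}B>0$ from (\ref{e4-4}), and the boundedness of $A(u_{n},v_{n})$ in part $(ii)$ is obtained by contradiction: assuming $A(u_{n},v_{n})\to \infty $, rescaling by $\varepsilon _{n}=A(u_{n},v_{n})^{-1/2}$ so that $A(\tilde{u}_{n},\tilde{v}_{n})=1$, $C(\tilde{u}_{n},\tilde{v}_{n})\to \frac{5}{3}$, $B(\tilde{u}_{n},\tilde{v}_{n})\to 0$, and then invoking the vanishing lemma, translations, and Fatou's lemma on the nonlocal term to reach $0<B(u^{\ast },v^{\ast })\le 0$. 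Your replacement is the quantitative mixed bound $C\le K(\beta )A^{7/9}B^{4/9}$, which is precisely Ruiz's inequality (\ref{e3-23}) at $p=\tfrac{10}{3}$, but since you derive it by interpolating $L^{10/3}$ between the Lions estimates (\ref{e2-28})--(\ref{e2-29}) and $\dot{H}^{1}\hookrightarrow L^{6}$, it requires no radial symmetry, which is what makes it usable on all of $\mathcal{M}_{\gamma ,\beta }(c)$. This buys you strictly more than the paper's argument: $B\ge k_{1}A^{1/2}$ and $A\ge D_{1}>0$ hold uniformly on the whole manifold, not merely along minimizing sequences, so you obtain the uniform bound $m_{\gamma ,\beta }(c)\ge \frac{\gamma }{8}k_{1}D_{1}^{1/2}>0$ with explicit constants, whereas the paper's displayed computation for $(i)$ at $p=\tfrac{10}{3}$ gives only pointwise positivity (the uniform claim resting implicitly on the compactness argument of $(ii)$); the price is the extra interpolation inequality, which the paper's soft argument avoids. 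One remark: your closing concern about ``sharp matching of the threshold'' is unnecessary, since your argument never uses sharpness of $c_{\ast }$ in (\ref{e1-7}), only the nonemptiness of $\mathcal{M}_{\gamma ,\beta }(c)$ from Lemma \ref{L4-2}; indeed nonemptiness by itself forces $\tfrac{3}{5}k_{2}>1$ in your chain (on the manifold $A<\tfrac{3}{5}C\le \tfrac{3}{5}k_{2}A$), after which $\tfrac{\gamma }{4}k_{1}A^{1/2}\le (\tfrac{3}{5}k_{2}-1)A$ pins $A\ge D_{1}$ regardless of how $k_{2}$ compares with the optimal constant.
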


\begin{proof}
$(i)$ For $(u,v)\in \mathcal{M}_{\gamma ,\beta }(c)$, we have
\begin{equation}
A(u,v)+\frac{\gamma }{4}B(u,v)-\frac{3(p-2)}{2p}C(u,v)=0.  \label{e4-4}
\end{equation}%
When $p=\frac{10}{3}$, for $c>c_{\ast }$ it follows from (\ref{e4-4}) that
\begin{equation*}
I_{\gamma ,\beta }(u,v)=\frac{3}{10}C(u,v)-\frac{1}{2}A(u,v)>0.
\end{equation*}%
When $\frac{10}{3}<p<6,$ by (\ref{e2-3}), one has
\begin{equation*}
A(u,v)\geq \left[ \frac{p}{3(2+\beta )(p-2)\mathcal{S}_{p}}\right] ^{\frac{4%
}{3p-10}}c^{-\frac{6-p}{3p-10}},
\end{equation*}%
and together with (\ref{e4-4}), leading to%
\begin{eqnarray*}
I_{\gamma ,\beta }(u,v) &=&\frac{3p-10}{6(p-2)}A(u,v)+\frac{3p-8}{12(p-2)}%
\gamma B(u,v)  \notag \\
&\geq &\frac{3p-10}{6(p-2)}A(u,v)  \notag \\
&>&\frac{3p-10}{6(p-2)}\left[ \frac{p}{3(2+\beta )(p-2)\mathcal{S}_{p}}%
\right] ^{\frac{4}{3p-10}}c^{-\frac{6-p}{3p-10}}>0.  \label{e4-5}
\end{eqnarray*}%
\newline
$(ii)$ Let $\left\{ (u_{n},v_{n})\right\} \subset \mathcal{M}_{\gamma ,\beta
}(c)$ be a minimizing sequence of $m_{\gamma ,\beta }(c)$. For $\frac{10}{3}%
<p<6$, $\left\{ (u_{n},v_{n})\right\} $ is bounded in $\mathbf{H},$ since $%
I_{\gamma ,\beta }$ is coercive on $\mathcal{M}_{\gamma ,\beta }(c)$. For $p=%
\frac{10}{3}$, we assume on the contrary that $A(u_{n},v_{n})\rightarrow
+\infty $ as $n\rightarrow \infty $. Since $\mathcal{P}_{\gamma ,\beta
}(u_{n},v_{n})=o_{n}(1)$, we have
\begin{equation}
m_{\gamma ,\beta }(c)=I_{\gamma ,\beta }(u_{n},v_{n})=\frac{1}{8}%
B(u_{n},v_{n}),  \label{e4-6}
\end{equation}%
and
\begin{equation*}
\lim_{n\rightarrow \infty }\frac{\frac{3}{5}C(u_{n},v_{n})}{A(u_{n},v_{n})}%
=1.
\end{equation*}%
Set $\varepsilon _{n}:=[A(u_{n},v_{n})]^{-1/2}$ and $(\tilde{u}_{n}(x),%
\tilde{v}_{n}(x)):=\left(\varepsilon _{n}^{3/2}u_{n}(\varepsilon
_{n}x),\varepsilon _{n}^{3/2}v_{n}(\varepsilon _{n}x)\right)$. It is clear that $%
\varepsilon _{n}\rightarrow 0$ as $n\rightarrow \infty $ and $(\tilde{u}%
_{n}(x),\tilde{v}_{n}(x))\in S_{c}.$ Moreover, we have
\begin{equation}
A(\tilde{u}_{n},\tilde{v}_{n})=1\quad \text{and }C(\tilde{u}_{n},\tilde{v}%
_{n})\rightarrow \frac{5}{3}\text{ as }n\rightarrow \infty .  \label{e4-7}
\end{equation}%
Then $\left\{ (\tilde{u}_{n},\tilde{v}_{n})\right\} $ is bounded in $\mathbf{%
H}$, and from (\ref{e4-6}) it follows that
\begin{equation}
B(\tilde{u}_{n},\tilde{v}_{n})=\varepsilon _{n}B(u_{n},v_{n})\rightarrow 0%
\text{ as }n\rightarrow \infty .  \label{e4-8}
\end{equation}%
Let
\begin{equation*}
\delta :=\lim_{n\rightarrow \infty }\sup_{y\in \mathbb{R}^{3}}%
\int_{B_{R}(y)}(\tilde{u}_{n}^{2}+\tilde{v}_{n}^{2})dx\geq 0.
\end{equation*}%
If $\delta =0$, then it follows from Vanishing lemma in \cite[Lemma I.1]{L1}
that $\left\{ (\tilde{u}_{n},\tilde{v}_{n})\right\} \rightarrow (0,0)$ in $%
L^{\frac{10}{3}}(\mathbb{R}^{3})\times L^{\frac{10}{3}}(\mathbb{R}^{3})$,
which contradicts with (\ref{e4-7}). So $\delta >0$ and there exists a
sequence $\left\{ y_{n}\right\} \subset \mathbb{R}^{3}$ such that
\begin{equation*}
\int_{B_{1}(y_{n})}(\tilde{u}_{n}^{2}+\tilde{v}_{n}^{2})dx\geq \frac{\delta
}{2}>0.
\end{equation*}%
Let $(u_{n}^{\ast },v_{n}^{\ast }):=(\tilde{u}_{n}(x+y_{n}),\tilde{v}%
_{n}(x+y_{n}))$. Then we have $A(u_{n}^{\ast },v_{n}^{\ast })=A(\tilde{u}%
_{n},\tilde{v}_{n})$ and
\begin{equation*}
\int_{B_{1}(0)}((u_{n}^{\ast })^{2}+(u_{n}^{\ast })^{2})dx\geq \frac{\delta
}{2},
\end{equation*}%
which implies that $(u_{n}^{\ast },v_{n}^{\ast })\rightharpoonup (u^{\ast
},v^{\ast })\neq (0,0)$ in $\mathbf{H}$. By the Fatou's lemma and (\ref{e4-8}%
), we obtain that
\begin{equation*}
0<B(u^{\ast },v^{\ast })\leq \lim \inf_{n\rightarrow \infty }B(u_{n}^{\ast
},v_{n}^{\ast })=\lim \inf_{n\rightarrow \infty }B(\tilde{u}_{n},\tilde{v}%
_{n})=0,
\end{equation*}%
which is impossible and so $\left\{ (u_{n},v_{n})\right\} $ is bounded in $%
\mathbf{H}$. Moreover, it is easy to prove that the sequence $\left\{
(u_{n},v_{n})\right\} $ can not vanish. In fact, if $\left\{
(u_{n},v_{n})\right\} $ can vanish, then $I_{\gamma ,\beta
}(u_{n},v_{n})\rightarrow 0$ as $n\rightarrow \infty $, which contradicts
with $m_{\gamma ,\beta }(c)>0$. We complete the proof.
\end{proof}

\begin{lemma}
\label{L4-4} (\cite[Proposition 3.1]{SS}) Assume that $b\neq 0$, $dl-bg\neq
0 $, $(be-al)/(dl-bg)>0$, $(ag-de)(dl-bg)>0$, $A,B,G>0$ and $p\in
(2,6)\backslash \left\{ 3\right\} $. Then the following system
\begin{equation*}
\left\{
\begin{array}{ll}
aAt+bBr+dGr^{\frac{p-2}{2}}t^{\frac{3p-8}{2}}=0 &  \\
eAt+lBr+gGr^{\frac{p-2}{2}}t^{\frac{3p-8}{2}}=0 &
\end{array}%
\right.
\end{equation*}%
admits a unique positive solution $(r,t)$. Moreover, we have
\begin{equation*}
r=\left( \frac{be-al}{dl-bg}\right) ^{\frac{1}{2(p-3)}}\left( \frac{ag-de}{%
dl-bg}\right) ^{\frac{3p-10}{4(p-3)}}\frac{A^{\frac{3p-8}{4(p-3)}}B^{\frac{%
3p-10}{4(p-3)}}}{G^{\frac{1}{2(p-2)}}}.
\end{equation*}
\end{lemma}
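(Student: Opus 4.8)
The plan is to exploit the fact that the two equations are \emph{linear} in the three monomial quantities $X:=At$, $Y:=Br$ and $W:=Gr^{(p-2)/2}t^{(3p-8)/2}$, and then to close the system by re-imposing the definition of $W$. First I would rewrite the system as
\begin{equation*}
\left\{
\begin{array}{l}
aX+bY+dW=0,\\
eX+lY+gW=0,
\end{array}
\right.
\end{equation*}
and regard $Y,W$ as the unknowns with $X$ a parameter. The nondegeneracy hypotheses ($b\neq 0$ and, crucially, $dl-bg\neq 0$) make the $2\times 2$ subsystem in $(Y,W)$ invertible, so Cramer's rule yields
\begin{equation*}
Y=\frac{ag-de}{dl-bg}\,X,\qquad W=\frac{be-al}{dl-bg}\,X.
\end{equation*}

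Next I would extract the sign information. For $X>0$ the hypotheses $(ag-de)(dl-bg)>0$ and $(be-al)/(dl-bg)>0$ guarantee $Y>0$ and $W>0$ respectively, so that all three quantities are positive; this is exactly what is needed for the fractional powers appearing below to make sense and for the resulting solution to be admissible. I would then close the system by recalling that $W$ is not free but equals $Gr^{(p-2)/2}t^{(3p-8)/2}$ with $t=X/A$ and $r=Y/B$. Substituting the expression for $Y$ into this identity and collecting the powers of $X$ (whose exponents on the right add up to $\tfrac{p-2}{2}+\tfrac{3p-8}{2}=2p-5$), the relation $W=\frac{be-al}{dl-bg}X$ reduces to a single scalar equation
\begin{equation*}
X^{2(p-3)}=\frac{be-al}{dl-bg}\,\frac{A^{(3p-8)/2}B^{(p-2)/2}}{G}\left(\frac{dl-bg}{ag-de}\right)^{(p-2)/2}.
\end{equation*}

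Because $p\neq 3$ the exponent $2(p-3)$ is nonzero, and because the right-hand side is strictly positive (a product and quotient of the positive numbers $A,B,G$ and the positive ratios above), this equation has a \emph{unique} positive root $X$. This delivers existence and uniqueness of $(r,t)=(Y/B,\,X/A)$ with $r,t>0$ simultaneously. The explicit formula for $r$ then follows by back-substitution: writing $r=\frac{ag-de}{(dl-bg)B}\,X$, inserting the value of $X$ obtained above, distributing the exponent $\tfrac{1}{2(p-3)}$ over each factor and regrouping, one recovers the displayed product of powers of $be-al$, $ag-de$, $dl-bg$, $A$, $B$ and $G$.

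The computation is elementary once this structure is in place, so there is no deep obstacle here; the delicate points are of bookkeeping type. The main thing to watch is the positivity: each sign hypothesis is used essentially once (to force $Y>0$, $W>0$, and the positivity of the right-hand side of the scalar equation), and if any of them failed, either the monomial equation would have no positive root or the fractional powers would be meaningless. The second point is the exclusion $p\neq 3$, which is indispensable, since at $p=3$ the exponent $2(p-3)$ collapses, the scalar equation degenerates, and unique solvability is lost. Correctly assembling the rational exponents in the final display is then the only remaining (purely algebraic) effort.
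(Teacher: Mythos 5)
Your reduction is the right (and essentially the only) approach; note that the paper itself gives no proof of this lemma, quoting it from \cite[Proposition 3.1]{SS}, so your argument stands on its own. Its core steps check out: with $X=At$, $Y=Br$, $W=Gr^{\frac{p-2}{2}}t^{\frac{3p-8}{2}}$, invertibility of the $2\times 2$ block (which only needs $dl-bg\neq 0$; the hypothesis $b\neq 0$ is never actually used) gives $Y=k_{1}X$ and $W=k_{2}X$ with $k_{1}=\frac{ag-de}{dl-bg}$, $k_{2}=\frac{be-al}{dl-bg}$; the sign hypotheses give $k_{1},k_{2}>0$; and the closure relation, with exponent count $\frac{p-2}{2}+\frac{3p-8}{2}=2p-5$, yields $X^{2(p-3)}=k_{2}\,k_{1}^{-\frac{p-2}{2}}A^{\frac{3p-8}{2}}B^{\frac{p-2}{2}}G^{-1}$, which for $p\neq 3$ has a unique positive root, so existence and uniqueness of $(r,t)=(Y/B,\,X/A)$ follow.

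However, your final sentence --- that back-substitution ``recovers the displayed product of powers'' --- is not correct as stated, and you would have caught this had you finished the bookkeeping. Carrying it out gives
\begin{equation*}
r=\frac{k_{1}X}{B}=\left(\frac{be-al}{dl-bg}\right)^{\frac{1}{2(p-3)}}\left(\frac{ag-de}{dl-bg}\right)^{\frac{3p-10}{4(p-3)}}A^{\frac{3p-8}{4(p-3)}}\,B^{\frac{10-3p}{4(p-3)}}\,G^{-\frac{1}{2(p-3)}},
\end{equation*}
i.e.\ the exponent of $B$ is $\frac{10-3p}{4(p-3)}$ (opposite in sign to the displayed one) and the exponent of $G$ is $-\frac{1}{2(p-3)}$ (the display has $p-2$ in place of $p-3$). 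These are typos in the stated lemma, not errors in your method: the paper's own application of the lemma, namely the definition of $c^{\ast}$ in (\ref{e1-7}) through $\mathcal{R}_{p}(u,v)=\left[A(u,v)^{3p-8}(\gamma B(u,v))^{10-3p}/C(u,v)^{2}\right]^{\frac{1}{4(p-3)}}$ applied to system (\ref{e4-1}) (where $k_{1}=\frac{2(6-p)}{5p-12}$ and $k_{2}=\frac{3p}{5p-12}$), agrees exactly with the formula above and not with the display. A complete solution should have carried out this last computation explicitly and flagged the discrepancy rather than asserting agreement.
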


Fix $(u,v)\in S_{1}$ and consider the map $\hat{f}(c):=\Phi _{c,(u,v)}\left( t_{c}^{-}(u,v)\right),$ where $t_{c}^{-}(u,v)$ is the unique critical point of $\Phi _{c,(u,v)}$.
For simplicity, we denote $t_{c}=t_{c}^{-}(u,v)$. By Lemma \ref{L4-2}, the
map $\hat{f}$ is well-defined and
\begin{equation*}
\hat{f}^{\prime }(c)=\Phi _{c,(u,v)}^{\prime }\left( t_{c}\right) +\frac{1}{2%
}\left[ (t_{c})^{2}A(u,v)+ct_{c}(u)\gamma B(u,v)-c^{\frac{p-2}{2}}(t_{c})^{%
\frac{3p-6}{2}}C(u,v)\right] .
\end{equation*}%
Thus $\hat{f}^{\prime }(c)=0$ if and only if
\begin{equation}
\left\{
\begin{array}{l}
A(u,v)t_{c}+\frac{\gamma }{4}B(u,v)c-\frac{3(p-2)}{2p}C(u,v)c^{\frac{p-2}{2}%
}(t_{c})^{\frac{3p-8}{2}}=0 \\
A(u,v)t_{c}+\gamma B(u,v)c-C(u,v)c^{\frac{p-2}{2}}(t_{c})^{\frac{3p-8}{2}}=0.%
\end{array}%
\right.  \label{e4-1}
\end{equation}%
By Lemma \ref{L4-4}, system (\ref{e4-1}) has a unique positive solution $%
(c^{\ast },t^{\ast })$, where $c^{\ast }$ is as (\ref{e1-7}).
In particular, when $p=10/3$, it is clear that
\begin{equation*}
c^{\ast }=\left( \frac{15}{7}\right) ^{\frac{3}{2}}\inf_{(u,v)\in S_{1}}%
\mathcal{R}_{10/3}(u,v)=\left( \frac{9}{7}\right) ^{\frac{3}{2}}c_{\ast }.
\end{equation*}%

\begin{lemma}
\label{L4-5} Let $\gamma ,\beta >0$. For $(u,v)\in S_{1}$, the following
statements are true.\newline
$(i)$ If $p=\frac{10}{3}$, then the function $\hat{f}(c)$ is decreasing on $%
c\in (c_{\ast },c^{\ast }).$\newline
$(ii)$ If $\frac{10}{3}<p<6$, then the function $\hat{f}(c)$ is decreasing
on $c\in (0,c^{\ast }).$
\end{lemma}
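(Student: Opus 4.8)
The plan is to prove that $\hat f'(c)<0$ on the relevant range. Throughout fix $(u,v)\in S_1$ and abbreviate $A=A(u,v)$, $B=B(u,v)$, $C=C(u,v)$, $t_c=t_c^-(u,v)$. First I would record that $\hat f$ is $C^1$: on $\mathcal{M}_{\gamma,\beta}^-(c)$ the unique critical point $t_c$ of $\Phi_{c,(u,v)}$ satisfies $\Phi_{c,(u,v)}''(t_c)<0$, so by the implicit function theorem $c\mapsto t_c$ is $C^1$, whence $\hat f$ is $C^1$. Because $\Phi_{c,(u,v)}'(t_c)=0$, the chain rule collapses $\hat f'(c)$ to the partial derivative in $c$, which is exactly the bracket displayed just before the lemma:
\[
\hat f'(c)=\frac12\Big[\,t_c^{2}A+\gamma c\,t_c B-c^{\frac{p-2}{2}}t_c^{\frac{3p-6}{2}}C\,\Big].
\]

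Next I would simplify the cubic term using the critical point identity $\Phi_{c,(u,v)}'(t_c)=0$, which after dividing by $c$ reads $t_cA+\tfrac\gamma4 cB=\tfrac{3(p-2)}{2p}c^{\frac{p-2}{2}}t_c^{\frac{3p-8}{2}}C$. Writing $c^{\frac{p-2}{2}}t_c^{\frac{3p-6}{2}}C=t_c\cdot c^{\frac{p-2}{2}}t_c^{\frac{3p-8}{2}}C=t_c\cdot\tfrac{2p}{3(p-2)}\big(t_cA+\tfrac\gamma4 cB\big)$ turns the nonlinear term into an affine one, and a direct reduction gives
\[
\hat f'(c)=\frac{t_c}{12(p-2)}\Big[\,2(p-6)t_cA+(5p-12)\gamma c B\,\Big].
\]
Since $t_c>0$ and $p>2$, the sign of $\hat f'(c)$ equals that of $G(c):=2(p-6)t_cA+(5p-12)\gamma c B$; on $(10/3,6)$ one has $p-6<0<5p-12$, so $G$ is a competition between a negative and a positive term.

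The decisive step is uniqueness of the zero. System (\ref{e4-1}) is precisely the pair of equations ``$\Phi_{c,(u,v)}'(t_c)=0$ and $\hat f'(c)=0$'', and by Lemma \ref{L4-4} it admits a unique positive solution $(c^{\ast},t^{\ast})$. As the first equation holds for every admissible $c$ by the very definition of $t_c$, the equation $G(c)=0$ has the single root $c=c^{\ast}$ in the admissible range. Consequently the continuous function $\hat f'$ keeps a constant sign on the connected range to the left of $c^{\ast}$, namely $(0,c^{\ast})$ when $\tfrac{10}{3}<p<6$ and $(c_\ast,c^{\ast})$ when $p=\tfrac{10}{3}$ (where $t_c$ exists only for $c>c_\ast$ by Lemma \ref{L4-2}).

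Finally I would pin the sign at the left endpoint. As $c$ decreases to the endpoint the fibering maximum degenerates and $t_c\to+\infty$: balancing the two sides of the critical point identity gives $t_c\sim \kappa\, c^{-(p-2)/(3p-10)}\to+\infty$ as $c\to0^+$ for $\tfrac{10}{3}<p<6$, while for $p=\tfrac{10}{3}$ the exponent $\tfrac{3p-8}{2}=1$ makes everything explicit, $t_c=\dfrac{\gamma c B}{4\big(\frac35 c^{2/3}C-A\big)}\to+\infty$ as $c\to c_\ast^+$. In either case the negative term dominates, so $G(c)\to-\infty$ and $\hat f'(c)<0$ near the endpoint; by the constant-sign property $\hat f'(c)<0$ on the whole interval, i.e. $\hat f$ is strictly decreasing, and since $c^{\ast}(u,v)\ge c^{\ast}$ this covers the stated interval. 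I expect the main obstacle to be exactly this sign-location step: without the uniqueness from Lemma \ref{L4-4} one would be forced to estimate $\tfrac{d}{dc}t_c$ directly, and the cleanest substitute is the blow-up $t_c\to\infty$ at the boundary of the admissible range, whose verification is routine for $p=\tfrac{10}{3}$ but requires careful asymptotic bookkeeping for $\tfrac{10}{3}<p<6$.
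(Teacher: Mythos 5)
Your proposal is correct in outline and its skeleton coincides with the paper's: the envelope identity $\hat f'(c)=\frac12\bigl[t_c^2A+\gamma c\,t_cB-c^{(p-2)/2}t_c^{(3p-6)/2}C\bigr]$, the observation that "$\Phi'_{c,(u,v)}(t_c)=0$ and $\hat f'(c)=0$" is exactly system (\ref{e4-1}), the uniqueness of its positive solution from Lemma \ref{L4-4}, and a continuity/constant-sign argument whose sign is pinned at the left endpoint. Two execution differences are worth recording. First, after inserting the critical-point identity you eliminate $C$ and work with $G(c)=2(p-6)t_cA+(5p-12)\gamma cB$, whereas the paper eliminates $B$ and works with $h(c)=-3A+\frac{5p-12}{p}c^{(p-2)/2}t_c^{(3p-10)/2}C$, so that $2\hat f'(c)=t_c^2h(c)$; the two are equivalent (your $G$ equals $t_c h(c)$ up to the positive factor $6(p-2)$), but the choice matters below. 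Second, for $p=\frac{10}{3}$ the paper needs neither Lemma \ref{L4-4} nor continuity: since $(3p-10)/2=0$, $h(c)=-3A+\frac75c^{2/3}C$ is explicit, and it is negative for every $c<c^\ast$ directly from the definition of $c^\ast$ as an infimum over $S_1$; this settles part $(i)$ on the whole interval in one line, while your detour through uniqueness and blow-up of $t_c$ at the lower end also works but is heavier than necessary.

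The one genuine soft spot is your endpoint analysis in part $(ii)$. With your $G$, the claim $t_c\to+\infty$ as $c\to0^+$ is not a convenience but a necessity: if $t_c$ tended to $0$ at a rate comparable to $c$, both terms of $G$ would vanish and its sign near $0$ would be undetermined. "Balancing the two sides of the critical point identity" is only a heuristic and does not rule this out. The missing argument is short: if $t_{c_n}$ stayed bounded along some $c_n\to0^+$, then the identity $t_{c_n}A+\frac{\gamma}{4}c_nB=\frac{3(p-2)}{2p}c_n^{(p-2)/2}t_{c_n}^{(3p-8)/2}C$ forces $t_{c_n}\to0$, and dividing it by $t_{c_n}$ yields $A\le A+\frac{\gamma c_nB}{4t_{c_n}}=\frac{3(p-2)}{2p}c_n^{(p-2)/2}t_{c_n}^{(3p-10)/2}C\to0$ because $(3p-10)/2>0$, a contradiction; hence $t_c\to+\infty$, so $2(p-6)t_cA\to-\infty$ while $(5p-12)\gamma cB\to0$, and $G(c)\to-\infty$, closing your argument. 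Note that the paper's form $h$ sidesteps this entirely: in the case "$t_c$ bounded" one gets $h(c)\to-3A<0$ trivially, and only the case $t_c\to\infty$ requires invoking the constraint equation, which is why the paper can afford a mere case split where you must prove blow-up.
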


\begin{proof}
For $(u,v)\in S_{1}$, by system (\ref{e4-1}), we have
\begin{equation}
t_{c}^{2}A(u,v)+ct_{c}\gamma B(u,v)-c^{\frac{p-2}{2}}t_{c}^{\frac{3p-6}{2}%
}C(u,v)=t_{c}^{2}h(c),  \label{e4-2}
\end{equation}%
where
\begin{equation*}
h(c):=-3A(u,v)+\frac{5p-12}{p}c^{\frac{p-2}{2}}t_{c}^{\frac{3p-10}{2}}C(u,v).
\end{equation*}%
According to (\ref{e4-2}), we note that $2\hat{f}^{\prime }(c)=t_{c}^{2}h(c)$%
, which implies that the monotonicity of $\hat{f}(c)$ depends on the sign of
$h(c)$.\newline
$(i)$ When $p=\frac{10}{3}$, we have $h(c)<0$ for $c_{\ast }<c<c^{\ast }$.
This shows that the function $\hat{f}(c)$ is decreasing on $c\in (c_{\ast
},c^{\ast }).$\newline
$(ii)$ When $\frac{10}{3}<p<6,$ since $t_{c}$ is continuous and $\hat{f}$
has a unique critical point, it is sufficient to show that there exists $%
0<c_{1}<c^{\ast }$ such that $h\left( c_{1}\right) <0$. Now we claim that
\begin{equation}
\lim_{c\rightarrow 0}h(c)<0.  \label{e4-3}
\end{equation}%
Indeed, if $t_{c}$ is bounded from above as $c\rightarrow 0$, then (\ref%
{e4-3}) is obvious. If $t_{c}\rightarrow \infty $ as $c\rightarrow 0$, by (%
\ref{e4-1}), we have
\begin{equation*}
A(u,v)-\frac{3(p-2)}{2p}c^{\frac{p-2}{2}}t_{c}^{\frac{3p-10}{2}%
}C(u,v)=o_{c}(1),
\end{equation*}%
which shows that
\begin{eqnarray*}
h(c) &=&-3A(u,v)+\frac{5p-12}{p}c^{\frac{p-2}{2}}t_{c}^{\frac{3p-10}{2}%
}C(u,v) \\
&=&-3A(u,v)+\frac{5p-12}{p}\frac{2p}{3(p-2)}A(u,v)+o_{c}(1) \\
&=&\frac{p-6}{3p-6}A(u,v)+o_{c}(1) \\
&<&0\text{ as }c\rightarrow 0.
\end{eqnarray*}%
So, the claim holds. This implies that there exists $0<c_{1}<c^{\ast }$ such
that $h\left( c_{1}\right) <0.$ The proof is complete.
\end{proof}

\begin{lemma}
\label{L4-6} Let $\gamma ,\beta >0$. The following statements are true.%
\newline
$(i)$ If $\frac{10}{3}<p<6$, then the function $m_{\gamma ,\beta }(c)$ is
decreasing on $c\in (0,c^{\ast }).$\newline
$(ii)$ If $p=\frac{10}{3}$, then the function $m_{\gamma ,\beta }(c)$ is
decreasing on $c\in \left( c_{\ast },c^{\ast }\right) $.
\end{lemma}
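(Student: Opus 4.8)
The plan is to transfer the fibre-wise monotonicity of Lemma \ref{L4-5} to the ground state map $m_{\gamma ,\beta }(c)$. First I would record the representation
\begin{equation*}
m_{\gamma ,\beta }(c)=\inf_{(u,v)\in S_{1}}\hat{f}_{(u,v)}(c),\qquad \hat{f}_{(u,v)}(c):=\Phi _{c,(u,v)}\!\left( t_{c}^{-}(u,v)\right),
\end{equation*}
where the subscript records the dependence of $\hat{f}$ on the direction $(u,v)\in S_{1}$. This identity rests on Lemma \ref{L4-2}: on the relevant range of $c$ one has $\mathcal{M}_{\gamma ,\beta }(c)=\mathcal{M}_{\gamma ,\beta }^{-}(c)$, so every point of $\mathcal{M}_{\gamma ,\beta }(c)$ is of the form $c^{1/2}(u,v)_{t_{c}^{-}(u,v)}$ for some $(u,v)\in S_{1}$, and along each fibre $t\mapsto \Phi _{c,(u,v)}(t)$ the value $t_{c}^{-}(u,v)$ realises the global maximum; conversely each such direction produces an admissible competitor. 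Hence the infimum over $\mathcal{M}_{\gamma ,\beta }(c)$ coincides with $\inf_{S_{1}}\hat{f}_{(u,v)}(c)$. Note also that $\hat{f}_{(u,v)}$ is unchanged if $(u,v)$ is replaced by any of its dilates $(u,v)_{t}$, since this only reparametrises the fibre; I will use this freedom below.

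Given the representation, the soft half is immediate: by Lemma \ref{L4-5} every $\hat{f}_{(u,v)}$ is decreasing on $(c_{\ast },c^{\ast })$ when $p=\tfrac{10}{3}$ and on $(0,c^{\ast })$ when $\tfrac{10}{3}<p<6$, so for $c_{1}<c_{2}$ in the respective interval one has $\hat{f}_{(u,v)}(c_{2})\le \hat{f}_{(u,v)}(c_{1})$ for every direction, and passing to the infimum gives $m_{\gamma ,\beta }(c_{2})\le m_{\gamma ,\beta }(c_{1})$. Thus $m_{\gamma ,\beta }$ is at least non-increasing on the two intervals named in the statement.

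The real work is upgrading this to a \emph{strict} decrease, since the infimum of a family of strictly decreasing functions need not be strictly decreasing unless the decrease is uniform. Here I would quantify the drop using $2\hat{f}_{(u,v)}^{\prime }(s)=t_{s}^{2}h(s)$ with $h(s)<0$, taken from the proof of Lemma \ref{L4-5}. Fix $c_{1}<c_{2}$ and choose a minimizing sequence of directions $(u_{n},v_{n})\in S_{1}$ for $m_{\gamma ,\beta }(c_{1})$; after replacing $(u_{n},v_{n})$ by $(u_{n},v_{n})_{t_{c_{1}}^{-}(u_{n},v_{n})}$ (allowed by the dilation invariance of $\hat{f}$) I may assume $t_{c_{1}}^{-}(u_{n},v_{n})=1$, so that the points $w_{n}:=c_{1}^{1/2}(u_{n},v_{n})\in \mathcal{M}_{\gamma ,\beta }(c_{1})$ form a minimizing sequence for $m_{\gamma ,\beta }(c_{1})$. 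Lemma \ref{L4-3}$(ii)$ gives $D_{1}\le A(w_{n})\le D_{2}$, hence a two-sided bound on $A(u_{n},v_{n})$; together with $I_{\gamma ,\beta }(w_{n})\to m_{\gamma ,\beta }(c_{1})$, the Pohozaev identity (\ref{e2-4}) (which forces $C=\tfrac{2p}{3(p-2)}(A+\tfrac{\gamma }{4}B)$, so $C$ is bounded below by a multiple of $A$), and the homogeneities $A(c^{1/2}\cdot)=cA$, $B(c^{1/2}\cdot)=c^{2}B$, $C(c^{1/2}\cdot)=c^{p/2}C$, this propagates to two-sided bounds on $A(u_{n},v_{n})$ and $C(u_{n},v_{n})$ and an upper bound on $B(u_{n},v_{n})$. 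For $p=\tfrac{10}{3}$ the relation $I_{\gamma ,\beta }|_{\mathcal{M}}=\tfrac{\gamma }{8}B$ combined with $m_{\gamma ,\beta }(c_{1})>0$ (Lemma \ref{L4-3}$(i)$) additionally pins $B(u_{n},v_{n})$ away from zero.

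With $A(u_{n},v_{n}),C(u_{n},v_{n})$ trapped in a compact subset of $(0,\infty )$, the critical-point equation $\Phi _{s,(u,v)}^{\prime }(t)=0$ keeps $t_{s}^{-}(u_{n},v_{n})$ in a compact subinterval of $(0,\infty )$ uniformly for $s\in [c_{1},c_{2}]$ and $n$, and the strict inequality $c_{2}<c^{\ast }$ (through the definition of $c^{\ast }$ via $\inf_{S_{1}}\mathcal{R}_{p}$) makes $\sup_{s\in [c_{1},c_{2}]}h(s)\le -\delta <0$ uniformly in $n$; for $p=\tfrac{10}{3}$ the uniform lower bound on $B$ keeps $t_{s}^{-}$ bounded below, while for $\tfrac{10}{3}<p<6$ the possibly degenerate case $B\to 0$ is harmless because then $h(s)\to \tfrac{p-6}{3(p-2)}A<0$. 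Consequently there is $\eta >0$, independent of $n$, with
\begin{equation*}
\hat{f}_{(u_{n},v_{n})}(c_{2})=\hat{f}_{(u_{n},v_{n})}(c_{1})+\int_{c_{1}}^{c_{2}}\tfrac{1}{2}t_{s}^{2}h(s)\,ds\le \hat{f}_{(u_{n},v_{n})}(c_{1})-\eta .
\end{equation*}
Using $m_{\gamma ,\beta }(c_{2})\le \hat{f}_{(u_{n},v_{n})}(c_{2})$ and $\hat{f}_{(u_{n},v_{n})}(c_{1})\to m_{\gamma ,\beta }(c_{1})$ yields $m_{\gamma ,\beta }(c_{2})\le m_{\gamma ,\beta }(c_{1})-\eta <m_{\gamma ,\beta }(c_{1})$, which is the claimed strict decrease in both cases. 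The main obstacle is precisely this uniform gap $\eta $: the non-routine point is to guarantee that the direction-dependent quantities $t_{s}^{-}$ and $h(s)$ do not degenerate along the minimizing sequence, for which the two-sided control from Lemma \ref{L4-3}$(ii)$ (supplemented by $m_{\gamma ,\beta }(c_{1})>0$ when $p=\tfrac{10}{3}$) together with the buffer $c^{\ast }-c_{2}>0$ is the crucial input.
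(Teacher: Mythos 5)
Your proposal is correct and follows essentially the same strategy as the paper's proof: take a minimizing sequence of directions, establish a uniform negative bound on $\hat f'$ (equivalently on $h$) over the compact mass interval below $c^{\ast}$ using the two-sided bounds from Lemma \ref{L4-3}, and conclude by the mean value theorem (your integral version of it) to force a strict drop. The one step you assert rather than prove, namely that $c_{2}<c^{\ast}$ yields $\sup_{s\in[c_{1},c_{2}]}h(s)\leq-\delta$ uniformly, is exactly what the paper establishes by contradiction: if $\hat f'(c_{n})\to 0$ along the sequence, then system (\ref{e4-1}) holds asymptotically, and Lemma \ref{L4-4}'s explicit solution formula forces $c_{n}\geq \mathrm{const}\cdot\mathcal{R}_{p}(u_{n},v_{n})\geq c^{\ast}$, contradicting $c_{n}\leq\tau<c^{\ast}$ — the precise mechanism behind your parenthetical appeal to the definition of $c^{\ast}$ via $\inf_{S_{1}}\mathcal{R}_{p}$.
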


\begin{proof}
$(i)$ Fix $0<\rho <\tau <c^{\ast }$ and let $\left\{ (u_{n},v_{n})\right\}
\subset \mathcal{M}_{\gamma ,\beta }^{-}(\rho )$ be a minimizing sequence to
$m_{\gamma ,\beta }(\rho )$. By Lemma \ref{L4-5}, we have $^{\prime
}(c)<0$ on $c\in (0,c^{\ast })$. Next, we further prove that $\hat{f}%
^{\prime }(c)<-D$ for some $D>0$. Suppose on the contrary, then there exists
a sequence $\left\{ (\bar{u}_{n},\bar{v}_{n})\right\} \subset S_{1}$
satisfying $\Vert \nabla \bar{u}_{n}\Vert _{2}^{2}+\Vert \nabla \bar{v}%
_{n}\Vert _{2}^{2}=1$ and corresponding sequences $\left\{ t_{n}\right\}
\subset (0,+\infty ),\left\{ c_{n}\right\} \subset \lbrack \rho ,\tau ]$
such that
\begin{equation*}
\left\{
\begin{array}{l}
t_{n}A(\bar{u}_{n},\bar{v}_{n})+\frac{c_{n}}{4}\gamma B(\bar{u}_{n},\bar{v}%
_{n})-\frac{3(p-2)}{2p}c_{n}^{\frac{p-2}{2}}t_{n}^{\frac{3p-8}{2}}C(\bar{u}%
_{n},\bar{v}_{n})=o_{n}(1) \\
t_{n}A(\bar{u}_{n},\bar{v}_{n})+c_{n}\gamma B(\bar{u}_{n},\bar{v}%
_{n})-c_{n}^{\frac{p-2}{2}}t_{n}^{\frac{3p-8}{2}}C(\bar{u}_{n},\bar{v}%
_{n})=o_{n}(1).%
\end{array}%
\right.
\end{equation*}%
By Lemma \ref{L4-3}, we conclude that $\left\{ t_{n}\right\} $ is bounded
and
\begin{equation*}
c_{n}=\left[ \frac{2(6-p)}{5p-12}\right] ^{\frac{3p-10}{4(p-3)}}\left( \frac{%
3p}{5p-12}\right) ^{\frac{1}{2(p-3)}}R_{p}(u_{n},v_{n})\geq c^{\ast },
\end{equation*}%
which is a contradiction. Thus, the claim is proved. So by the mean value
theorem, we have
\begin{eqnarray*}
m_{\gamma ,\beta }(\tau ) &\leq &\Phi _{\tau ,(u_{n},v_{n})}\left( t_{\tau
}^{-}\left( u_{n},v_{n}\right) \right)=\Phi _{\rho ,(u_{n},v_{n})}\left( t_{\rho }^{-}\left( u_{n},v_{n}\right)
\right) +\hat{f}^{\prime }\left( \xi \right) \left( \tau -\rho \right) \\
&<&m_{\gamma ,\beta }(\rho )-D(\tau -\rho ) \\
&<&m_{\gamma ,\beta }(\rho ),
\end{eqnarray*}%
where $\xi \in \left( \rho ,\tau \right) $. This shows that the function $%
m_{\gamma ,\beta }(c)$ is decreasing on $c\in (0,c^{\ast }).$\newline
$(ii)$ The proof is similar to that of $(i)$, we omit it here.
\end{proof}

Next, we recall the so-called profile decomposition of bounded sequences, proposed by G\'{e}rard in \cite{G}, which is crucial to recover the compactness.

\begin{proposition}\label{P3-18}
Let $\left\{(u_{n},v_{n})\right\}$ be bounded in $\mathbf{H}$. Then there are sequences $\left\{(\bar{u}_{i},\bar{v}_{i})\right\}_{i=0}^{\infty}\subset \mathbf{H}$, $\left\{y_{n}^{i}\right\}_{i=0}^{\infty}\subset \mathbb{R}^{3}$ for any $n\geq 1$, such that $y_{n}^{0}=0$, $|y_{n}^{i}-y_{n}^{j}|\rightarrow \infty$ as $n\rightarrow \infty$ for $i\neq j$, and passing to a subsequence, the following conclusions hold for any $i\geq 0$:
\begin{equation}\label{e7-1}
(u_{n}(\cdot+y_{n}^{i}),v_{n}(\cdot+y_{n}^{i}))\rightharpoonup (\bar{u}_{i},\bar{v}_{i}) \quad \text{ as }\, n\rightarrow\infty,
\end{equation}
with $\lim\sup_{n\rightarrow\infty}\int_{\mathbb{R}^{3}}(|z_{1,n}^{i}|^{q}+|z_{2,n}^{i}|^{q})dx\rightarrow 0$ as $i\rightarrow\infty$, where $2<q<6$, $z_{1,n}^{i}:=u_{n}-\sum\limits_{j=0}^{i}\bar{u}_{j}(\cdot-y_{n}^{j})$ and $z_{2,n}^{i}:=v_{n}-\sum\limits_{j=0}^{i}\bar{v}_{j}(\cdot-y_{n}^{j})$. Moreover, we have
\begin{equation}\label{e7-2}
\lim_{n\rightarrow\infty}\int_{\mathbb{R}^{3}}|\nabla u_{n}|^{2}dx=\sum\limits_{j=0}^{i}\int_{\mathbb{R}^{3}}|\nabla \bar{u}_{j}|^{2}dx+\lim_{n\rightarrow\infty}\int_{\mathbb{R}^{3}}|\nabla z_{1,n}^{i}|^{2}dx,\,
\end{equation}
\begin{equation}\label{e7-3}
\lim_{n\rightarrow\infty}\int_{\mathbb{R}^{3}}|\nabla v_{n}|^{2}dx=\sum\limits_{j=0}^{i}\int_{\mathbb{R}^{3}}|\nabla \bar{v}_{j}|^{2}dx+\lim_{n\rightarrow\infty}\int_{\mathbb{R}^{3}}|\nabla z_{2,n}^{i}|^{2}dx,
\end{equation}
\begin{equation}\label{e7-4}
\limsup_{n\rightarrow\infty}\int_{\mathbb{R}^{3}}(|u_{n}|^{p}+|v_{n}|^{p}+2\beta|u_{n}|^{\frac{p}{2}}|v_{n}|
^{\frac{p}{2}})dx=\sum\limits_{j=0}^{\infty}\int_{\mathbb{R}^{3}}(|\bar{u}_{j}|^{p}+|\bar{v}_{j}|^{p}
+2\beta|\bar{u}_{j}|^{\frac{p}{2}}|\bar{v}_{j}|
^{\frac{p}{2}})dx,
\end{equation}
and
\begin{equation}\label{e7-5}
\limsup_{n\rightarrow\infty}\int_{\mathbb{R}^{3}}\phi_{u_{n},v_{n}}(u_{n}^{2}+v_{n}^{2})dx=
\sum\limits_{j=0}^{\infty}\int_{\mathbb{R}^{3}}\phi_{\bar{u}_{j},\bar{v}_{j}}(\bar{u}_{j}^{2}+\bar{v}_{j}^{2})dx.
\end{equation}
\end{proposition}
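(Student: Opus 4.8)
The plan is to obtain the family $\{(\bar u_i,\bar v_i)\}$ and the concentration points $\{y_n^i\}$ by an iterative bubble-extraction scheme in the Hilbert space $\mathbf{H}$, in the spirit of G\'erard \cite{G}, and then to verify the four decoupling identities (\ref{e7-2})--(\ref{e7-5}) separately: the two gradient identities from the Hilbertian (Pythagorean) orthogonality of the profiles, and the two nonlinear identities from iterating the Brezis--Lieb splitting of Lemma \ref{L2-5}. First I would initialise the induction by setting $y_n^0=0$ and, passing to a subsequence, taking $(\bar u_0,\bar v_0)$ to be the weak limit of $(u_n,v_n)$ in $\mathbf{H}$, so that $(z_{1,n}^0,z_{2,n}^0)=(u_n-\bar u_0,v_n-\bar v_0)$. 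Assuming $(\bar u_j,\bar v_j)$ and $y_n^j$ have been chosen for $0\le j\le i$, I introduce the concentration level
\[
\delta_i:=\lim_{n\to\infty}\sup_{y\in\mathbb{R}^3}\int_{B_1(y)}\big(|z_{1,n}^i|^2+|z_{2,n}^i|^2\big)\,dx ,
\]
stop the extraction if $\delta_i=0$, and otherwise pick $y_n^{i+1}$ realising at least half of this supremum and, after a further subsequence, let $(z_{1,n}^i(\cdot+y_n^{i+1}),z_{2,n}^i(\cdot+y_n^{i+1}))\rightharpoonup(\bar u_{i+1},\bar v_{i+1})\neq(0,0)$. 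A diagonal argument over $i$ then yields the full family satisfying (\ref{e7-1}).

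Next I would establish the asymptotic orthogonality $|y_n^i-y_n^j|\to\infty$ for $i\neq j$. By construction $(z_{1,n}^i(\cdot+y_n^j),z_{2,n}^i(\cdot+y_n^j))\rightharpoonup(0,0)$ for every $j\le i$; were $|y_n^{i+1}-y_n^j|$ to stay bounded, the newly extracted profile would be absorbed into the $j$-th one, contradicting $(\bar u_{i+1},\bar v_{i+1})\neq(0,0)$. This divergence, combined with the weak convergence (\ref{e7-1}), makes the translated profiles $\bar u_j(\cdot-y_n^j)$ mutually orthogonal in $\dot H^1$ in the limit and makes each remainder $z_{1,n}^i$ asymptotically orthogonal to every $\bar u_j(\cdot-y_n^j)$, giving the Pythagorean expansions (\ref{e7-2}) and (\ref{e7-3}). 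The same orthogonality yields the mass bound $\sum_j\big(\Vert\bar u_j\Vert_2^2+\Vert\bar v_j\Vert_2^2\big)\le\liminf_n\big(\Vert u_n\Vert_2^2+\Vert v_n\Vert_2^2\big)<\infty$, whence $\Vert\nabla\bar u_j\Vert_2,\Vert\nabla\bar v_j\Vert_2\to0$ as $j\to\infty$, forcing $\delta_i\to0$. Applying the vanishing lemma \cite[Lemma I.1]{L1} then upgrades $\delta_i\to0$ to $\limsup_n\int_{\mathbb{R}^3}(|z_{1,n}^i|^q+|z_{2,n}^i|^q)\,dx\to0$ as $i\to\infty$ for every $2<q<6$, which is exactly the remainder control asserted in the statement.

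The hard part will be the decoupling of the two coupled nonlinearities (\ref{e7-4}) and (\ref{e7-5}), since both the cross power term $|u|^{p/2}|v|^{p/2}$ and the nonlocal Coulomb term are sensitive to interactions between distinct bubbles. For (\ref{e7-4}) I would iterate the Brezis--Lieb identity of Lemma \ref{L2-5}: each splitting peels off one profile and leaves a remainder whose contribution is controlled by the $L^q$ vanishing just proved, while the separation $|y_n^i-y_n^j|\to\infty$ forces the mixed integrals between two different profiles to vanish, so that only the diagonal sum $\sum_j\int(|\bar u_j|^p+|\bar v_j|^p+2\beta|\bar u_j|^{p/2}|\bar v_j|^{p/2})\,dx$ survives. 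For (\ref{e7-5}) the Brezis--Lieb splitting for $B$ in Lemma \ref{L2-5} reduces matters to showing that the cross Coulomb energies between profiles centred at far-separated points $y_n^i,y_n^j$ tend to zero; this is precisely where the nonlocal character is most delicate, and I would handle it through the Hardy--Littlewood--Sobolev inequality (Lemma \ref{L2-10}) together with the semigroup property of the Riesz potential, which make the interaction kernel $|x-y|^{-1}$ evaluated on supports drifting apart negligible. Summing the finitely many profile contributions and letting the number of profiles tend to infinity, using the vanishing of the remainder, then produces the series identities (\ref{e7-4}) and (\ref{e7-5}). The principal obstacle throughout is rigorously converting the heuristic that ``far separation kills cross terms'' into quantitative estimates for the nonlocal energy, and this is the step where the HLS inequality and the Riesz semigroup property carry the essential weight.
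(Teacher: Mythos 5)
Your strategy coincides, in all essentials, with the paper's. The authors do not reprove the decomposition (\ref{e7-1})--(\ref{e7-4}) at all: they cite \cite{G,HK,MS,Z} for those items and devote the entire proof to the new nonlocal identity (\ref{e7-5}), which they establish exactly along your lines --- a Brezis--Lieb/expansion argument to isolate the profiles, the Hardy--Littlewood--Sobolev inequality combined with the $L^{12/5}$ smallness of the remainders $z_{1,n}^{i},z_{2,n}^{i}$ to discard the remainder's Coulomb energy, and a separate treatment of the pairwise interactions between bubbles. Your reconstruction of the extraction scheme (concentration function, inductive extraction, diagonal subsequence, asymptotic orthogonality, Pythagorean expansions, quantitative vanishing lemma) is therefore extra work relative to the paper, but it is the standard argument and is sound; the one slip there is minor: it is the decay of $\Vert\bar u_j\Vert_{2}^{2}+\Vert\bar v_j\Vert_{2}^{2}$ coming from the mass bound (via the compact embedding $H^{1}(B_{1})\hookrightarrow L^{2}(B_{1})$), not the decay of the gradient norms, that forces $\delta_i\to 0$.

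The step you should repair is the vanishing of the cross Coulomb energies. The tools you name cannot do it on their own: both the HLS inequality and the Riesz semigroup/Cauchy--Schwarz bound are translation invariant, so with $a_n:=y_n^k-y_n^j$ they only give
\begin{equation*}
\int_{\mathbb{R}^{3}}\int_{\mathbb{R}^{3}}\frac{|\bar u_j(x)|^{2}|\bar u_k(y-a_n)|^{2}}{|x-y|}\,dx\,dy\;\leq\;\mathcal{C}_{HLS}\,\Vert\bar u_j\Vert_{12/5}^{2}\,\Vert\bar u_k(\cdot-a_n)\Vert_{12/5}^{2}=\mathcal{C}_{HLS}\,\Vert\bar u_j\Vert_{12/5}^{2}\,\Vert\bar u_k\Vert_{12/5}^{2},
\end{equation*}
a bound that does not decay as $|a_n|\to\infty$; moreover a general $H^{1}$ profile has no ``support'' to drift apart. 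The missing ingredient, which the paper inserts at the very start of its proof, is a density reduction: assume without loss of generality that all profiles are continuous with compact support. Then (i) mixed products in the \emph{same} variable, such as $|\bar u_j(\cdot-y_n^j)|\,|\bar u_k(\cdot-y_n^k)|$, vanish identically for large $n$, which, fed into HLS, kills the interaction terms (\ref{e7-6})--(\ref{e7-7}); and (ii) for the genuinely bilocal term (\ref{e7-8}) one uses the elementary kernel estimate $|x-y|^{-1}\leq(|a_n|-2R)^{-1}$ on the product of the two supports, which is what actually makes that term go to zero --- indeed the paper's own displayed bound for this term is the same translation-invariant HLS bound you propose, and it is the compact-support reduction that rescues it. Alternatively, you can bypass cross terms altogether by iterating Lemma \ref{L2-5}: translating by $y_n^{j}$ and splitting once per profile gives $B(u_n,v_n)=\sum_{j=0}^{i}B(\bar u_j,\bar v_j)+B(z_{1,n}^{i},z_{2,n}^{i})+o_n(1)$ directly, after which only the HLS bound on the remainder is needed; with either repair your proof goes through.
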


\begin{proof}
The proof is based on Vanishing Lemma  \cite[Lemma I.1]{L1} and Brezis-Lieb type results. For the proofs of (\ref{e7-1})-(\ref{e7-4}), we refer the reader to \cite{G,HK,MS,Z}. Here, we only need to prove the nonlocal coupled term (\ref{e7-5}) holds. Without loss of generality, we may assume that $\bar{u}_{i},\bar{v}_{i}$ is continuous and compactly supported. For every $i\neq j$, the sequence $\left\{y_{n}^{i}\right\}_{i=0}^{\infty}$ satisfies $|y_{n}^{i}-y_{n}^{j}|\rightarrow \infty$ as $n\rightarrow \infty$, then we call this sequence $\left\{y_{n}^{i}\right\}_{i=0}^{\infty}\subset\mathbb{R}^{3}$ satisfying the orthogonality condition.

Since the sequence $\left\{(u_{n},v_{n})\right\}$ can be written, up to a subsequence, as
\begin{equation*}
(u_{n},v_{n})=\left(\sum\limits_{j=0}^{i}\bar{u}_{j}(\cdot-y_{n}^{j})+z_{1,n}^{i},
\sum\limits_{j=0}^{i}\bar{v}_{j}(\cdot-y_{n}^{j})+z_{2,n}^{i}\right),
\end{equation*}
and $\limsup_{n\rightarrow\infty}\int_{\mathbb{R}^{3}}(|z_{1,n}^{i}|^{q}+|z_{2,n}^{i}|^{q})dx\rightarrow 0$ as $i\rightarrow\infty$, we claim that
\begin{equation}\label{e7-10}
B(u_{n},v_{n})\rightarrow B\left(\sum\limits_{j=0}^{i}\bar{u}_{j}(\cdot-y_{n}^{j}),\sum\limits_{j=0}^{i}\bar{v}_{j}(\cdot-y_{n}^{j})\right)\ \mbox{as}\ n\rightarrow\infty\ \mbox{and}\ i\rightarrow\infty.
\end{equation}
Taking $\bar{u}_{j,n}=\bar{u}_{j}(\cdot-y_{n}^{j})$ and $\bar{v}_{j,n}=\bar{v}_{j}(\cdot-y_{n}^{j})$, to prove (\ref{e7-10}) is equivalent to prove that
\begin{equation}\label{e7-11}
\int_{\mathbb{R}^{3}}\int_{\mathbb{R}^{3}}\frac{|u_{n}|^{2}|u_{n}|^{2}}{|x-y|}dxdy\rightarrow \int_{\mathbb{R}^{3}}\int_{\mathbb{R}^{3}}\frac{|\sum\limits_{j=0}^{i}\bar{u}_{j,n}|^{2}
|\sum\limits_{j=0}^{i}\bar{u}_{j,n}|^{2}}{|x-y|}dxdy,
\end{equation}
\begin{equation}\label{e7-12}
\int_{\mathbb{R}^{3}}\int_{\mathbb{R}^{3}}\frac{|v_{n}|^{2}|v_{n}|^{2}}{|x-y|}dxdy\rightarrow \int_{\mathbb{R}^{3}}\int_{\mathbb{R}^{3}}\frac{|\sum\limits_{j=0}^{i}\bar{v}_{j,n}|^{2}
|\sum\limits_{j=0}^{i}\bar{v}_{j,n}|^{2}}{|x-y|}dxdy,
\end{equation}
and
\begin{equation}\label{e7-13}
\int_{\mathbb{R}^{3}}\int_{\mathbb{R}^{3}}\frac{|u_{n}|^{2}|v_{n}|^{2}}{|x-y|}dxdy\rightarrow \int_{\mathbb{R}^{3}}\int_{\mathbb{R}^{3}}\frac{|\sum\limits_{j=0}^{i}\bar{u}_{j,n}|^{2}
|\sum\limits_{j=0}^{i}\bar{v}_{j,n}|^{2}}{|x-y|}dxdy,
\end{equation}
as $n\rightarrow\infty$ and $i\rightarrow\infty$. Indeed, to prove (\ref{e7-11}), we only need to obtain the following estimates:
\begin{equation}\label{e7-14}
\lim_{i\rightarrow\infty}\limsup_{n\rightarrow\infty}\int_{\mathbb{R}^{3}}\int_{\mathbb{R}^{3}}
\frac{|z_{1,n}^{i}|^{2}|z_{1,n}^{i}|^{2}}{|x-y|}dxdy=0,
\end{equation}
\begin{equation}\label{e7-15}
\lim_{i\rightarrow\infty}\limsup_{n\rightarrow\infty}\int_{\mathbb{R}^{3}}\int_{\mathbb{R}^{3}}
\frac{|\bar{u}_{j,n}|^{2}|z_{1,n}^{i}|^{2}}{|x-y|}dxdy=0,
\end{equation}
\begin{equation}\label{e7-16}
\lim_{i\rightarrow\infty}\limsup_{n\rightarrow\infty}\int_{\mathbb{R}^{3}}\int_{\mathbb{R}^{3}}
\frac{|\bar{u}_{j,n}|^{3}|z_{1,n}^{i}|}{|x-y|}dxdy=0,
\end{equation}
\begin{equation}\label{e7-17}
\lim_{i\rightarrow\infty}\limsup_{n\rightarrow\infty}\int_{\mathbb{R}^{3}}\int_{\mathbb{R}^{3}}
\frac{|\bar{u}_{j,n}||z_{1,n}^{i}|^{3}}{|x-y|}dxdy=0.
\end{equation}
For (\ref{e7-14}), it follows from Lemma \ref{L2-10} that
\begin{equation*}
\int_{\mathbb{R}^{3}}\int_{\mathbb{R}^{3}}
\frac{|z_{1,n}^{i}|^{2}|z_{1,n}^{i}|^{2}}{|x-y|}dxdy\leq \mathcal{C}_{HLS}\|z_{1,n}^{i}\|_{12/5}^{2}\|z_{1,n}^{i}\|_{12/5}^{2}\leq \mathcal{C}_{HLS}\|z_{1,n}^{i}\|_{12/5}^{4}.
\end{equation*}
Since $\limsup_{n\rightarrow\infty}\int_{\mathbb{R}^{3}}(|z_{1,n}^{i}|^{q}+|z_{2,n}^{i}|^{q})dx\rightarrow 0$ as $i\rightarrow\infty$, we have
$\lim_{i\rightarrow\infty}\limsup_{n\rightarrow\infty}\|z_{1,n}^{i}\|_{12/5}^{4}=0,$
and thus (\ref{e7-14}) is true. For (\ref{e7-15}), by Lemma \ref{L2-10} and Sobolev embedding theorem, we have
\begin{equation*}
\int_{\mathbb{R}^{3}}\int_{\mathbb{R}^{3}}
\frac{|\bar{u}_{j,n}|^{2}|z_{1,n}^{i}|^{2}}{|x-y|}dxdy\leq \mathcal{C}_{HLS}\|\bar{u}_{j,n}\|_{12/5}^{2}\|z_{1,n}^{i}\|_{12/5}^{2}\leq C\|\bar{u}_{j,n}\|_{H^{1}}^{2}\|z_{1,n}^{i}\|_{12/5}^{2},
\end{equation*}
which implies that (\ref{e7-15}) holds, since $\|\bar{u}_{j,n}\|_{H^{1}}$ is bounded. For (\ref{e7-16}) and (\ref{e7-17}), we have
\begin{equation*}
\int_{\mathbb{R}^{3}}\int_{\mathbb{R}^{3}}
\frac{|\bar{u}_{j,n}|^{3}|z_{1,n}^{i}|}{|x-y|}dxdy\leq \mathcal{C}_{HLS}\|\bar{u}_{j,n}\|_{12/5}^{3}\|z_{1,n}^{i}\|_{12/5}\leq C\|\bar{u}_{j,n}\|_{H^{1}}^{3}\|z_{1,n}^{i}\|_{12/5},
\end{equation*}
and
\begin{equation*}
\int_{\mathbb{R}^{3}}\int_{\mathbb{R}^{3}}
\frac{|\bar{u}_{j,n}||z_{1,n}^{i}|^{3}}{|x-y|}dxdy\leq \mathcal{C}_{HLS}\|\bar{u}_{j,n}\|_{12/5}\|z_{1,n}^{i}\|_{12/5}^{3}
\leq C\|\bar{u}_{j,n}\|_{H^{1}}\|z_{1,n}^{i}\|_{12/5}^{3}.
\end{equation*}
Then, similar to the arguments of (\ref{e7-14}) and (\ref{e7-15}), we can get (\ref{e7-16}) and (\ref{e7-17}). So (\ref{e7-11}) holds. Moreover, the proofs of (\ref{e7-12}) and (\ref{e7-13}) are similar, we omit it here.
Hence, (\ref{e7-10}) holds.

Finally, to obtain (\ref{e7-5}), it is sufficient to prove that
\begin{equation*}
B\left(\sum\limits_{j=0}^{\infty}\bar{u}_{j}(\cdot-y_{n}^{j}),\sum\limits_{j=0}^{\infty}\bar{v}_{j}(\cdot-y_{n}^{j})\right)=
\sum\limits_{j=0}^{\infty}B(\bar{u}_{j}(\cdot-y_{n}^{j}),\bar{v}_{j}(\cdot-y_{n}^{j}))+o_{n}(1).
\end{equation*}
By the pairwise orthogonality of the family $\left\{y_{n}^{i}\right\}_{i=0}^{\infty}$, and the following the elementary inequality \cite{G}
\begin{equation*}
\left|\left|\sum\limits_{j=0}^{\infty}a_{j}\right|^{2}
-\sum\limits_{j=0}^{\infty}|a_{j}|^{2}\right|\leq C\sum\limits_{j\neq k}|a_{j}||a_{k}|,
\end{equation*}
we have
\begin{eqnarray}
&&\left|\int_{\mathbb{R}^{3}}\int_{\mathbb{R}^{3}}\frac{|\sum\limits_{j=0}^{\infty}\bar{u}_{j}(x-y_{n}^{j})
|^{2}|\sum\limits_{j=0}^{\infty}\bar{u}_{j}(y-y_{n}^{j})|^{2}}{|x-y|}dxdy-\sum\limits_{j=0}^{\infty}\int_{\mathbb{R}^{3}}\int_{\mathbb{R}^{3}}
\frac{|\bar{u}_{j}(x-y_{n}^{j})|^{2}|\bar{u}_{j}(y-y_{n}^{j})|^{2}}{|x-y|}dxdy\right|\notag\\
&\leq&\sum\limits_{j=0}^{\infty}\sum\limits_{k\neq j}\int_{\mathbb{R}^{3}}\int_{\mathbb{R}^{3}}
\frac{|\bar{u}_{j}(x-y_{n}^{j})||\bar{u}_{k}(x-y_{n}^{k})||\sum\limits_{m=0}^{\infty}
\bar{u}_{m}(y-y_{n}^{m})|^{2}}{|x-y|}dxdy\label{e7-6}\\
&&+\sum\limits_{j=0}^{\infty}\sum\limits_{k\neq j}\int_{\mathbb{R}^{3}}\int_{\mathbb{R}^{3}}
\frac{|\bar{u}_{j}(y-y_{n}^{j})||\bar{u}_{k}(y-y_{n}^{k})||\sum\limits_{m=0}^{\infty}
\bar{u}_{m}(x-y_{n}^{m})|^{2}}{|x-y|}dxdy\label{e7-7}\\
&&+\sum\limits_{j=0}^{\infty}\sum\limits_{k\neq j}\int_{\mathbb{R}^{3}}\int_{\mathbb{R}^{3}}
\frac{|\bar{u}_{j}(x-y_{n}^{j})|^{2}|\bar{u}_{k}(y-y_{n}^{k})|^{2}}{|x-y|}dxdy\label{e7-8}.
\end{eqnarray}
Next, we estimate (\ref{e7-6}), (\ref{e7-7}) and (\ref{e7-8}), respectively. It follows from Lemma \ref{L2-10} and orthogonality that
\begin{equation*}
(\ref{e7-6})\leq \sum\limits_{j=0}^{\infty}\sum\limits_{k\neq j}\left\||\bar{u}_{j}(\cdot-y_{n}^{j})||\bar{u}_{k}(\cdot-y_{n}^{k})|\right\|_{6/5}
\left\|\left|\sum\limits_{m=0}^{\infty}\bar{u}_{m}(\cdot-y_{n}^{m})\right|^{2}\right\|_{6/5}\rightarrow 0\ \text{ as }\, n\rightarrow\infty.
\end{equation*}
Similarly, we get $(\ref{e7-7})\rightarrow 0$ as $n\rightarrow\infty$. For (\ref{e7-8}), by using the transformation $\tilde{x}:=x-y_{n}^{j}$ and $\tilde{y}:=y-y_{n}^{j}$,
we have
\begin{eqnarray*}
(\ref{e7-8})&=&\sum\limits_{j=0}^{\infty}\sum\limits_{k\neq j}\int_{\mathbb{R}^{3}}\int_{\mathbb{R}^{3}}
\frac{|\bar{u}_{j}(\tilde{x})|^{2}|\bar{u}_{k}(\tilde{y}-(y_{n}^{k}-y_{n}^{j}))|^{2}}{|\tilde{y}-\tilde{x}|}d\tilde{x}d\tilde{y}\\
&\leq&\sum\limits_{j=0}^{\infty}\sum\limits_{k\neq j}\|\bar{u}_{j}\|_{12/5}^{2}\|\bar{u}_{k}(\cdot-(y_{n}^{k}-y_{n}^{j}))\|_{12/5}^{2}\rightarrow 0\ \text{ as }\ n\rightarrow\infty.
\end{eqnarray*}
Hence, we get
\begin{eqnarray*}
\int_{\mathbb{R}^{3}}\int_{\mathbb{R}^{3}}\frac{|\sum\limits_{j=0}^{\infty}\bar{u}_{j}(x-y_{n}^{j})
|^{2}|\sum\limits_{j=0}^{\infty}\bar{u}_{j}(y-y_{n}^{j})|^{2}}{|x-y|}dxdy\rightarrow\sum\limits_{j=0}^{\infty}\int_{\mathbb{R}^{3}}\int_{\mathbb{R}^{3}}
\frac{|\bar{u}_{j}(x-y_{n}^{j})|^{2}|\bar{u}_{j}(y-y_{n}^{j})|^{2}}{|x-y|}dxdy
\end{eqnarray*}
as $n\rightarrow\infty.$
Similarly,
\begin{eqnarray*}
\int_{\mathbb{R}^{3}}\int_{\mathbb{R}^{3}}\frac{|\sum\limits_{j=0}^{\infty}\bar{v}_{j}(x-y_{n}^{j})
|^{2}|\sum\limits_{j=0}^{\infty}\bar{v}_{j}(y-y_{n}^{j})|^{2}}{|x-y|}dxdy\rightarrow\sum\limits_{j=0}^{\infty}\int_{\mathbb{R}^{3}}\int_{\mathbb{R}^{3}}
\frac{|\bar{v}_{j}(x-y_{n}^{j})|^{2}|\bar{v}_{j}(y-y_{n}^{j})|^{2}}{|x-y|}dxdy,
\end{eqnarray*}
and
\begin{eqnarray*}
\int_{\mathbb{R}^{3}}\int_{\mathbb{R}^{3}}\frac{|\sum\limits_{j=0}^{\infty}\bar{u}_{j}(x-y_{n}^{j})
|^{2}|\sum\limits_{j=0}^{\infty}\bar{v}_{j}(y-y_{n}^{j})|^{2}}{|x-y|}dxdy\rightarrow\sum\limits_{j=0}^{\infty}\int_{\mathbb{R}^{3}}\int_{\mathbb{R}^{3}}
\frac{|\bar{u}_{j}(x-y_{n}^{j})|^{2}|\bar{v}_{j}(y-y_{n}^{j})|^{2}}{|x-y|}dxdy
\end{eqnarray*}
as $n\rightarrow\infty.$ Therefore, (\ref{e7-5}) holds. We complete the proof.
\end{proof}

\textbf{Now we give the proofs of Theorems \ref{t5} and \ref{t6}:} Let $\left\{(u_{n},v_{n})\right\}\subset \mathcal{M}_{\gamma,\beta}^{-}(c)$ be a bounded and non-vanishing minimizing sequence to $m_{\gamma,\beta}(c)$. By applying Proposition \ref{P3-18}, we can find a profile decomposition of $\left\{(u_{n},v_{n})\right\}$ satisfying (\ref{e7-1})-(\ref{e7-5}). Define the index set
\begin{equation*}
I:=\left\{i\geq 0: (\bar{u}_{i},\bar{v}_{i})\neq (0,0)\right\}.
\end{equation*}
By using Lemma \ref{L4-3}, (\ref{e7-4}) and (\ref{e7-5}), we get that $I\neq \emptyset$. Now, we claim that
$\mathcal{P}_{\gamma,\beta}(\bar{u}_{i},\bar{v}_{i})\leq 0$ for some $i\geq 0$. Otherwise, if $\mathcal{P}_{\gamma,\beta}(\bar{u}_{i},\bar{v}_{i})>0$ for all $i\in I$. Then it follows from (\ref{e7-4}) and (\ref{e7-5}) that
\begin{eqnarray*}
\lim\sup_{n\rightarrow\infty}\left[\frac{3(p-2)}{2p}C(u_{n},v_{n})-\frac{\gamma}{4}B(u_{n},v_{n})\right]
&=&\lim\sup_{n\rightarrow\infty}A(u_{n},v_{n})\\&\geq& \sum\limits_{i=0}^{\infty}A(\bar{u}_{i},\bar{v}_{i})=\sum\limits_{i\in I}A(\bar{u}_{i},\bar{v}_{i})\\
&>&\sum\limits_{i=0}^{\infty}\left[\frac{3(p-2)}{2p}C(\bar{u}_{i},\bar{v}_{i})-\frac{\gamma}{4}
B(\bar{u}_{i},\bar{v}_{i})\right]\\
&=&\lim\sup_{n\rightarrow\infty}\left[\frac{3(p-2)}{2p}C(u_{n},v_{n})-\frac{\gamma}{4}B(u_{n},v_{n})\right],
\end{eqnarray*}
which is a contradiction.

Let $i\in I$ and, for simplicity, let us denote $(\bar{u},\bar{v}):=(\bar{u}_{i},\bar{v}_{i})\neq (0,0)$. Now we try to prove that $(\bar{u},\bar{v})\in S_{c}$. Assume on the contrary. Then $\Vert \bar{u}\Vert _{2}^{2}+\Vert \bar{v}\Vert _{2}^{2}=:\rho <c<c^{\ast}$. Moreover, we claim that $\rho>c_{\ast}$ for $p=10/3$. Indeed, since $\mathcal{P}_{\gamma,\beta}(\bar{u},\bar{v})\leq 0$, we have
\begin{equation*}
A(\bar{u},\bar{v})<A(\bar{u},\bar{v})+\frac{\gamma}{4}B(\bar{u},\bar{v})\leq \frac{3}{5}C(\bar{u},\bar{v}),
\end{equation*}
which implies that
\begin{equation*}
\frac{A(\bar{u},\bar{v})}{C(\bar{u},\bar{v})}<\frac{3}{5}.
\end{equation*}
Set
\begin{equation*}
(\check{u},\check{v}):=\frac{1}{\sqrt{\rho}}(\bar{u},\bar{v})\in S_{1}.
\end{equation*}
Then we have
\begin{eqnarray*}
\rho&>&\rho\left(\frac{5}{3}\right)^{\frac{3}{2}}
\left[\frac{A(\bar{u},\bar{v})}{C(\bar{u},\bar{v})}
\right]^{\frac{3}{2}}
=\left(\frac{5}{3}\right)^{\frac{3}{2}}
\left[\frac{A(\check{u},\check{v})}{C(\check{u},\check{v})}
\right]^{\frac{3}{2}}\\
&\geq&\left(\frac{5}{3}\right)^{\frac{3}{2}}
\inf_{(u,v)\in S_{1}}\left[\frac{A(u,v)}{C(u,v)}
\right]^{\frac{3}{2}}=c_{\ast}.
\end{eqnarray*}
Thus, the claim is true. Again using $\mathcal{P}_{\gamma,\beta}(\bar{u},\bar{v})\leq 0$, there exists $t^{-}\in(0,1]$ such that $(\bar{u},\bar{v})_{t^{-}}\in \mathcal{M}_{\gamma,\beta}^{-}(\rho)$. Then we have
\begin{eqnarray*}
m_{\gamma,\beta}(\rho)\leq I_{\gamma,\beta}((\bar{u},\bar{v})_{t^{-}})&=&\frac{3p-10}{6(p-2)}
A((\bar{u},\bar{v})_{t^{-}})+\frac{3p-8}{12(p-2)}B((\bar{u},\bar{v})_{t^{-}})\\
&=&(t^{-})^{2}\frac{3p-10}{6(p-2)}A(\bar{u},\bar{v})+(t^{-})\frac{3p-8}{12(p-2)}B(\bar{u},\bar{v})\\
&\leq&\frac{3p-10}{6(p-2)}A(\bar{u},\bar{v})+\frac{3p-8}{12(p-2)}B(\bar{u},\bar{v})\\
&\leq&\lim\inf_{n\rightarrow \infty}\left[\frac{3p-10}{6(p-2)}A(u_{n},v_{n})+\frac{3p-8}{12(p-2)}B(u_{n},v_{n})\right]\\
&=&m_{\gamma,\beta}(c),
\end{eqnarray*}
which is a contradiction. So $\rho=c$, $t^{-}=1$ and $I_{\gamma,\beta}(\bar{u},\bar{v})=m_{\gamma,\beta}(c)$. This implies that $(u_{n}(\cdot+y_{n}^{i}),v_{n}(\cdot+y_{n}^{i}))\rightarrow (\bar{u},\bar{v})$ in $\mathbf{H}$ and therefore $I$ is a singleton set.

Finally, we claim that $\bar{u}\neq 0$ and $\bar{v}\neq 0$. If not, we may
assume that $\bar{v}\equiv 0$. Then by Lemma \ref{L2-4}, there exists $%
s_{\beta }\in (0,1)$ such that $(\sqrt{s_{\beta }}\bar{u},\sqrt{1-s_{\beta }}%
\bar{u})\in S_{c}$. If $p\in \lbrack 10/3,4)$ and $\beta >0$, or $p\in
\lbrack 4,6)$ and $\beta >2^{(p-2)/2}$, then for any $t>0$, we have
\begin{eqnarray*}
I_{\gamma ,\beta }((\bar{u},0)_{t}) &=&\frac{t^{2}}{2}\Vert \nabla \bar{u}%
\Vert _{2}^{2}+\frac{\gamma t}{4}\int_{\mathbb{R}^{3}}\int_{\mathbb{R}^{3}}%
\frac{\bar{u}(x)^{2}\bar{u}(y)^{2}}{|x-y|}dxdy-\frac{t^{\frac{3(p-2)}{2}}}{p}%
\int_{\mathbb{R}^{3}}|\bar{u}|^{p}dx \\
&>&\frac{t^{2}}{2}A(\sqrt{s_{\beta }}\bar{u},\sqrt{1-s_{\beta }}\bar{u})+%
\frac{\gamma t}{4}B(\sqrt{s_{\beta }}\bar{u},\sqrt{1-s_{\beta }}\bar{u})-%
\frac{t^{\frac{3(p-2)}{2}}}{p}C(\sqrt{s_{\beta }}\bar{u},\sqrt{1-s_{\beta }}%
\bar{u}) \\
&=&I_{\gamma ,\beta }((\sqrt{s_{\beta }}\bar{u},\sqrt{1-s_{\beta }}\bar{u}%
)_{t}).
\end{eqnarray*}%
Passing to the maximum on $t>0$, then there exists $t_{\beta }^{-}>0$ such
that $(\sqrt{s_{\beta }}\bar{u},\sqrt{1-s_{\beta }}\bar{u})_{t_{\beta
}^{-}}\in \mathcal{M}_{\gamma ,\beta }^{-}(c)$ and
\begin{equation*}
I_{\gamma ,\beta }((\sqrt{s_{\beta }}\bar{u},\sqrt{1-s_{\beta }}\bar{u}%
)_{t_{\beta }^{-}})<I_{\gamma ,\beta }(\bar{u},0)=m_{\gamma ,\beta }(c),
\end{equation*}%
which is a contradiction. So, $(\bar{u},\bar{v})$ is a vectorial type
minimizer of $m_{\gamma ,\beta }(c)$.

Finally, we determine the sign of Lagrange multiplier $\bar{\lambda}$. Since
$(\bar{u},\bar{v})$ is a weak solution of system (\ref{e1-3}), we have
\begin{equation*}
A(\bar{u},\bar{v})+\bar{\lambda}c+B(\bar{u},\bar{v})-C(\bar{u},\bar{v})=0
\end{equation*}%
and
\begin{equation*}
\mathcal{P}_{\gamma ,\beta }(\bar{u},\bar{v})=A(\bar{u},\bar{v})+\frac{%
\gamma }{4}B(\bar{u},\bar{v})-\frac{3(p-2)}{2p}C(\bar{u},\bar{v})=0.
\end{equation*}%
For $p=\frac{10}{3}$, we deduce that
\begin{equation*}
\bar{\lambda}=\frac{1}{c}\left[ 3A(\bar{u},\bar{v})-\frac{7}{5}C(\bar{u},%
\bar{v})\right] >0,
\end{equation*}%
where we have used the fact of $c<c^{\ast }$. For $\frac{10}{3}<p<6$, it
follows from (\ref{e2-3}) that
\begin{eqnarray*}
A(\bar{u},\bar{v})-\frac{6(1+\beta )(p-2)\mathcal{S}_{p}}{p}c^{\frac{6-p}{4}%
}A(\bar{u},\bar{v})^{\frac{3(p-2)}{4}} &\leq &A(\bar{u},\bar{v})-\frac{3(p-2)%
}{2p}C(\bar{u},\bar{v}) \\
&=&-\frac{\gamma }{4}B(\bar{u},\bar{v}) \\
&<&0,
\end{eqnarray*}%
which implies that
\begin{equation*}
A(\bar{u},\bar{v})>\left[ \frac{6(1+\beta )(p-2)\mathcal{S}_{p}}{p}\right]
^{-\frac{4}{3p-10}}c^{-\frac{6-p}{3p-10}}.
\end{equation*}%
This shows that $A(\bar{u},\bar{v})$ is large when $c>0$ sufficiently small.
Using this fact, together with (\ref{e4-9}), we have
\begin{eqnarray*}
\bar{\lambda}c &=&\frac{6-p}{3p-6}A(\bar{u},\bar{v})-\frac{5p-12}{2(3p-6)}B(%
\bar{u},\bar{v}) \\
&\geq &\frac{6-p}{3p-6}A(\bar{u},\bar{v})-\frac{9(5p-12)\mathcal{B}}{4(3p-6)}%
c^{\frac{3}{2}}A(\bar{u},\bar{v})^{\frac{1}{2}} \\
&>&0\text{ for }c>0\text{ sufficiently small.}
\end{eqnarray*}%
So, $\bar{\lambda}>0$. The proof is complete.

\begin{proposition}
\label{P6-1}Let $(u,v)$ be the energy ground state in Theorem \ref{t5} or %
\ref{t6} with the positive Lagrange multiplier $\lambda $, then $(u,v)$ has
exponential decay:
\begin{equation*}
|u(x)|\leq Ce^{-\zeta |x|},\,|v(x)|\leq Ce^{-\zeta |x|}\text{ for every }%
x\in \mathbb{R}^{3},
\end{equation*}%
for some $C>0$, $\zeta >0$.
\end{proposition}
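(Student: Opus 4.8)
The plan is to first promote the variational solution to a classical one that decays to zero at infinity, then to control the full system through the scalar quantity $U:=\sqrt{u^{2}+v^{2}}$, and finally to trap $U$ beneath an exponential supersolution via the maximum principle.

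\textbf{Step 1 (regularity and vanishing at infinity).} Since $u,v\in H^{1}(\mathbb{R}^{3})\hookrightarrow L^{2}\cap L^{6}$, the density $u^{2}+v^{2}\in L^{1}\cap L^{3}$, and splitting the Riesz kernel into its local singular part and its tail and applying H\"older's inequality shows that $\phi_{u,v}\in L^{\infty}(\mathbb{R}^{3})$ with $\phi_{u,v}(x)\to0$ as $|x|\to\infty$. I would next absorb the coupled term by Young's inequality with the conjugate exponents $\frac{p-1}{p/2-1}$ and $\frac{p-1}{p/2}$, obtaining $|v|^{\frac{p}{2}}|u|^{\frac{p}{2}-1}\leq C(|u|^{p-1}+|v|^{p-1})$, so that both right-hand sides of (\ref{e1-3}) are bounded in modulus by $C(|u|^{p-1}+|v|^{p-1})$. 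As $2<p<6$ this exponent is $H^{1}$-subcritical, whence a Brezis--Kato / Moser iteration gives $(u,v)\in L^{\infty}\times L^{\infty}$; elliptic regularity then yields $u,v\in C^{2}$ and $u(x),v(x)\to0$ as $|x|\to\infty$.

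\textbf{Step 2 (scalar differential inequality).} Writing $W:=(u,v)$ and $U:=|W|$, Kato's inequality for vector-valued maps gives $-\Delta U\leq -\langle W,\Delta W\rangle/U$ in the distributional sense. Multiplying the two equations of (\ref{e1-3}) by $u$ and $v$ and adding, one finds
\[
-\langle W,\Delta W\rangle=-\lambda U^{2}-\gamma\phi_{u,v}U^{2}+|u|^{p}+|v|^{p}+2\beta|u|^{\frac{p}{2}}|v|^{\frac{p}{2}}.
\]
Since $|u|^{p}+|v|^{p}\leq U^{p}$ and $2\beta|u|^{\frac{p}{2}}|v|^{\frac{p}{2}}\leq\beta U^{p}$ for $p\geq2$, the numerator is at most $(1+\beta)U^{p}$, and using $\phi_{u,v}\geq0$ I obtain
\[
-\Delta U\leq -\lambda U-\gamma\phi_{u,v}U+(1+\beta)U^{p-1}\leq -\lambda U+(1+\beta)U^{p-2}\,U.
\]
Because $U\to0$ as $|x|\to\infty$ and $p>2$, there is $R>0$ with $(1+\beta)U^{p-2}\leq\lambda/2$ for $|x|\geq R$, so that $-\Delta U+\frac{\lambda}{2}U\leq0$ on $\{|x|>R\}$. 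This step crucially uses $\lambda>0$ and simultaneously disposes of both the possible sign indefiniteness of the ground state and the coupling, by working with the single function $U$ rather than the components.

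\textbf{Step 3 (exponential barrier).} Fix $\zeta\in(0,\sqrt{\lambda/2})$ and set $Z(x):=Me^{-\zeta|x|}$. A direct computation in $\mathbb{R}^{3}$ gives $-\Delta Z+\frac{\lambda}{2}Z=\big(\frac{\lambda}{2}-\zeta^{2}+\frac{2\zeta}{|x|}\big)Z\geq0$, so $Z$ is a supersolution of $-\Delta+\frac{\lambda}{2}$ on $\{|x|>R\}$. Choosing $M$ so large that $Z\geq U$ on $\{|x|=R\}$ (possible since $U$ is bounded) and noting $U,Z\to0$ at infinity, the maximum principle for $-\Delta+\frac{\lambda}{2}$ on the exterior domain $\{|x|>R\}$ yields $U(x)\leq Me^{-\zeta|x|}$ there; enlarging $M$ to dominate $U$ on the compact set $\{|x|\leq R\}$ gives $U\leq Me^{-\zeta|x|}$ on all of $\mathbb{R}^{3}$. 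Since $|u|,|v|\leq U$, the claimed estimate follows with $C=M$. The main obstacle is Step 1: only once the $L^{\infty}$ bound and the decay to zero are established do the nonlinear contributions become negligible, which is what licenses the purely linear comparison in Steps 2--3.
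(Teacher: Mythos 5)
Your proof is correct, but it follows a genuinely different route from the paper. The paper adapts the integral-estimate method of Angeles--Clapp--Salda\~{n}a \cite{ACS}: it tests each equation against the truncations $u^{r}=\chi(|x|-r)u$, $v^{r}=\chi(|x|-r)v$, discards the nonnegative Hartree term, and derives an iterative inequality for the tail energies $\vartheta_{1}(r)=\int_{\mathbb{R}^{3}\setminus B_{r}}(|\nabla u|^{2}+|u|^{2})dx$ (and $\vartheta_{2}$), which forces $|\vartheta(r)|_{1}\leq Ce^{-Dr}$; pointwise decay then follows from local elliptic estimates and the embedding $W^{2,s}(B_{1/2}(x))\hookrightarrow L^{\infty}(B_{1/2}(x))$. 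You instead first establish $L^{\infty}$ bounds and vanishing at infinity (Brezis--Kato/Moser plus elliptic regularity), reduce the system to a scalar distributional inequality for $U=\sqrt{u^{2}+v^{2}}$ via Kato's inequality --- which absorbs both the cooperative coupling, bounded by $(1+\beta)U^{p-1}$, and the nonlocal term, discarded by its sign --- and then trap $U$ under the supersolution $Me^{-\zeta|x|}$ of $-\Delta+\lambda/2$ on an exterior domain by the maximum principle. Both arguments hinge on the same two structural facts ($\lambda>0$ and $\phi_{u,v}\geq 0$), but they trade different tools: your comparison argument is classical, yields an explicit admissible rate $\zeta<\sqrt{\lambda/2}$ (in fact any $\zeta<\sqrt{\lambda}$ after shrinking the exterior region), and shows that the vector structure is harmless once one passes to $|W|$; the paper's method works entirely at the level of energy integrals and never invokes a maximum principle, which is precisely why it is robust for systems where no convenient scalar reduction is available. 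The only point you should make explicit is the justification of the weak maximum principle for the distributional subsolution $U\in H^{1}_{loc}\cap C^{0}$ on the unbounded domain $\{|x|>R\}$ (using $U-Z\to 0$ at infinity), but this is standard.
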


\begin{proof}
Following the idea in \cite{ACS}. Set $\vartheta (r):=(\vartheta
_{1}(r),\vartheta _{2}(r))$, where
\begin{equation*}
\vartheta _{1}(r)=\int_{\mathbb{R}^{3}\backslash B_{r}(0)}(|\nabla
u|^{2}+|u|^{2})dx\quad \text{and }\vartheta _{2}(r)=\int_{\mathbb{R}%
^{3}\backslash B_{r}(0)}(|\nabla v|^{2}+|v|^{2})dx.
\end{equation*}%
Let $\chi :\mathbb{R}^{3}\rightarrow \mathbb{R}$ be given by
\begin{equation*}
\chi (r):=\left\{
\begin{array}{ll}
0\quad & \text{if }r\leq 0, \\
r\quad & \text{if }0<r<1, \\
1\quad & \text{if }r\geq 1.%
\end{array}%
\right.
\end{equation*}%
Set $u^{r}:=\chi (|x|-r)u(x)$ and $v^{r}:=\chi (|x|-r)v(x)$ for $r\geq 0$.
Then for $x\in B_{r+1}(0)\backslash B_{r}(0)$, we have
\begin{equation*}
u^{r}(x)=(|x|-r)u(x),\text{ }v^{r}(x)=(|x|-r)v(x),
\end{equation*}%
and
\begin{equation*}
\nabla u^{r}(x)=(|x|-r)\nabla u(x)+\frac{x}{|x|}u(x),\quad \nabla
v^{r}(x)=(|x|-r)\nabla v(x)+\frac{x}{|x|}v(x).
\end{equation*}%
Next, we only consider one component of $\vartheta (r)$ because the other is
similar. Set $\varrho :=\min \left\{ \lambda ,1\right\} $. Then we get
\begin{eqnarray}
&&\int_{\mathbb{R}^{3}}(\nabla u\nabla u^{r}+\lambda uu^{r})dx  \notag \\
&\geq &\varrho \vartheta _{1}(r+1)+\int_{B_{r+1}(0)\backslash B_{r}(0)}\left[
(|x|-r)(|\nabla u|^{2}+\lambda |u|^{2})+u\frac{x}{|x|}\cdot \nabla u\right]
dx  \notag \\
&\geq &\varrho \vartheta _{1}(r+1)-\frac{1}{2}\int_{B_{r+1}(0)\backslash
B_{r}(0)}(|\nabla u|^{2}+|u|^{2})dx  \notag \\
&\geq &\frac{2\varrho +1}{2}\vartheta _{1}(r+1)-\frac{1}{2}\vartheta _{1}(r).
\label{e5-1}
\end{eqnarray}%
Since $(u,v)$ solves system (\ref{e1-2}), we have
\begin{eqnarray*}
\int_{\mathbb{R}^{3}}(\nabla u\nabla u^{r}+\lambda uu^{r})dx&\leq &\int_{\mathbb{R}^{3}}(|u|^{p-1}u^{r}+\beta |v|^{\frac{p}{2}}|u|^{%
\frac{p}{2}-1}u^{r})dx  \notag \\
&\leq &\int_{\mathbb{R}^{3}\backslash B_{r}}(|u|^{p}+\beta |v|^{\frac{p}{2}%
}|u|^{\frac{p}{2}})dx  \notag \\
&\leq &\left( 1+\frac{\beta }{2}\right) \int_{\mathbb{R}^{3}\backslash
B_{r}}(|u|^{p}+|v|^{p})dx  \notag \\
&\leq &\left( 1+\frac{\beta }{2}\right) \left[ \int_{\mathbb{R}%
^{3}\backslash B_{r}}(|\nabla u|^{2}+|u|^{2})dx\right] ^{\frac{p}{2}}\left[
\int_{\mathbb{R}^{3}\backslash B_{r}}(|\nabla v|^{2}+|v|^{2})dx\right] ^{%
\frac{p}{2}}.  \label{e5-2}
\end{eqnarray*}%
Using this, together with (\ref{e5-1}) one has
\begin{equation*}
\frac{2\varrho +1}{2}|\vartheta (r+1)|_{1}-\frac{1}{2}|\vartheta
(r)|_{1}\leq C_{0}(|\vartheta _{1}(r)|^{\frac{p}{2}}+|\vartheta _{2}(r)|^{%
\frac{p}{2}})\leq C_{1}|\vartheta (r)|_{1}^{\frac{p}{2}}.
\end{equation*}%
Thus we have
\begin{equation*}
\frac{|\vartheta (r+1)|_{1}}{|\vartheta (r)|_{1}}\leq \frac{1}{2\varrho +1}%
\left( 1+C_{2}\frac{|\vartheta (r)|_{1}^{\frac{p}{2}}}{|\vartheta (r)|_{1}}%
\right) =\frac{1}{2\varrho +1}\left( 1+C_{2}|\vartheta (r)|_{1}^{\frac{p-2}{2%
}}\right) .
\end{equation*}%
Since $|\vartheta (r)|_{1}\rightarrow 0$ as $r\rightarrow \infty $, there
exits $r_{0}>0$ such that for $r>r_{0}+1$,
\begin{equation*}
|\vartheta (r)|_{1}\leq |\vartheta (\lfloor r\rfloor )|_{1}=|\vartheta
(r_{0})|_{1}\prod_{k=r_{0}}^{\lfloor r\rfloor -1}\frac{|\vartheta (k+1)|_{1}%
}{|\vartheta (k)|_{1}}\leq \left( C_{3}+1\right) ^{r_{0}-r+1}(\Vert u\Vert
_{H^{1}}^{2}+\Vert v\Vert _{H^{1}}^{2}).
\end{equation*}%
As $|\vartheta (r)|_{1}\leq \left( C_{3}+1\right) ^{r_{0}-r+1}(\Vert u\Vert
_{H^{1}}^{2}+\Vert v\Vert _{H^{1}}^{2})$ for $r\leq r_{0}+1$, then for all $%
r\geq 0$, we get
\begin{eqnarray*}
|\vartheta (r)|_{1} &\leq &\left( C_{3}+1\right) ^{r_{0}-r+1}(\Vert u\Vert
_{H^{1}}^{2}+\Vert v\Vert _{H^{1}}^{2}) \\
&=&(\Vert u\Vert _{H^{1}}^{2}+\Vert v\Vert _{H^{1}}^{2})\left(
C_{3}+1\right) ^{r_{0}+1}e^{-\ln (C_{3}+1)r}.
\end{eqnarray*}

For $x\in \mathbb{R}^{3}$ with $|x|\geq 2$, let $r=\frac{1}{2}|x|$. Then $%
B_{1}(x)\subset \mathbb{R}^{3}\backslash B_{r}(0)$, and there holds
\begin{equation*}
\Vert u\Vert _{H^{1}(B_{1}(x))}^{2}\leq \Vert u\Vert _{H^{1}(\mathbb{R}%
^{3}\backslash B_{1}(x))}^{2}=\vartheta _{1}(r)\leq C_{4}e^{-D_{1}r},
\end{equation*}%
where $D_{1}>0$. By the standard elliptic regularity theory, we have $u,v\in
W^{2,s}(\mathbb{R}^{3})\cap \mathcal{C}^{2}(\mathbb{R}^{3})$ for $s>2$. Thus
for $x\in \mathbb{R}^{3}\backslash B_{2}(0)$, by using the embedding $%
W^{2,s}(B_{1/2}(x))\hookrightarrow L^{\infty }(B_{1/2}(x))$ and \cite[Lemma
2.3]{ACS}, one has
\begin{equation*}
|u(x)|\leq |u(x)|_{L^{\infty }(B_{1/2}(x))}\leq C_{5}\Vert u\Vert
_{W^{2,s}(B_{1/2}(x))}\leq C_{6}e^{-D_{2}|x|},
\end{equation*}%
where $D_{2}>0$. Since $u$ is continuous, we have $|u(x)|\leq
C_{6}e^{-D_{2}|x|}$ for $x\in B_{2}(0)$. Therefore,
\begin{equation*}
|u(x)|\leq Ce^{-\zeta |x|}\text{ for every }x\in \mathbb{R}^{3}.
\end{equation*}%
Similarly, we have
\begin{equation*}
|v(x)|\leq Ce^{-\zeta |x|}\text{ for every }x\in \mathbb{R}^{3}.
\end{equation*}
The proof is complete.
\end{proof}

\section{Dynamic behavior of standing waves}

\subsection{The local well-posedness and orbital stability}

We firstly consider the local well-posedness for system (\ref{e1-2}) with
the initial data. Set
\begin{equation*}
g_{1}(\psi _{1},\psi _{2}):=|\psi _{1}|^{p-2}\psi _{1}+\beta |\psi _{2}|^{%
\frac{p}{2}}|\psi _{1}|^{\frac{p}{2}-2}\psi _{1}-\gamma \left( |x|^{-1}\ast
\sum\limits_{j=1}^{2}|\psi _{j}|^{2}\right) \psi _{1}
\end{equation*}%
and
\begin{equation*}
g_{2}(\psi _{1},\psi _{2}):=|\psi _{2}|^{p-2}\psi _{2}+\beta |\psi _{1}|^{%
\frac{p}{2}}|\psi _{2}|^{\frac{p}{2}-2}\psi _{2}-\gamma \left( |x|^{-1}\ast
\sum\limits_{j=1}^{2}|\psi _{j}|^{2}\right) \psi _{2}.
\end{equation*}%
Then the Cauchy problem (\ref{e1-2}) is rewritten as
\begin{equation*}
\left\{
\begin{array}{ll}
i\partial _{t}\psi _{1}+\Delta \psi _{1}+g_{1}(\psi _{1},\psi _{2})=0 &
\quad \text{in}\quad \mathbb{R}^{3}, \\
i\partial _{t}\psi _{2}+\Delta \psi _{2}+g_{2}(\psi _{1},\psi _{2})=0 &
\quad \text{in}\quad \mathbb{R}^{3}, \\
\psi _{1}(0,x)=\psi _{1}(0)\in H^{1}(\mathbb{R}^{3}),\text{ }\psi
_{2}(0,x)=\psi _{2}(0)\in H^{1}(\mathbb{R}^{3}). &
\end{array}%
\right.  \label{e3-30}
\end{equation*}

\begin{definition}
\label{D3-1} The pair $(s,r)$ is referred to be as an Strichartz admissible
if
\begin{equation*}
\frac{2}{s}+\frac{3}{r}=\frac{3}{2}\quad \text{for }s,r\in \lbrack 2,\infty ].
\end{equation*}
\end{definition}

We introduce the spaces $L_{t}^{s}([0,T);L^{r})$ and $%
L_{t}^{s}([0,T);W^{1,r})$ (as $L_{t}^{s}L^{r}$ and $%
L_{t}^{s}W^{1,r}$ respectively to simplify the notation) equipped with
the Strichartz norms:
\begin{equation*}
\Vert w(t,x)\Vert _{L_{t}^{s}L^{r}}=\left( \int_{0}^{T}\Vert w(t,\cdot
)\Vert _{r}^{s}dt\right) ^{\frac{1}{s}}\text{ and }\Vert w(t,x)\Vert
_{L_{t}^{s}W^{1,r}}=\left( \int_{0}^{T}\Vert w(t,\cdot )\Vert _{W^{1,r}(%
\mathbb{R}^{3})}^{s}dt\right) ^{\frac{1}{s}},
\end{equation*}%
where the function $w(t,x)$ is defined on the time-space strip $[0,T)\times
\mathbb{R}^{N}.$

\begin{definition}
\label{D3-2}Let $T>0$. We say that $(\psi _{1},\psi _{2})$ is an integral
solution of the Cauchy problem (\ref{e3-30}) on the time interval $[0,T)$ if%
\newline
$(i)$ $\psi _{1},\psi _{2}\in \mathcal{C}([0,T);H^{1})\cap
L_{t}^{s}([0,T);W^{1,r}).$\newline
$(ii)$ For all $t\in \lbrack 0,T)$ it holds
\begin{equation*}
(\psi _{1}(t),\psi _{2}(t))=e^{it\Delta }(\psi _{1}(0),\psi
_{2}(0))+i\int_{0}^{t}e^{i(t-\rho )\Delta }(g_{1}(\psi _{1}(\rho ),\psi
_{2}(\rho )),g_{2}(\psi _{1}(\rho ),\psi _{2}(\rho )))d\rho .
\end{equation*}
\end{definition}

Let us recall the following well-known Strichartz's estimates (see \cite%
{C,KT}).

\begin{proposition}
\label{P3-4} Let $T>0.$ For every admissible pairs $(s,r)$ and $(\tilde{s},%
\tilde{r})$, there exists a constant $C>0$ such that the following
properties hold:\newline
$(i)$ For every $\varphi \in L^{2}(\mathbb{R}^{3})$, the function $%
t\mapsto e^{it\Delta }\varphi $ belongs to $L_{t}^{s}([0,T);L^{r})\cap
\mathcal{C(}[0,T);L^{2}(\mathbb{R}^{3}))$ and
$\left\Vert e^{it\Delta }\varphi \right\Vert _{L_{t}^{s}L^{r}}\leq C\Vert
\varphi \Vert _{L^{2}}.$\newline
$(ii)$ Let $F\in L_{t}^{\tilde{s}^{\prime }}([0,T);L^{\tilde{r}^{\prime
}})$, where we use a prime to denote conjugate indices. Then the function $t\mapsto \Phi _{F}(t):=\int_{0}^{t}e^{i(t-\rho )\Delta }F(\rho )d\rho$
belongs to $L_{t}^{s}([0,T);L^{r})\cap \mathcal{C}([0,T);L^{2}(%
\mathbb{R}^{3}))$ and
$\left\Vert \Phi _{F}\right\Vert _{L_{t}^{s}L^{r}}\leq C\Vert F\Vert
_{L_{t}^{\tilde{s}^{\prime }}L^{\tilde{r}^{\prime }}}.$\newline
$(iii)$ For every $\varphi \in H^{1}(\mathbb{R}^{3})$, the function $%
t\mapsto e^{it\Delta }\varphi $ belongs to $L_{t}^{s}([0,T);W^{1,r})\cap
\mathcal{C}([0,T);L^{2}(\mathbb{R}^{3}))$ and $\left\Vert e^{it\Delta }\varphi \right\Vert _{L_{t}^{s}W^{1,r}}\leq
C\Vert \varphi \Vert _{H^{1}}.$
\end{proposition}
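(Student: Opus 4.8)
The plan is to obtain all three assertions as consequences of a single dispersive bound for the free Schr\"{o}dinger group, combined with the abstract $TT^{*}$ duality method and the endpoint refinement of Keel--Tao, exactly in the spirit of \cite{C,KT}; I will not reprove these classical inequalities in full but indicate the mechanism, since the proposition is quoted as a known fact. First I would record the two elementary endpoint estimates for the propagator. Writing $e^{it\Delta}$ as Fourier multiplication by $e^{-it|\xi|^{2}}$ gives the mass conservation $\Vert e^{it\Delta}\varphi\Vert_{2}=\Vert\varphi\Vert_{2}$, while the explicit convolution kernel $(4\pi it)^{-3/2}e^{i|x|^{2}/4t}$ yields the dispersive decay $\Vert e^{it\Delta}\varphi\Vert_{\infty}\leq(4\pi|t|)^{-3/2}\Vert\varphi\Vert_{1}$. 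Interpolating these two bounds by Riesz--Thorin produces
\begin{equation*}
\Vert e^{it\Delta}\varphi\Vert_{r}\leq C|t|^{-3(\frac{1}{2}-\frac{1}{r})}\Vert\varphi\Vert_{r'}\quad\text{for all }2\leq r\leq\infty,
\end{equation*}
which is the only analytic input needed in what follows.

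For $(i)$ I would view the solution map $T\colon\varphi\mapsto e^{it\Delta}\varphi$ and note that the claimed bound $\Vert T\varphi\Vert_{L_{t}^{s}L^{r}}\leq C\Vert\varphi\Vert_{2}$ is, by duality, equivalent to the boundedness of $TT^{*}G=\int_{\mathbb{R}}e^{i(t-\rho)\Delta}G(\rho)\,d\rho$ from $L_{t}^{s'}L^{r'}$ into $L_{t}^{s}L^{r}$. Applying the displayed dispersive estimate pointwise in time reduces this to the one-dimensional fractional-integration bound
\begin{equation*}
\left\Vert \int_{\mathbb{R}}|t-\rho|^{-3(\frac{1}{2}-\frac{1}{r})}\Vert G(\rho)\Vert_{r'}\,d\rho\right\Vert_{L_{t}^{s}}\leq C\left\Vert\,\Vert G(\cdot)\Vert_{r'}\right\Vert_{L_{\rho}^{s'}},
\end{equation*}
which is precisely the Hardy--Littlewood--Sobolev inequality in the time variable; a direct computation of the scaling shows that the exponent this requires is $3(\frac{1}{2}-\frac{1}{r})=\frac{2}{s}$, i.e. exactly the admissibility condition $\frac{2}{s}+\frac{3}{r}=\frac{3}{2}$. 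This is the conceptual heart: the condition defining an admissible pair is forced by the HLS kernel exponent, not imposed arbitrarily.

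The inhomogeneous estimate $(ii)$ then follows by factoring the non-retarded operator $\int_{\mathbb{R}}e^{i(t-\rho)\Delta}F(\rho)\,d\rho = T\,T^{*}F$ through $L^{2}$, using $(i)$ for each of the two admissible pairs $(s,r)$ and $(\tilde s,\tilde r)$, and then passing from the full integral to the retarded integral $\int_{0}^{t}$ defining $\Phi_{F}$ by the Christ--Kiselev lemma. For $(iii)$, since $\nabla$ commutes with $e^{it\Delta}$, I would simply apply the $L^{2}\to L_{t}^{s}L^{r}$ bound from $(i)$ to $\varphi$ and to each $\partial_{x_{j}}\varphi$ and sum, using $\Vert\cdot\Vert_{W^{1,r}}\simeq\Vert\cdot\Vert_{r}+\Vert\nabla\cdot\Vert_{r}$; the continuity statements $e^{it\Delta}\varphi,\Phi_{F}\in\mathcal{C}([0,T);L^{2})$ come from the strong continuity of the unitary group $e^{it\Delta}$ on $L^{2}$ together with a density argument (approximating $\varphi$ and $F$ by smooth, compactly supported data) applied to the Duhamel representation.

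The main obstacle is the endpoint pair $(s,r)=(2,6)$, where $3(\frac{1}{2}-\frac{1}{r})=1$: here the fractional-integration kernel $|t-\rho|^{-1}$ sits exactly at the borderline where the one-dimensional Hardy--Littlewood--Sobolev inequality fails as a strong $L^{2}\to L^{2}$ bound, so the naive $TT^{*}$ argument above breaks down. Overcoming it requires the bilinear real-interpolation scheme of Keel and Tao \cite{KT}, and corresponding care is needed because the Christ--Kiselev lemma itself excludes the diagonal double endpoint. For every non-endpoint admissible pair, however, the dispersive-estimate-plus-HLS argument is complete and suffices for the local well-posedness theory developed in the sequel.
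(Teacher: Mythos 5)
The paper offers no proof of this proposition: it is recalled as a well-known result with a pointer to Cazenave \cite{C} and Keel--Tao \cite{KT}, so there is nothing internal to compare your argument against, and your sketch should be judged against those references. Judged that way, it is a correct reconstruction of the classical proof: the explicit kernel plus Riesz--Thorin gives the dispersive bound, the $TT^{*}$ duality reduces the homogeneous estimate to one-dimensional Hardy--Littlewood--Sobolev in time (and your computation that the HLS scaling forces $3(\tfrac{1}{2}-\tfrac{1}{r})=\tfrac{2}{s}$, i.e. $\tfrac{2}{s}+\tfrac{3}{r}=\tfrac{3}{2}$, is exactly the right way to see where admissibility comes from), the retarded estimate follows by factoring through $L^{2}$ plus Christ--Kiselev, and commuting $\nabla$ with $e^{it\Delta}$ gives $(iii)$. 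Two remarks. First, when $(\tilde{s},\tilde{r})=(s,r)$ you do not need Christ--Kiselev at all: since $\left\Vert \int_{0}^{t}e^{i(t-\rho)\Delta}F(\rho)\,d\rho\right\Vert _{r}\leq\int_{0}^{T}|t-\rho|^{-3(\frac{1}{2}-\frac{1}{r})}\Vert F(\rho)\Vert _{r^{\prime }}\,d\rho$, the retarded integral is controlled by the same HLS bound as the full convolution; this is essentially how Cazenave argues, and Christ--Kiselev is only required for genuinely mixed pairs (where, as you note, $\tilde{s}^{\prime }<s$ holds automatically away from the double endpoint). Second, as stated the proposition quantifies over all admissible pairs, including the endpoint $(2,6)$, so a complete proof does require the Keel--Tao bilinear interpolation that you defer to; but your closing observation is apt, since the local well-posedness argument in Section 6 only invokes the non-endpoint pairs $(s_{1},r_{1})=\left( \frac{4p}{3(p-2)},p\right) $ and $(s_{2},r_{2})=\left( 8,\frac{12}{5}\right) $, for which your dispersive-plus-HLS argument is self-contained.
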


We now choose the $L^{2}$-admissible pair $(s_{1},r_{1})$ and $(s_{2},r_{2}),$
respectively, as
$(s_{1},r_{1})=\left( \frac{4p}{3(p-2)},p\right)$  and $(s_{2},r_{2})=\left( 8,\frac{12}{5}\right).$
\newline
\textbf{Now we give the proof of Theorem \ref{t8}:} Define the solution map $%
\Psi $  by%
\begin{equation*}
\Psi (\psi _{1},\psi _{2}):=e^{it\Delta }(\psi _{1}(0),\psi
_{2}(0))+i\int_{0}^{t}e^{i(t-\rho )\Delta }(g_{1}(\psi _{1}(\rho ),\psi
_{2}(\rho )),g_{2}(\psi _{1}(\rho ),\psi _{2}(\rho )))d\rho .
\end{equation*}%
For $T>0$, we define the complete metric space
\begin{equation*}
\mathcal{B}_{R,T}:=\left\{ (\psi _{1},\psi _{2})\in \mathcal{H}\times
\mathcal{H}\text{ }|\text{\ }\vartheta (\psi _{1},\psi _{2})\leq R\right\}
\text{ for some }R>0,
\end{equation*}%
where
\begin{equation*}
\mathcal{H}:=\mathcal{C}([0,T);H^{1})\cap
L_{t}^{s_{1}}([0,T);W^{1,r_{1}})\cap L_{t}^{s_{2}}([0,T);W^{1,r_{2}})
\end{equation*}%
and
\begin{equation*}
\vartheta (\psi _{1},\psi _{2}):=\max \left\{ \sup_{t\in \lbrack 0,T)}\Vert
(\psi _{1},\psi _{2})\Vert _{H^{1}},\Vert (\psi _{1},\psi _{2})\Vert
_{L_{t}^{s_{1}}W^{1,r_{1}}},\Vert (\psi _{1},\psi _{2})\Vert
_{L_{t}^{s_{2}}W^{1,r_{2}}}\right\}
\end{equation*}%
with the metric
\begin{equation*}
d((\psi _{1},\psi _{2})-(\omega _{1},\omega _{2})):=\Vert (\psi _{1},\psi
_{2})-(\omega _{1},\omega _{2})\Vert _{L_{t}^{s_{1}}L^{r_{1}}\cap
L_{t}^{s_{2}}L^{r_{2}}}.
\end{equation*}%
By Strichartz estimates, we have
\begin{eqnarray}
\Vert \Psi (\psi _{1},\psi _{2})\Vert _{L_{t}^{\tilde{s}}W^{1,\tilde{r}%
}} &\lesssim &\Vert (\psi _{1}(0),\psi _{2}(0))\Vert _{H^{1}}+\Vert
(f_{1}(\psi _{1},\psi _{2}),f_{2}(\psi _{1},\psi _{2}))\Vert
_{L_{t}^{s_{1}^{\prime }}W^{1,r_{1}^{\prime }}}  \notag \\
&&+\Vert (h_{1}(\psi _{1},\psi _{2}),h_{2}(\psi _{1},\psi _{2}))\Vert
_{L_{t}^{s_{2}^{\prime }}W^{1,r_{2}^{\prime }}},  \label{e3-36}
\end{eqnarray}%
where
\begin{equation*}
f_{1}:=|\psi _{1}|^{p-2}\psi _{1}+\beta |\psi _{2}|^{\frac{p}{2}}|\psi
_{1}|^{\frac{p}{2}-2}\psi _{1},\text{ }f_{2}:=|\psi _{2}|^{p-2}\psi
_{2}+\beta |\psi _{1}|^{\frac{p}{2}}|\psi _{2}|^{\frac{p}{2}-2}\psi _{2},
\end{equation*}%
and
\begin{equation*}
h_{1}:=\gamma \left( |x|^{-1}\ast \sum\limits_{j=1}^{2}|\psi
_{j}|^{2}\right) \psi _{1},\text{ }h_{2}:=\gamma \left( |x|^{-1}\ast
\sum\limits_{j=1}^{2}|\psi _{j}|^{2}\right) \psi _{2}.
\end{equation*}%
By using the H\"{o}lder inequality in time in (\ref{e3-36}), we obtain
\begin{eqnarray*}
\Vert \Psi (\psi _{1},\psi _{2})\Vert _{L_{t}^{\tilde{s}}W^{1,\tilde{r}%
}} &\lesssim &\Vert (\psi _{1}(0),\psi _{2}(0))\Vert _{H^{1}}+T^{\theta
}\Vert (f_{1}(\psi _{1},\psi _{2}),f_{2}(\psi _{1},\psi _{2}))\Vert
_{L_{t}^{s_{1}}W^{1,r_{1}^{\prime }}} \\
&&+T^{\delta }\Vert (h_{1}(\psi _{1},\psi _{2}),h_{2}(\psi _{1},\psi
_{2}))\Vert _{L_{t}^{s_{2}}W^{1,r_{2}^{\prime }}}
\end{eqnarray*}%
where $\theta :=\frac{6-p}{2p}$ and $\delta :=\frac{3}{4}$.

Using H\"{o}lder and Sobolev inequalities, we have
\begin{equation*}
\Vert f_{1}(\psi _{1},\psi _{2})\Vert _{L_{t}^{s_{1}}W^{1,r_{1}^{\prime
}}}\lesssim \Vert \psi _{1}\Vert _{L_{t}^{s_{1}}W^{r_{1}}}^{p-1}+2\Vert
\psi _{2}\Vert _{L_{t}^{s_{1}}W^{r_{1}}}^{\frac{p}{2}}\Vert \psi
_{1}\Vert _{L_{t}^{s_{1}}W^{r_{1}}}^{\frac{p-2}{2}}.
\end{equation*}%
Similarly, we get
\begin{equation*}
\Vert f_{2}(\psi _{1},\psi _{2})\Vert _{L_{t}^{s_{1}}W^{1,r_{1}^{\prime
}}}\lesssim \Vert \psi _{2}\Vert _{L_{t}^{s_{1}}W^{r_{1}}}^{p-1}+2\Vert
\psi _{1}\Vert _{L_{t}^{s_{1}}W^{r_{1}}}^{\frac{p}{2}}\Vert \psi
_{2}\Vert _{L_{t}^{s_{1}}W^{r_{1}}}^{\frac{p-2}{2}}.
\end{equation*}%
Moreover, it follows from H\"{o}lder, Hardy-Littlewood-Sobolev and Sobolev
inequalities that
\begin{eqnarray*}
&&\Vert \nabla h_{1}(\psi _{1},\psi _{2})\Vert
_{L_{t}^{s_{2}}L^{r_{2}^{\prime }}} \\
&\lesssim &\Vert |x|^{-1}\ast (|\psi _{1}|\nabla \psi _{1})\Vert
_{L_{t}^{s_{2}}L^{6}}\Vert \psi _{1}\Vert _{L_{t}^{\infty }L^{\frac{%
12}{5}}}+\Vert |x|^{-1}\ast |\psi _{1}|^{2}\Vert _{L_{t}^{\infty
}L^{6}}\Vert \nabla \psi _{1}\Vert _{L_{t}^{s_{2}}L^{\frac{12}{5}}}
\\
&&+\Vert |x|^{-1}\ast (|\psi _{2}|\nabla \psi _{2})\Vert
_{L_{t}^{s_{2}}L^{6}}\Vert \psi _{1}\Vert _{L_{t}^{\infty }L^{\frac{%
12}{5}}}+\Vert |x|^{-1}\ast |\psi _{2}|^{2}\Vert _{L_{t}^{\infty
}L^{6}}\Vert \nabla \psi _{1}\Vert _{L_{t}^{s_{2}}L^{\frac{12}{5}}}
\\
&\lesssim &\Vert |\psi _{1}|\nabla \psi _{1}\Vert _{L_{t}^{s_{2}}L^{%
\frac{6}{5}}}\Vert \psi _{1}\Vert _{L_{t}^{\infty }L^{r_{2}}}+\Vert \psi
_{1}\Vert _{L_{t}^{\infty }L^{r_{2}}}^{2}\Vert \nabla \psi _{1}\Vert
_{L_{t}^{s_{2}}L^{r_{2}}} \\
&&+\Vert |\psi _{2}|\nabla \psi _{2}\Vert _{L_{t}^{s_{2}}L^{\frac{6}{5}%
}}\Vert \psi _{1}\Vert _{L_{t}^{\infty }L^{r_{2}}}+\Vert \psi _{2}\Vert
_{L_{t}^{\infty }L^{r_{2}}}^{2}\Vert \nabla \psi _{1}\Vert
_{L_{t}^{s_{2}}L^{r_{2}}} \\
&\lesssim &\Vert \nabla \psi _{1}\Vert _{L_{t}^{s_{2}}L^{r_{2}}}\Vert
\psi _{1}\Vert _{L_{t}^{\infty }H^{1}}\Vert \psi _{1}\Vert
_{L_{t}^{\infty }H^{1}}+\Vert \psi _{1}\Vert _{L_{t}^{\infty
}H^{1}}^{2}\Vert \nabla \psi _{1}\Vert _{L_{t}^{s_{2}}L^{r_{2}}} \\
&&+\Vert \nabla \psi _{2}\Vert _{L_{t}^{s_{2}}L^{r_{2}}}\Vert \psi
_{2}\Vert _{L_{t}^{\infty }H^{1}}\Vert \psi _{1}\Vert _{L_{t}^{\infty
}H^{1}}+\Vert \psi _{2}\Vert _{L_{t}^{\infty }H^{1}}^{2}\Vert \nabla
\psi _{1}\Vert _{L_{t}^{s_{2}}L^{r_{2}}},
\end{eqnarray*}%
and
\begin{eqnarray*}
\Vert h_{1}(\psi _{1},\psi _{2})\Vert _{L_{t}^{s_{2}}W^{1,r_{2}^{\prime
}}} &\lesssim &\Vert \psi _{1}\Vert _{L_{t}^{\infty }H^{1}}^{2}\Vert
\psi _{1}\Vert _{L_{t}^{s_{2}}W^{r_{2}}}+\Vert \psi _{2}\Vert
_{L_{t}^{s_{2}}W^{r_{2}}}\Vert \psi _{2}\Vert _{L_{t}^{\infty
}H^{1}}\Vert \psi _{1}\Vert _{L_{t}^{\infty }H^{1}} \\
&&+\Vert \psi _{2}\Vert _{L_{t}^{\infty }H^{1}}^{2}\Vert \psi _{1}\Vert
_{L_{t}^{s_{2}}W^{r_{2}}}.
\end{eqnarray*}%
Similarly, we have
\begin{eqnarray*}
\Vert h_{2}(\psi _{1},\psi _{2})\Vert _{L_{t}^{s_{2}}W^{1,r_{2}^{\prime
}}} &\lesssim &\Vert \psi _{2}\Vert _{L_{t}^{\infty }H^{1}}^{2}\Vert
\psi _{2}\Vert _{L_{t}^{s_{2}}W^{r_{2}}}+\Vert \psi _{1}\Vert
_{L_{t}^{s_{2}}W^{r_{2}}}\Vert \psi _{1}\Vert _{L_{t}^{\infty
}H^{1}}\Vert \psi _{2}\Vert _{L_{t}^{\infty }H^{1}} \\
&&+\Vert \psi _{1}\Vert _{L_{t}^{\infty }H^{1}}^{2}\Vert \psi _{2}\Vert
_{L_{t}^{s_{2}}W^{r_{2}}}.
\end{eqnarray*}%
So, for $(\psi _{1},\psi _{2})\in \mathcal{B}_{R,T}$, we get
\begin{equation*}
\Vert \Psi (\psi _{1},\psi _{2})\Vert _{L_{t}^{\tilde{s}}W^{1,\tilde{r}%
}}\leq C\Vert (\psi _{1}(0),\psi _{2}(0))\Vert _{\mathbf{H}}+CT^{\theta
}R^{p-1}+CT^{\delta }R^{3}.
\end{equation*}%
Now we choose $R=2C\Vert (\psi _{1}(0),\psi _{2}(0))\Vert _{\mathbf{H}}>0$
and $0<T<1$ such that $CT^{\theta }R^{p-2}+CT^{\delta }R^{2}\leq \frac{1}{2}.$ This implies that $\mathcal{B}_{R,T}$ is an invariant set of $\Psi $.
Furthermore, for $(\psi _{1},\psi _{2}),(\omega _{1},\omega _{2})\in
\mathcal{B}_{R,T}$, by using the same argument, we get
\begin{equation*}
d(\Psi (\psi _{1},\psi _{2})-\Psi (\omega _{1},\omega _{2}))\lesssim
(T^{\theta }+T^{\delta })d((\psi _{1},\psi _{2})-(\omega _{1},\omega _{2})),
\end{equation*}%
which leads to
\begin{equation*}
d(\Psi (\psi _{1},\psi _{2})-\Psi (\omega _{1},\omega _{2}))\leq \frac{1}{2}%
d((\psi _{1},\psi _{2})-(\omega _{1},\omega _{2})).
\end{equation*}%
This implies that $\Psi $ is a contraction map. In particular, $\Psi $ has a
unique fixed point in this space. Moreover, by the classical arguments in
\cite{C}, the solution $(\psi _{1},\psi _{2})$ satisfies the mass
conservation law
\begin{equation*}
\Vert \psi _{1}(t)\Vert _{2}^{2}+\Vert \psi _{2}(t)\Vert _{2}^{2}=\Vert \psi
_{1}(0)\Vert _{2}^{2}+\Vert \psi _{2}(0)\Vert _{2}^{2}
\end{equation*}%
and the energy conservation law
\begin{eqnarray*}
I_{\gamma ,\beta }(\psi _{1}(t),\psi _{2}(t)) &=&\frac{1}{2}\int_{\mathbb{R}^{N}}(|\nabla \psi _{1}(t)|^{2}+|\nabla \psi
_{2}(t)|^{2})dx+\frac{\gamma }{4}\int_{\mathbb{R}^{3}}\phi _{\psi
_{1}(t),\psi _{2}(t)}(|\psi _{1}(t)|^{2}+|\psi _{2}(t)|^{2})dx \\
&&-\frac{1}{p}\int_{\mathbb{R}^{3}}(|\psi _{1}(t)|^{p}+|\psi
_{2}(t)|^{p}+2\beta |\psi _{1}(t)|^{\frac{p}{2}}|\psi _{2}(t)|^{\frac{p}{2}%
})dx\\
&=&I_{\gamma ,\beta }(\psi _{1}(0),\psi _{2}(0)).
\end{eqnarray*}

\textbf{We give the proof of Theorem \ref{t3}:} Since the proof is similar
to the classical arguments in \cite{L}, we only give a sketch. Assume on the
contrary. Then there exist $\varepsilon _{0}>0$, $\left\{ \psi
_{1}^{n}(0),\psi _{2}^{n}(0)\right\} $ and $\left\{ t_{n}\right\} \subset
\mathbb{R}^{+}$ such that
\begin{equation*}
\inf_{(u,v)\in \mathcal{Z}(c)}\Vert (\psi _{1}^{n}(0),\psi
_{2}^{n}(0))-(u,v)\Vert _{H^{1}(\mathbb{R}^{3})\times H^{1}(\mathbb{R}%
^{3})}\rightarrow 0
\end{equation*}%
and
\begin{equation}
\inf_{(u,v)\in \mathcal{Z}(c)}\Vert (\psi _{1}^{n}(t_{n}),\psi
_{2}^{n}(t_{n}))-(u,v)\Vert _{H^{1}(\mathbb{R}^{3})\times H^{1}(\mathbb{R}%
^{3})}\geq \varepsilon _{0}.  \label{e3-18}
\end{equation}%
According to mass and energy conservation laws, we have%
\begin{equation*}
\sum_{i=1}^{2}\Vert \psi _{i}^{n}(t_{n})\Vert _{2}^{2}=\sum_{i=1}^{2}\Vert
\psi _{i}^{n}(0)\Vert _{2}^{2}\quad \text{and }I_{\gamma ,\beta }(\psi
_{1}^{n}(t_{n}),\psi _{2}^{n}(t_{n}))=I_{\gamma ,\beta }(\psi
_{1}^{n}(0),\psi _{2}^{n}(0))\text{ for }i=1,2.
\end{equation*}%
Define
\begin{equation*}
\widetilde{\psi }_{i}^{n}=\frac{\psi _{i}^{n}(t_{n})}{\sum_{i=1}^{2}\Vert
\psi _{i}^{n}(t_{n})\Vert _{2}^{2}}c\text{ for }i=1,2.
\end{equation*}%
A direct calculation shows that
\begin{equation*}
\sum_{i=1}^{2}\Vert \widetilde{\psi }_{i}^{n}\Vert _{2}^{2}=c\quad \text{and
}I_{\gamma ,\beta }(\widetilde{\psi }_{1}^{n},\widetilde{\psi }%
_{2}^{n})=\sigma _{\gamma ,\beta }(c)+o_{n}(1),
\end{equation*}%
which indicates that $(\widetilde{\psi }_{1}^{n},\widetilde{\psi }_{2}^{n})$
is a minimizing sequence for $\sigma _{\gamma ,\beta }(c)$. By Theorems \ref%
{t1} and \ref{t2}, it is precompact in $\mathbf{H}$, which is a
contradiction with (\ref{e3-18}). Therefore, the set $\mathcal{Z}(c)$ is
orbitally stable.

\subsection{The global well-posedness}

\textbf{We give the proof of Theorem \ref{t9}:} Since system (\ref{e1-2})
with the initial data is locally well-posed by Theorem \ref{t8}, we obtain
that either $T_{\max }=+\infty $, or $T_{\max }<+\infty $ and $%
\lim_{t\rightarrow T_{\max }^{-}}A(\psi _{1}(t),\psi _{2}(t))\rightarrow
+\infty $. Then the global existence boils down to obtaining a priori bound
on $A(\psi _{1}(t),\psi _{2}(t))$.\newline
$(i)$ $2<p\leq \frac{10}{3}$. By mass and energy conservation laws and (\ref%
{e2-3}), we have
\begin{eqnarray*}
&&A(\psi _{1}(t),\psi _{2}(t)) \\
&\leq &2I_{\gamma ,\beta }(\psi _{1}(0),\psi _{2}(0))+\frac{2}{p}C(\psi
_{1}(t),\psi _{2}(t)) \\
&\leq &2I_{\gamma ,\beta }(\psi _{1}(0),\psi _{2}(0))+4(1+\beta )\mathcal{S}%
_{p}\left( \Vert \psi _{1}(0)\Vert _{2}^{2}+\Vert \psi _{2}(0)\Vert
_{2}^{2}\right) ^{\frac{6-p}{4}}\left( A(\psi _{1}(t),\psi _{2}(t))\right) ^{%
\frac{3(p-2)}{4}}.
\end{eqnarray*}%
If $2<p<\frac{10}{3},$ then there exists a constant $M>0$ independent of $t$
such that $A(\psi _{1}(t),\psi _{2}(t))\leq M$. If $p=\frac{10}{3},$ then
also there exists a constant $M>0$ independent of $t$ such that $A(\psi
_{1}(t),\psi _{2}(t))\leq M$ provided that $\Vert \psi _{1}(0)\Vert
_{2}^{2}+\Vert \psi _{2}(0)\Vert _{2}^{2}$ is small.\newline
$(ii)$ $10/3<p<6$. If by contradiction $T_{\max }<+\infty $, we have $%
\lim_{t\rightarrow T_{\max }^{-}}A(\psi _{1}(t),\psi _{2}(t))\rightarrow
+\infty $. Since
\begin{equation*}
I_{\gamma ,\beta }(\psi _{1},\psi _{2})-\frac{2}{3(p-2)}\mathcal{P}_{\gamma
,\beta }(\psi _{1},\psi _{2})=\frac{3p-10}{6(p-2)}A(\psi _{1},\psi _{2})+%
\frac{3p-8}{12(p-2)}B(\psi _{1},\psi _{2}),
\end{equation*}%
together with the energy conservation law, we have
$\lim_{t\rightarrow T_{\max }^{-}}\mathcal{P}_{\gamma ,\beta }(\psi _{1},\psi
_{2})=-\infty .$ Then by the continuity, these exists $t_{0}\in (0,T_{\max })$ such that $%
\mathcal{P}_{\gamma ,\beta }(\psi _{1}(t_{0}),\psi _{2}(t_{0}))=0$, where we
have used the assumption of $\mathcal{P}_{\gamma ,\beta }(\psi _{1}(0),\psi
_{2}(0))>0.$ Using again the energy conservation law and the assumption of $%
I_{\gamma ,\beta }(\psi _{1}(0),\psi _{2}(0))<m_{\gamma ,\beta }(c)$, we
deduce that%
\begin{equation*}
m_{\gamma ,\beta }(c)>I_{\gamma ,\beta }(\psi _{1}(0),\psi
_{2}(0))=I_{\gamma ,\beta }(\psi _{1}(t_{0}),\psi _{2}(t_{0}))\geq
\inf_{(u,v)\in \mathcal{M}_{\gamma ,\beta }(c)}I_{\gamma ,\beta
}(u,v)=m_{\gamma ,\beta }(c),
\end{equation*}%
which is a contradiction. So, $T_{\max }=+\infty .$

\subsection{Finite time blow-up and strong unstablility}

\begin{lemma}
\label{L6-1} Assume that the conditions of Theorem \ref{t5} or Theorem \ref%
{t6} hold. Let $(\psi _{1}(0),\psi _{2}(0))\in S_{c}$ such that $I_{\gamma
,\beta }(\psi _{1}(0),\psi _{2}(0))<m_{\gamma ,\beta }(c)$. Then
\begin{equation*}
\Phi _{(\psi _{1}(0),\psi _{2}(0))}(t):=\frac{t^{2}}{2}A(\psi _{1}(0),\psi
_{2}(0))+\frac{\gamma t}{4}B(\psi _{1}(0),\psi _{2}(0))-\frac{t^{\frac{3(p-2)%
}{2}}}{p}C(\psi _{1}(0),\psi _{2}(0))
\end{equation*}%
has a unique global maximum point $t^{-}(\psi _{1}(0),\psi _{2}(0))$.
Furthermore, if
\begin{equation*}
0<t^{-}(\psi _{1}(0),\psi _{2}(0))<1\text{ and }(|x|\psi _{1}(0),|x|\psi
_{2}(0))\in L^{2}(\mathbb{R}^{3})\times L^{2}(\mathbb{R}^{3}),
\end{equation*}%
then the solution $(\psi _{1}(t),\psi _{2}(t))$ of system (\ref{e1-2}) with
initial datum $(\psi _{1}(0),\psi _{2}(0))$ blows-up in finite time.
\end{lemma}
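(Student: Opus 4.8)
The plan is to split the statement into the variational analysis of the fibering map $\Phi:=\Phi_{(\psi_1(0),\psi_2(0))}$ and a virial (convexity) argument for the blow-up. Write $A:=A(\psi_1(0),\psi_2(0))$, $B:=B(\psi_1(0),\psi_2(0))$, $C:=C(\psi_1(0),\psi_2(0))$, all positive since $(\psi_1(0),\psi_2(0))\neq(0,0)$, and recall from the scaling $(w)_t=(t^{3/2}\psi_1(0)(tx),t^{3/2}\psi_2(0)(tx))$ that $\Phi(t)=I_{\gamma,\beta}((w)_t)$, $\Phi'(t)=\tfrac1t\mathcal P_{\gamma,\beta}((w)_t)$, and in particular $\Phi'(1)=\mathcal P_{\gamma,\beta}(\psi_1(0),\psi_2(0))$. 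For the first assertion I would analyze $\Phi'(t)=tA+\tfrac\gamma4 B-\tfrac{3(p-2)}{2p}t^{(3p-8)/2}C$. When $\tfrac{10}3<p<6$ one has $\Phi'(0^+)=\tfrac\gamma4B>0$ while $\Phi'(t)\to-\infty$, and since $\Phi''(t)=A-\tfrac{3(p-2)(3p-8)}{4p}t^{(3p-10)/2}C$ changes sign exactly once (from $+$ to $-$), $\Phi'$ is first increasing then decreasing, producing a single positive zero $t^-$, the unique global maximum. For $p=\tfrac{10}3$ the exponent $(3p-8)/2$ equals $1$, so $\Phi(t)=(\tfrac A2-\tfrac{3C}{10})t^2+\tfrac{\gamma B}4 t$ is a downward parabola precisely when $A<\tfrac35C$, equivalently $\mathcal P_{\gamma,\beta}(\psi_1(0),\psi_2(0))<0$; this is exactly the regime in which the later hypothesis $0<t^-<1$ is non-vacuous, and there $t^-$ is again the unique maximum.

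For the blow-up part, the first step is to record the virial identity. Setting $V(t):=\int_{\mathbb R^3}|x|^2(|\psi_1(t)|^2+|\psi_2(t)|^2)dx$ (finite on $[0,T_{\max})$ by the assumption $(|x|\psi_1(0),|x|\psi_2(0))\in L^2\times L^2$ and a standard propagation argument), I would compute, first for smooth solutions and then by a truncation/approximation argument in $\mathbf H$, that
$$V''(t)=8A(\psi_1(t),\psi_2(t))+2\gamma B(\psi_1(t),\psi_2(t))-\frac{12(p-2)}p C(\psi_1(t),\psi_2(t))=8\,\mathcal P_{\gamma,\beta}(\psi_1(t),\psi_2(t)).$$
The coefficients match $8\mathcal P_{\gamma,\beta}$ because the repulsive Coulomb term $\phi_{u,v}$ contributes $+2\gamma B$ and the focusing power term contributes $-\tfrac{12(p-2)}pC$. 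This is the step I expect to be the main obstacle, since the double-coupled nonlocal term requires the Hardy--Littlewood--Sobolev bound (Lemma \ref{L2-10}) to justify differentiating under the integral at $H^1$ regularity.

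The second step is to show $\mathcal P_{\gamma,\beta}(\psi_1(t),\psi_2(t))$ stays negative and bounded away from $0$. By mass and energy conservation (Theorem \ref{t8}), $(\psi_1(t),\psi_2(t))\in S_c$ and $I_{\gamma,\beta}(\psi_1(t),\psi_2(t))=I_{\gamma,\beta}(\psi_1(0),\psi_2(0))<m_{\gamma,\beta}(c)$ for all $t$. Since $0<t^-<1$ places $1$ on the decreasing branch of $\Phi$, we get $\Phi'(1)=\mathcal P_{\gamma,\beta}(\psi_1(0),\psi_2(0))<0$; moreover $\mathcal P_{\gamma,\beta}(\psi_1(t),\psi_2(t))$ cannot vanish at any later time, for otherwise $(\psi_1(t),\psi_2(t))\in\mathcal M_{\gamma,\beta}(c)$ would force $I_{\gamma,\beta}(\psi_1(t),\psi_2(t))\ge m_{\gamma,\beta}(c)$, contradicting the strict energy bound, so $\mathcal P_{\gamma,\beta}(\psi_1(t),\psi_2(t))<0$ for all $t$ by continuity. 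To extract the uniform gap I would introduce $Q(u,v):=I_{\gamma,\beta}(u,v)-\tfrac{2}{3(p-2)}\mathcal P_{\gamma,\beta}(u,v)=\tfrac{3p-10}{6(p-2)}A(u,v)+\tfrac{3p-8}{12(p-2)}\gamma B(u,v)$, whose coefficients are nonnegative for $p\ge\tfrac{10}3$, so $s\mapsto Q((u,v)_s)$ is nondecreasing. Because $\mathcal P_{\gamma,\beta}(w(t))<0$, there is $s^-=s^-(w(t))\in(0,1)$ with $(w(t))_{s^-}\in\mathcal M^-_{\gamma,\beta}(c)$ (using $\mathcal M_{\gamma,\beta}(c)=\mathcal M^-_{\gamma,\beta}(c)$ from Lemma \ref{L4-2}), whence $Q(w(t))\ge Q((w(t))_{s^-})=I_{\gamma,\beta}((w(t))_{s^-})\ge m_{\gamma,\beta}(c)$; rearranging yields
$$\mathcal P_{\gamma,\beta}(\psi_1(t),\psi_2(t))\le\tfrac{3(p-2)}2\big(I_{\gamma,\beta}(\psi_1(0),\psi_2(0))-m_{\gamma,\beta}(c)\big)=:-\delta<0.$$

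Finally I would combine the two steps: $V''(t)=8\mathcal P_{\gamma,\beta}(\psi_1(t),\psi_2(t))\le-8\delta<0$ on $[0,T_{\max})$, hence $V(t)\le V(0)+V'(0)t-4\delta t^2$. If $T_{\max}=+\infty$, the right-hand side eventually turns negative, contradicting $V(t)\ge0$; therefore $T_{\max}<+\infty$ and the solution blows up in finite time. Apart from the virial identity for the nonlocal doubly-coupled nonlinearity, the remaining ingredients are the by-now-standard variational/convexity scheme together with the monotonicity packaged in $Q$.
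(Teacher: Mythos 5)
Your proposal is correct, and its skeleton coincides with the paper's: fibering-map analysis of $\Phi$, a time-uniform negative upper bound on $\mathcal{P}_{\gamma,\beta}(\psi_1(t),\psi_2(t))$, and then the virial identity $\bar f''(t)=8\mathcal{P}_{\gamma,\beta}(\psi_1(t),\psi_2(t))$ together with the quadratic upper bound $0\leq \bar f(t)\leq \bar f(0)+\bar f'(0)t-4\delta t^2$ forcing $T_{\max}<+\infty$. Where you genuinely differ is the middle step. The paper proves the tangent-line inequality (\ref{e6-3}): if $t^{-}(\psi_1(0),\psi_2(0))\in(0,1)$ then $\mathcal{P}_{\gamma,\beta}(\psi_1(0),\psi_2(0))\leq I_{\gamma,\beta}(\psi_1(0),\psi_2(0))-m_{\gamma,\beta}(c)$ (using concavity of $\Phi$ past its maximum), and then propagates the condition $t^{-}(\psi_1(t),\psi_2(t))<1$ along the flow by invoking the continuity of the map $(\psi_1,\psi_2)\mapsto t^{-}(\psi_1,\psi_2)$ (cited from Soave) plus a continuation argument, arriving at $\mathcal{P}\leq-\eta$ with $\eta=m_{\gamma,\beta}(c)-I_{\gamma,\beta}(\psi_1(0),\psi_2(0))$. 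You instead (a) preserve the sign of $\mathcal{P}$ by the connectedness observation that $\mathcal{P}(\psi_1(t),\psi_2(t))=0$ would place the solution on $\mathcal{M}_{\gamma,\beta}(c)$ and force $I_{\gamma,\beta}\geq m_{\gamma,\beta}(c)$, contradicting energy conservation, and (b) quantify the gap pointwise in time through the scaling-monotone functional $Q=I_{\gamma,\beta}-\frac{2}{3(p-2)}\mathcal{P}_{\gamma,\beta}=\frac{3p-10}{6(p-2)}A+\frac{(3p-8)\gamma}{12(p-2)}B$: projecting onto $\mathcal{M}_{\gamma,\beta}(c)$ at the scale $s^{-}\in(0,1)$ and using $Q(\psi(t))\geq Q\bigl((\psi(t))_{s^{-}}\bigr)=I_{\gamma,\beta}\bigl((\psi(t))_{s^{-}}\bigr)\geq m_{\gamma,\beta}(c)$ yields $\mathcal{P}(\psi(t))\leq\frac{3(p-2)}{2}\bigl(I_{\gamma,\beta}(\psi_1(0),\psi_2(0))-m_{\gamma,\beta}(c)\bigr)<0$. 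Both routes are valid and the resulting constants differ only by the harmless factor $\frac{3(p-2)}{2}$; what yours buys is independence from the continuity-of-$t^{-}$ and continuation step, which is the most delicate point of the paper's argument and the only one resting on an external reference, while staying entirely within the paper's own toolkit (your $Q$ is exactly the combination the paper uses in the proof of Theorem \ref{t9}(iii), modulo a typo there where the factor $\gamma$ on $B$ is dropped). Incidentally, your caveat that for $p=\frac{10}{3}$ the map $\Phi$ has a global maximum only in the regime $A<\frac{3}{5}C$, equivalently $\mathcal{P}_{\gamma,\beta}(\psi_1(0),\psi_2(0))<0$, is a point the paper's ``it is easy to prove'' silently skips; and both you and the paper leave the virial identity for the doubly coupled nonlocal term at the same level of rigor (the paper cites Cazenave, you flag the needed approximation argument).
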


\begin{proof}
Let $(\psi _{1}(0),\psi _{2}(0))\in S_{c}$. Since $\frac{10}{3}\leq p<6,$ it
is easy to prove that $\Phi _{(\psi _{1}(0),\psi _{2}(0))}(t)$ has a unique
global maximum point $t^{-}(\psi _{1}(0),\psi _{2}(0))$ and $\Phi _{(\psi
_{1}(0),\psi _{2}(0))}(t)$ is strict decreasing and concave on $(t^{-}(\psi
_{1}(0),\psi _{2}(0)),+\infty )$. We claim that if $t^{-}(\psi _{1}(0),\psi
_{2}(0))\in (0,1)$, then
\begin{equation}
\mathcal{P}_{\gamma ,\beta }(\psi _{1}(0),\psi _{2}(0))\leq I_{\gamma ,\beta
}(\psi _{1}(0),\psi _{2}(0))-m_{\gamma ,\beta }(c).  \label{e6-3}
\end{equation}%
The fact of $t^{-}(\psi _{1}(0),\psi _{2}(0))\in (0,1)$ implies that $%
\mathcal{P}_{\gamma ,\beta }(\psi _{1}(0),\psi _{2}(0))<0$. Then there holds
\begin{eqnarray*}
I_{\gamma ,\beta }(\psi _{1}(0),\psi _{2}(0))&=&\Phi _{(\psi _{1}(0),\psi _{2}(0))}(1) \\
&\geq &\Phi _{(\psi _{1}(0),\psi _{2}(0))}(t^{-}(\psi _{1}(0),\psi
_{2}(0)))-(t^{-}(\psi _{1}(0),\psi _{2}(0))-1)\Phi _{(\psi _{1}(0),\psi
_{2}(0))}^{\prime }(1) \\
&=&I_{\gamma ,\beta }((\psi _{1}(0),\psi _{2}(0))_{t^{-}(\psi _{1}(0),\psi
_{2}(0))})-|\mathcal{P}_{\gamma ,\beta }(\psi _{1}(0),\psi
_{2}(0))|(1-t^{-}(\psi _{1}(0),\psi _{2}(0))) \\
&\geq &m_{\gamma ,\beta }(c)-|\mathcal{P}_{\gamma ,\beta }(\psi _{1}(0),\psi
_{2}(0))| \\
&=&m_{\gamma ,\beta }(c)+\mathcal{P}_{\gamma ,\beta }(\psi _{1}(0),\psi
_{2}(0)).
\end{eqnarray*}%
This proves the claim.

Now let us consider the solution $(\psi _{1},\psi _{2})$ with initial data $%
(\psi _{1}(0),\psi _{2}(0))$. Similar to the argument in Soave \cite[Lemma
5.3]{S2}, the map $(\psi _{1}(0),\psi _{2}(0))\mapsto t^{-}(\psi
_{1}(0),\psi _{2}(0))$ is continuous, and together with $0<t^{-}(\psi
_{1}(0),\psi _{2}(0))<1,$ we have $t^{-}(\psi _{1}(\tau ),\psi _{2}(\tau
))<1 $ for every $|\tau |$ small, say $|\tau |<\overline{\tau }$ for some $%
\overline{\tau }>0$. By (\ref{e6-3}) and the assumption of $I_{\gamma ,\beta
}(\psi _{1}(0),\psi _{2}(0))<m_{\gamma ,\beta }(c)$, we have
\begin{eqnarray*}
\mathcal{P}_{\gamma ,\beta }(\psi _{1}(\tau ),\psi _{2}(\tau )) &\leq
&I_{\gamma ,\beta }(\psi _{1}(\tau ),\psi _{2}(\tau ))-m_{\gamma ,\beta }(c)
\\
&=&I_{\gamma ,\beta }(\psi _{1}(0),\psi _{2}(0))-m_{\gamma ,\beta
}(c):=-\eta <0,
\end{eqnarray*}%
for every such $\tau $, and hence $t^{-}(\psi _{1}(\overline{\tau }),\psi
_{2}(\overline{\tau }))<1$. Appplying the continuity argument yields
\begin{equation*}
\mathcal{P}_{\gamma ,\beta }(\psi _{1}(t),\psi _{2}(t))\leq -\eta \text{ for
}t\in (T_{\min },T_{\max }).
\end{equation*}%
Since $(|x|\psi _{1}(0),|x|\psi _{2}(0))\in L^{2}(\mathbb{R}^{3})\times
L^{2}(\mathbb{R}^{3})$, by the Virial identity \cite[Proposition 6.5.1]{C},
the function
\begin{equation*}
\bar{f}(t):=\sum_{i=1}^{2}\int_{\mathbb{R}^{3}}|x|^{2}|\psi _{i}(t,x)|^{2}dx
\end{equation*}%
is of class $C^{2}$. We now use the method established by Cazenave \cite{C}.
By multiplying the first equation in system (\ref{e1-2}) by $i|x|^{2}%
\overline{\psi }_{1}$ and the second equation in system (\ref{e1-2}) by $%
i|x|^{2}\overline{\psi }_{2}$, taking the real part and integrating over $%
\mathbb{R}^{3}$, we have
\begin{equation*}
\bar{f}^{\prime }(t)=2\mathrm{Re}\sum_{i=1}^{2}\int_{\mathbb{R}^{3}}|x|^{2}%
\overline{\psi }_{i}\partial _{t}\psi _{i}dx=4\mathrm{Im}\sum_{i=1}^{2}\int_{%
\mathbb{R}^{3}}\overline{\psi }_{i}x\cdot \nabla \psi _{i}dx,
\end{equation*}%
and
\begin{eqnarray*}
\bar{f}^{\prime \prime }(t) =-4\mathrm{Im}\sum_{i=1}^{2}\int_{\mathbb{R}%
^{3}}(2x\cdot \nabla \psi _{i}+3\overline{\psi }_{i})\psi _{i}dx =8\mathcal{P}_{\gamma ,\beta }(\psi _{1}(t),\psi _{2}(t))\leq -8\eta <0
\end{eqnarray*}%
for $t\in (T_{\min },T_{\max })$. Integrating twice in time gives
$0\leq \bar{f}(t)\leq \bar{f}(0)+\bar{f}^{\prime }(0)t-4\eta t^{2}.$ Since the right hand side becomes negative for $t$ sufficiently large, it is
necessary that both $T_{\min }$ and $T_{\max }$ are bounded, which in turn
implies finite time blow-up. The proof is complete.
\end{proof}

\textbf{We give the proofs of Theorems \ref{t7} and \ref{t10}:} Theorem \ref%
{t7} directly follows from Lemma \ref{L6-1}.

Next, we prove Theorem \ref{t10}. Let $(\bar{u},\bar{v})$ be an energy ground state
obtained in Theorem \ref{t5} or \ref{t6}. Let $(\psi _{1}(t),\psi _{2}(t))$
be the solution to system (\ref{e1-2}) with the initial data $(\bar{u}_{s},\bar{v}_{s})$%
, where $(\bar{u}_{s},\bar{v}_{s})=(s^{3/2}\bar{u}(sx),s^{3/2}\bar{v}(sx))$ with $s>1.$ Then $%
(\bar{u}_{s},\bar{v}_{s})\rightarrow (\bar{u},\bar{v})$ in $\mathbf{H}$ as $s\searrow 1$, and so
it is sufficient to prove that $(\psi _{1}(t),\psi _{2}(t))$ blows up in
finite time. Clearly, $t^{-}(\bar{u}_{s},\bar{v}_{s})=\frac{1}{s}<1$, and by the
definition of $m_{\gamma ,\beta }(c)$ one has $I_{\gamma ,\beta }(\bar{u}_{s},\bar{v}_{s})<I_{\gamma ,\beta }(\bar{u},\bar{v})=m_{\gamma ,\beta
}(c).$ Therefore, it follows from Lemma \ref{L6-1} that the solution $(\psi
_{1}(t),\psi _{2}(t))$ blows up in finite time and the associated standing
wave is orbitally unstable.

\section*{Acknowledgments}

J. Sun was supported by the National Natural Science Foundation of China
(Grant No. 12371174) and Shandong Provincial Natural Science Foundation
(Grant No. ZR2020JQ01). T.F. Wu was supported by the National Science and
Technology Council, Taiwan (Grant No. 112-2115-M-390-001-MY3).

\end{document}